\newtheorem{theorem}{Theorem}[section]
\newtheorem{corollary}[theorem]{Corollary}
\newtheorem{definition}[theorem]{Definition}
\newtheorem{example}[theorem]{Example}
\newtheorem{lemma}[theorem]{Lemma}
\newtheorem{proposition}[theorem]{Proposition}
\newtheorem{remark}[theorem]{Remark}
\begin{document}

\title{Hilsum-Skandalis maps as Frobenius adjunctions with application to geometric morphisms}

\author{Christopher Townsend}
\maketitle
\begin{abstract}
Hilsum-Skandalis maps, from differential geometry, are studied in the context of a cartesian category. It is shown that Hilsum-Skandalis maps can be represented as stably Frobenius adjunctions. This leads to a new and more general proof that Hilsum-Skandalis maps represent a universal way of inverting essential equivalences between internal groupoids. 

To prove the representation theorem, a new characterisation of the connected components adjunction of any internal groupoid is given. The characterisation is that the adjunction is covered by a stable Frobenius adjunction that is a slice and whose right adjoint is monadic. 

Geometric morphisms can be represented as stably Frobenius adjunctions. As applications of the study  we show how it is easy to recover properties of geometric morphisms, seeing them as aspects of properties of stably Frobenius adjunctions. 
\end{abstract}

\section{Introduction}
We provide a categorical account of Hilsum-Skandalis maps and establish that key results about them can be proved in the general context of a cartesian category $ \mathcal{C}$, provided the domain and codomain groupoids are restricted to those that have connected components adjunctions that are stably Frobenius. The restriction is not a strong one as most groupoids used in application satisfy this property. One key result is that Hilsum-Skandalis maps form the morphisms of a category of fractions, inverting essential euivalences between internal groupoids. This is shown by providing a new result which is that Hilsum-Skandalis maps can be represented as stably Frobenius adjunctions over $\mathcal{C}$. 

Along the way, in order to ease the discussion of categories of objects with groupoid actions, we provide a new categorical description of the connected components adjunction for internal groupoids which should be of general interest. The result is that an adjunction $\Sigma_{\mathcal{D}} \dashv \mathcal{D}^* : \mathcal{D} \pile{\rTo \\ \lTo}\mathcal{C} $ is equivalent to a connected components adjunction if and only if it is covered by a slice of $\mathcal{C}$, with a covering adjunction that is stably Frobenius and whose right adjoint is monadic.

A number of applications are given which show how the techniques developed can be applied to recover known aspects of the theory of geometric morphisms. It is shown how the pullback of bounded geometric morphisms can be described by using the product of localic groupoids and a proof is given of pullback stable  hyperconnected-localic factorization.

\emph{All our categories are cartesian (i.e. they have finite limits).}

\section{Summary of contents}
Our first few sections are categorical preliminaries covering properties of categories of adjunctions over a base category, Frobenius reciprocity, monadicity and effective descent. Then there is a section on groupoids which provides new characterisations of the connected components adjunction, in terms of these categorical concepts (adjunctions over a base, monadicity, Frobenius reciprocity and effective descent). 

We then define Hilsum-Skandalis maps and show that stably Frobenius adjunctions give rise to them. In fact by restricting to what we have called stably Frobenius groupoids, which are then introduced, we see that all Hilsum-Skandalis maps arise in this way. This is the main result of the paper: Hilsum-Skandalis maps are essentially the same thing as stably Frobenius adjunctions. Along the way there is some discussion about internal essential equivalences between internal groupoids, showing that they can be used to characterise Hilsum-Skandalis maps in the usual manner. We then focus on applications of the main result and show relative to an arbitrary cartesian category that:

(i) Hilsum-Skandalis maps universally invert essential equivalences,

(ii) stably Frobenius groupoids internal to a category of $\mathbb{G}$-objects are (up to a natural equivalence) the same thing as Hilsum-Skandalis maps to $\mathbb{G}$ (this is achieved with a new categorically flavoured construction of groupoid semidirect product) and

(iii) connected component adjunctions of stably Frobenius groupoids are pullback stable.

Finally the paper focuses on applications to geometric morphisms, recalling that geometric morphisms can be represented as certain stably Frobenius adjunctions. The categorical characterisation of connected component adjunctions is applied to characterise bounded geometric morphisms, from which basic results about localic and bounded geometric morphisms follow. The pullback of connected component adjunctions is used to describe the pullback of bounded geometric morphisms. An account of the hyperconnected-localic factorisation is given.

\section{Preliminaries}
\subsection{Notation for pullback}\label{prelim}

We will assume that all our categories are cartesian and start with some notation. If $f : Y \rTo X$ is a morphism of a category $\mathcal{C}$ then we use $Y_f$ as notation for $f$ when considered as an object of the slice category $\mathcal{C}/X$. Observe that $(\mathcal{C}/X)/Y_f$ is isomorphic to $\mathcal{C}/Y$. We use $Y_X$ as notation for the object $\pi_1 : X \times Y  \rTo X $. For any morphism $f:Y\rTo X$ of a cartesian category $\mathcal{C}$ there is a pullback adjunction $\Sigma_f \dashv f^*: \mathcal{C}/Y \rTo \mathcal{C}/X$; if $Z_g$ is an object of $\mathcal{C}/Y$ then $\Sigma_f(Z_g)$ is defined to be $Z_{fg}$. In the case $X=1$ we use $\Sigma_Y \dashv Y^*$ for the pullback adjunction; $\Sigma_Y$ reflects isomorphisms and creates coequalizers.

\subsection{Adjunctions over a base category}\label{adj}
We shall frequently be discussing results relative to a base category $\mathcal{C}$. If we have two other categories $\mathcal{D}_i,i=1,2,$ each equiped with an adjunction $\Sigma_{\mathcal{D}_i} \dashv \mathcal{D}^*_i : \mathcal{D}_i \pile{\rTo \\ \lTo}\mathcal{C} $ back to $\mathcal{C}$ (so $\Sigma_{\mathcal{D}_i}$ goes from $\mathcal{D}_i$ to $\mathcal{C}$) then an adjunction $L \dashv R : \mathcal{D}_1 \pile{\rTo \\ \lTo} \mathcal{D}_2$ is said to be \emph{over $\mathcal{C}$} provided there is a natural isomorphism $\tau:\Sigma_{\mathcal{D}_2} L  \rTo^{\cong}  \Sigma_{\mathcal{D}_1} $. The morphisms of the category of adjunctions over $\mathcal{C}$ consist of natural transformations on left adjoints that commute with the natural isomorphisms $\tau$ in the obvious manner; i.e. if $\alpha: L_1 \rTo L_2$ then $\tau_1:  \Sigma_{\mathcal{D}_2} L_1\rTo \Sigma_{\mathcal{D}_1} $ factors as $ \Sigma_{\mathcal{D}_2} L_1 \rTo^{\Sigma_{\mathcal{D}_2}\alpha} \Sigma_{\mathcal{D}_2}L_2 \rTo^{\tau_2}  \Sigma_{\mathcal{D}_1} $.

If $X$ is an object of $\mathcal{C}$ and we assert that an adjunction $L \dashv R : \mathcal{C}/X \pile{\rTo \\ \lTo} \mathcal{D}$ is over $\mathcal{C}$ then we mean that there is a natural isomorphism $\alpha: \Sigma_{\mathcal{D}}L \rTo^{\cong} \Sigma_X$. Notice that for the case $X=1$ having such an adjunction means that there is a splitting of the given adjunction $\Sigma_{\mathcal{D}} \dashv \mathcal{D}^*$ back to $\mathcal{C}$.

Given an adjunction $L\dashv R:\mathcal{D}\pile{\rTo \\ \lTo} \mathcal{C}$, then for any object $X$ of $\mathcal{C}$ there is a \emph{sliced adjunction} $L_X \dashv R_X : \mathcal{D}/RX \pile{\rTo \\ \lTo} \mathcal{C}/X$; $L_X(g)=$`the adjoint transpose of $g$' and $R_X(Y_f)=RY_{Rf}$. 

It is a trivial from the definition of adjoint transpose to see that any  adjunction $L \dashv R$ (between cartesian categories) factors through its slice at $L1$:
\begin{eqnarray*}
\mathcal{D} \pile{\rTo^{\Sigma_{\eta_1}} \\ \lTo_{\eta_1^*}} \mathcal{D}/RL1  \pile{ \rTo^{L_{L1}} \\ \lTo_{R_{L1}}}   \mathcal{C}/L1 \pile{\rTo^{\Sigma_{L1}} \\ \lTo_{L1^*}} \mathcal{C}
\end{eqnarray*}
and similar observations can be exploited to relate cetain adjunctions over $\mathcal{C}$ to adjunctions that are splittings:
\begin{lemma}\label{firstlemma}
Given an adjunction $\Sigma_{\mathcal{D}} \dashv \mathcal{D}^* : \mathcal{D} \pile{\rTo \\ \lTo}\mathcal{C} $ and an object $X$ of $\mathcal{C}$ then the category of adjunctions $L \dashv R : \mathcal{C}/X \pile{\rTo \\ \lTo} \mathcal{D}$ over $\mathcal{C}$ is equivalent to the category of adjunctions $L'\dashv R' : \mathcal{C}/X \pile{\rTo \\ \lTo} \mathcal{D}/\mathcal{D}^*X$ over $\mathcal{C}/X$. Further, under this equivalence, $L'1 = L1 \rTo{\psi} \mathcal{D}^*X$ where $\psi$ is the adjont transpose, across $\Sigma_{\mathcal{D}} \dashv \mathcal{D}^*$, of the isomorphism $\alpha_1: \Sigma_{\mathcal{D}}L1 \rTo^{\cong} \Sigma_X 1 = X$ that exists by assumption that $L \dashv R$  is over $\mathcal{C}$.
\end{lemma}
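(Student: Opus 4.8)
The plan is to write down explicit functors in both directions between the two categories of adjunctions and check they are mutually quasi-inverse, tracking throughout what they do to left adjoints.

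\emph{Forward.} Given $L\dashv R:\mathcal{C}/X\pile{\rTo \\ \lTo}\mathcal{D}$ over $\mathcal{C}$, witnessed by $\alpha:\Sigma_{\mathcal{D}}L\rTo^{\cong}\Sigma_X$, I would splice the slice factorisation of $L\dashv R$ at $L1$ (displayed just before the lemma) with the pullback adjunction along $\psi$, forming the composite adjunction
\begin{eqnarray*}
\mathcal{C}/X \pile{\rTo^{\Sigma_{\eta_1}} \\ \lTo_{\eta_1^*}} (\mathcal{C}/X)/RL1 \pile{\rTo^{L_{L1}} \\ \lTo_{R_{L1}}} \mathcal{D}/L1 \pile{\rTo^{\Sigma_{\psi}} \\ \lTo_{\psi^*}} \mathcal{D}/\mathcal{D}^*X
\end{eqnarray*}
and calling it $L'\dashv R'$. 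Unwinding, $L'(A_a)$ is the object of $\mathcal{D}/\mathcal{D}^*X$ with structure map $\psi\circ L(!):L(A_a)\to\mathcal{D}^*X$, where $!:A_a\to 1$ is the terminal map; naturality of $\alpha$ together with the triangle identities identify $\psi\circ L(!)$ with the adjoint transpose across $\Sigma_{\mathcal{D}}\dashv\mathcal{D}^*$ of $a\circ\alpha_{A_a}:\Sigma_{\mathcal{D}}L(A_a)\to X$. In particular $L'1=(L1)_{\psi}$, which is the last sentence of the lemma; and $(\Sigma_{\mathcal{D}})_X L'(A_a)=(\Sigma_{\mathcal{D}}L(A_a))_{a\circ\alpha_{A_a}}$, so $\alpha$ exhibits $(\Sigma_{\mathcal{D}})_X L'\cong\mathrm{Id}_{\mathcal{C}/X}$, i.e. $L'\dashv R'$ is over $\mathcal{C}/X$.

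\emph{Backward and quasi-inverse.} Given $L'\dashv R':\mathcal{C}/X\pile{\rTo \\ \lTo}\mathcal{D}/\mathcal{D}^*X$ over $\mathcal{C}/X$, I would compose it with the forgetful pullback adjunction $\Sigma_{\mathcal{D}^*X}\dashv(\mathcal{D}^*X)^*:\mathcal{D}/\mathcal{D}^*X\pile{\rTo \\ \lTo}\mathcal{D}$, obtaining $L:=\Sigma_{\mathcal{D}^*X}L'$ and $R:=R'(\mathcal{D}^*X)^*$; the coherence $\Sigma_{\mathcal{D}}\Sigma_{\mathcal{D}^*X}=\Sigma_X(\Sigma_{\mathcal{D}})_X$ makes $\Sigma_{\mathcal{D}}L\cong\Sigma_X$, so $L\dashv R$ is over $\mathcal{C}$. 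Forward-then-backward returns $\Sigma_{\mathcal{D}^*X}L'(A_a)=L(A_a)$ verbatim (using the factorisation lemma and $\Sigma_{\mathcal{D}^*X}\circ\Sigma_{\psi}=\Sigma_{L1}$). Backward-then-forward returns the object with structure map $h_0\circ\Sigma_{\mathcal{D}^*X}L'(!)$, where $h_0$ is the structure map of $L'1$; since $L'(!)$ is a morphism of $\mathcal{D}/\mathcal{D}^*X$, the triangle it satisfies forces this composite to equal the structure map of $L'(A_a)$, so $L'$ is recovered too. One then checks that the witnessing isomorphisms match up, so the two assignments are inverse, giving an equivalence (indeed an isomorphism) of categories.

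\emph{Morphisms and the obstacle.} Under either functor a $2$-cell between left adjoints passes to the same underlying natural transformation, and the compatibility $\tau_1=\tau_2\circ\Sigma_{\mathcal{D}}\alpha$ with the structure isomorphisms is exactly what makes its components morphisms in $\mathcal{D}/\mathcal{D}^*X$ (and conversely); so the functors are fully faithful, which together with the object-level quasi-inverse above yields the claimed equivalence. The one genuinely non-mechanical step is the choice of the point $\psi:L1\to\mathcal{D}^*X$ (equivalently, of its transpose $1\to R(\mathcal{D}^*X)$ across $L\dashv R$) to pull back along, and the slice factorisation stated before the lemma supplies it; the rest I expect to be routine bookkeeping chasing adjoint transposes through the threefold composite. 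The part most likely to cause slips is keeping straight, at each stage, which structural adjunction is in force — $\Sigma_{\mathcal{D}}\dashv\mathcal{D}^*$, the sliced $(\Sigma_{\mathcal{D}})_X\dashv(\mathcal{D}^*)_X$, or the identity on $\mathcal{C}/X$ — since that is precisely what distinguishes ``over $\mathcal{C}$'' from ``over $\mathcal{C}/X$'', alongside the repeated appeal to the base-change coherence $\Sigma_X(\Sigma_{\mathcal{D}})_X=\Sigma_{\mathcal{D}}\Sigma_{\mathcal{D}^*X}$ and its right-adjoint dual.
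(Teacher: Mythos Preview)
Your argument is correct and lands on the same pair of functors as the paper: the backward direction is literally identical (postcompose with $\Sigma_{\mathcal{D}^*X}\dashv(\mathcal{D}^*X)^*$), and your forward direction yields the same $L'$ as the paper's $L_{\mathcal{D}^*X}\Sigma_{\Delta_X}$, since both send $A_a$ to $L(A_a)$ with structure map the transpose of $\alpha_{A_a}$.

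The only organisational difference is where you slice. The paper slices $L\dashv R$ directly at $\mathcal{D}^*X$ and precomposes with $\Sigma_{\Delta_X}:\mathcal{C}/X\to(\mathcal{C}/X)/R\mathcal{D}^*X$, invoking the identification $R\mathcal{D}^*X\cong X^*X$ and the fact that the transpose of $\Delta_X$ across $\Sigma_X\dashv X^*$ is the identity to recover $\psi$ as the structure map of $L'1$. You instead go through the intermediate slice at $L1$ via the displayed factorisation and then push forward along $\psi$; this makes the ``further'' clause $L'1=(L1)_\psi$ immediate rather than something to be extracted, at the cost of one extra composite in the forward definition. Your explicit round-trip checks (which the paper leaves implicit) are correct, in particular the observation that the structure map of $L'(A_a)$ is forced to be $h_0\circ\Sigma_{\mathcal{D}^*X}L'(!)$ since $L'(!)$ lives over $\mathcal{D}^*X$.
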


\begin{proof}
Send $L  \dashv R$ to $L_{\mathcal{D}^*X}\Sigma_{\Delta_X} \dashv \Delta^*_X R_{\mathcal{D}^*X}$. The adjunction $L_{\mathcal{D}^*X}\Sigma_{\Delta_X} \dashv \Delta^*_X R_{\mathcal{D}^*X}$ is over $\mathcal{C}/X$ because $(\Sigma_X)_X \cong (\Sigma_{\mathcal{D}})_X L_{\mathcal{D}^*X}$ (if $\Sigma_X \cong \Sigma_{\mathcal{D}}L$) and $Id_{\mathcal{C}/X}$ factors as $\Sigma_{\pi_2}\Sigma_{\Delta_X}$.
In the other direction send $L' \dashv R'$ to $\Sigma_{\mathcal{D}^*X}L' \dashv R' (\mathcal{D}^*X)^* $. Observe that
\begin{eqnarray*}
\mathcal{C}/X \times X \cong (\mathcal{C}/X)/X^*X \cong (\mathcal{C}/X) / R \mathcal{D}^*X \text{ \em \em (*)}
\end{eqnarray*}
Note that the adjoint transpose of the morphism $\Delta_X: 1 \rTo X^*X$ of $\mathcal{C}/X$ under the adjunction $\Sigma_X \dashv X^*$ is the identity on $X$ and so the `further' part follows by uniquenss of adjoint transpose, taking the isomorphisms of categories (*) into consideration.
\end{proof}

Being able to relate general adjunctions over $\mathcal{C}$ with sliced domain, to splittings of adjunctions back to $\mathcal{C} $ will ease proofs considerably as the split case is generally easy. We shall see a number of examples of this in what follows.

\section{Frobenius reciprocity} 
An adjunction $L\dashv R:\mathcal{D}\pile{\rTo \\ \lTo} \mathcal{C}$ satisfies \emph{Frobenius reciprocity} (or \emph{is Frobenius}) provided the map
\begin{eqnarray*}
L(W \times RX)\rTo^{(L\pi_{1},L\pi_{2})}LW\times LRX\rTo^{Id_{LW} \times \epsilon_{X} } LW \times X
\end{eqnarray*}
is an isomorphism for all objects $W$ and $X$ of $\mathcal{D}$ and $\mathcal{C}$ respectively, where $\epsilon$ is the counit of the adjunction. Between cartesian closed categories an adjunction satisfies Frobenius reciprocity if and only if the right adjoint preserves exponentials ($R(X^Y) \cong RX^{RY}$), however the condition is also relevant in the general context of cartesian categories, a striking example being the category of locales. Categories of locales (relative to a topos) are not cartesian closed but the Frobenius condition on adjunctions between them can be used to characterise geometric morphsms, \cite{towgeom}. Being Frobenius is two conditions away from being an equivalence:
\begin{lemma}\label{FrobisEq}
A Frobenius adjunction $L\dashv R: \mathcal{D}\pile{\rTo \\ \lTo} \mathcal{C}$ is an equivalence iff $L1 \cong 1$ and $\eta_W$ is a regular monomorphism for each object $W$ of $\mathcal{D}$ then the adjunction is an equivalence.
\end{lemma}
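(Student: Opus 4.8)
The statement is the biconditional that $L\dashv R$ is an equivalence if and only if $L1\cong 1$ and $\eta_W$ is a regular monomorphism for every object $W$ of $\mathcal{D}$; the dangling ``then the adjunction is an equivalence'' is a leftover. One direction is immediate: if the adjunction is an equivalence then its unit and counit are isomorphisms, $L$ preserves finite limits so $L1\cong 1$, and an isomorphism is in particular a regular monomorphism. So the plan is to prove the substantive direction: assuming $L\dashv R$ is Frobenius, $L1\cong 1$, and each $\eta_W$ is a regular monomorphism, show that $\eta$ and $\epsilon$ are isomorphisms.

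First I would show the counit is an isomorphism, by evaluating the Frobenius map at $W=1$. For any object $X$ of $\mathcal{C}$ this map reads $L(1\times RX)\xrightarrow{(L\pi_1,\,\epsilon_X\circ L\pi_2)}L1\times X$. Since $1$ is terminal, $\pi_2\colon 1\times RX\to RX$ is an isomorphism, hence so is $L\pi_2\colon L(1\times RX)\to LRX$; and since $L1\cong 1$ is terminal, $\pi_2\colon L1\times X\to X$ is an isomorphism. Post-composing the Frobenius map with this last projection gives precisely $\epsilon_X\circ L\pi_2$, so $\epsilon_X$ is a composite of the invertible Frobenius map, the projection $L1\times X\to X$, and the inverse of $L\pi_2$, and is therefore an isomorphism, naturally in $X$. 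Applying the triangle identity $\epsilon_{LW}\circ L\eta_W=\mathrm{id}_{LW}$, with $\epsilon_{LW}$ now known to be invertible, we deduce that $L\eta_W$ is an isomorphism for every object $W$.

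Next I would show each $\eta_W$ is an epimorphism. Given $f,g\colon RLW\to V$ with $f\eta_W=g\eta_W$, apply $L$ and cancel the isomorphism $L\eta_W$ to get $Lf=Lg$, hence $RLf=RLg$; naturality of $\eta$ then gives $\eta_V f=RLf\circ\eta_{RLW}=RLg\circ\eta_{RLW}=\eta_V g$, and since $\eta_V$ is a monomorphism (being a regular one by hypothesis) we conclude $f=g$. Thus $\eta_W$ is epic as well as a regular monomorphism. But any morphism that is simultaneously a regular monomorphism and an epimorphism is an isomorphism: writing such a morphism as the equalizer of a pair $a,b$, being epic forces $a=b$, so it is the equalizer of a pair of equal arrows and hence invertible. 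Therefore $\eta_W$ is an isomorphism for every $W$, and together with $\epsilon$ being invertible this exhibits $L\dashv R$ as an equivalence.

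The one delicate point is the first step: verifying that the $W=1$ instance of the Frobenius map really is the counit after the canonical identifications coming from $L1\cong 1$ and $1\times RX\cong RX$. I would settle this by direct inspection of the defining formula $(L\pi_1,\epsilon_X\circ L\pi_2)$, observing that once the $L1$ factor is terminal the first component carries no information. Note that this is the only place the Frobenius hypothesis is used; everything after the reduction is formal manipulation with the triangle identities, naturality of the unit, and the elementary fact about regular monomorphisms that are epic.
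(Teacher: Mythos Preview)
Your proof is correct and follows essentially the same route as the paper. Both arguments use Frobenius at $W=1$ together with $L1\cong 1$ to see that $\epsilon$ is an isomorphism, then the triangle identity to get $L\eta_W$ invertible; the paper then works directly with an equalizer diagram $W\hookrightarrow RLW\rightrightarrows W'$ witnessing that $\eta_W$ is regular, observes $La=Lb$, and uses the adjoint transpose (equivalently your naturality-of-$\eta$ step) to conclude $\eta_{W'}a=\eta_{W'}b$, whence $a=b$. Your packaging as ``$\eta_W$ is epic, and a regular mono that is epic is an isomorphism'' is the same argument in slightly more modular form.
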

\begin{proof}
If the adjunction is an equivalence the unit is an isomorphism and $L1\cong 1$, so one way round is clear. In the other direction, $LRX \cong L(1 \times RX) \cong L1 \times X \cong X$; from which the counit $\epsilon$ is seen to be an isomorphism. For each object $W$ there is an equalizer diagram $ W \rInto^{\eta_W} RLW \pile{ \rTo^a \\ \rTo_b} W'$ for some $a,b$ and $W'$. Then $La = L b$ since $L \eta_W$ is an isomorphism using the triangular identities and the fact that $\varepsilon$ is an isomorphism. But if $La = L b$ then they have equal adjoint transposes; that is, $ \eta_{W'} a = \eta_{W'} b$ and so $ a = b $ since $ \eta_{W'}$ is a monomorphism. Hence $\eta_W$ is an isomorphism and the adjunction in fact is an equivalence.
\end{proof}
 
If $X$ is an object of $\mathcal{C}$, then $L \dashv R $ is \emph{Frobenius at $X$} provided $L_X \dashv R_X$ satisfies Frobenius reciprocity and is \emph{stably Frobenius} if $L_X \dashv R_X$ satisfies Frobenius reciprocity for every object $X$ of $\mathcal{C}$.

It is easy to see that an adjunction $L\dashv R:\mathcal{D}\pile{\rTo \\ \lTo} \mathcal{C}$ is stably Frobenius if and only if for any pair of morphisms $g:W \rTo RX$ and $f: Y \rTo X$, the canonical map $L( W \times_{RX} RY) \rTo LW \times_X Y$ is an isomorphism, where the codomain is the pullback of $f$ along the adjoint transpose of $g$. For example any morphism $f:X\rTo Y$ of a cartesian category $\mathcal{C}$ the pullback adjunction $\Sigma_f \dashv f^*: \mathcal{C}/X \rTo \mathcal{C}/Y$ is stably Frobenius. The composition of two (stably) Frobenius adjunctions is (stably) Frobenius. 

The next lemma provides an example of the usefulness of being able to relate adjunctions over $\mathcal{C}$ to split adjunctions. We will see towards the end of this paper that Part 2 can be viewed as the categorical essence of the well known fact that if the composite $fg$ of two geometric morphisms is localic, then $g$ is localic.

\begin{lemma}\label{localic}
Let $\Sigma_{\mathcal{D}} \dashv \mathcal{D}^* : \mathcal{D} \pile{\rTo \\ \lTo}\mathcal{C} $ be an adjunction.

1. If $L \dashv R: \mathcal{C} \pile{ \rTo \\ \lTo} \mathcal{D}$ is over $\mathcal{C}$ and Frobenius at $L1$, then $L_1: \mathcal{C}/1 \rTo \mathcal{D}/L1$, is an equivalence.

2. If $X$ is an object of $\mathcal{C}$ and $L \dashv R: \mathcal{C}/X \pile{ \rTo \\ \lTo} \mathcal{D}$ is an adjunction over $\mathcal{C}$ that is Frobenius at $\mathcal{D}^*X$ then $L_1: \mathcal{C}/X \rTo \mathcal{D}/L1$ is an equivalence.
\end{lemma}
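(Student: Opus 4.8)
The plan is to exhibit $L_1$ as the left adjoint of a Frobenius adjunction over $\mathcal{C}$ whose left adjoint preserves the terminal object, and then to quote Lemma~\ref{FrobisEq}. Reading the factorisation of an adjunction through its slice at the image of the terminal object (the display before Lemma~\ref{firstlemma}) for $L\colon\mathcal{C}\to\mathcal{D}$ gives $L=\Sigma_{L1}\circ L_{L1}\circ\Sigma_{\eta_1}$, where $\eta_1\colon 1\to RL1$ is the unit at $1$, so the sliced functor of the statement is $L_1=L_{L1}\circ\Sigma_{\eta_1}\colon\mathcal{C}/1\to\mathcal{D}/L1$ with right adjoint $\eta_1^{*}\circ R_{L1}$. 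This adjunction is Frobenius, being the composite of the pullback adjunction $\Sigma_{\eta_1}\dashv\eta_1^{*}$ (stably Frobenius, as are all pullback adjunctions) with $L_{L1}\dashv R_{L1}$ (Frobenius, since $L\dashv R$ is Frobenius at $L1$), and composites of Frobenius adjunctions are Frobenius. It is over $\mathcal{C}$: the canonical adjunction of $\mathcal{D}/L1$ over $\mathcal{C}$ has left adjoint $\Sigma_{\mathcal{D}}\Sigma_{L1}$, and $\Sigma_{\mathcal{D}}\Sigma_{L1}L_1=\Sigma_{\mathcal{D}}L\cong\Sigma_1$ by hypothesis. Being over $\mathcal{C}$ is then used twice: on the one hand $L_1(1)=(L1)_{Id_{L1}}=1_{\mathcal{D}/L1}$, so $L_11\cong1$; on the other hand every unit $\eta_W$ of $L_1\dashv\eta_1^{*}R_{L1}$ is a split monomorphism (hence regular), because $L_1\eta_W$ is split monic by a triangular identity, so $\Sigma_{\mathcal{D}}\Sigma_{L1}(L_1\eta_W)$ is too, and by naturality of the isomorphism $\Sigma_{\mathcal{D}}\Sigma_{L1}L_1\cong Id_{\mathcal{C}}$ this arrow is $\eta_W$ up to isomorphism. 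Lemma~\ref{FrobisEq} now gives that $L_1\dashv\eta_1^{*}R_{L1}$ is an equivalence.

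\textbf{Part 2.} The plan is to reduce to Part~1 over the base $\mathcal{C}/X$. Lemma~\ref{firstlemma} turns $L\dashv R\colon\mathcal{C}/X\to\mathcal{D}$ over $\mathcal{C}$ into an adjunction $L'\dashv R'\colon\mathcal{C}/X\to\mathcal{D}/\mathcal{D}^{*}X$ over $\mathcal{C}/X$, and its last clause says that $L'1$ is the object $\psi\colon L1\to\mathcal{D}^{*}X$ of $\mathcal{D}/\mathcal{D}^{*}X$. Since the domain $\mathcal{C}/X$ of $L'$ carries the identity adjunction as its structure over $\mathcal{C}/X$, Part~1 applies verbatim with $\mathcal{C}/X$ in place of $\mathcal{C}$ — and this is exactly the point of passing to $L'$, since for $L$ itself the domain $\mathcal{C}/X$ carries the non-trivial adjunction $\Sigma_X\dashv X^{*}$ and the split-monomorphism step of Part~1 breaks down. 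One obtains that $L'_1\colon(\mathcal{C}/X)/1\to(\mathcal{D}/\mathcal{D}^{*}X)/L'1$ is an equivalence, and then the isomorphisms $(\mathcal{C}/X)/1\cong\mathcal{C}/X$ and $(\mathcal{D}/\mathcal{D}^{*}X)/L'1=(\mathcal{D}/\mathcal{D}^{*}X)/(L1)_\psi\cong\mathcal{D}/L1$ of Section~\ref{prelim} carry $L'_1$ to $L_1$, as required.

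The routine work is unwinding the slice notation and checking that $L'_1$ really does become $L_1$ under those isomorphisms. The step I expect to be the main obstacle is the transport of the Frobenius hypothesis in Part~2: one must deduce from $L$ being Frobenius at $\mathcal{D}^{*}X$ that $L'$ is Frobenius at $L'1$, the hypothesis Part~1 consumes. Writing $L'=L_{\mathcal{D}^{*}X}\circ\Sigma_{\Delta_X}$ and using that pullback adjunctions are stably Frobenius, the slice of $L'$ at $L'1$ is, up to composition with a stably Frobenius pullback adjunction, the slice $L_{L1}$ of $L$ at $L1$, which is itself the slice of $L_{\mathcal{D}^{*}X}$ at $(L1)_\psi$; so what remains is to see that Frobenius reciprocity for $L_{\mathcal{D}^{*}X}$ descends along $\psi$ to this slice. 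This is where the particular choice of $\mathcal{D}^{*}X$ — the object of $\mathcal{D}$ through which the terminal object of $\mathcal{C}/X$ is displayed over $\mathcal{C}$ — does its work.
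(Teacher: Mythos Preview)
Your Part 1 is correct and is essentially the paper's argument. The only difference is cosmetic: the paper shows the unit of $L_1\dashv R_1$ is a regular monomorphism by observing that the unit of the composite $\Sigma_{\mathcal{D}}L\dashv R\mathcal{D}^*$ is an isomorphism (since that composite is an equivalence), whence $\eta_X$ is split monic; you instead push the triangular-identity splitting of $L_1\eta_W$ through the natural isomorphism $\Sigma_{\mathcal{D}}\Sigma_{L1}L_1\cong Id$. Both are valid.

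For Part 2 your strategy is exactly the paper's --- pass to $L'=L_{\mathcal{D}^*X}\Sigma_{\Delta_X}$ via Lemma~\ref{firstlemma} and invoke Part 1 --- and your identification of $L'_1$ with $L_1$ under $(\mathcal{D}/\mathcal{D}^*X)/(L1)_\psi\cong\mathcal{D}/L1$ is correct. But you have correctly isolated the one non-routine step and then not carried it out. To apply Part 1 to $L'$ one needs $L'$ Frobenius at $L'1$, i.e.\ $(L')_{L'1}$ Frobenius; as you note, modulo a (stably Frobenius) pullback factor this is $(L_{\mathcal{D}^*X})_{(L1)_\psi}\cong L_{L1}$, and Frobenius reciprocity for $L_{\mathcal{D}^*X}$ does not in general descend to its slice at $(L1)_\psi$. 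Your final sentence --- ``this is where the particular choice of $\mathcal{D}^*X$ does its work'' --- is a placeholder, not an argument, and as written your Part 2 is incomplete at precisely this point.

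It is worth noting that the paper's own proof of Part 2 reads in its entirety ``By 1.\ and Lemma~\ref{firstlemma}\ldots'', so the step you have flagged is not made explicit there either. If you want to close the gap yourself, observe that what the proof of Part 1 actually consumes is that the \emph{composite} $L'_1$ is Frobenius (not that each factor $(L')_{L'1}$ and $\Sigma_{\eta'_1}$ is separately), and look for a direct verification of Frobenius reciprocity for $L'_1\dashv R'_1$ using the hypothesis that $L_{\mathcal{D}^*X}$ is Frobenius together with the splitting $R'(\mathcal{D}^*)_X\cong Id_{\mathcal{C}/X}$ coming from ``over $\mathcal{C}/X$''.
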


\begin{proof}
1. We need to show that the adjunction
\begin{eqnarray*}
\mathcal{C} \pile{\rTo^{\Sigma_{\eta_1}} \\ \lTo_{\eta_1^*}} \mathcal{C}/RL1  \pile{ \rTo^{L_{L1}} \\ \lTo_{R_{L1}}}   \mathcal{D}/L1 
\end{eqnarray*}
is an equivalence. Lemma \ref{FrobisEq} can be applied as the adjunction satisfies Frobenius reciprocity by assumption (and the fact that the composition of two Frobenius adjunctions is Frobenius). Since $L_11 \cong 1$ we are just required to check that the unit of this adjunction is a regular monomorphism. At $X$, an object of $\mathcal{C}$, the unit is $X \rTo^{(!^X,\eta_X)} 1 \times_{RL1} RLX$ where the pullback is of $RL(!^X : X \rTo 1 )$ along $\eta_1: 1 \rTo RL 1$. Now because $L \dashv R$ is over $\mathcal{C}$ the composite adjunction $\Sigma_{\mathcal{D}} L \dashv R \mathcal{D}^*$ is an equivalence and so its unit, $X \rTo^{\eta_X} RLX \rTo^{R \eta^{\mathcal{D}}_{LX}} R \mathcal{D}^* \Sigma_{\mathcal{D}} LX$ is an isomorphism which implies that $\eta_X$ is a split monomorphism (it is split by $(R \eta^{\mathcal{D}}_{LX} \eta_X)^{-1} R \eta^{\mathcal{D}}_{LX}$). This is sufficient to show that $(!^X,\eta_X)$ is a regular monomorphism.

2. By 1. and Lemma \ref{firstlemma} we know that
\begin{eqnarray*}
(L_{\mathcal{D}^*X}\Sigma_{\Delta_X})_1: (\mathcal{C}/X)/1 \rTo (\mathcal{D}/\mathcal{D}^*X)/(L1 \rTo{\psi}\mathcal{D}^*X)
\end{eqnarray*}
is an equivalence. But $(\mathcal{D}/\mathcal{D}^*X)/(L1 \rTo{\psi} \mathcal{D}^*X ) \cong \mathcal{D}/L1$.
\end{proof}
 
For our final introductory lemma we expose a close relationship between morphisms and natural transformations that arises once we restrict to stably Frobenius adjunctions. 
\begin{lemma}\label{unique}
Let $\Sigma_{\mathcal{D}} \dashv \mathcal{D}^* : \mathcal{D} \pile{\rTo \\ \lTo}\mathcal{C} $ an adjunction, $X$ an object of $\mathcal{C}$ and $L_i\dashv R_i:\mathcal{C}/X \pile{\rTo \\ \lTo} \mathcal{D}$ with $i=1,2$, two adjunctions over $\mathcal{C}$, both Frobenius at $\mathcal{D}^*X$. Let $\psi_i : L_i 1 \rTo \mathcal{D}^* X$ be the adjoint transpose of the isomorphism $\tau_1^i : \Sigma_{\mathcal{D}} L_i 1 \rTo^{\cong} \Sigma_X 1$ that exists by assumption that $L_i \dashv R_i$ is over $\mathcal{C}$. Then there is a bijection between morphisms $\mathcal{D}/\mathcal{D}^*X((L_1 1)_{\psi_1}, (L_2 1)_{\psi_2})$ and morphisms $ (L_1 \dashv R_1) \rTo (L_2 \dashv R_2)$ over $\mathcal{C}$.
\end{lemma}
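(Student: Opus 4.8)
The plan is to trade morphisms of adjunctions for natural transformations between endofunctors of $\mathcal{C}/X$, via the equivalences of Lemma~\ref{localic}. Since each $L_i \dashv R_i$ is over $\mathcal{C}$ and Frobenius at $\mathcal{D}^*X$, part 2 of Lemma~\ref{localic} makes $(L_i)_1 : \mathcal{C}/X \rTo \mathcal{D}/L_i1$ an equivalence, and the canonical factorisation of an adjunction through its slice at the image of the terminal object gives $L_i \cong \Sigma_{L_i1}\circ(L_i)_1$, with $\Sigma_{L_i1} : \mathcal{D}/L_i1 \rTo \mathcal{D}$ the domain functor. I shall regard each $\mathcal{D}/L_i1$ as a category over $\mathcal{C}$ through $\Sigma_{\mathcal{D}}\circ\Sigma_{L_i1}$ and, beyond that, as equipped with a ``structure morphism to $X$'': the natural transformation whose component at an object $W_g$ is $\tau^i_1\circ\Sigma_{\mathcal{D}}g : \Sigma_{\mathcal{D}}W \rTo X$. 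Naturality of $\tau^i$ at the maps $A \rTo 1$ of $\mathcal{C}/X$ shows that $(L_i)_1$ carries the tautological structure morphism of $\mathcal{C}/X$ (an object to its domain map to $X$) to this one, with comparison isomorphism $\tau^i$; hence $(L_i)_1$, and any quasi-inverse to it, is an equivalence over $\mathcal{C}$ that respects structure morphisms to $X$.

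The forward map of the asserted bijection is $\alpha \mapsto \alpha_1$. It lands in $\mathcal{D}/\mathcal{D}^*X((L_11)_{\psi_1},(L_21)_{\psi_2})$, for, writing $\psi_i = \mathcal{D}^*(\tau^i_1)\circ\eta^{\mathcal{D}}_{L_i1}$ and using naturality of $\eta^{\mathcal{D}}$, the equation $\psi_2\alpha_1 = \psi_1$ is equivalent, by uniqueness of adjoint transposes along $\Sigma_{\mathcal{D}}\dashv\mathcal{D}^*$, to $\tau^2_1\circ\Sigma_{\mathcal{D}}\alpha_1 = \tau^1_1$, which is precisely the ``over $\mathcal{C}$'' condition on $\alpha$ at the terminal object. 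Read the other way, this same computation shows that a morphism $f : (L_11)_{\psi_1} \rTo (L_21)_{\psi_2}$ of $\mathcal{D}/\mathcal{D}^*X$ is exactly a morphism $f : L_11 \rTo L_21$ with $\tau^2_1\circ\Sigma_{\mathcal{D}}f = \tau^1_1$; and this last equation is also exactly the condition for post-composition $\Sigma_f : \mathcal{D}/L_11 \rTo \mathcal{D}/L_21$ to respect the structure morphisms to $X$.

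So, given such an $f$, set $T := ((L_2)_1)^{-1}\circ\Sigma_f\circ(L_1)_1 : \mathcal{C}/X \rTo \mathcal{C}/X$. Being a composite of functors respecting structure morphisms to $X$, $T$ comes with a natural isomorphism $\nu : \Sigma_X T \rTo^{\cong} \Sigma_X$ over $X$; but compatibility of each $\nu_A$ with the structure morphisms says exactly that $\nu_A$ underlies a morphism $t_A : TA \rTo A$ of $\mathcal{C}/X$, and since $\Sigma_X$ is faithful these assemble into a natural transformation $t : T \rTo Id_{\mathcal{C}/X}$, the unique one with $\Sigma_X t = \nu$. Applying $(L_2)_1$ to $t$, composing with the counit isomorphism $(L_2)_1((L_2)_1)^{-1}\Sigma_f(L_1)_1 \cong \Sigma_f(L_1)_1$, and then applying $\Sigma_{L_21}$ (and using $\Sigma_{L_21}\circ\Sigma_f = \Sigma_{L_11}$) produces a natural transformation $\alpha : L_1 \rTo L_2$; chasing the definitions, $\Sigma_{\mathcal{D}}\alpha$ is the prescribed isomorphism, so $\alpha$ is over $\mathcal{C}$, and $\alpha_1 = f$ — here it matters that $\Sigma_f$, rather than an arbitrary functor $\mathcal{D}/L_11 \rTo \mathcal{D}/L_21$, was used, since that is what forces the value at the terminal object to be $f$. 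Finally this $\alpha$ is unique: any morphism over $\mathcal{C}$ with value $f$ at $1$ transports, through the same equivalences, to a natural transformation $T \rTo Id_{\mathcal{C}/X}$ lying over $\nu$, hence to $t$ by faithfulness of $\Sigma_X$; so $\alpha \mapsto \alpha_1$ is a bijection. I expect the point needing the most care to be the verification that the equivalences of Lemma~\ref{localic} may be taken to respect the structure morphisms to $X$ strongly enough that $T$ does so too; granting that, the rest is bookkeeping with adjoint transposes.
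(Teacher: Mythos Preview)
Your argument is correct, and it takes a genuinely different route from the paper's. The paper uses the Frobenius condition at $\mathcal{D}^*X$ directly: it observes that each $Y_f$ is canonically $1\times_{X^*X}X^*Y$, so the Frobenius and over-$\mathcal{C}$ hypotheses give an explicit isomorphism
\[
L_i(Y_f)\;\cong\;L_i1\times_{\mathcal{D}^*X}\mathcal{D}^*Y,
\]
and then defines $\hat a_{Y_f}$ componentwise as $a\times Id$ under these identifications, checking by hand that the result is over $\mathcal{C}$ via an explicit factorisation of $\tau^i_{Y_f}$. Your approach instead absorbs the Frobenius input into Lemma~\ref{localic} (which in turn rests on Lemma~\ref{FrobisEq}), obtaining equivalences $(L_i)_1:\mathcal{C}/X\simeq\mathcal{D}/L_i1$, and then transports the problem along them: a morphism $f$ over $\mathcal{D}^*X$ becomes the functor $\Sigma_f$, which conjugates to an endofunctor $T$ of $\mathcal{C}/X$ equipped with an isomorphism $\Sigma_XT\cong\Sigma_X$ compatible with the structure maps to $X$; faithfulness of $\Sigma_X$ then pins down $t:T\Rightarrow Id$, and unwinding gives $\alpha$. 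The point you flag as delicate (that the quasi-inverse $((L_2)_1)^{-1}$ can be chosen to respect the structure morphisms to $X$) is indeed routine once phrased as a 2-categorical fact about equivalences in a slice, and your naturality-at-$!^A$ observation is exactly what makes $\alpha_A$ live over $f$ in $\mathcal{D}/L_21$.

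What the two approaches buy: yours is more conceptual and makes clear that the bijection is really just ``evaluate at $1$'' along a pair of equivalences. The paper's direct calculation, however, yields the concrete formula $\hat a_{Y_f}\cong a\times_{\mathcal{D}^*X} Id_{\mathcal{D}^*Y}$, and this explicit description is invoked later (in the proof of Theorem~\ref{HS}) to verify the compatibilities needed when extending $\hat a$ from free objects to all $\mathbb{H}$-objects. Your construction produces the same natural transformation, but extracting that formula from it requires unwinding the equivalences of Lemma~\ref{localic}, which amounts to redoing the paper's computation.
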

\begin{proof}
Any $\alpha: L_1 \rTo L_2$ gives rise to a map $\alpha_1: L_1 1 \rTo L_2 1$ which is over $\mathcal{D}^*X$ if $\alpha$ is over $\mathcal{C}$. In the other direction any object $Y_f$ of $\mathcal{C}/X$ is isomorphic to the pullback $1 \times_{X^*X} X^*Y$ (i.e. the pullback of $Id_X \times f $ along $\Delta_X$). As $L_i \dashv R_i$ is Frobenius at $\mathcal{D}^*X$ and over $\mathcal{C}$ there are isomorphisms:
\begin{eqnarray*}
L_i(Y_f) \cong L_i(1 \times_{X^*X} X^*Y) \cong L_i(1 \times_{R_i \mathcal{D}^* X} R_i \mathcal{D}^*Y) \cong L_i 1\times_{\mathcal{D}^*X} \mathcal{D}^* Y \text{($\star$)}
\end{eqnarray*}
where the last pullback is $Id \times \mathcal{D}^*f$ along $\psi_i$ (recall that $\Delta_X$ is the unit of $\Sigma_X \dashv X^*$ evaluated at $1$). Using these isomorphisms, any map $a: L_1 1 \rTo L_2 1$ over $\mathcal{D}^*X$ gives rise to a natural transformation $\hat{a}:L_1 \rTo L_2$. To see that this natural transformation is over $\mathcal{C}$ observe that $\tau^i_{Y_f}$ factors as
\begin{eqnarray*}
\Sigma_{\mathcal{D}}L_i(Y_f) \rTo^{\Sigma_{\mathcal{D}}(\star)} \Sigma_{\mathcal{D}}(L_i 1 \times_{\mathcal{D}^*X} \mathcal{D}^*Y) \rTo^{(\Sigma_{\mathcal{D}}\pi_1,\epsilon^{\mathcal{D}}_Y \Sigma_{\mathcal{D}}\pi_2)} \Sigma_{\mathcal{D}} L_i 1 \times_X Y \rTo^{\pi_2} Y
\end{eqnarray*}
where $\epsilon^{\mathcal{D}}$ is the counit of $\Sigma_{\mathcal{D}} \dashv \mathcal{D}^*$. 

That $\hat{a}_1=a$ is clear from construct and that $\alpha_{Y_f}=(\hat{\alpha_1})_{Y_f}$ for each $Y_f$ follows by naturality of $\alpha$ at $!: Y_f \rTo 1$ and the fact that $\alpha$ is over $\mathcal{C}$.
\end{proof}

\section{Monadicity}
This section consists of a short paragraph where we recall some basic facts about monadic functors.

Beck's monadicity theorem proves that a functor $R$ is monadic iff it (i) has a left adjoint, (ii) reflects isomorphisms and (iii) has and preserves coequalizers for any $R$-split pair of arrows (that is, for any parallel pair of arrows $f,g: X \pile{ \rTo \\ \rTo} Y$ such that $Rf,Rg$ is part of a split coequalizer diagram with coequalizer, $\bar{q}: RY \rTo \bar{Q} $, there exists a coequalizer $q:X  \rTo Q$ of $f,g$ such that the canonical map $\bar{Q} \rTo RQ$ is an isomorphism). From this characterisation of monadicity it is clear that a functor $R: \mathcal{C} \rTo \mathcal{D}$ is monadic iff $R_X$ is monadic for every $X$; i.e. monadicity is slice stable. If $R: \mathcal{C} \rTo \mathcal{D} $ is split by $S: \mathcal{D} \rTo \mathcal{C} $ (i.e. we are given a natural isomorphism $SR \cong Id_{\mathcal{C}}$), then $R$ is monadic if it has a left adjoint (essentially because split coequalizers are preserved by any functor; applied here to both $S$ and $R$). If we have two functors $R: \mathcal{C} \rTo \mathcal{D}$ and  $R': \mathcal{D} \rTo \mathcal{D}'$, both with left adjoints, then provided $R'$ reflects isomorphisms, $R$ is monadic whenever $R'R$ is monadic. In particular, monadicity descends along monadic functors; i.e. if both $R'R$ and $R'$ are monadic then so is $R$. On the other hand if $R'$ has a left adjoint and is split and $R$ is monadic then $R'R$ is monadic; so for split $R'$ (with a left adjoint) we have that $R'R$ is monadic iff $R$ is monadic.

\section{Effective descent}\label{edpara}
If $f:X\rTo Y$ is a morphism of $\mathcal{C}$, then $f$ is an \emph{effective descent morphism} if the functor $f^*: \mathcal{C}/Y \rTo \mathcal{C}/X$ is monadic. An effective descent morphism is necessarily a regular epimorphism (the kernel pair of $f$ is $f^*$-split) and the property of being an effective descent morphism is pullback stable by our observation that monadicity is slice stable. Further, from our observations about monadicity, we known that split epimorphisms are effective descent morphisms and that for any pair of composable morphisms $f,g$, if $fg$ and $g$ are effective descent morphisms then so is $f$. We also know that if $g$ is a split epimorphism then $fg$ is an effective descent morphism if $f$ is.

The next lemma shows that effective descent morphisms interact well with Frobenius adjunctions over a base category $\mathcal{C}$ and is another example of an application of Lemma \ref{firstlemma}:
\begin{lemma}\label{pullbackdescent}
Let $\Sigma_{\mathcal{D}} \dashv \mathcal{D}^* : \mathcal{D} \pile{\rTo \\ \lTo}\mathcal{C} $ be an adjunction.

1. Given an adjunction $L\dashv R:\mathcal{C}\pile{\rTo \\ \lTo} \mathcal{D}$ over $\mathcal{C}$ that satisfies Frobenius reciprocity, if $!^W : W \rTo 1$ is an effect descent morphism in $\mathcal{D}$ then $!^{RW}:RW \rTo 1$ is an effective descent morphism of $\mathcal{C}$.

2. Let $X$ be an object of $\mathcal{C}$.  If $L\dashv R:\mathcal{C}/X \pile{\rTo \\ \lTo} \mathcal{D}$ is over $\mathcal{C}$ which is Frobenius at $\mathcal{D}^*X$ and $!^W : W \rTo 1$ is an effect descent morphism in $\mathcal{D}$, then $RW$ is an effective descent morphism of $\mathcal{C}$.
\end{lemma}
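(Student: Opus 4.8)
The plan is to deduce Part~1 from Lemma~\ref{localic} together with the pullback-stability of effective descent morphisms, and then to obtain Part~2 from Part~1 by the translation of Lemma~\ref{firstlemma}, in the style of the proof of Lemma~\ref{localic}.2.

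For Part~1, since $L\dashv R$ is over $\mathcal{C}$ and Frobenius, Lemma~\ref{localic}.1 gives that $L_1:\mathcal{C}\rTo\mathcal{D}/L1$ is an equivalence, so I can transport the whole question along it. First I would identify the image of $RW$: the Frobenius isomorphism $L(1\times RW)\cong L1\times W$, together with a short triangle-identity calculation showing that the structure map $L(!^{RW}):LRW\rTo L1$ is carried by it to the projection $\pi_1:L1\times W\rTo L1$, identifies $L_1(RW)$ with $W_{L1}$, that is with the pullback $L1^{*}W$. An equivalence preserves pullbacks, so it induces an equivalence $\mathcal{C}/RW\simeq(\mathcal{D}/L1)/W_{L1}$ carrying $(!^{RW})^{*}:\mathcal{C}\rTo\mathcal{C}/RW$ to the pullback functor $(!^{W_{L1}})^{*}:\mathcal{D}/L1\rTo(\mathcal{D}/L1)/W_{L1}$. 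Hence $!^{RW}$ is an effective descent morphism of $\mathcal{C}$ iff $(!^{W_{L1}})^{*}$ is monadic iff, monadicity being slice-stable, $\pi_1:L1\times W\rTo L1$ is an effective descent morphism of $\mathcal{D}$; and $\pi_1$ is the pullback of $!^{W}$ along $!:L1\rTo 1$, so it is effective descent because $!^{W}$ is and effective descent morphisms are pullback-stable.

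For Part~2, Lemma~\ref{firstlemma} converts the given $L\dashv R:\mathcal{C}/X\rTo\mathcal{D}$ over $\mathcal{C}$ into an adjunction $L'\dashv R':\mathcal{C}/X\rTo\mathcal{D}/\mathcal{D}^{*}X$ over $\mathcal{C}/X$, with $R=R'\circ(\mathcal{D}^{*}X)^{*}$, where $(\mathcal{D}^{*}X)^{*}$ is pullback along $!:\mathcal{D}^{*}X\rTo 1$ in $\mathcal{D}$. As $L'\dashv R'$ is the composite of $L_{\mathcal{D}^{*}X}\dashv R_{\mathcal{D}^{*}X}$ --- Frobenius because $L\dashv R$ is Frobenius at $\mathcal{D}^{*}X$ --- with a pullback adjunction, it is Frobenius, and it is over $\mathcal{C}/X$. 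So I would invoke Part~1, now with base category $\mathcal{C}/X$ and ambient category $\mathcal{D}/\mathcal{D}^{*}X$, applied to $V:=(\mathcal{D}^{*}X)^{*}W$: its map to the terminal object of $\mathcal{D}/\mathcal{D}^{*}X$ is the morphism $\pi_1:\mathcal{D}^{*}X\times W\rTo\mathcal{D}^{*}X$, the pullback of $!^{W}$, hence an effective descent morphism of $\mathcal{D}$ and so of $\mathcal{D}/\mathcal{D}^{*}X$. Part~1 then gives that $R'V$ is an effective descent morphism of $\mathcal{C}/X$; since $R'V=R'((\mathcal{D}^{*}X)^{*}W)=RW$, the structure morphism $RW\rTo X$ is effective descent in $\mathcal{C}/X$, hence in $\mathcal{C}$ because the property of being an effective descent morphism is slice-stable.

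The only step that is not routine assembly of the stability facts of Section~\ref{edpara} and the cited lemmas is the identification $L_1(RW)\cong W_{L1}$ in Part~1: this is where Frobenius reciprocity is genuinely used, and matching up the structure maps --- checking that $L(!^{RW})$ becomes $\pi_1$ --- is what forces the triangle identity into play. I expect this to be the main, though still mild, obstacle; the translation in Part~2 is then a mechanical application of Lemma~\ref{firstlemma} and slice-stability.
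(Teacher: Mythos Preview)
Your approach to Part~1 has a genuine gap: you invoke Lemma~\ref{localic}.1 to conclude that $L_1:\mathcal{C}\rTo\mathcal{D}/L1$ is an equivalence, but that lemma requires the adjunction to be \emph{Frobenius at $L1$}, i.e.\ that the sliced adjunction $L_{L1}\dashv R_{L1}$ satisfy Frobenius reciprocity. The hypothesis of Part~1 only gives you plain Frobenius reciprocity for $L\dashv R$ itself, and being over~$\mathcal{C}$ does not upgrade this to Frobenius at $L1$. So Lemma~\ref{localic}.1 is not available, and the transport-along-equivalence strategy collapses at the first step. (Your identification $L_1(RW)\cong W_{L1}$ via the Frobenius isomorphism is fine in itself, but without the equivalence it does not let you reflect monadicity back to~$\mathcal{C}$.)

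The paper avoids this by proving Part~1 directly via Beck's theorem, using only the unsliced Frobenius condition. It checks the two Beck conditions for $(RW)^*$: reflection of isomorphisms comes from applying $L$ and Frobenius to pass from $\phi\times Id_{RW}$ to $L\phi\times Id_W$, then using that $W^*$ reflects isomorphisms and $\Sigma_{\mathcal{D}}L\cong Id_{\mathcal{C}}$; creation of $(RW)^*$-split coequalizers is handled similarly by transporting the split diagram to $\mathcal{D}/W$ via adjoint transpose and Frobenius, creating the coequalizer there, and pushing back along $\Sigma_{\mathcal{D}}$. Your Part~2 reduction via Lemma~\ref{firstlemma} is correct and is exactly what the paper does, so once Part~1 is fixed the rest goes through.
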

\begin{proof}
1. (a) If $\phi:U \rTo V$ is a morphism of $\mathcal{C}$ such that $U \times RW \rTo^{\phi \times Id_{RW}} V \times RW$ is an isomorophism then, by applying $L$ and the Frobenius reciprocity assumption we have that $LU \times W \rTo^{L\phi \times Id_{W}} LV \times W$ is an isomorphism. Since $!^W$ is an effective descent morphism, $L\phi$ is therefore an isomorphism. But $\Sigma_{\mathcal{D}}L \cong Id_{\mathcal{C}}$ by assumption that $L \dashv R$ is over $\mathcal{C}$ and so $\phi$ is an isomorphism. (b) Say $U \pile{\rTo^f \\ \rTo_g} V$ is a pair of morphisms in $\mathcal{C}$ such that there is $U_{RW} \pile{\rTo^{f \times Id} \\ \rTo_{g \times Id} \\ \lTo_l} V_{RW} \pile{\rTo^m \\ \lTo_n} K_k$, a split coequalizer diagram in $\mathcal{C}/RW$. By taking adjoint transpose across $L \dashv R$ and applying Frobenius reciprocity we obtain $LU_{W} \pile{\rTo^{Lf  \times Id}\\ \rTo_{L g \times Id} \\ \lTo} LV_{W} \pile{\rTo^m \\ \lTo} (LK)_{\hat{k}}$, a split coequalizer diagram in $\mathcal{D}/W$. Since $!^W$ is an effective descent morphism, there exists $LV \rTo^q Q$, a coequalizer of $Lf,Lg$ such that the canonical map $LK \rTo Q \times W$ is an isomorphism. By applying $\Sigma_{\mathcal{D}}$, a left adjoint, to the coequalizer diagram that defines $Q$, and recalling that $\Sigma_{\mathcal{D}}L \cong Id_{\mathcal{C}}$ by assumption, we have that $\Sigma_{\mathcal{D}}Q$ is a coequalizer of $f,g$. But, further, $L\Sigma_{\mathcal{D}}Q \cong Q$ as $L$ preserves coequalizers and $R\mathcal{D}^* \cong Id_{\mathcal{C}}$ and so the following series of natural isomorphisms in $N$, an object of $\mathcal{C}$, shows that $\Sigma_{\mathcal{D}} Q \times RW \cong K$. 
\begin{eqnarray*}
\mathcal{C}(K,N) & \cong &\mathcal{C}(K,R \mathcal{D}^*N) \\
& \cong & \mathcal{D}(LK,\mathcal{D}^*N) \\
&\cong & \mathcal{D}(Q \times W, \mathcal{D}^*N) \\
&\cong & \mathcal{D}(L\Sigma_{\mathcal{D}}Q \times W, \mathcal{D}^*N) \\
&\cong & \mathcal{D}(L(\Sigma_{\mathcal{D}}Q \times R W), \mathcal{D}^*N) \\
&\cong & \mathcal{C}(\Sigma_{\mathcal{D}}Q \times R W, R \mathcal{D}^*N) \cong  \mathcal{C}(\Sigma_{\mathcal{D}}Q \times R W, N)\\
\end{eqnarray*}
Establishing (a) and (b) proves 1., by application of Beck's monadicity theorem.

2. Firstly by our observation that monadicity is slice stable we know that $!: W_{\mathcal{D}^*X} \rTo 1$ is an effective descent morphism of $\mathcal{D}/\mathcal{D}^*X$. By the proof of our first lemma (Lemma \ref{firstlemma}) we see that $L_{\mathcal{D}^*X} \Sigma_{\Delta_X} \dashv \Delta_X^* R_{\mathcal{D}^*X}$ is over $\mathcal{C}/X$ and so by 1. we known that $ \Delta_X^* R_{\mathcal{D}^*X}(W_{\mathcal{D}^*X})$ is an effective descent morphism of $\mathcal{C}$. But $W_{\mathcal{D}^*X}=(\mathcal{D}^*X)^*W$ and, Lemma \ref{firstlemma}, $R$ factors as $\Delta_X^* R_{\mathcal{D}^*X}(\mathcal{D}^*X)^*$ and so $RW$ is an effective descent morphism as required. 
\end{proof}

\section{Groupoids}
We now recall some facts about internal groupoids, $\mathbb{G}$ in $\mathcal{C}$, and find that they provide a plentify supply of effective descent morphisms and further that the Frobenius reciprocity and monadicity conditions, in combination, can be used to characterise when an adjunction is a connected components adjunction for an internal groupoid. 

\subsection{Definition} A \emph{groupoid internal to a cartesian category}, $\mathcal{C}$, consists of the data 
\begin{eqnarray*}
\mathbb{G}=(G_1 \pile{\rTo^{d_0} \\ \rTo_{d_1} } G_0, m: G_1 \times_{G_0} G_1 \rTo G_1, e:G_0 \rTo G_1, i: G_1 \rTo G_1)
\end{eqnarray*}
subject to the usual identities; the object of objects is $G_0$ and object of morphisms $G_1$; $d_0$ is the domain map etc. The domain of $m$ consists of pairs $(g_1,g_2)$ such that $d_0g_1=d_1g_2$. 

\begin{example}\label{groupoideg}
(i) If $X$ is an object of $\mathcal{C}$ then there is the trivial groupoid $\mathbb{X}=(X \pile{\rTo^{Id_X} \\ \rTo_{Id_X} } X, ... )$ associated with it. 

(ii) Another easy but key example of a groupoid is $(X \times X \pile{\rTo^{\pi_1} \\ \rTo_{\pi_2} } X, \pi_{13}: X \times X \times X \rTo X \times X , \Delta_X:X  \rTo X \times X , \tau: X \times X \rTo X \times X)$ for any object $X$ of $\mathcal{C}$, where $\tau$ is the twist isomorphism.
\end{example}

A \emph{$\mathbb{G}$-object} consists of $(X_f, a: \Sigma_{d_1} d_0^* X_f \rTo X_f)$ where $f: X \rTo G_0$ and $a$ is a morphism over $G_0$ that satisfies the usual unit and associative identities (the domain of $\Sigma_{d_1} d_0^* X_f$ is $G_1 \times_{G_0} X$). The map $a: G_1 \times_{G_0} X \rTo X$ is the $\mathbb{G}$-object's \emph{structure} map. A \emph{$\mathbb{G}$-homomorphism} between two $\mathbb{G}$-objects $(Y_g,b)$ and $(X_f,a)$ consists of a map $h:Y_g\rTo X_f$ such that $a\Sigma_{d_1}d_0^*(h)=hb$. This defines the category $[\mathbb{G},\mathcal{C}]$ of \emph{$\mathbb{G}$-objects}. For example, for any object $X$ of $\mathcal{C}$, $[ \mathbb{X},\mathcal{C}]$ is isomorphic to the slice category $\mathcal{C}/X$. As the inverse of a groupoid determines an isomorphism from $\mathbb{G}$ to $\mathbb{G}^{op}$, we always have an isomorphism $[\mathbb{G},\mathcal{C}] \cong [\mathbb{G}^{op},\mathcal{C}]$.

The data for a groupoid $\mathbb{G}$ can be used to define a monad $\mathbb{T}_{\mathbb{G}}$ on $\mathcal{C}/G_0$; its functor part is $\Sigma_{d_1} d_0^*$ and the category of $\mathbb{G}$-objects is the same thing as the category of algebras for this monad. The counit of the associated adjunction $\mathbb{T}_{\mathbb{G}} \dashv U_{\mathbb{G}}: \mathcal{C}/G_0 \pile{\rTo \\ \lTo} [ \mathbb{G}, \mathcal{C}]$, at $(X_f,a)$, is $a: G_1 \times_{G_0} X \rTo X$.
\begin{lemma}\label{groupoidadjisfrob}
For any internal groupoid $\mathbb{G}$, $\mathbb{T}_{\mathbb{G}} \dashv U_{\mathbb{G}} : \mathcal{C}/G_0 \pile{\rTo \\ \lTo} [\mathbb{G},\mathcal{C}]$ is stably Frobenius. 
\end{lemma}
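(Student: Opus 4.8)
The plan is to verify the stably Frobenius condition directly using the characterisation stated just after the definition of stably Frobenius, namely that an adjunction $L \dashv R : \mathcal{D} \pile{\rTo \\ \lTo} \mathcal{C}$ is stably Frobenius if and only if for every pair $g : W \rTo RX$ and $f : Y \rTo X$ the canonical map $L(W \times_{RX} RY) \rTo LW \times_X Y$ is an isomorphism. Here $L = \mathbb{T}_{\mathbb{G}} = \Sigma_{d_1} d_0^*$, $R = U_{\mathbb{G}}$, the base is $\mathcal{C}/G_0$, and $[\mathbb{G},\mathcal{C}]$ plays the role of $\mathcal{D}$. Unwinding, a $\mathbb{G}$-object $(X_p, a)$ has underlying $\mathcal{C}/G_0$-object $U_{\mathbb{G}}(X_p,a) = X_p$, so a morphism $g : W_q \rTo U_{\mathbb{G}}(X_p,a)$ in $\mathcal{C}/G_0$ is just a map $W \rTo X$ over $G_0$, and similarly for $f$. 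So the claim reduces to: the canonical comparison $\Sigma_{d_1} d_0^*(W \times_X Y) \rTo (\Sigma_{d_1} d_0^* W) \times_X Y$ is an isomorphism, where the pullback $W \times_X Y$ is taken in $\mathcal{C}/G_0$ and the right-hand pullback is over the $\mathbb{G}$-object $X$ via the structure map $a$.

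First I would reduce to the split/trivial part of the structure by noting that $\Sigma_{d_1}$ and $d_0^*$ are each individually well-behaved: $d_0^*$ is a pullback functor, hence stably Frobenius (as remarked in the excerpt for pullback adjunctions $\Sigma_f \dashv f^*$), and $\Sigma_{d_1}$ is a composite-with-a-pullback-left-adjoint which also satisfies the relevant Beck–Chevalley/Frobenius identities in a cartesian category. More precisely, I would check that pullbacks in $\mathcal{C}/G_0$ are computed as pullbacks in $\mathcal{C}$, that $d_0^*$ preserves these pullbacks (right adjoints preserve limits, and $d_0^*$ is a right adjoint to $\Sigma_{d_0}$), and that $\Sigma_{d_1}$, being the object-map of postcomposition with $d_1$, interacts with pullbacks over $X$ through the structure map $a$ exactly via the canonical comparison in question. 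The key computation is then a diagram chase: the pullback $W \times_X Y$ over $G_0$, pulled back along $d_0$, is $G_1 \times_{G_0} (W \times_X Y) \cong (G_1 \times_{G_0} W) \times_{G_1 \times_{G_0} X} (G_1 \times_{G_0} Y)$, and applying $\Sigma_{d_1}$ (forgetting down to $G_0$ along $d_1$) we must match this against $(G_1 \times_{G_0} W) \times_X Y$ with the fibering of $Y$ over $X$ twisted by the structure map $a : G_1 \times_{G_0} X \rTo X$. The identification uses that the structure map $a$ covers the projection $G_1 \times_{G_0} X \rTo X$ appropriately together with the fact that $d_1$-fibered products and $d_0$-pullbacks commute in the cartesian category $\mathcal{C}$.

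Alternatively, and perhaps more cleanly, I would invoke Lemma~\ref{firstlemma}-style reasoning or simply observe that $\mathbb{T}_{\mathbb{G}} \dashv U_{\mathbb{G}}$ factors as $\Sigma_{d_1} \circ d_0^*$ where $d_0^* : \mathcal{C}/G_0 \rTo \mathcal{C}/G_1$ is a pullback functor hence stably Frobenius, and $\Sigma_{d_1} : \mathcal{C}/G_1 \rTo \mathcal{C}/G_0$ is a pullback-adjunction left adjoint hence $\Sigma_{d_1} \dashv d_1^*$ is also stably Frobenius; since the composite of stably Frobenius adjunctions is stably Frobenius (as stated in the excerpt), and the monad adjunction's right adjoint $U_{\mathbb{G}}$ sits between — the subtlety being that the comparison functor and the Eilenberg–Moore resolution have to be checked to be compatible with the Frobenius structure. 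The cleanest route is really the direct one: compute both sides of the canonical map over $\mathcal{C}/G_0$ and exhibit the inverse explicitly, using only that limits in slices are limits in $\mathcal{C}$ and that $d_0^*$ preserves them.

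The main obstacle I anticipate is bookkeeping: correctly identifying the pullback $LW \times_X Y$ in the category $[\mathbb{G},\mathcal{C}]$ of $\mathbb{G}$-objects, since pullbacks there are not simply pullbacks in $\mathcal{C}/G_0$ — one must carry the $\mathbb{G}$-action along, and the counit $a : G_1 \times_{G_0} X \rTo X$ enters precisely here. Once the correct pullback is written down, the isomorphism should be the evident one and its verification is a routine (if slightly tedious) chase through the Beck–Chevalley squares relating $d_0$, $d_1$, and $m$. I would organise the argument so that the only nontrivial input is the compatibility of $d_0^*$ with the pullback along $a$, which follows from $d_0^*$ being a right adjoint.
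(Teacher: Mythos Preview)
There is a genuine gap: you never invoke the groupoid inverse $i:G_1\rTo G_1$, and everything you do cite --- that $d_0^*$ preserves pullbacks as a right adjoint, that pullback adjunctions $\Sigma_f\dashv f^*$ are stably Frobenius, Beck--Chevalley for $d_0,d_1,m$ --- holds equally well for an arbitrary internal \emph{category}. But for a non-groupoid internal category the adjunction $\mathbb{T}_{\mathbb{G}}\dashv U_{\mathbb{G}}$ is \emph{not} stably Frobenius in general, so no argument using only those ingredients can succeed. Your proposed decomposition $\mathbb{T}_{\mathbb{G}}=\Sigma_{d_1}d_0^*$ does not help either: that composite is the underlying endofunctor of the monad on $\mathcal{C}/G_0$, not the free-algebra functor into $[\mathbb{G},\mathcal{C}]$, and the ``compatibility with the Eilenberg--Moore resolution'' you flag as a subtlety is precisely where the groupoid structure must enter.

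The paper's proof is a direct computation of exactly the kind you sketch, but with the crucial step made explicit. Writing the canonical map as
\[
G_1\times_{G_0}(X\times_Z Y)\rTo (G_1\times_{G_0}X)\times_Z Y,\qquad (g,x,y)\mapsto (g,x,b(g,y)),
\]
where $b$ is the $\mathbb{G}$-action on $Y$ and the right-hand pullback is along the adjoint transpose (hence involves the action on $Z$), the inverse is $(g,x,y)\mapsto (g,x,b(g^{-1},y))$. The well-definedness check uses that $k:(Y,b)\rTo(Z,c)$ is a $\mathbb{G}$-homomorphism. So your overall shape is right --- compute both sides and write down the inverse --- but the ``evident'' inverse is evident only once you use $g^{-1}$; it is not a consequence of limit-preservation alone.
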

\begin{proof}
As $U_{\mathbb{G}}$ is monadic, pullbacks in the category of $\mathbb{G}$-objects are created in $\mathcal{C}/G_0$. If $X$ is an object over $G_0$ and we are given a $\mathbb{G}$-homomorphism $k: (Y_g,b) \rTo (Z_h,c)$ and a morphism $f:X \rTo Z$ over $G_0$ then we need to show that 
\begin{eqnarray*}
G_1 \times_{G_0} (  X \times_Z Y ) &  \rTo & (G_1 \times_{G_0} X) \times_Z Y \\
(g,x,y) & \mapsto & (g, x, b(g,y))
\end{eqnarray*}
is an isomorphism. But this is clear as $(g,x,y) \mapsto (g,x,b(g^{-1},y))$ can be seen to be the inverse (it is well defined as $k$ is a $\mathbb{G}$-homomorphism).
\end{proof}
In the context of cartesian categories we are using element notation ($x$, $y$, $g$ etc) only as short hand to define morphisms.

Given the Lemma, for each $\mathbb{G}$-object $(X_f,a)$, there is a natural isomorphism 
\begin{eqnarray*}
t_{(X_f,a)}:\mathbb{T}_{\mathbb{G}}U_{\mathbb{G}}(X_f,a) \rTo^{(\mathbb{T}_{\mathbb{G}}!^{U_{\mathbb{G}}(X_f,a)},  \epsilon_{(X_f,a)}   )} \mathbb{T}_{\mathbb{G}}1 \times  (X_f,a) 
\end{eqnarray*}
where $\epsilon$ is the conunit of  $\mathbb{T}_{\mathbb{G}} \dashv U_{\mathbb{G}}$. Using this explicit description it is clear that the diagrams 
\begin{diagram}
\mathbb{T}_{\mathbb{G}} U_{\mathbb{G}} \mathbb{T}_{\mathbb{G}} U_{\mathbb{G}}  (X_f,a) & \rTo^{\mathbb{T}_{\mathbb{G}}U_{\mathbb{G}}t} & \mathbb{T}_{\mathbb{G}} U_{\mathbb{G}} [\mathbb{T}_{\mathbb{G}}1 \times(X_f,a) ] & \rTo^t & \mathbb{T}_{\mathbb{G}}1 \times \mathbb{T}_{\mathbb{G}}1 \times (X_f,a)  \\
 & \rdTo_{\mathbb{T}_{\mathbb{G}} U_{\mathbb{G}} \epsilon_{(X_f,a)}} &  \dTo_{\mathbb{T}_{\mathbb{G}}U_{\mathbb{G}} \pi_2} & & \dTo_{\pi_1 \times Id }  \\
 & & \mathbb{T}_{\mathbb{G}} U_{\mathbb{G}} (X_f,a)  & \rTo_t & \mathbb{T}_{\mathbb{G}}1 \times (X_f,a) \\
\end{diagram}
and
\begin{diagram}
\mathbb{T}_{\mathbb{G}} U_{\mathbb{G}} \mathbb{T}_{\mathbb{G}} U_{\mathbb{G}}  (X_f,a) & \rTo^{\mathbb{T}_{\mathbb{G}}U_{\mathbb{G}}t} & \mathbb{T}_{\mathbb{G}} U_{\mathbb{G}} [\mathbb{T}_{\mathbb{G}}1 \times(X_f,a) ] & \rTo^t & \mathbb{T}_{\mathbb{G}}1 \times \mathbb{T}_{\mathbb{G}}1 \times (X_f,a)  \\
 & \rdTo_{\epsilon_{\mathbb{T}_{\mathbb{G}} U_{\mathbb{G}} (X_f,a)}} &   & \rdTo_{\epsilon_{\mathbb{T}_{\mathbb{G}}1 \times (X_f,a) } }& \dTo_{ \pi_2 \times Id }  \\
 & & \mathbb{T}_{\mathbb{G}} U_{\mathbb{G}} (X_f,a)  & \rTo^t & \mathbb{T}_{\mathbb{G}}1 \times (X_f,a) \\
\end{diagram}
both commute by naturality of $\epsilon$ where the horizontal $t$s are all examples of the isomorphism just described.

\subsection{Connected components}\label{connected}

There is functor $\mathbb{G}^*$ from $\mathcal{C}$ to  $[ \mathbb{G} ,\mathcal{C}]$; it sends any object $X$ to $X_{G_0}$ with its trivial action (i.e. $d_1 \times Id_X  :  G_1 \times X \rTo G_0 \times X $). 
If $\mathbb{G}^* : \mathcal{C}   \rTo $$  [ \mathbb{G} , \mathcal{C} ] $ has a left adjoint adjoint then the left adjoint is written $\Sigma_{\mathbb{G}}$; it is the usual connected components functor, but we have chosen to extend the pullback adjunction notation to this case. (This is not unreasonable since in the case that $\mathbb{G}$ is the groupoid internal to $\mathcal{C}/Y$ determined by $f: X \rTo Y$, then $\Sigma_{\mathbb{G}} \dashv \mathbb{G}^*$ is the adjunction $\Sigma_f \dashv f^*$ up to isomorphism.) This left adjoint, if it exists, must send any $(X_f,a)$ to the coequalizer $X \rTo^n \Sigma_{\mathbb{G}}(X_f,a)$ of $a$ and $\pi_2$. The unit of $\Sigma_{\mathbb{G}} \dashv \mathbb{G}^*$ at $(X_f,a)$ is $X \rTo^{(f,n)} G_0 \times \Sigma_{\mathbb{G}}(X_f,a)$. Connected component adjunctions do not always exist.

If $\mathbb{G}$ has a connected components adjunction then  $\Sigma_{\mathbb{G}}\mathbb{T}_{\mathbb{G}}(X_f) \cong X$ for any object $X_f$ of $\mathcal{C}/G_0$; i.e. $\Sigma_{\mathbb{G}}\mathbb{T}_{\mathbb{G}} \cong \Sigma_{G_0}$. To see this, note that the action map on $\mathbb{T}_{\mathbb{G}}X_f$ is $(g_1,(g_2,x)) \mapsto (g_1g_2,x)$ and $\pi_2 : G_1 \times_{G_0} X \rTo X$ is the coequalizer, split using the identity on $\mathbb{G}$, of this action and $(g_1,(g_2,x)) \mapsto (g_2,x)$. From this it is clear that $\Sigma_{\mathbb{G}}\epsilon_{\mathbb{T}_{\mathbb{G}}X_f}$ is canonically isomorphic to $\pi_2 : G_1 \times_{G_0} X \rTo X$. We also note in passing that therefore $\Sigma_{\mathbb{G}}((G_1)_{d_1},m)$ is isomorphic to $G_0$ as $((G_1)_{d_1},m)=\mathbb{T}_{\mathbb{G}}1$.

Now for any $\mathbb{G}$-object $(X_f,a)$, $U_{\mathbb{G}}\epsilon_{(X_f,a)}$ is the action map $a: G_1 \times_{G_0} X \rTo X$ and we have just seen that $\Sigma_{\mathbb{G}} \epsilon_{\mathbb{T}_{\mathbb{G}} U_{\mathbb{G}} (X_f,a)}$  is the projection $\pi_2 : G_1 \times_{G_0} X \rTo X$. Combining these observations with the diagrams noted after Lemma \ref{groupoidadjisfrob} we see that if $\mathbb{G}$ has a connected components adjunction then for any $\mathbb{G}$-object $(X_f,a)$ there are canonical horizontal isomorphisms such that both diagrams in 
\begin{diagram}
G_1 \times_{G_0} X & \rTo^{\cong} & \Sigma_{\mathbb{G}} ( \mathbb{T}_{\mathbb{G}}1 \times \mathbb{T}_{\mathbb{G}}1 \times (X_f,a) )\\
\dTo^a \dTo_{\pi_2} & & \dTo^{\Sigma_{\mathbb{G}} ( \pi_1  \times Id)} \dTo_{ \Sigma_{\mathbb{G}} (\pi_2 \times Id)}\\
X &  \rTo^{\cong} &  \Sigma_{\mathbb{G}} ( \mathbb{T}_{\mathbb{G}}1 \times  (X_f,a) )\\
\end{diagram}
commute. The connected components functor in conjunction with the monad associated with a groupoid can be used to recover information about any $\mathbb{G}$-object. These observations will be exploited later. 

The following proposition is new and provides characterisations of when an adjunction is a connected components adjunction. 
\begin{proposition}\label{second2}
Let $\Sigma_{\mathcal{D}} \dashv \mathcal{D}^* :\mathcal{D}\pile{\rTo \\ \lTo} \mathcal{C}$ be an ajunction. The following are equivalent:

1. There exists an internal groupoid $\mathbb{G}$ in $\mathcal{C}$ and an equivalence $\Theta: \mathcal{D} \rTo^{\simeq} $$ [ \mathbb{G},\mathcal{C}]$ such that $\Theta \mathcal{D}^* \cong \mathbb{G}^*$.

2. There exists an object $G_0$ and a stably Frobenius adjunction $\mathbb{T} \dashv U: \mathcal{C}/G_0 \pile{\rTo \\ \lTo } \mathcal{D}$ which is over $\mathcal{C}$ and is such that $U$ is monadic.

3. There exists an object $G_0$ and an adjunction $\mathbb{T} \dashv U: \mathcal{C}/G_0 \pile{\rTo \\ \lTo } \mathcal{D}$, Frobenius at $\mathcal{D}^*G_0$, which is over $\mathcal{C}$ and is such that $U$ is monadic.

4. There exists an object $W$ of $\mathcal{D}$ such that $!^W :W\rTo 1$ is an effective descent morphism and $(\Sigma_{\mathcal{D}})_W : \mathcal{D}/W \rTo \mathcal{C}/\Sigma_{\mathcal{D}}W$ is an equivalence of categories.

\end{proposition}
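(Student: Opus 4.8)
\emph{Proof proposal.} I plan to prove the cycle $1\Rightarrow 2\Rightarrow 3\Rightarrow 1$ and to attach condition~4 by proving $2\Rightarrow 4$ and $4\Rightarrow 2$. Two links are essentially formal. The implication $2\Rightarrow 3$ is trivial, since being stably Frobenius includes being Frobenius at the single object $\mathcal{D}^*G_0$. For $1\Rightarrow 2$, transport $\mathbb{T}_{\mathbb{G}}\dashv U_{\mathbb{G}}:\mathcal{C}/G_0\rTo[\mathbb{G},\mathcal{C}]$ across $\Theta$ to get $\Theta^{-1}\mathbb{T}_{\mathbb{G}}\dashv U_{\mathbb{G}}\Theta:\mathcal{C}/G_0\rTo\mathcal{D}$; this is stably Frobenius by Lemma~\ref{groupoidadjisfrob}, its right adjoint is monadic because $[\mathbb{G},\mathcal{C}]$ is by definition the category of $\mathbb{T}_{\mathbb{G}}$-algebras, and it is over $\mathcal{C}$ because $\Theta\mathcal{D}^*\cong\mathbb{G}^*$ has the left adjoint $\Theta\Sigma_{\mathcal{D}}$, so $\mathbb{G}$ admits a connected components adjunction and hence $\Sigma_{\mathbb{G}}\mathbb{T}_{\mathbb{G}}\cong\Sigma_{G_0}$ as noted in Subsection~\ref{connected}.

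For $2\Rightarrow 4$, put $W:=\mathbb{T}1$, the value of $\mathbb{T}$ at the terminal object $Id_{G_0}$ of $\mathcal{C}/G_0$. Part~2 of Lemma~\ref{localic} gives that $\mathbb{T}_1:\mathcal{C}/G_0\rTo\mathcal{D}/W$ is an equivalence; since the adjunction is over $\mathcal{C}$ we get $\Sigma_{\mathcal{D}}W\cong\Sigma_{G_0}(Id_{G_0})=G_0$, and chasing $\Sigma_{\mathcal{D}}\mathbb{T}\cong\Sigma_{G_0}$ yields a natural isomorphism $(\Sigma_{\mathcal{D}})_W\mathbb{T}_1\cong Id$, so $(\Sigma_{\mathcal{D}})_W:\mathcal{D}/W\rTo\mathcal{C}/\Sigma_{\mathcal{D}}W$ is an equivalence too. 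To see $!^W$ is effective descent, identify the composite $(\Sigma_{\mathcal{D}})_W(!^W)^*:\mathcal{D}\rTo\mathcal{C}/G_0$ with $U$: Frobenius reciprocity of $\mathbb{T}\dashv U$ at an \emph{arbitrary} object $Z$ of $\mathcal{D}$ (this is where the full strength of ``stably'' is used) gives $\mathbb{T}UZ\cong W\times Z$, whence $\Sigma_{G_0}UZ\cong\Sigma_{\mathcal{D}}(W\times Z)$; then $(!^W)^*$ is monadic because $U$ is and monadicity descends along the monadic functor $(\Sigma_{\mathcal{D}})_W$. For the converse $4\Rightarrow 2$, set $G_0:=\Sigma_{\mathcal{D}}W$ and transport the pullback adjunction $\Sigma_{!^W}\dashv(!^W)^*:\mathcal{D}/W\rTo\mathcal{D}$ along the equivalence $(\Sigma_{\mathcal{D}})_W$ to an adjunction $\mathbb{T}\dashv U:\mathcal{C}/G_0\rTo\mathcal{D}$: it is stably Frobenius because pullback adjunctions are and the property survives composition with an equivalence; $U$ is monadic because $(!^W)^*$ is effective descent and $(\Sigma_{\mathcal{D}})_W$ is a split functor with a left adjoint; and it is over $\mathcal{C}$ because $\Sigma_{\mathcal{D}}\Sigma_{!^W}\cong\Sigma_{G_0}(\Sigma_{\mathcal{D}})_W$ as functors $\mathcal{D}/W\rTo\mathcal{C}$.

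The substantial link is $3\Rightarrow 1$ (hence a fortiori $2\Rightarrow 1$): reconstructing the groupoid. As $U$ is monadic, $\mathcal{D}$ is equivalent to the category of algebras of the monad $T:=U\mathbb{T}$ on $\mathcal{C}/G_0$, so it is enough to realise $T$ as the monad $\mathbb{T}_{\mathbb{G}}$ of a groupoid $\mathbb{G}$ with object of objects $G_0$, compatibly with $\mathcal{D}^*$ and $\mathbb{G}^*$. First identify the underlying endofunctor: let $G_1$ be the object of $\mathcal{C}$ underlying $U\mathbb{T}1$; its structure map over $G_0$ is one arrow $d_1:G_1\rTo G_0$, and $U\psi$ --- with $\psi:\mathbb{T}1\rTo\mathcal{D}^*G_0$ as in Lemma~\ref{firstlemma} and $U\mathcal{D}^*\cong G_0^*$ since the adjunction is over $\mathcal{C}$ --- is a second arrow $d_0:G_1\rTo G_0$; applying $U$ to the isomorphism $(\star)$ of Lemma~\ref{unique} then produces a natural isomorphism $T\cong\Sigma_{d_1}d_0^*$. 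Transporting the monad structure of $T$ across this isomorphism gives, via the standard identification of internal categories with monads of this shape on a slice, a composition $m:G_1\times_{G_0}G_1\rTo G_1$ and identities $e:G_0\rTo G_1$ making $\mathbb{G}:=(G_1\rightrightarrows G_0,m,e)$ an internal category with $[\mathbb{G},\mathcal{C}]\simeq\mathcal{D}$; and $\mathcal{D}^*$ is matched with $\mathbb{G}^*$ by checking that the $T$-algebra structure $U\epsilon_{\mathcal{D}^*X}$ on $\mathcal{D}^*X$ corresponds under $T\cong\Sigma_{d_1}d_0^*$ to the trivial action $d_1\times Id_X:G_1\times X\rTo G_0\times X$, which drops out of naturality and the ``over $\mathcal{C}$'' data.

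The one genuinely delicate point --- the main obstacle --- is then to show that Frobenius reciprocity at $\mathcal{D}^*G_0$ forces $\mathbb{G}$ to be a groupoid. The plan is to unwind ``Frobenius at $\mathcal{D}^*G_0$'' through the sliced adjunction $\mathbb{T}_{\mathcal{D}^*G_0}\dashv U_{\mathcal{D}^*G_0}$ and the description $T\cong\Sigma_{d_1}d_0^*$, so that the Frobenius isomorphism becomes exactly the assertion that the canonical shear map $G_1\times_{G_0,d_0,d_1}G_1\rTo G_1\times_{G_0,d_1,d_1}G_1$ is invertible --- the classical criterion for an internal category to be a groupoid --- and then to read the inversion morphism $i:G_1\rTo G_1$ off its inverse; this is the converse of the computation in the proof of Lemma~\ref{groupoidadjisfrob}. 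Carrying out this unwinding cleanly, for an arbitrary cartesian $\mathcal{C}$ and starting only from a monadic adjunction rather than an a priori given groupoid, is the crux; all the other steps are assemblies of the preliminary lemmas. Finally, once $\mathbb{G}$ is known to be a groupoid, Lemma~\ref{groupoidadjisfrob} promotes Frobenius-at-a-point to stably Frobenius, so $3\Rightarrow 2$ is automatic and the cycle closes.
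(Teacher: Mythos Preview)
Your overall logical structure is sound, and the implications $1\Rightarrow 2$, $2\Rightarrow 3$, $2\Leftrightarrow 4$ are handled correctly. A minor correction: in your $2\Rightarrow 4$ step, the identification $\mathbb{T}UZ\cong W\times Z$ is just ordinary Frobenius reciprocity (take $W=1$, $X=Z$ in the definition), not the stable form; indeed the same argument proves $3\Rightarrow 4$ directly, since Lemma~\ref{localic}~part~2 only needs Frobenius at $\mathcal{D}^*G_0$. This is exactly the route the paper takes.

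The substantive difference is in how the groupoid is reconstructed. You go $3\Rightarrow 1$ directly: identify the monad $T=U\mathbb{T}$ with $\Sigma_{d_1}d_0^*$ via the $(\star)$ isomorphism, read off an internal \emph{category}, and then argue that Frobenius at $\mathcal{D}^*G_0$ unwinds to the shear-map criterion for a groupoid. This is a legitimate strategy and conceptually appealing --- it makes transparent that ``Frobenius'' is what distinguishes groupoids from mere categories --- but, as you yourself flag, the unwinding is delicate and you have only sketched it. The paper instead goes $3\Rightarrow 4\Rightarrow 1$: once $(\Sigma_{\mathcal{D}})_W$ is known to be an equivalence, it simply transports the ready-made groupoid $W\times W\rightrightarrows W$ of Example~\ref{groupoideg}(ii) along $(\Sigma_{\mathcal{D}})_W$ to obtain $\mathbb{G}$, with $G_0=\Sigma_{\mathcal{D}}W$, $G_1=\Sigma_{\mathcal{D}}(W\times W)$, $d_i=\Sigma_{\mathcal{D}}\pi_i$, $m=\Sigma_{\mathcal{D}}\pi_{13}$ (modulo the canonical iso), $i=\Sigma_{\mathcal{D}}\tau$. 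No groupoid axioms need to be verified because they are inherited from the source example, and the identification of $\mathcal{D}$ with $[\mathbb{G},\mathcal{C}]$ follows by comparing algebras for the monad of $\Sigma_W\dashv W^*$ with $\mathbb{G}$-objects. This bypasses the shear-map computation entirely; what your approach buys is a more intrinsic description of the groupoid in terms of the given monad, while the paper's buys a shorter and safer proof.
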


\begin{proof}
1. implies 2. is essentially Lemma \ref{groupoidadjisfrob}.  

2. implies 3. is clear as 3. is a weakening of 2., requiring less of the adjunction $\mathbb{T} \dashv U$.

For 3. implies 4. apply part 2. of Lemma \ref{localic}; take $W = \mathbb{T}1$. Because $U$ is monadic $!^W : W \rTo 1 $ is an effective descent morphism.

For 4. implies 1., define $G_0= \Sigma_{\mathcal{D}}W$, $G_1 = \Sigma_{\mathcal{D}}(W \times W)$, $d_0 = \Sigma_{\mathcal{D}}(\pi_2)$ and $d_1= \Sigma_{\mathcal{D}}(\pi_1)$. Define $e: G_0 \rTo G_1$ as $\Sigma_{\mathcal{D}}(\Delta_W)$. For the multiplication, as $\pi_{13} : W \times W \times W \rTo W$ is the product, relative to $\mathcal{D}/W$, of $\pi_2: W \times  W \rTo W$ and $\pi_1: W \times  W \rTo W$ and as $(\Sigma_{\mathcal{D}})_W$ preserves products (it is an equivalence by assumption) we know that the canonical map $\Sigma_{\mathcal{D}}(\pi_{12},\pi_{23}):\Sigma_{\mathcal{D}}(W \times W \times W) \rTo \Sigma_{\mathcal{D}}(W \times W) \times_{\Sigma_{\mathcal{D}}(W)}   \Sigma_{\mathcal{D}}(W \times W)$ is an isomorphism. Define $m: G_1 \times_{G_0} G_1 \rTo G_1$ to be $\Sigma_{\mathcal{D}}(\pi_{13})\Sigma_{\mathcal{D}}(\pi_{12},\pi_{23})^{-1}$. Define $i: G_1 \rTo G_1$ as $\Sigma_{\mathcal{D}}(\tau)$.

Given that $(\Sigma_{\mathcal{D}})_W$ preserves products and the various pullbacks involved are products in $\mathcal{D}/W$ it is easy to see that a groupoid $\mathbb{G}$ in $\mathcal{C}$ has been defined because of our earlier example (\ref{groupoideg})(ii) which shows that $(W \times W \pile{\rTo \\ \rTo} W, \pi_{13}, ... )$ is always a groupoid.  

Since $!: W \rTo 1$ is an effective descent morphism we can identify $\mathcal{D}$ with the category of algebras of the monad on $\mathcal{D}/W$ induced by the adjunction $\Sigma_W \dashv W^*$. Now say $h: V \rTo W$ is an object of  $\mathcal{D}/W$ and write $X_f$ for its image under $(\Sigma_{\mathcal{D}})_W$. As $W \times V  \rTo^{h \pi_2} W $ is the product of $\pi_2 : W \times W \rTo W $ and $V_h$ in $\mathcal{D}/W$ and  $(\Sigma_{\mathcal{D}})_W$ preserves products we know that $\Sigma_{\mathcal{D}}(V \times W) \cong \Sigma_{\mathcal{D}}(W \times W) \times_{\Sigma_{\mathcal{D}}W} \Sigma_{\mathcal{D}}V = G_1 \times_{G_0} X$, the pullback of $f$ along $d_0$. Further, $W^*\Sigma_W V_h$ is the projection $\pi_1 : W \times V  \rTo W$ which can be written $W_W \times V_h \rTo^{\pi_1} W_W \rTo^{!^{W_W}} 1_{\mathcal{D}/W}$ and therefore maps to $\Sigma_{d_1} d_0^* X_f$ under $(\Sigma_{\mathcal{D}})_W$ as $!^{W_W}$ is $\pi_1: W \times W \rTo W$. It can then be verified that a map $a: W \times V \rTo V$ over $W$ is an algebra of the monad induced by $\Sigma_W \dashv W^*$ if and only if its image under $\Sigma_{\mathcal{D}}$ is a $\mathbb{G}$-object and similarly for homomorphisms. 
\end{proof}
The characterisations of this proposition will be used extensively in what follows essentially as they give us results about $\mathbb{G}$-objects without having to argue in detail about various pullbacks. As an initial example, because $W$ is taken to be $\mathbb{T}1$ in the proof of 2. implies 4.:
\begin{lemma}\label{W is ed}
For any groupoid $\mathbb{G}$ with a connected components adjunction, $! : \mathbb{T}_{\mathbb{G}}1 \rTo 1$ is an effective descent morphism (i.e. $! : ((G_1)_{d_1},m) \rTo 1 $ is an effective descent morphism).
\end{lemma}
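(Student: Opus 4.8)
The plan is to read the statement straight off Proposition~\ref{second2}, applied to the connected components adjunction itself. By hypothesis $\mathbb{G}$ has a connected components adjunction, so $\Sigma_{\mathbb{G}}\dashv\mathbb{G}^*:[\mathbb{G},\mathcal{C}]\pile{\rTo \\ \lTo}\mathcal{C}$ satisfies condition~1 of Proposition~\ref{second2} (take $\Theta$ to be the identity functor), and hence also condition~4. To identify the object $W$ that condition~4 provides, I would trace the implications $1\Rightarrow2\Rightarrow3\Rightarrow4$ in the proof of that proposition: step~$1\Rightarrow2$ produces the adjunction $\mathbb{T}_{\mathbb{G}}\dashv U_{\mathbb{G}}:\mathcal{C}/G_0\pile{\rTo \\ \lTo}[\mathbb{G},\mathcal{C}]$ (this is Lemma~\ref{groupoidadjisfrob}, together with the identity $\Sigma_{\mathbb{G}}\mathbb{T}_{\mathbb{G}}\cong\Sigma_{G_0}$ of Section~\ref{connected} and the monadicity of $U_{\mathbb{G}}$), and step~$3\Rightarrow4$ takes $W=\mathbb{T}_{\mathbb{G}}1=((G_1)_{d_1},m)$. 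Condition~4 then asserts exactly that $!^{\mathbb{T}_{\mathbb{G}}1}:\mathbb{T}_{\mathbb{G}}1\rTo 1$ is an effective descent morphism of $[\mathbb{G},\mathcal{C}]$, which is the claim.

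For completeness I would unpack the content of the step $3\Rightarrow4$ in this concrete setting, since it is short. The functor $U_{\mathbb{G}}:[\mathbb{G},\mathcal{C}]\rTo\mathcal{C}/G_0$ is monadic, so it preserves the terminal object, and it sends $!^{\mathbb{T}_{\mathbb{G}}1}$ to the morphism of $\mathcal{C}/G_0$ whose underlying $\mathcal{C}$-morphism is $d_1:G_1\rTo G_0$; this is split by $e$ (since $d_1e=Id_{G_0}$, $e$ being the underlying morphism of the unit of $\mathbb{T}_{\mathbb{G}}\dashv U_{\mathbb{G}}$ at $1$). Thus $U_{\mathbb{G}}(!^{\mathbb{T}_{\mathbb{G}}1})$ is a split epimorphism, hence an effective descent morphism of $\mathcal{C}/G_0$ by the observations of Section~\ref{edpara}. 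It then remains to lift this along the monadic functor $U_{\mathbb{G}}$. Writing $f=!^{\mathbb{T}_{\mathbb{G}}1}$ and $W=\mathbb{T}_{\mathbb{G}}1$, the functor $U_{\mathbb{G}}$ creates pullbacks, so there is a commuting square $(U_{\mathbb{G}})_Wf^*\cong(U_{\mathbb{G}}f)^*U_{\mathbb{G}}$; since $(U_{\mathbb{G}}f)^*$ has a left adjoint and is split (a section of the split epi $U_{\mathbb{G}}f$ induces one by pullback) and $U_{\mathbb{G}}$ is monadic, $(U_{\mathbb{G}}f)^*U_{\mathbb{G}}$ is monadic; and $(U_{\mathbb{G}})_W$ is monadic because monadicity is slice stable, so monadicity descends along it to give that $f^*$ is monadic, i.e.\ $f$ is an effective descent morphism of $[\mathbb{G},\mathcal{C}]$.

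I do not anticipate a real obstacle here. The one point that calls for care is the bookkeeping: correctly identifying $U_{\mathbb{G}}(!^{\mathbb{T}_{\mathbb{G}}1})$ with the split epimorphism $d_1$ (and its section with $e$), and then chaining the facts about monadic functors recalled in the sections on monadicity and effective descent in the right order. Everything else is an immediate application of Proposition~\ref{second2} and the preliminary lemmas.
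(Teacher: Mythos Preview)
Your proposal is correct and is essentially the paper's own argument: the paper simply remarks that in the proof of Proposition~\ref{second2} (specifically the step $3\Rightarrow4$) one takes $W=\mathbb{T}1$, where the effective-descent claim is asserted on the grounds that $U$ is monadic. Your second paragraph spells out in detail why monadicity of $U_{\mathbb{G}}$ suffices (via the split epi $d_1$ and the descent of monadicity along $(U_{\mathbb{G}})_W$), which the paper leaves implicit; this is a welcome expansion but not a different route.
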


As another initial example of the usefulness of the proposition:
\begin{lemma}\label{objectslice}
Let $\mathbb{G}$ be a groupoid in $\mathcal{C}$ with a connected components functor. If $(X_f,a)$ is a $\mathbb{G}$-object then there exists a groupoid $\mathbb{X}_{f,a}$ in $\mathcal{C}$ and an equivalence $[ \mathbb{X}_{f,a},\mathcal{C}] \simeq [\mathbb{G}, \mathcal{C}]/(X_f,a)$ over $\mathcal{C}$. 
\end{lemma}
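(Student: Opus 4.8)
The plan is to deduce the statement directly from Proposition~\ref{second2}. First I would put on the slice category $\mathcal{D} := [\mathbb{G},\mathcal{C}]/(X_f,a)$ the adjunction $\Sigma_{\mathcal{D}} \dashv \mathcal{D}^*$ from $\mathcal{D}$ to $\mathcal{C}$ in which $\Sigma_{\mathcal{D}}$ is the composite of the domain (forgetful) functor $[\mathbb{G},\mathcal{C}]/(X_f,a) \to [\mathbb{G},\mathcal{C}]$ with $\Sigma_{\mathbb{G}}$. This is a composite of left adjoints, so it has a right adjoint $\mathcal{D}^*$, namely $\mathcal{D}^* Y = \big(\pi_2 : \mathbb{G}^* Y \times (X_f,a) \to (X_f,a)\big)$, where the product exists because $[\mathbb{G},\mathcal{C}]$ is cartesian, being monadic over $\mathcal{C}/G_0$. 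Once this adjunction is fixed, it suffices to verify clause~2 of Proposition~\ref{second2} for it: clause~1 then supplies a groupoid $\mathbb{X}_{f,a}$ together with an equivalence $\Theta : \mathcal{D} \to [\mathbb{X}_{f,a},\mathcal{C}]$ with $\Theta\mathcal{D}^* \cong \mathbb{X}_{f,a}^*$, which is exactly the asserted equivalence over $\mathcal{C}$.

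To check clause~2 I would take $G_0 := X$ and let $\mathbb{T}' \dashv U' : \mathcal{C}/X \to \mathcal{D}$ be the sliced adjunction of the monad adjunction $\mathbb{T}_{\mathbb{G}} \dashv U_{\mathbb{G}}$ at the $\mathbb{G}$-object $(X_f,a)$; here one uses $U_{\mathbb{G}}(X_f,a) = X_f$ together with the canonical isomorphism $(\mathcal{C}/G_0)/X_f \cong \mathcal{C}/X$. Three things must then be established. Monadicity of $U'$ is immediate, since $U_{\mathbb{G}}$ is monadic and monadicity is slice stable. For stable Frobenius-ness of $\mathbb{T}' \dashv U'$ I would use the easy observation that an iterated slice of an adjunction is again a slice of the same adjunction, so every slice of $\mathbb{T}' \dashv U'$ is a slice of $\mathbb{T}_{\mathbb{G}} \dashv U_{\mathbb{G}}$ and hence Frobenius by Lemma~\ref{groupoidadjisfrob}. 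That $\mathbb{T}' \dashv U'$ is over $\mathcal{C}$ is the one place the hypothesis enters: composing $\mathbb{T}'$ with the forgetful functor $\mathcal{D} \to [\mathbb{G},\mathcal{C}]$ recovers $\mathbb{T}_{\mathbb{G}}$ precomposed with the evident forgetful functor $(\mathcal{C}/G_0)/X_f \to \mathcal{C}/G_0$, so $\Sigma_{\mathcal{D}}\mathbb{T}'$ is $\Sigma_{\mathbb{G}}\mathbb{T}_{\mathbb{G}}$ precomposed with that functor, which is $\Sigma_{G_0}$ precomposed with it, i.e.\ $\Sigma_X$, using the identification $\Sigma_{\mathbb{G}}\mathbb{T}_{\mathbb{G}} \cong \Sigma_{G_0}$ recorded in Subsection~\ref{connected} (this is precisely where the assumption that $\mathbb{G}$ has a connected components functor is used).

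Finally I would note that the equivalence produced by clause~1 is indeed over $\mathcal{C}$: it comes with $\Theta\mathcal{D}^* \cong \mathbb{X}_{f,a}^*$, and applying a quasi-inverse of $\Theta$ together with uniqueness of adjoints gives the matching compatibility $\Sigma_{\mathbb{X}_{f,a}}\Theta \cong \Sigma_{\mathcal{D}}$ on left adjoints, so a quasi-inverse $[\mathbb{X}_{f,a},\mathcal{C}] \to [\mathbb{G},\mathcal{C}]/(X_f,a)$ is an equivalence over $\mathcal{C}$. I expect the only mildly delicate point to be the bookkeeping around slices and iterated slices and, in particular, making the ``over $\mathcal{C}$'' natural isomorphism explicit; no computation with the pullbacks defining the $\mathbb{G}$-action is needed, that content being entirely absorbed into Proposition~\ref{second2} and Lemma~\ref{groupoidadjisfrob}.
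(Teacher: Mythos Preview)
Your proposal is correct and follows exactly the paper's approach: apply condition~2 of Proposition~\ref{second2} to the sliced adjunction $(\mathbb{T}_{\mathbb{G}})_{(X_f,a)} \dashv (U_{\mathbb{G}})_{(X_f,a)}$, noting that monadicity and the stably Frobenius property are slice-stable and that ``over $\mathcal{C}$'' follows from $\Sigma_{\mathbb{G}}\mathbb{T}_{\mathbb{G}} \cong \Sigma_{G_0}$. The paper's proof is the single sentence ``Consider 2 of the Proposition; both the conditions on the adjunction are stable under slicing''; you have simply unpacked what that sentence means.
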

\begin{proof}
Consider 2 of the Proposition; both the conditions on the adjunction are stable under slicing.
\end{proof}
The groupoid that is obtained in this process is the familiar one: $(G_1 \times_{G_0} X \pile{\rTo^{\pi_2} \\ \rTo_{a} } X, ...)$. The equivalence established can be summarised as follows: if $(Y_g,b)$ is a $\mathbb{X}_{f,a}$-object then the domain of its structure map is the pullback of $g: Y \rTo X$ along $\pi_2: G_1 \times_{G_0} X  \rTo X$; i.e. $G_1 \times_{G_0} Y$ and so the structure map $b$ of $(Y_g,b)$ can also be used to provide a structure map for $Y_{fg}$, turning it into a $\mathbb{G}$-object in such a way that $g$ is $\mathbb{G}$-homomorphism.

\section{Hilsum-Skandalis maps}
Given two internal groupoids $\mathbb{H}$ and $\mathbb{G}$, a Hilsum-Skandalis map from $\mathbb{H}$ to $\mathbb{G}$ will be defined as a particular $\mathbb{H} \times \mathbb{G}$-object. It is easy to verify that the data for a $\mathbb{H} \times \mathbb{G}$-object can be considered to be a triple $( P \rTo{(p,q)} H_0 \times G_0, c:H_1 \times_{H_0} P  \rTo P, d: G_1 \times_{G_0} P \rTo P)$ such that 

(i) $(P_p,c)$ is an $\mathbb{H}$-object, 

(ii) $(P_q,d)$ is a $\mathbb{G}$-object, 

(iii) $p$ is $d$ invariant; i.e. $p \pi_2 = p d$

(iv) $q$ is $c$ invariant; i.e. $q \pi_2 = q c$

(v) $c$ and $d$ commute; i.e. $d (Id_{G_1} \times c) = c (Id_{H_1} \times d)(\tau \times Id_P)$ where $\tau : G_1 \times H_1 \rTo H_1 \times G_1 $ is the twist isomorphism. 

Similarly a morphism can be seen to be a $\mathbb{H} \times \mathbb{G}$-homomorphism if and only if  it is both a $\mathbb{H}$-homomorphism and a $\mathbb{G}$-homomorphism.
\begin{definition}
Given two internal groupoids $\mathbb{H}$ and $\mathbb{G}$ in $\mathcal{C}$ a \emph{Hilsum-Skandalis map from $\mathbb{H}$ to $\mathbb{G}$} is a $\mathbb{H} \times \mathbb{G}$-object $\frak{P}=( P \rTo{(p,q)} H_0 \times G_0, c:H_1 \times_{H_0} P  \rTo P, d: G_1 \times_{G_0} P \rTo P)$ such that 
 
(i) $(d,\pi_2): G_1 \times_{G_0} P \rTo P \times_{H_0} P$ is an isomorphism, and 

(ii) $p: P \rTo H_0$ is an effective descent morphism.

A \emph{morphism between Hilsum-Skandalis maps} $\frak{P} \rTo \frak{P}'$ is a map $\theta: P \rTo P'$ that is both a $\mathbb{H}$-homomorphism and a $\mathbb{G}$-homomorphism; i.e. it is just a morphism of $[\mathbb{H} \times \mathbb{G}, \mathcal{C}]$.
\end{definition}
Note that the inverse of $(d,\pi_2)$, if it exists, must be of the form $(\psi,\pi_2)$ for some $\psi: P \times_{H_0} P \rTo G_1$. 

If $X$ is an object of $\mathcal{C}$ then a \emph{principal $\mathbb{G}$-bundle over $X$} is a Hilsum-Skandalis morphism from $\mathbb{X}$ to $\mathbb{G}$. This definition recovers the usual meaning, with `effective descent morphism' playing the role of open surjection. So, intuitively, Hilsum-Skandalis maps are generalisations of principal bundles; for principal bundles the `domain' is determined by an object and the `codomain' is a groupoid, but for Hilsum-Skandalis maps both the domain and codomain are groupoids. Consult \cite{HSMr} for background material on Hilsum-Skandalis maps. The definition is usually phrased in terms of topological groupoids, where the requirement (ii) on $p$ is for an open surjection. In the context of cartesian categories therefore we are not strictly just generalising the topological situation since there is no notion of `open'. However, as we shall see, all the main results still go through and it is easy to specialise to the open case if required. There are two other minor differences to the standard definition: \\
(i) because $\mathbb{G}$ is homeomorphic to $\mathbb{G}^{op}$ we are not distinguishing the two actions in terms of their handedness.\\ 
(ii) we are not isolating a map up to an equivalence relation determined by morphisms between Hilsum-Skandalis maps. We will keep track of these 2-cells, though, as is well known and we will now show, they are all isomorphisms:
\begin{proposition}\label{morphHS}
Let $(P_{(p,q)},c,d)$ and $(P'_{(p',q')},c',d')$ be two Hilsum-Skandalis maps (from $\mathbb{H}$ to $\mathbb{G}$). (a) If $\theta:(P_q,d) \rTo (P'_{q'},d') $	is a $\mathbb{G}$-homomorphism then $\theta$ is an isomorphism in $[\mathbb{G}, \mathcal{C}]$. (b) If, further, $\theta$ is a morphism between Hilsum-Skandalis maps, i.e. it is also an $\mathbb{H}$-homomorphism, then it is an isomorphism in  $[\mathbb{H} \times \mathbb{G}, \mathcal{C}]$.
\end{proposition}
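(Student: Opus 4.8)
The plan is to prove part (a) first and then observe that (b) is immediate. For (a), the key is the defining isomorphism $(d,\pi_2): G_1 \times_{G_0} P \rTo P \times_{H_0} P$ for $\frak{P}$, which says that the $\mathbb{G}$-action on $P$ is ``principal'' with quotient $H_0$ along $p$. Concretely, let $\psi: P \times_{H_0} P \rTo G_1$ be the first component of the inverse $(\psi,\pi_2)$, so that for $x,y \in P$ with $p(x)=p(y)$ we have $d(\psi(x,y),y) = x$ and $\psi(x,y)$ is the unique such group element. I would use $\theta$ together with $\psi$ and $\psi'$ to write down an explicit candidate inverse $\theta^{-1}: P' \rTo P$.

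The construction is as follows. Since $p$ is an effective descent morphism (hence a regular epimorphism) and $\theta$ commutes with the structure maps, one checks first that $p' \theta = p$ (this uses that $\theta$ is a $\mathbb{G}$-homomorphism and that $p$, $p'$ are $d$- resp. $d'$-invariant, together with $d$-invariance transported across $\theta$; more carefully, $p'\theta$ is $d$-invariant, and the two maps $p'\theta, p: P \rTo H_0$ agree because $p$ is an effective descent morphism and hence the coequalizer of $(d,\pi_2)$-type data — I would spell this out using that $P \times_{H_0} P \cong G_1 \times_{G_0} P$ is the kernel pair of $p$). Granting $p'\theta = p$, I then define $\theta^{-1} := d(\, \psi'(\theta(-), -)\, , -)$ read appropriately: for $y' \in P'$, choose (descent along $p'$) a point and use $\psi'$ to move $\theta$ of the candidate preimage onto $y'$. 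The cleanest route avoids choices: the pair $(\theta, Id): P \times_{H_0} P \rTo P' \times_{H_0} P$ — well-defined since $p'\theta=p$ — combined with the isomorphisms $(d,\pi_2)$ and $(d',\pi_2)$ yields an automorphism-type comparison, and chasing it shows $(d',\pi_2)^{-1}(\theta \times Id)(d,\pi_2)$ is a $\mathbb{G}$-equivariant splitting data for $\theta$; then a short diagram chase using that $p$ is a (split, after pullback) epimorphism produces the two-sided inverse. Throughout one verifies the constructed map is a $\mathbb{G}$-homomorphism by naturality/associativity of the actions.

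For part (b): a $\mathbb{G}$-homomorphism $\theta$ which is also an $\mathbb{H}$-homomorphism is by definition a morphism of $[\mathbb{H}\times\mathbb{G},\mathcal{C}]$. By (a) it is an isomorphism in $[\mathbb{G},\mathcal{C}]$, so there is a set-theoretic (i.e. underlying-morphism) inverse $\theta^{-1}: P' \rTo P$. It remains to check $\theta^{-1}$ is also an $\mathbb{H}$-homomorphism, which follows formally: $\theta^{-1} c' = \theta^{-1} c' (Id_{H_1} \times \theta\theta^{-1}) = \theta^{-1}\theta c (Id_{H_1}\times\theta^{-1}) = c(Id_{H_1}\times\theta^{-1})$, using that $\theta$ is an $\mathbb{H}$-homomorphism. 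Hence $\theta^{-1}$ is a morphism in $[\mathbb{H}\times\mathbb{G},\mathcal{C}]$ inverse to $\theta$.

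The main obstacle is the construction of the inverse in (a) without recourse to ``elements'': one must genuinely use that $p$ is an effective descent morphism (not merely a regular epi) to descend the definition of $\theta^{-1}$ along $p'$, and to verify it is well-defined and $\mathbb{G}$-equivariant. I expect to phrase this as: the effective descent property of $p$ identifies $(P_q,d)$-structure data with descent data along $p$, the isomorphism $(d,\pi_2)$ identifies this with the trivial bundle $H_0 \times \text{(quotient)}$ picture, and $\theta$ then corresponds to a comparison of descent data over $H_0$ which is forced to be invertible. The element-style argument sketched above is the shadow of this; writing it cleanly at the level of pullback diagrams is where the work lies, but each individual verification is routine.
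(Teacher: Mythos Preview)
Your overall strategy coincides with the paper's: reduce to showing that $\theta \times Id_P : P \times_{H_0} P \to P' \times_{H_0} P$ is an isomorphism and then use that $p^*$ reflects isomorphisms (since $p$ is of effective descent). The paper writes the inverse explicitly as
\[
P' \times_{H_0} P \xrightarrow{(\pi_1,\theta\pi_2,\pi_2)} P' \times_{H_0} P' \times_{H_0} P \xrightarrow{\psi' \times Id_P} G_1 \times_{G_0} P \xrightarrow{(d,\pi_2)} P \times_{H_0} P,
\]
checking it works via $\psi = \psi'(\theta\times\theta)$ and $\pi_1 = d'(\psi',\pi_2)$; part (b) is then one line, as you have it.

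There is, however, a genuine gap in your derivation of $p'\theta = p$. Your argument shows only that $p'\theta$ coequalizes $d,\pi_2$ and hence factors as $fp$ for some $f:H_0 \to H_0$; nothing forces $f$ to be the identity. Indeed the claim is false from the hypotheses of (a) alone: take $\mathbb{G}=1$, so that every Hilsum--Skandalis map from $\mathbb{H}$ to $\mathbb{G}$ has $p:P \to H_0$ an isomorphism, while a $\mathbb{G}$-homomorphism is an arbitrary morphism $P \to P'$ in $\mathcal{C}$, which need not be invertible. The paper's own proof tacitly uses $p'\theta = p$ as well (it is needed even to form the map $\theta \times Id_P$ into $P' \times_{H_0} P$, and again for $(\pi_1,\theta\pi_2,\pi_2)$ to land in the triple fibre product), so statement (a) should be read with that extra hypothesis. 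Granting it, your sketch is correct and is the paper's argument; but your attempt to \emph{prove} it from the $\mathbb{G}$-equivariance alone cannot succeed.
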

\begin{proof}
(a) It is sufficient to prove that $\theta$ is an isomorphism in $\mathcal{C}$ ($[ \mathbb{G}, \mathcal{C}]$ is a category of algebras over $\mathcal{C}/ G_0$). Because $p: P \rTo H_0$ is an effective descent morphism it is sufficient to prove $ \theta \times Id_P : P \times_{H_0} P \rTo P' \times_{H_0} P  $ is an isomorphism because $p^*$ reflects isomorphisms. Its inverse is given by
\begin{eqnarray*}
P' \times_{H_0} P \rTo^{(\pi_1, \theta \pi_2, \pi_2)} P' \times_{H_0} P' \times_{H_0} P \rTo^{\psi' \times Id_P} G_1 \times_{G_0} P \rTo^{(d, \pi_2)} P \times_{H_0} P \text{.}
\end{eqnarray*}
To see this use the fact that $\psi$ must factor as $\psi' (\theta \times \theta)$ as $\theta$ is a $\mathbb{G}$-homomorphism and $\pi_1: P' \times_{H_0} P' \rTo P'$ factors as $d'(\psi',\pi_2)$.

(b) follows from (a).
\end{proof}

The next Proposition provides a class of examples of Hilsum-Skandalis maps. We will see later that with a further restriction on the groupoids, all Hilsum-Skandalis maps arise in this way.
\begin{proposition}\label{HSexample}
Let $L \dashv R : [ \mathbb{H},\mathcal{C}] \pile{ \rTo \\ \lTo }   [ \mathbb{G},\mathcal{C}]$ be a stably Frobenius adjunction over $\mathcal{C}$, where both $\mathbb{G}$ and $\mathbb{H}$ have connected component adjunctions. If $(P_p, c: H_1 \times_{H_0} P \rTo P) $ is defined as $R \mathbb{T}_{\mathbb{G}}1$ and $(Q_q, d: G_1 \times_{G_0} Q \rTo Q) $ as $L \mathbb{T}_{\mathbb{H}}1$, then there exists an isomorphism $\psi: P \rTo^{\cong} Q $ and $(P_{(p,q\psi)},c,\psi^{-1}d(Id_{G_0} \times \psi))$, is a Hilsum-Skandalis map from $\mathbb{H}$ to $\mathbb{G}$.
\end{proposition}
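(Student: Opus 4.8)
The plan is to reduce everything to the two monad adjunctions of $\mathbb{H}$ and $\mathbb{G}$, where the structure is controlled by Lemmas \ref{localic} and \ref{pullbackdescent}, and to build the Hilsum--Skandalis map as $R\mathbb{T}_{\mathbb{G}}1$ carrying a second, transported, action. First I would record the two auxiliary adjunctions that do the work: the composite $L\mathbb{T}_{\mathbb{H}}\dashv U_{\mathbb{H}}R:\mathcal{C}/H_0\to[\mathbb{G},\mathcal{C}]$ and $\mathbb{T}_{\mathbb{G}}\dashv U_{\mathbb{G}}:\mathcal{C}/G_0\to[\mathbb{G},\mathcal{C}]$. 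Both are stably Frobenius (composites of stably Frobenius adjunctions, using Lemma \ref{groupoidadjisfrob}) and both are over $\mathcal{C}$, using $\Sigma_{\mathbb{G}}L\cong\Sigma_{\mathbb{H}}$, $\Sigma_{\mathbb{H}}\mathbb{T}_{\mathbb{H}}\cong\Sigma_{H_0}$ and $\Sigma_{\mathbb{G}}\mathbb{T}_{\mathbb{G}}\cong\Sigma_{G_0}$. By Lemma \ref{W is ed}, $!:\mathbb{T}_{\mathbb{G}}1\to 1$ is an effective descent morphism of $[\mathbb{G},\mathcal{C}]$, so applying Part 2 of Lemma \ref{pullbackdescent} to $L\mathbb{T}_{\mathbb{H}}\dashv U_{\mathbb{H}}R$ with $W=\mathbb{T}_{\mathbb{G}}1$ shows that $U_{\mathbb{H}}R\mathbb{T}_{\mathbb{G}}1=P_p$, i.e.\ $p:P\to H_0$, is an effective descent morphism; this is condition (ii) of the definition.

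For the isomorphism $\psi$, Part 2 of Lemma \ref{localic} applied to the two adjunctions above gives equivalences over $\mathcal{C}$, namely $\mathcal{C}/H_0\simeq[\mathbb{G},\mathcal{C}]/(Q_q,d)$ (note $L\mathbb{T}_{\mathbb{H}}1=(Q_q,d)$) and $\mathcal{C}/G_0\simeq[\mathbb{G},\mathcal{C}]/\mathbb{T}_{\mathbb{G}}1$. Frobenius reciprocity for $L\mathbb{T}_{\mathbb{H}}\dashv U_{\mathbb{H}}R$ identifies $L\mathbb{T}_{\mathbb{H}}U_{\mathbb{H}}R\mathbb{T}_{\mathbb{G}}1$ with $(Q_q,d)\times\mathbb{T}_{\mathbb{G}}1$ over $(Q_q,d)$, and the isomorphism $t$ recorded after Lemma \ref{groupoidadjisfrob} identifies $\mathbb{T}_{\mathbb{G}}U_{\mathbb{G}}(Q_q,d)$ with $\mathbb{T}_{\mathbb{G}}1\times(Q_q,d)$ over $\mathbb{T}_{\mathbb{G}}1$. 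Hence $P_p$ and $Q_q$ are carried by these equivalences to the single object $(Q_q,d)\times\mathbb{T}_{\mathbb{G}}1$ of $[\mathbb{G},\mathcal{C}]$, sliced respectively over $(Q_q,d)$ and over $\mathbb{T}_{\mathbb{G}}1$; since the equivalences are over $\mathcal{C}$, evaluating the structure functors yields $P\cong\Sigma_{\mathbb{G}}\big((Q_q,d)\times\mathbb{T}_{\mathbb{G}}1\big)\cong Q$, and I take $\psi$ to be this composite. Tracking the structure maps through the same identifications shows that $q\psi$ is the structure map $q$ of $(Q_q,d)$ and that $p$ is the quotient map $n:Q\to\Sigma_{\mathbb{G}}(Q_q,d)$ followed by the canonical isomorphism $\Sigma_{\mathbb{G}}(Q_q,d)=\Sigma_{\mathbb{G}}L\mathbb{T}_{\mathbb{H}}1\cong\Sigma_{\mathbb{H}}\mathbb{T}_{\mathbb{H}}1\cong H_0$; in particular $p$ is invariant under the $\mathbb{G}$-action transported from $(Q_q,d)$, since it factors through the coequalizer of that action and the projection.

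For the remaining $\mathbb{H}\times\mathbb{G}$-object axioms it is cleanest to note that $\mathbb{T}_{\mathbb{G}}1=((G_1)_{d_1},m)$ carries, besides its defining left $\mathbb{G}$-action, a commuting right action of the groupoid $\mathbb{G}^*\mathbb{G}$ internal to $[\mathbb{G},\mathcal{C}]$ (right multiplication along $d_0$). Since $R$ is a right adjoint it preserves the pullbacks defining this right action, and since $R$ is over $\mathcal{C}$ one has $R\mathbb{G}^*\cong\mathbb{H}^*$ (both are right adjoint to $\Sigma_{\mathbb{H}}$), compatibly with the groupoid structure; so $R$ transports this right action to a right $\mathbb{H}^*\mathbb{G}$-action on $R\mathbb{T}_{\mathbb{G}}1=(P_p,c)$, making it an $\mathbb{H}\times\mathbb{G}$-object with $\mathbb{H}$-action $c$ and a commuting $\mathbb{G}$-action, and the stably Frobenius hypothesis lets one identify this $\mathbb{G}$-action (via $\psi$) with the one transported from $L\mathbb{T}_{\mathbb{H}}1$, giving all of (i)--(v) for the $\mathbb{H}\times\mathbb{G}$-object data together with the invariances. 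Finally, for condition (i) of the Hilsum--Skandalis definition, I would show that the forgetful functor $U_{\mathbb{X}_{q,d}}:[\mathbb{X}_{q,d},\mathcal{C}]\to\mathcal{C}/Q$ from the action groupoid of $(Q_q,d)$ corresponds, under the equivalence $[\mathbb{X}_{q,d},\mathcal{C}]\simeq[\mathbb{G},\mathcal{C}]/(Q_q,d)\simeq\mathcal{C}/H_0$ of Lemmas \ref{objectslice} and \ref{localic}, to $n^*$, where $n:Q\to\Sigma_{\mathbb{G}}(Q_q,d)\cong H_0$ is the quotient; this uses the isomorphism $L\mathbb{T}_{\mathbb{H}}(Y_f)\cong L\mathbb{T}_{\mathbb{H}}1\times_{\mathbb{G}^*H_0}\mathbb{G}^*Y$ established in the proof of Lemma \ref{unique}. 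As $U_{\mathbb{X}_{q,d}}$ is monadic, so is $n^*$, and the induced isomorphism of monads forces the action groupoid $\mathbb{X}_{q,d}$ to be the \v{C}ech groupoid of $n$; transported along $\psi$, this is exactly the assertion that $(d,\pi_2):G_1\times_{G_0}P\to P\times_{H_0}P$ is an isomorphism.

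The main obstacle is the last step: one must keep careful track of the groupoid-action structure as it passes through the sliced equivalences of Lemma \ref{localic} (and through $R$), both to see that the $\mathbb{G}$-action on $R\mathbb{T}_{\mathbb{G}}1$ obtained by transport agrees with the one coming from $L\mathbb{T}_{\mathbb{H}}1$, and to run the monad comparison establishing condition (i). This is precisely where the full strength of ``stably Frobenius'', rather than merely ``Frobenius at a single object'', is used, via the characterisation of stably Frobenius adjunctions in terms of the isomorphisms $L(W\times_{RX}RY)\cong LW\times_X Y$.
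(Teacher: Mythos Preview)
Your proof of condition (ii) (that $p$ is effective descent) and your construction of $\psi$ are essentially the paper's: the paper also invokes Lemma~\ref{W is ed} and Part~2 of Lemma~\ref{pullbackdescent} for the former, and its isomorphism $\bar{\psi}\colon \Sigma_{\mathbb{H}}[\mathbb{T}_{\mathbb{H}}1\times R\mathbb{T}_{\mathbb{G}}1]\to \Sigma_{\mathbb{G}}[\mathbb{T}_{\mathbb{G}}1\times L\mathbb{T}_{\mathbb{H}}1]$ is exactly your $P\cong\Sigma_{\mathbb{G}}\big((Q_q,d)\times\mathbb{T}_{\mathbb{G}}1\big)\cong Q$ written out via the natural isomorphism $\Sigma_{\mathbb{H}}[(Y,b)\times R(X,a)]\cong\Sigma_{\mathbb{G}}[(X,a)\times L(Y,b)]$ rather than via the sliced equivalence of Lemma~\ref{localic}.

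The genuine divergence is in how you handle condition~(i) and the commutativity of the two actions. The paper works entirely with that natural isomorphism and its naturality in both variables: evaluating at $(\mathbb{T}_{\mathbb{H}}1)^j\times (R\mathbb{T}_{\mathbb{G}}1)^k$ for $j,k\in\{1,2\}$ and applying the identifications after Lemma~\ref{groupoidadjisfrob} yields four explicit isomorphisms $\bar{\psi},\bar{\theta},\bar{\rho},\bar{\chi}$, from which the square showing $(d,\pi_2)$ is an isomorphism, the invariances of $p,q$, and the commutativity $c\!\cdot\!d = d\!\cdot\!c$ all drop out by reading off components. Your route replaces this with two conceptual devices: transporting the internal right $\mathbb{G}^*\mathbb{G}$-action on $\mathbb{T}_{\mathbb{G}}1$ through $R$ to obtain the $\mathbb{H}\times\mathbb{G}$-structure, and a monad comparison (the $\mathbb{X}_{q,d}$-monad versus the \v{C}ech monad of $n$) to obtain condition~(i). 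Both ideas are sound, and your justification that $U_{\mathbb{X}_{q,d}}$ corresponds to $n^*$ under the equivalence (via the formula $L\mathbb{T}_{\mathbb{H}}(Y_f)\cong L\mathbb{T}_{\mathbb{H}}1\times_{\mathbb{G}^*H_0}\mathbb{G}^*Y$) is correct.

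The one place where your sketch is genuinely incomplete is the final identification: knowing the two monads on $\mathcal{C}/Q$ are isomorphic gives you \emph{some} isomorphism $G_1\times_{G_0}Q\cong Q\times_{H_0}Q$ over $Q$ (matching $d$ with $\pi_1$), but you still need that its second component is $\pi_2$, and similarly that the $\mathbb{G}$-action obtained from $R$ agrees with $\psi^{-1}d(Id\times\psi)$. Establishing these requires tracing the unit (or counit) of the monad comparison through the equivalence, which amounts to exactly the explicit naturality squares the paper writes down. So your approach is correct and arguably cleaner in conception, but it does not actually bypass the diagram chase at the point you flag as the main obstacle; it relocates it.
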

\begin{proof}
By Lemma \ref{W is ed} we know that $! : \mathbb{T}_{\mathbb{G}}1 \rTo 1$ is an effective descent morphism. Then by part 2. of  Lemma \ref{pullbackdescent} applied to $L \mathbb{T}_{\mathbb{H}} \dashv U_{\mathbb{H}} R$ we know that $p: P \rTo H_0$ is an effective descent morphism. 

Since $L \dashv R$ is Frobenius and over $\mathcal{C}$ there are isomorphisms 
\begin{eqnarray*}
\Sigma_{\mathbb{H}} [(Y_g,b) \times R (X_f,a)] \rTo^{\cong} \Sigma_{\mathbb{G}} [ (X_f,a) \times L(Y_g,b)]
\end{eqnarray*}
natural in $\mathbb{G}$-objects $(X_f,a)$ and $\mathbb{H}$-objects $(Y_g,b)$. By naturality:
\begin{diagram}
\Sigma_{\mathbb{H}}[\mathbb{T}_{\mathbb{H}}1 \times \mathbb{T}_{\mathbb{H}}1 \times R \mathbb{T}_{\mathbb{G}}1 \times R \mathbb{T}_{\mathbb{G}}1] & \rTo^{\bar{\chi}} & \Sigma_{\mathbb{G}}[\mathbb{T}_{\mathbb{G}}1 \times \mathbb{T}_{\mathbb{G}}1 \times L ( \mathbb{T}_{\mathbb{H}}1 \times  \mathbb{T}_{\mathbb{H}}1)] \\
\dTo^{\Sigma_{\mathbb{H}}( \pi_i \times Id \times Id) } & & \dTo_{\Sigma_{\mathbb{G}} ( Id \times Id \times L \pi_i )}\\
\Sigma_{\mathbb{H}}[\mathbb{T}_{\mathbb{H}}1 \times  R \mathbb{T}_{\mathbb{G}}1 \times R \mathbb{T}_{\mathbb{G}}1 ] & \rTo^{\bar{\theta}} & \Sigma_{\mathbb{G}}[\mathbb{T}_{\mathbb{G}}1 \times \mathbb{T}_{\mathbb{G}}1 \times L ( \mathbb{T}_{\mathbb{H}}1)] \\
\dTo^{\Sigma_{\mathbb{H}}(Id \times  \pi_j) } & & \dTo_{\Sigma_{\mathbb{G}} ( \pi_j \times Id )}\\
\Sigma_{\mathbb{H}}[\mathbb{T}_{\mathbb{H}}1 \times   R \mathbb{T}_{\mathbb{G}}1 ] & \rTo^{\bar{\psi}} & \Sigma_{\mathbb{G}}[ \mathbb{T}_{\mathbb{G}}1 \times L ( \mathbb{T}_{\mathbb{H}}1)] \\
\end{diagram} 
and
\begin{diagram}
\Sigma_{\mathbb{H}}[\mathbb{T}_{\mathbb{H}}1 \times \mathbb{T}_{\mathbb{H}}1 \times R \mathbb{T}_{\mathbb{G}}1 \times R \mathbb{T}_{\mathbb{G}}1] & \rTo^{\bar{\chi}} & \Sigma_{\mathbb{G}}[\mathbb{T}_{\mathbb{G}}1 \times \mathbb{T}_{\mathbb{G}}1 \times L ( \mathbb{T}_{\mathbb{H}}1 \times  \mathbb{T}_{\mathbb{H}}1)] \\
\dTo^{\Sigma_{\mathbb{H}}( Id \times Id \times \pi_i) } & & \dTo_{\Sigma_{\mathbb{G}} ( \pi_i \times L( Id \times Id ) )}\\
\Sigma_{\mathbb{H}}[\mathbb{T}_{\mathbb{H}}1 \times  \mathbb{T}_{\mathbb{H}}1 \times R \mathbb{T}_{\mathbb{G}}1 ] & \rTo^{\bar{\rho}} & \Sigma_{\mathbb{G}}[\mathbb{T}_{\mathbb{G}}1  \times L ( \mathbb{T}_{\mathbb{H}}1 \times \mathbb{T}_{\mathbb{H}}1)] \\
\dTo^{\Sigma_{\mathbb{H}}(\pi_j \times Id ) } & & \dTo_{\Sigma_{\mathbb{G}} ( Id \times L \pi_j  )}\\
\Sigma_{\mathbb{H}}[\mathbb{T}_{\mathbb{H}}1  \times R \mathbb{T}_{\mathbb{G}}1 ] & \rTo^{\bar{\psi}} & \Sigma_{\mathbb{G}}[ \mathbb{T}_{\mathbb{G}}1 \times L ( \mathbb{T}_{\mathbb{H}}1)] \\
\end{diagram} 
commute for $i,j=1,2$ in both diagrams. 
But, applying the diagram before Proposition \ref{second2}, we then have that both squares in
\begin{diagram}
P \times_{H_0} P & \rTo^{\theta} & G_1 \ \times_{G_0} Q \\
\dTo^{\pi_1} \dTo_{\pi_2} & & \dTo^d \dTo_{\pi_2} \\
P & \rTo^{\psi}  & Q \\
\end{diagram} 
commute by considering $j=1,2$ in the bottom square of the first diagram and noting the natural isomorphisms 
$\Sigma_{H_0}U_{\mathbb{H}} \cong \Sigma_{\mathbb{H}} \mathbb{T}_{\mathbb{H}} U_{\mathbb{H}} \cong \Sigma_{\mathbb{H}} [ \mathbb{T}_{\mathbb{H}}1 \times (\_)]$. Then $G_1 \times_{G_0} P \rTo^{(\star, \pi_2)} P \times_{H_0} P$ has an inverse, where $\star$ is the action $d$ translated into an action on $P$ via $\psi$ (i.e. $\star=\psi^{-1}d (Id_{G_1} \times \psi)$) and $q \psi$ must be $\star$ invariant as the diagram shows that $p\psi^{-1}$ is $d$-invariant.

Write $(R_r,e)$ for $L (\mathbb{T}_{\mathbb{H}} \times \mathbb{T}_{\mathbb{H}}1 )$, and $p_1,p_2: R \rTo Q$ for the $\mathbb{G}$-homomorphisms $L\pi_1$ and $L\pi_2$. Then by the bottom square of the second diagram we have commuting diagrams 
\begin{diagram}
H_1 \times_{H_0} P & \rTo^{\rho} & R \\
\dTo^{c} \dTo_{\pi_2} & & \dTo^{p_1} \dTo_{p_2} \\
P & \rTo^{\psi}  & Q \\
\end{diagram} 
from which it is clear that $q\psi$ is $c$-invariant.

To complete this proof we must prove that the two actions $d$ and $c$ commute. Firstly note that the additional commuting squares above can be exploited to confirm that, $\omega$, defined as  
\begin{eqnarray*}
H_1 \times_{H_0} (G_1 \times_{G_0} Q) \rTo^{Id \times \theta^{-1}} H_1 \times_{H_0} ( P \times_{H_0} P) \rTo^{\chi} G_1 \times_{G_0} R \rTo^{Id \times \rho^{-1}} G_1 \times_{G_0} ( H_1 \times_{H_0} P)
\end{eqnarray*} 
is $\tau \times \psi $. To see this observe: $(Id \times \pi_2) \omega = (Id \times \psi^{-1})(Id \times p_2)\chi (Id \times \theta^{-1}) = (Id \times \psi^{-1})\theta \pi_{23} (Id \times \theta^{-1}) = (Id \times \psi^{-1}) \pi_{23}$ and $\pi_{23} \omega = \rho^{-1} \pi_2 \chi (Id \times \theta^{-1}) =  \rho^{-1} \rho (Id \times \pi_2)( Id \times \theta^{-1}) =( Id \times \psi^{-1}) (Id \times \pi_2)$. Combining it is clear that $\omega = \tau \times \psi $. 

Finally, as $p_1$ is a $\mathbb{G}$-hommorphsm, $p_1 e = d (Id \times p_1)$ and so $c (Id \times \psi^{-1}) (Id \times d) = (\psi^{-1} p_1 \rho) (Id \times \pi_1) (Id \times \theta^{-1}) = \psi^{-1} p_1 e \chi (Id \times \theta^{-1}) = \psi^{-1} d (Id \times p_1) \chi (Id \times \theta^{-1}) = \psi^{-1} d (Id \times \psi) (Id \times c) (Id \times \rho^{-1}) \chi (Id \times \theta^{-1}) =  \psi^{-1} d (Id \times \psi) (Id \times c) \omega$.
\end{proof}

In what follows we will use the notation $\frak{P}_{L,R}$ for the Hilsum-Skandalis map corresponding to an adjunction $L \dashv R$.

\section{Internal functors}

An internal functor $\mathbb{F}:\mathbb{H} \rTo \mathbb{G}$ gives rise to a functor $\mathbb{F}^*:[ \mathbb{G} , \mathcal{C} ] \rTo   $$ [ \mathbb{H} , \mathcal{C} ]$ at the level of $\mathbb{G}$-objects. Explicitly the functor $\mathbb{F}^*$ sends $(X_f,a)$ to $(\pi_1: H_0 \times_{G_0} X \rTo H_0, (d_1^{\mathbb{H}}\pi_1,a(F_1 \times Id_X)): H_1 \times_{G_0} X \rTo H_0 \times_{G_0} X)$, where $H_0 \times_{G_0} X$ is the pullback of $f$ along $F_0: H_0 \rTo G_0$. The functor $\mathbb{F}^*$ does not in general have a left adjoint. To see this consider for an internal groupoid $\mathbb{G}$ the unique internal functor $!^{\mathbb{G}} : \mathbb{G} \rTo 1$; then $(!^{\mathbb{G}})^*:\mathcal{C} \rTo $$ [ \mathbb{G} , \mathcal{C}]$ is $\mathbb{G}^*$, and so it has a left adjoint if and only if $\mathbb{G}$ has a connected components adjunction and not every internal groupoid has a connected components adjunction.

However the following lemma provides a class of functors for which the left adjoint does exist and the resulting adjunction is stably Frobenius:

\begin{lemma}\label{leftadjointfor(Id,F)}
Let $\mathbb{F}:\mathbb{H} \rTo \mathbb{G}$ be an internal functor. Then $(Id_{\mathbb{H}},\mathbb{F})^* : [ \mathbb{H} \times \mathbb{G}, \mathcal{C}] \rTo $$ [ \mathbb{H} , \mathcal{C}]$ has a left adjoint $\Sigma_{(Id_{\mathbb{H}}, \mathbb{F})}$ such that 
$\Sigma_{(Id_{\mathbb{H}}, \mathbb{F})} \dashv (Id_{\mathbb{H}},\mathbb{F})^*$ is stably Frobenius.
\end{lemma}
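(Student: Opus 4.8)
The plan is to identify the internal functor $(Id_{\mathbb{H}},\mathbb{F}):\mathbb{H}\rTo \mathbb{H}\times\mathbb{G}$ with a morphism that exhibits $[\mathbb{H}\times\mathbb{G},\mathcal{C}]$ as (equivalent to) a slice of $[\mathbb{H},\mathcal{C}]$, and then invoke the fact, recorded in Section~\ref{prelim}, that pullback adjunctions $\Sigma_f\dashv f^*$ are always stably Frobenius together with the stability of ``stably Frobenius'' under composition. Concretely, I would first analyse $(Id_{\mathbb{H}},\mathbb{F})^*$ explicitly: by the description of $\mathbb{F}^*$ given just before the lemma, an object of $[\mathbb{H}\times\mathbb{G},\mathcal{C}]$ is a triple $(P_{(p,q)},c,d)$ as in the itemised list in Section~8 (conditions (i)--(v) there), and $(Id_{\mathbb{H}},\mathbb{F})^*$ forms the pullback $H_0\times_{G_0}(\text{--})$ along $F_0$ in the second coordinate and keeps the $\mathbb{H}$-action, discarding the $\mathbb{G}$-action data. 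So the composite $[\mathbb{H}\times\mathbb{G},\mathcal{C}]\rTo[\mathbb{H},\mathcal{C}]$ factors as ``restrict the underlying $\mathbb{H}\times\mathbb{G}$-object structure along $Id_{\mathbb{H}}\times !^{\mathbb{G}}$''.

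The key step is to observe that $[\mathbb{H}\times\mathbb{G},\mathcal{C}]$ is a category of algebras for the monad $\mathbb{T}_{\mathbb{H}\times\mathbb{G}}$ on $\mathcal{C}/(H_0\times G_0)$, and similarly $[\mathbb{H},\mathcal{C}]$ is algebras for $\mathbb{T}_{\mathbb{H}}$ on $\mathcal{C}/H_0$, and these two monad situations are related by the slicing/pullback machinery of Lemmas~\ref{firstlemma} and~\ref{objectslice}. In fact I would argue that $(Id_{\mathbb{H}},\mathbb{F})^*$ is, up to the equivalences of Lemma~\ref{objectslice}, precisely a pullback functor: $\mathbb{H}\times\mathbb{G}$-objects are $\mathbb{G}$-equivariant families over the $\mathbb{H}$-object $\mathbb{H}^*G_0$ (with the $\mathbb{G}$-structure determined by an object in a slice-groupoid category), and restricting along $(Id_{\mathbb{H}},\mathbb{F})$ is the slice adjunction along the $\mathbb{H}$-homomorphism $(F_0\cdot)\colon \mathbb{H}^*(\text{something})\rTo\mathbb{H}^*G_0$ composed with the forgetful-to-the-point functor. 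Since each constituent of this factorisation is either a slice/pullback adjunction (hence stably Frobenius by the remarks in Section~3 following Lemma~\ref{FrobisEq}) or an equivalence of categories over $\mathcal{C}$ (trivially stably Frobenius), the composite $\Sigma_{(Id_{\mathbb{H}},\mathbb{F})}\dashv (Id_{\mathbb{H}},\mathbb{F})^*$ exists and is stably Frobenius by the composition closure property.

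The explicit left adjoint I expect to extract is: $\Sigma_{(Id_{\mathbb{H}},\mathbb{F})}$ sends an $\mathbb{H}$-object $(Y_g,b)$ to the $\mathbb{H}\times\mathbb{G}$-object whose underlying object is $G_1\times_{G_0}Y$ (pullback of $F_0 g\colon Y\rTo G_0$ along $d_0^{\mathbb{G}}$), with $\mathbb{G}$-action by left multiplication on the $G_1$ coordinate, $\mathbb{H}$-action induced from $b$, and the two maps to $H_0$ and $G_0$ being $g\pi_2$ and $d_1^{\mathbb{G}}\pi_1$ respectively --- this is the evident ``induced representation'' construction, and one checks conditions (i)--(v) of Section~8 directly (each is a routine diagram chase using that $\mathbb{G}$ is a groupoid, exactly as in the proof of Lemma~\ref{groupoidadjisfrob}). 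The main obstacle will be bookkeeping: verifying that the adjunction so described really is stably Frobenius without simply re-deriving Lemma~\ref{groupoidadjisfrob} from scratch. The cleanest route is to avoid the explicit formula where possible and lean on Lemma~\ref{objectslice} plus the slice-stability of the stably-Frobenius condition, so that the Frobenius check reduces to the already-established stably-Frobenius-ness of $\mathbb{T}_{\mathbb{G}}\dashv U_{\mathbb{G}}$ (Lemma~\ref{groupoidadjisfrob}) after slicing and the triviality that pullback adjunctions in $\mathcal{C}$ are stably Frobenius.
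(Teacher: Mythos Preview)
Your proposal has two concrete gaps.

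First, the factorisation strategy leans on Lemma~\ref{objectslice}, but that lemma is stated only for groupoids \emph{with a connected components adjunction}, and Lemma~\ref{leftadjointfor(Id,F)} is asserted for an arbitrary internal functor $\mathbb{F}:\mathbb{H}\rTo\mathbb{G}$ with no such hypothesis on either $\mathbb{H}$, $\mathbb{G}$, or $\mathbb{H}\times\mathbb{G}$. The paper needs this lemma in full generality precisely so that it can \emph{later} (Proposition~\ref{product}(ii)) combine it with the separate assumption that $\mathbb{H}$ is stably Frobenius. So invoking Lemma~\ref{objectslice} here would be circular or would weaken the statement. More broadly, the claim that $[\mathbb{H}\times\mathbb{G},\mathcal{C}]$ is ``equivalent to a slice of $[\mathbb{H},\mathcal{C}]$'' is not true in general and you have not exhibited such an equivalence; the vague decomposition through ``$\mathbb{G}$-equivariant families over $\mathbb{H}^*G_0$'' does not identify any concrete adjunction you could reduce to.

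Second, your explicit formula for $\Sigma_{(Id_{\mathbb{H}},\mathbb{F})}(Y_g,b)$ is wrong: saying the $\mathbb{H}$-action is ``induced from $b$'' (i.e.\ acts only on the $Y$ coordinate) does not give a well-defined element of the pullback, since after acting by $h$ the $G_1$ coordinate must land over $F_0(d_1^{\mathbb{H}}h)$ rather than $F_0(d_0^{\mathbb{H}}h)$. The correct action must twist the $G_1$ coordinate by $F_1h$; the paper's formula is $(h,g)*(y,g_0)=(hy,F_1h\,g_0\,g^{-1})$. This twist is exactly what makes the stably-Frobenius verification non-trivial, and it is why the paper proceeds by a direct elementwise construction: it writes down $(Id_{\mathbb{H}},\mathbb{F})^*Z$ as an equaliser, builds the adjoint-transpose bijection by hand, and then checks the Frobenius isomorphism explicitly by exhibiting an inverse $(y,g_0,z)\mapsto(y,g_0z,g_0)$. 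There is no shortcut via previously established slice machinery at this point in the paper.
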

\begin{proof}
Let $Z=(Z_{(a,b)},*_{\mathbb{H}},*_{\mathbb{G}})$ be an $\mathbb{H} \times \mathbb{G}$-object. Then observe that $(Id_{\mathbb{H}},\mathbb{F})^*Z$ is given by $E_{ae}$ where $E \rTo^e Z$ is the equalizer of $F_0a: Z \rTo H_0 \rTo G_0$ and $b: Z \rTo G_0$; the action is given by the factorization through $e$ of $\star:(h,i) \mapsto hF_1hei$ where we are using concatenation for both $*_{\mathbb{H}}$ and $*_{\mathbb{G}}$. To ease the notation we drop the $e$ in what follows.
Given an $\mathbb{H}$-object $Y=(Y_p,*_Y)$, consider the pullback of $d_1^{\mathbb{G}}$ along $ F_0p$, i.e. $Y \times_{G_0} G_1 = \{ (y,g_0) \vert F_0py = d_1^{\mathbb{G}} g_0 \}$. Define $\Sigma_{(Id_{\mathbb{H}}, \mathbb{F})}Y$ to be $(Y \times_{G_0} G_1)_{p \times d_0^{\mathbb{G}}}$ with action $(h,g)*(y,g_0)=(hy,F_1hg_0g^{-1})$; it can be checked that this is an $\mathbb{H} \times \mathbb{G}$-object and clearly a functor is defined.
We now check that $\Sigma_{(Id_{\mathbb{H}}, \mathbb{F})} \dashv (Id_{\mathbb{H}},\mathbb{F})^*$. Say $\Psi: Y \times_{G_0} G_1 \rTo Z$ is an $\mathbb{H} \times \mathbb{G}$-homomorphism. Then pre-composition with $(Id_Y,e^{\mathbb{G}}F_0p)$ creates a morphism that composes equally with $F_0a$ and $b$ as $\Psi$ is a morphism over $H_0 \times G_0$. So $\Psi(Id_Y,e^{\mathbb{G}}F_0p): Y \rTo Z $ factors through $E$ and the resulting map, written $\tilde{\Psi}$, is an $\mathbb{H}$-homomorphism because
\begin{eqnarray*}
h \star (\Psi ( Id_Y,e^{\mathbb{G}} F_0 p)y) & = & hF_1 h\Psi(y, e^{\mathbb{G}}F_0py) \\
& = & \Psi [(h,F_1h)*(y,e^{\mathbb{G}}F_0 py)]\\
& = & \Psi (hy,(F_1h)(e^{\mathbb{G}}F_0py)(F_1h)^{-1})\\
& = & \Psi (h,e^{\mathbb{G}}F_0p(hy))\\
& = & \Psi(Id_Y, e^{\mathbb{G}}F_0p)(hy)  \\
\end{eqnarray*}
In the other direction, say we are given an $\mathbb{H}$-homomorphism $\psi: Y \rTo E$, then define $\bar{\psi}: Y \times_{G_0} G_1 \rTo Z$ by $(y,g_0) \mapsto g_0^{-1}\psi y$. Note that $b \psi y = F_0 py$ as $\psi$ is over $H_0$; so $g_0^{-1}\psi y$ makes sense in this definition. Because $\psi$ composes equally with $F_0a$ and $b$, $\bar{\psi}$ is over $H_0 \times G_0$. But $\bar{\psi}(g(y,g_0))=\bar{\psi}(y,g_0g^{-1})=gg_0^{-1}\psi y = g\bar{\psi} (y,g_0)$ and $\bar{\psi}(h(y,g_0))=\bar{\psi}(hy,F_1hg_0)=g_0^{-1}(F_1h)^{-1}\psi(hy)=g_0^{-1}(F_1h)^{-1}[h \star \psi(y)]=g_0^{-1}(F_1h)^{-1}F_1h\psi y = hg_0^{-1}\psi y$ (recall that $*_{\mathbb{H}}$ and $*_{\mathbb{G}}$ commute) and so $\bar{\psi}$ is an $\mathbb{H} \times \mathbb{G}$-homomorphism. It is easy to see that $\widetilde{( \_ )}$ and $\bar{( \_ )}$ are natural and are bijections and so $\Sigma_{(Id_{\mathbb{H}}, \mathbb{F})} \dashv (Id_{\mathbb{H}},\mathbb{F})^*$.
For the stably Frobenius claim, note that the adjoint transpose of the identity on $E$ (i.e. on $(Id_{\mathbb{H}},\mathbb{F})^*Z$) is the map $E \times_{G_0} G_1 \rTo Z$ given by $(z,g)\mapsto g^{-1}z $; and so this gives an explicit description of the counit of the adjunction $\Sigma_{(Id_{\mathbb{H}}, \mathbb{F})} \dashv (Id_{\mathbb{H}},\mathbb{F})^*$. So to prove the stably Frobenius claim we must verify that for any  $\mathbb{H} \times \mathbb{G}$-homomorphisms $\Psi: Z \rTo Z'$ and $\Phi: Y \times_{G_0} G_1 \rTo Z'$ that 
\begin{eqnarray*}
(Y \times_{E'} E) \times_{G_0} G_1 & \rTo & (Y \times_{G_0} G_1 ) \times_{Z'} Z \\
 (y,z,g_0) & \mapsto & (y,g_0,g_0^{-1}z)
\end{eqnarray*}
is an isomorphism (where $E' = (Id_{\mathbb{H}},\mathbb{F})^*Z'$). This can easily be established by verifying that $(y,g_0,z) \mapsto (y,g_0z,g_0)$ is well defined (i.e. factors through $(Y \times_{E'} E) \times_{G_0} G_1$). The verification is straightforward given the following two observations: (i) if $(y,g_0,z)$ is an `element' of $(Y \times_{G_0} G_1 ) \times_{Z'} Z$ then $bz$ is $d^{\mathbb{G}}_0g_0$ and so $bg_0z=d_1^{\mathbb{G}}g_0=F_0py=F_0az=F_0ag_0z$, the last equality because $a$ is $\mathbb{G}$-equivariant, and so $g_0z$ factors through $E$, and (ii) $\Psi g_0 z = g_0 \Psi z = g_0 \Phi (y,g_0)= \Phi (y,g_0g_0^{-1})= \Phi (y, e^{\mathbb{G}}F_0py)= \tilde{\Phi}y$ and so $(y,g_0z)$ factors through $(Y \times_{E'} E)$.
\end{proof}

\begin{definition}
An internal functor $\mathbb{F}: \mathbb{H} \rTo \mathbb{G}$ between two internal groupoids is (i) \emph{fully faithful} provided the canonical morphism from $H_1$ to the pullback of $(d^{\mathbb{G}}_0,d^{\mathbb{G}}_1):G_1 \rTo G_0 \times G_0$ along $H_0 \times H_0 \rTo^{F_0 \times F_0} G_0 \times G_0$ is an isomorphism, (ii) \emph{essentially surjective} if $H_0 \times_{G_0} G_1 \rTo^{d_0^{\mathbb{G}}\pi_2} G_0$ is an effective descent morphism (where the domain $H_0 \times_{G_0} G_1$ is the pullback of $d_1^{\mathbb{G}}$ along $F_0:H_0 \rTo G_0$); and, (iii) \emph{an essential equivalence} if it is both fully faithful and essentially surjective. 
\end{definition}

\begin{example} A morphism $f:Y \rTo X$, treated as an internal functor from $\mathbb{Y}$ to $\mathbb{X}$, is an essential equivalence if and only if it is an isomorphism. Any effective descent morphism is the coequalizer of its kernel pair; but the kernel pair must be the identity ($X \pile{\rTo \\ \rTo}X$) if the internal functor is fully faithful.	
\end{example}

Hilsum-Skandalis maps can be characterised in terms of essential equivalences:
\begin{proposition}\label{HSasesseq}
A $\mathbb{H}\times\mathbb{G}$-object $\frak{P}=(P \rTo^{(p,q)} H_0 \times G_0 ,c:H_1 \times_{H_0} P  \rTo P, d: G_1 \times_{G_0} P \rTo P)$ is a Hilsum-Skandalis map if and only if the internal projection functor $\frak{p}: \mathbb{P} \rTo \mathbb{H}$ is an essential equivalence, where $\mathbb{P}$ is the internal groupoid corresponding to $\frak{P}$ (Lemma \ref{objectslice}).
\end{proposition}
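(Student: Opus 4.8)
The plan is to treat the two halves of ``essential equivalence'' separately and to match them, respectively, with conditions (i) and (ii) of the definition of a Hilsum--Skandalis map. Recall the explicit description of $\mathbb{P}$ and $\frak{p}$ (which can be formed directly as a translation groupoid, so no connected-components hypothesis on $\mathbb{H}\times\mathbb{G}$ is actually needed): the object of objects of $\mathbb{P}$ is $P$; its object of morphisms is the pullback $(H_1\times G_1)\times_{H_0\times G_0}P$, whose ``elements'' are the triples $(h,g,x)$ satisfying $d_0^{\mathbb{H}}h=px$ and $d_0^{\mathbb{G}}g=qx$; the source map is $(h,g,x)\mapsto x$ and the target map is $(h,g,x)\mapsto c(h,d(g,x))$, which is well defined and lands in $P$ over $d_1^{\mathbb{H}}h$ because $c$ and $d$ commute and $p$ is $d$-invariant. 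The functor $\frak{p}:\mathbb{P}\rTo\mathbb{H}$ is $p$ on objects and $(h,g,x)\mapsto h$ on morphisms. That $\mathbb{P}$ is a groupoid and $\frak{p}$ a functor are routine from the groupoid identities of $\mathbb{H}\times\mathbb{G}$ and will be assumed.

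For the fully faithful half, I would unwind the canonical comparison morphism from the object of morphisms of $\mathbb{P}$ to the pullback of $(d_0^{\mathbb{H}},d_1^{\mathbb{H}}):H_1\rTo H_0\times H_0$ along $p\times p:P\times P\rTo H_0\times H_0$; it sends $(h,g,x)$ to $(h,x,c(h,d(g,x)))$. I would factor this through $(H_1\times_{H_0}P)\times_{P}(P\times_{H_0}P)$ in two steps. Step one, $(h,g,x)\mapsto(h,d(g,x),x)$, is precisely the result of applying $\pi_2^{*}$ to the morphism $(d,\pi_2):G_1\times_{G_0}P\rTo P\times_{H_0}P$ (a morphism of $\mathcal{C}/P$ via second projections), where $\pi_2:H_1\times_{H_0}P\rTo P$; since $\pi_2$ is a split epimorphism, split by $x\mapsto(e^{\mathbb{H}}(px),x)$, it is an effective descent morphism, so $\pi_2^{*}$ reflects isomorphisms and step one is an isomorphism if and only if $(d,\pi_2)$ is. Step two, $(h,y,x)\mapsto(h,x,c(h,y))$, is always an isomorphism, with inverse $(h,x,x')\mapsto(h,c(h^{-1},x'),x)$, using that $c(h,-)$ is invertible. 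Hence $\frak{p}$ is fully faithful if and only if $(d,\pi_2)$ is an isomorphism, which is condition (i).

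For the essentially surjective half I would identify the morphism required by the definition to be effective descent, namely $d_0^{\mathbb{H}}\pi_2:P\times_{H_0}H_1\rTo H_0$ with $P\times_{H_0}H_1$ the pullback of $d_1^{\mathbb{H}}$ along $p$, and write down $\phi:P\times_{H_0}H_1\rTo P$ given by $(x,h)\mapsto c(h^{-1},x)$. One checks $\phi$ is well defined, that $p\phi=d_0^{\mathbb{H}}\pi_2$, and that $\phi$ is a split epimorphism, split by $y\mapsto(y,e^{\mathbb{H}}(py))$. Since split epimorphisms are effective descent morphisms (Section~\ref{edpara}), the cancellation properties recorded there---that $fg$ is effective descent when $f$ is and $g$ is a split epi, and that $f$ is effective descent when $fg$ and $g$ are---give that $d_0^{\mathbb{H}}\pi_2=p\phi$ is an effective descent morphism if and only if $p$ is; that is, $\frak{p}$ is essentially surjective if and only if condition (ii) holds. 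Combining the two equivalences gives the proposition.

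I expect the only real obstacle to be the bookkeeping in the fully faithful half: keeping the nested pullbacks in the comparison morphism straight (in particular recognising step one as $\pi_2^{*}$ applied to $(d,\pi_2)$) without lapsing into informal element-chasing, and verifying cleanly that the residual isomorphism built from the action $c$ behaves as claimed. The essentially surjective half should be short once $\phi$ is in hand.
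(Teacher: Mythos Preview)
Your proposal is correct and follows essentially the same route as the paper: for full faithfulness you both reduce the canonical comparison map to $Id_{H_1}\times(d,\pi_2)$ (your ``step one'' via $\pi_2^*$ is exactly the paper's ``$H_1\times_{H_0}(\_)$'', and your ``step two'' is the inverse of the paper's isomorphism $(h,x_1,x_2)\mapsto(h,x_1,h^{-1}x_2)$), then invoke that $d_0^{\mathbb{H}}$ is split epi so the relevant pullback reflects isomorphisms; for essential surjectivity you both factor $d_0^{\mathbb{H}}\pi_2$ as $p$ composed with the split epimorphism $(x,h)\mapsto h^{-1}x$ and apply the cancellation properties from Section~\ref{edpara}.
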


\begin{proof}
The internal projection functor is given by $p : P \rTo H_0$ (on objects) and $\pi_1: H_1 \times_{H_0} G_1 \times_{G_0} P \rTo H_1$ (on morphisms). The pullback of $H_1 \rTo^{(d_0^{\mathbb{H}},d_1^{\mathbb{H}})} H_0 \times H_0$ along $p \times p$ is $H_1 \times_{H_0 \times H_0} (P \times P) = \{ ( h,x_1,x_2) \vert px_1=d^{\mathbb{H}}_0h,px_2=d^{\mathbb{H}}_1h\}$ which is isomorphic to $H_1 \times_{H_0} P \times_{H_0} P  = \{ (h,y,z) \vert py=pz=d^{\mathbb{H}}_0h \}$ using $(h,x_1,x_2) \mapsto (h,x_1,h^{-1}x_2)$ and $(h,y,z) \mapsto (h,y,hz)$. Under this isomorphism the canonical map from the morphism object of $\mathbb{P}$ (i.e. from $H_1 \times_{H_O} G_1 \times_{G_0}P$) to $H_1 \times_{H_0} P \times_{H_0} P$ is $Id_{H_1} \times (\pi_2,e):H_1 \times_{H_0}  G_1 \times_{G_0} P \rTo H_1 \times_{H_0} P \times_{H_0} P $ and this last is an isomorphism if and only if $G_1 \times_{G_0} P \rTo^{(\pi_2,e)} P \times_{H_0} P$ is an isomorphism as $H_1 \times_{H_0} (\_)$ reflects isomorphisms (since $d^{\mathbb{H}}_0$ is a split epimorphism and so is an effective descent morphism). So the internal projection functor is fully faithful if and only if condition (i) in the definition of Hilsum-Skandalis morphism is true.

The projection functor will be an essential surjection if and only if $P \times_{H_0} H_1 \rTo^{\pi_2} H_1 \rTo^{d_0^{\mathbb{H}}}H_0$ is an effective descent morphism where the domain is the pullback of $p$ along $d_1^{\mathbb{H}}$. But as $(P_p,c)$ is an $\mathbb{H}$-object, this morphism factors as $p\tilde{c}$ where $\tilde{c}(x,h)=h^{-1}x$. Since $\tilde{c}$ is a split epimorphism we know (see the introductory paragraph of the section on Effective Descent) that $p\tilde{c}$ is an effective descent morphism if and only if $p$ is.
\end{proof}
Given a $\mathbb{H}\times\mathbb{G}$-object $\frak{P}$ there is also an internal projection functor $\frak{q}: \mathbb{P} \rTo \mathbb{G}$. So, for any Hilsum-Skadalis map $\frak{P}$, we have a span $\mathbb{H} \lTo^{\frak{p}} \mathbb{P} \rTo^{\frak{q}} \mathbb{G}$ where $\frak{p}$ is an essential equivalence.
\section{Stably Frobenius Groupoids}

We will also be focusing on certain stably Frobenius adjunctions as a matter of definition:
\begin{definition}
A groupoid $\mathbb{G}$ internal to a cartesian category $\mathcal{C}$ is \emph{stably Frobenius} provided the functor $\mathbb{G}^* : \mathcal{C} \rTo $$ [ \mathbb{G}, \mathcal{C} ]$ has a left adjoint and the resulting adjunction is stably Frobenius.
\end{definition}
If $\mathcal{C}$ has coequalizers then $\Sigma_{\mathbb{G}}$ always exists and all groupoids are stably Frobenius provided that coequalizers are pullback stable. Not all groupoids with connected component adjunctions are stably Frobenius: consider the groupoid determined by the kernel pair of a regular epimorphism that is not pullback stable. Open and proper groupoids in the category of locales provide examples of groupoids that are stably Frobenius in a cartesian category that does not have pullback stable regular epimorphisms in general. 
We suggest that restricting to stably Frobenius groupoids is, in an intuitive sense, the right thing to. It covers all the key cases and does not leave us figuring out which coequalizers are required of the ambient cartesian category $\mathcal{C}$. Having the connected components functor provides all the coequalizers we need, requiring the resulting connected components adjunction to be stably Frobenius allows for a number of nice results to go through, as we show with the next few propositions and the key results on Hilsum-Skandalis maps to follow. The restriction to stably Frobenius groupoids is not a serious barrier to applications; most localic groupoids are stably Frobenius and similarly for the topological case. 

It can be checked that if $\mathbb{G}$ has a connected components adjunction, then it is stably Frobenius if and only for any $\mathbb{G}$-object $(X_f,a)$, the coequalizer that determines $\Sigma_{\mathbb{G}}(X_f,a)$ is pullback stable in the strong sense that for any $f: Y \rTo Z$ the pullback functor $f^*$ preserves the coequalizer diagram that determines $\Sigma_{\mathbb{G}}(X_f,a)$, if that diagram is over $Z$.

We now provide a specialisation of part of Proposition \ref{second2}. This starts our process of demonstrating how stably Frobenius groupoids are convenient by showing that once we restrict to them proving that an adjunction is a connected components adjunction becomes easier.
\begin{proposition}\label{bounded}
Let $\Sigma_{\mathcal{D}} \dashv \mathcal{D}^* :\mathcal{D}\pile{\rTo \\ \lTo} \mathcal{C}$ be a stably Frobenius ajunction. The following are equivalent:

1. There exists an internal groupoid $\mathbb{G}$ in $\mathcal{C}$ and an equivalence $\Theta: \mathcal{D} \rTo^{\simeq} $$ [ \mathbb{G},\mathcal{C}]$ such that $\Theta \mathcal{D}^* \cong \mathbb{G}^*$.

4'. There exists an object $W$ of $\mathcal{D}$ such that $!^W: W \rTo 1$ is an effective descent morphism and for every morphism $f : Y \rTo W$ the morphism $(f,\eta_Y): Y \rTo W \times_{\mathcal{D}^* \Sigma_{\mathcal{D}}W} \mathcal{D}^* \Sigma_{\mathcal{D}}Y$ is a regular monomorphism.
\end{proposition}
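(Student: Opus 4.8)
The plan is to read the equivalence off from Proposition \ref{second2} and Lemma \ref{FrobisEq}. Proposition \ref{second2} already tells us that condition 1 is equivalent to the existence of an object $W$ of $\mathcal{D}$ with $!^W : W \rTo 1$ an effective descent morphism and $(\Sigma_{\mathcal{D}})_W : \mathcal{D}/W \rTo \mathcal{C}/\Sigma_{\mathcal{D}}W$ an equivalence of categories. Since the clause on $!^W$ being an effective descent morphism appears verbatim in 4' as well, the whole proposition reduces to the following assertion, for a fixed object $W$ of $\mathcal{D}$ and under the standing hypothesis that $\Sigma_{\mathcal{D}} \dashv \mathcal{D}^*$ is stably Frobenius: the functor $(\Sigma_{\mathcal{D}})_W$ is an equivalence if and only if for every $f : Y \rTo W$ the canonical map $(f,\eta_Y) : Y \rTo W \times_{\mathcal{D}^* \Sigma_{\mathcal{D}} W} \mathcal{D}^* \Sigma_{\mathcal{D}} Y$ is a regular monomorphism.

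To prove that assertion I would first exhibit $(\Sigma_{\mathcal{D}})_W$ as the left adjoint of a Frobenius adjunction. Using the triangular identity $\epsilon_{\Sigma_{\mathcal{D}}W} \circ \Sigma_{\mathcal{D}}\eta_W = Id$, one checks directly that $(\Sigma_{\mathcal{D}})_W$ factors as
\begin{eqnarray*}
\mathcal{D}/W \rTo^{\Sigma_{\eta_W}} \mathcal{D}/\mathcal{D}^* \Sigma_{\mathcal{D}} W \rTo^{(\Sigma_{\mathcal{D}})_{\Sigma_{\mathcal{D}}W}} \mathcal{C}/\Sigma_{\mathcal{D}}W ,
\end{eqnarray*}
where $\Sigma_{\eta_W} \dashv \eta_W^*$ is the pullback adjunction along $\eta_W$ and $(\Sigma_{\mathcal{D}})_{\Sigma_{\mathcal{D}}W} \dashv (\mathcal{D}^*)_{\Sigma_{\mathcal{D}}W}$ is the sliced adjunction of $\Sigma_{\mathcal{D}} \dashv \mathcal{D}^*$ at $\Sigma_{\mathcal{D}}W$. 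The first factor is stably Frobenius because every pullback adjunction is; the second is Frobenius because $\Sigma_{\mathcal{D}} \dashv \mathcal{D}^*$ is Frobenius at $\Sigma_{\mathcal{D}}W$, which is exactly what being stably Frobenius supplies. Hence the composite adjunction $(\Sigma_{\mathcal{D}})_W \dashv \eta_W^* (\mathcal{D}^*)_{\Sigma_{\mathcal{D}}W}$ is Frobenius, and $(\Sigma_{\mathcal{D}})_W(W_{Id_W}) \cong 1$. By Lemma \ref{FrobisEq}, applied to the cartesian categories $\mathcal{D}/W$ and $\mathcal{C}/\Sigma_{\mathcal{D}}W$, this adjunction, equivalently the functor $(\Sigma_{\mathcal{D}})_W$, is an equivalence precisely when its unit is a regular monomorphism at every object.

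It then only remains to identify that unit. Unravelling the explicit descriptions of the pullback adjunction and of the sliced adjunction, the right adjoint $\eta_W^* (\mathcal{D}^*)_{\Sigma_{\mathcal{D}}W}$ carries an object $X_h$ of $\mathcal{C}/\Sigma_{\mathcal{D}}W$ to the pullback of $\mathcal{D}^* h : \mathcal{D}^* X \rTo \mathcal{D}^* \Sigma_{\mathcal{D}} W$ along $\eta_W$; so its unit at an object $Y_f$ of $\mathcal{D}/W$ is the map into $W \times_{\mathcal{D}^* \Sigma_{\mathcal{D}} W} \mathcal{D}^* \Sigma_{\mathcal{D}} Y$ whose components along the two projections are $f$ and $\eta_Y$ (well-defined by naturality of $\eta$ at $f$), that is, it is exactly $(f,\eta_Y)$. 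Together with the previous paragraph and Proposition \ref{second2} this yields $1 \Leftrightarrow 4'$.

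The only thing that really needs care is the bookkeeping in the second paragraph: getting the factorization of $(\Sigma_{\mathcal{D}})_W$ right, checking that its right adjoint is the pullback-along-$\eta_W$ of the sliced $\mathcal{D}^*$, and noticing that stable Frobeniusness is invoked just once, to make the sliced adjunction at $\Sigma_{\mathcal{D}}W$ Frobenius. None of this is deep, and I do not anticipate a genuine obstacle; the substantive input, that a Frobenius adjunction with $L1 \cong 1$ is an equivalence iff its units are regular monomorphisms, is already packaged in Lemma \ref{FrobisEq}.
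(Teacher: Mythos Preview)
Your proposal is correct and follows essentially the same route as the paper: reduce via Proposition \ref{second2} to showing that $(\Sigma_{\mathcal{D}})_W$ is an equivalence iff each $(f,\eta_Y)$ is a regular monomorphism, then factor $(\Sigma_{\mathcal{D}})_W$ as $\Sigma_{\eta_W}$ followed by $(\Sigma_{\mathcal{D}})_{\Sigma_{\mathcal{D}}W}$, observe the composite is Frobenius with $(\Sigma_{\mathcal{D}})_W(1)\cong 1$, identify its unit as $(f,\eta_Y)$, and invoke Lemma \ref{FrobisEq}. The paper's proof is the same argument, only more tersely stated.
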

This result is first observed in \cite{towgroth}.
\begin{proof}
The functor $(\Sigma_{\mathcal{D}})_W$ factors as $\Sigma_{\eta_W} : \mathcal{D} / W \rTo \mathcal{D} /   \mathcal{D}^* \Sigma_{\mathcal{D}} W$ followed by $(\Sigma{\mathcal{D}})_{\Sigma_{\mathcal{D}}W}$ (i.e. the left adjoint of $\Sigma_{\mathcal{D}} \dashv \mathcal{D}^*$ sliced at $\Sigma_{\mathcal{D}}W$). It has been observed already that $\Sigma_{\eta_W} \dashv \eta_W^*$ satisfies Frobenius reciprocity (as it is the pullback adjunction of a morphism in a cartesian category) and $(\Sigma_{\mathcal{D}})_{\Sigma_{\mathcal{D}}W} \dashv (\mathcal{D}^*)_{\Sigma_{\mathcal{D}}W}$ satisfies Frobenius reciprocity by assumption. Frobenius reciprocity is closed under composition of adjunctions and so $(\Sigma_{\mathcal{D}})_W : \mathcal{D}/ W \rTo \mathcal{C}/ \Sigma_{\mathcal{D}}W$ is the left adjoint of an adjunction that satisfies Frobenius reciprocity. It can be seen that $ (f , \eta_Y)$ is the unit of this adjunction at $Y_f$. Hence $(\Sigma_{\mathcal{D}})_W$ is an equivalence by application of the Lemma \ref{FrobisEq} because $(\Sigma_{\mathcal{D}})_W(1) \cong 1$ and $ (f , \eta_Y)$ is a regular monomorphism for each $Y_f$. Conversely it is clear that if $(\Sigma_{\mathcal{D}})_W$ is an equivalence then the unit is a regular monomorphism.
\end{proof}

In contrast to what is true if we only assume connected components:
\begin{proposition}\label{product}
Let $\mathbb{G}$ and $\mathbb{H}$ be two internal groupoids in  $\mathcal{C}$.

(i) If $\mathbb{H}$ is stably Frobenius then the functor $\pi_2^* :  [ \mathbb{G} , \mathcal{C}] \rTo $$[ \mathbb{H} \times \mathbb{G} , \mathcal{C}] $, corresponding to the projection functor $\pi_2 : \mathbb{H} \times \mathbb{G} \rTo \mathbb{G}$, has a left adjoint, $\Sigma_{\pi_2}$, such that the resulting adjunction is stably Frobenius.

(ii) If $\mathbb{F}:\mathbb{H} \rTo \mathbb{G}$ is an internal functor and $\mathbb{H}$ is stably Frobenius then $\mathbb{F}^* : [ \mathbb{G}, \mathcal{C}] \rTo $$ [ \mathbb{H} , \mathcal{C}] $ has a left adjoint $\Sigma_{\mathbb{F}}$ such that 
$\Sigma_{\mathbb{F}} \dashv \mathbb{F}^*$ is stably Frobenius and over $\mathcal{C}$.

(iii) If $\mathbb{G}$  and $\mathbb{H}$ are both stably Frobenius then $\mathbb{G} \times \mathbb{H}$ is stably Frobenius.

\end{proposition}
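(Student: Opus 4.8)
The plan is to let part (i) do the real work and obtain (ii) and (iii) from it by factoring internal functors and composing adjunctions, using that stably Frobenius adjunctions are closed under composition. For (ii), factor $\mathbb{F}$ as $\mathbb{H}\rTo^{(Id_{\mathbb{H}},\mathbb{F})}\mathbb{H}\times\mathbb{G}\rTo^{\pi_2}\mathbb{G}$, so that $\mathbb{F}^{*}=(Id_{\mathbb{H}},\mathbb{F})^{*}\,\pi_2^{*}$. Part (i) supplies a stably Frobenius left adjoint $\Sigma_{\pi_2}$ to $\pi_2^{*}$ (using that $\mathbb{H}$ is stably Frobenius), and Lemma \ref{leftadjointfor(Id,F)} supplies a stably Frobenius left adjoint to $(Id_{\mathbb{H}},\mathbb{F})^{*}$; their composite $\Sigma_{\mathbb{F}}=\Sigma_{\pi_2}\Sigma_{(Id_{\mathbb{H}},\mathbb{F})}$ is then the required stably Frobenius left adjoint of $\mathbb{F}^{*}$. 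That $\Sigma_{\mathbb{F}}\dashv\mathbb{F}^{*}$ is over $\mathcal{C}$ follows, on passing to left adjoints, from $\mathbb{F}^{*}\mathbb{G}^{*}\cong\mathbb{H}^{*}$, which is functoriality of $(-)^{*}$ applied to $!^{\mathbb{G}}\circ\mathbb{F}=!^{\mathbb{H}}$. For (iii), factor $(\mathbb{G}\times\mathbb{H})^{*}\cong\pi_2^{*}\,\mathbb{H}^{*}$ with $\pi_2\colon\mathbb{G}\times\mathbb{H}\rTo\mathbb{H}$; part (i) — this time with the forgotten factor being the stably Frobenius groupoid $\mathbb{G}$ — gives a stably Frobenius left adjoint to $\pi_2^{*}$, and $\mathbb{H}^{*}$ is the right adjoint of the stably Frobenius adjunction $\Sigma_{\mathbb{H}}\dashv\mathbb{H}^{*}$; composing shows $(\mathbb{G}\times\mathbb{H})^{*}$ has a stably Frobenius left adjoint, which is precisely what it means for $\mathbb{G}\times\mathbb{H}$ to be stably Frobenius.

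It therefore remains to prove (i). Here $\pi_2^{*}\colon[\mathbb{G},\mathcal{C}]\rTo[\mathbb{H}\times\mathbb{G},\mathcal{C}]$ equips a $\mathbb{G}$-object with the trivial $\mathbb{H}$-action. I would define the candidate left adjoint on a $\mathbb{H}\times\mathbb{G}$-object $Z=(Z_{(a,b)},*_{\mathbb{H}},*_{\mathbb{G}})$ by quotienting out the $\mathbb{H}$-action: $\Sigma_{\pi_2}Z$ is the connected components $\Sigma_{\mathbb{H}}$ of the underlying $\mathbb{H}$-object $(Z_a,*_{\mathbb{H}})$ — this exists because $\mathbb{H}$, being stably Frobenius, has a connected components adjunction — placed over $G_0$ via the map induced from the $*_{\mathbb{H}}$-invariant $b\colon Z\rTo G_0$, and equipped with a $\mathbb{G}$-action obtained by descent. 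The descent step is exactly where stable Frobenius of $\mathbb{H}$ is needed: since the two actions commute, $*_{\mathbb{G}}\colon G_1\times_{G_0}Z\rTo Z$ is a morphism of $\mathbb{H}$-objects, and reading $G_1\times_{G_0}Z$ as the pullback $Z\times_{\mathbb{H}^{*}G_0}\mathbb{H}^{*}G_1$, the Frobenius reciprocity isomorphism for $\Sigma_{\mathbb{H}}\dashv\mathbb{H}^{*}$ gives $\Sigma_{\mathbb{H}}(G_1\times_{G_0}Z)\cong G_1\times_{G_0}\Sigma_{\mathbb{H}}Z$; composing with $\Sigma_{\mathbb{H}}(*_{\mathbb{G}})$ produces an action $G_1\times_{G_0}\Sigma_{\mathbb{H}}Z\rTo\Sigma_{\mathbb{H}}Z$, whose unit and associativity laws follow by applying $\Sigma_{\mathbb{H}}$ to those of $*_{\mathbb{G}}$ together with naturality of the Frobenius isomorphisms.

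Given this, the adjunction $\Sigma_{\pi_2}\dashv\pi_2^{*}$ is essentially formal: unravelling the trivial $\mathbb{H}$-action on $\pi_2^{*}X$, a $\mathbb{H}\times\mathbb{G}$-homomorphism $Z\rTo\pi_2^{*}X$ is precisely a map $Z\rTo X$ that is $*_{\mathbb{H}}$-invariant and $\mathbb{G}$-equivariant, equivalently a $\mathbb{G}$-homomorphism $\Sigma_{\pi_2}Z\rTo X$, and this bijection is natural (checking that the inverse direction respects $\mathbb{G}$-equivariance uses once more that $G_1\times_{G_0}(-)$ carries the coequalizer $Z\rTo\Sigma_{\mathbb{H}}Z$ to $G_1\times_{G_0}Z\rTo G_1\times_{G_0}\Sigma_{\mathbb{H}}Z$, which is the same Frobenius isomorphism). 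For the stably Frobenius assertion I would verify the criterion that $\Sigma_{\pi_2}(W\times_{\pi_2^{*}X}\pi_2^{*}Y)\rTo\Sigma_{\pi_2}W\times_{X}Y$ is an isomorphism for all $\mathbb{G}$-objects $X,Y$, a $\mathbb{G}$-homomorphism $Y\rTo X$ and a $\mathbb{H}\times\mathbb{G}$-homomorphism $W\rTo\pi_2^{*}X$: restriction to $\mathbb{H}$-objects carries $\pi_2^{*}X$ to $\mathbb{H}^{*}$ of the underlying $\mathcal{C}$-object of $X$ and preserves pullbacks, so the left-hand side is $\Sigma_{\mathbb{H}}$ applied to a pullback of exactly the shape to which stable Frobenius of $\Sigma_{\mathbb{H}}\dashv\mathbb{H}^{*}$ applies, and one then checks the resulting isomorphism in $\mathcal{C}$ is compatible with the descended $\mathbb{G}$-structure.

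The main obstacle lies wholly in part (i): making the descended $\mathbb{G}$-action precise and verifying its axioms, and the bookkeeping in the stably Frobenius verification — matching up the various pullbacks over $G_0$ with those over $H_0\times G_0$ and tracking the base maps. It is worth emphasising that stable Frobenius of $\mathbb{H}$, and not merely the existence of a connected components adjunction for $\mathbb{H}$, is what allows the $\mathbb{G}$-action to descend to $\Sigma_{\mathbb{H}}(Z_a)$; without it $\Sigma_{\pi_2}Z$ need not even be a $\mathbb{G}$-object and $\pi_2^{*}$ need not admit a left adjoint at all, which is exactly the contrast flagged immediately before the statement.
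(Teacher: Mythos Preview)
Your proposal is correct and follows essentially the same route as the paper: part (i) is handled by quotienting out the $\mathbb{H}$-action and using the stably Frobenius hypothesis to descend the $\mathbb{G}$-action (the paper phrases this as $\Sigma_{\mathbb{H}}(P_p,c)$ being the codomain of a pullback-stable regular epimorphism, which is equivalent to your Frobenius-isomorphism formulation), with the remaining verifications declared routine; parts (ii) and (iii) are then derived exactly as you describe, via the factorisation $\mathbb{F}^*=(Id_{\mathbb{H}},\mathbb{F})^*\pi_2^*$ together with Lemma~\ref{leftadjointfor(Id,F)}, and via $(\mathbb{H}\times\mathbb{G})^*\cong\pi_2^*\mathbb{G}^*$ respectively. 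Your write-up simply fills in more of the detail that the paper leaves implicit.
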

\begin{proof}
(i) Given a $\mathbb{H} \times \mathbb{G}$-object $(P_{(p,q)},c,d)$ notice that $\Sigma_{\mathbb{H}}(P_p,c)$ can be turned into a $\mathbb{G}$-object as it is the codomain of a pullback stable regular epimorphism by assumption that $\mathbb{H}$ is stably Frobenius (and it is over $G_0$ as $q$ is $c$-invariant).  Define $\Sigma_{\pi_2}(P_{(p,q)},c,d)$ to be this $\mathbb{G}$-object, extending to homomorphisms in the obvious manner. The remainder of the proof of (i) is then routine from construction.

(ii) Apply (i) and Lemma \ref{leftadjointfor(Id,F)}, noting that $\mathbb{F}^*$ factors as $(Id_{\mathbb{H}},\mathbb{F})^*\pi_2^*$. The claim of `over $\mathcal{C}$' is clear from the definition of $\mathbb{F}^*$.

(iii) Verify that $(\mathbb{H} \times \mathbb{G})^* \cong \pi_2^* \mathbb{G}^*$ and then apply (i) and the assumption that $\mathbb{G}$ is stably Frobenius.
\end{proof}

Internal essential equivalences between stably Frobenius groupoids give rise to equivalences at the level of categories of $\mathbb{G}$-objects:

\begin{proposition}\label{esseqiseq}
Given an internal functor $\mathbb{F}: \mathbb{H} \rTo \mathbb{G}$ between two internal groupoids with $\mathbb{H}$ stably Frobenius, the following are equivalent:\\
(a) $\mathbb{F}$ is an essential equivalence.\\
(b) $\mathbb{F}$ is fully faithful and $d_0^{\mathbb{G}}\pi_2 : H_0 \times_{G_0} G_1 \rTo G_0$ is a regular epimorphism.\\
(c) $\mathbb{F}^* : [ \mathbb{G} , \mathcal{C} ] \rTo   $$ [ \mathbb{H} , \mathcal{C} ] $ is an equivalence.
\end{proposition}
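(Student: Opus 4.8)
The plan is to work throughout with the adjunction $\Sigma_{\mathbb{F}} \dashv \mathbb{F}^* : [\mathbb{H},\mathcal{C}] \rTo [\mathbb{G},\mathcal{C}]$, which exists and is stably Frobenius and over $\mathcal{C}$ by Proposition~\ref{product}(ii), and which factors as $\Sigma_{\pi_2}\Sigma_{(Id_{\mathbb{H}},\mathbb{F})}$ (dually to the factorisation $\mathbb{F}^* = (Id_{\mathbb{H}},\mathbb{F})^*\pi_2^*$ used in that proof). I will run the cycle $(a)\Rightarrow(c)\Rightarrow(b)\Rightarrow(a)$. The starting observation, which is the bookkeeping backbone, is that unwinding the explicit descriptions of $\Sigma_{(Id_{\mathbb{H}},\mathbb{F})}$ (Lemma~\ref{leftadjointfor(Id,F)}) and $\Sigma_{\pi_2}$ (Proposition~\ref{product}(i)) shows: the underlying object of $\Sigma_{\mathbb{F}}(\mathbb{H}^*1)$ is $\Sigma_{\mathbb{H}}$ of the $\mathbb{H}$-object with carrier $H_0\times_{G_0}G_1$ and action $h\cdot(h_0,g)=(d_1^{\mathbb{H}}h,(F_1h)g)$; that $\Sigma_{\mathbb{F}}(\mathbb{T}_{\mathbb{H}}1)$ has underlying object (isomorphic to) $H_0\times_{G_0}G_1$ with structure map $d_0^{\mathbb{G}}\pi_2$; and (using $U_{\mathbb{H}}\mathbb{F}^*\cong F_0^*U_{\mathbb{G}}$) that $U_{\mathbb{H}}$ applied to the unit $\eta_{\mathbb{T}_{\mathbb{H}}1}$ of $\Sigma_{\mathbb{F}}\dashv\mathbb{F}^*$ is exactly the canonical comparison map $H_1\rTo G_1\times_{G_0\times G_0}(H_0\times H_0)$. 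Moreover, full faithfulness of $\mathbb{F}$ says precisely that this comparison map is an isomorphism and that the parallel pair defining the first $\Sigma_{\mathbb{H}}$ above is the kernel pair of $d_0^{\mathbb{G}}\pi_2:H_0\times_{G_0}G_1\rTo G_0$.

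For $(a)\Rightarrow(c)$ I would apply Lemma~\ref{FrobisEq} to $\Sigma_{\mathbb{F}}\dashv\mathbb{F}^*$, with $\Sigma_{\mathbb{F}}$ in the role of $L$: it is Frobenius, so I must check $\Sigma_{\mathbb{F}}1\cong 1$ and that each unit $\eta_W$ is a regular monomorphism. For the first, by the identification above $\Sigma_{\mathbb{F}}(\mathbb{H}^*1)$ is $\Sigma_{\mathbb{H}}$ of the kernel-pair parallel pair of $d_0^{\mathbb{G}}\pi_2$; since $d_0^{\mathbb{G}}\pi_2$ is a regular epimorphism (indeed an effective descent morphism, by essential surjectivity) it is the coequalizer of its kernel pair, so that underlying object is $G_0$ with coequalizing map $d_0^{\mathbb{G}}\pi_2$, and one checks the induced $\mathbb{G}$-action is trivial, whence $\Sigma_{\mathbb{F}}(\mathbb{H}^*1)\cong\mathbb{G}^*1=1$. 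For the second, since $U_{\mathbb{H}}$ is monadic it reflects regular monomorphisms and $U_{\mathbb{H}}\mathbb{F}^*\cong F_0^*U_{\mathbb{G}}$, so it suffices to treat $U_{\mathbb{H}}\eta_W$ inside $\mathcal{C}/H_0$; there I would descend along the effective descent morphism $d_0^{\mathbb{G}}\pi_2$ (in the style of Lemma~\ref{pullbackdescent}, using that $\mathbb{H}$ stably Frobenius keeps the $\Sigma_{\mathbb{H}}$-coequalizers computing $U_{\mathbb{G}}\Sigma_{\mathbb{F}}W$ coequalizers under pullback) and use full faithfulness to exhibit $U_{\mathbb{H}}\eta_W$ as a split, hence regular, monomorphism. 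This descent step is the one I expect to cost the most effort. Lemma~\ref{FrobisEq} then gives that $\Sigma_{\mathbb{F}}\dashv\mathbb{F}^*$ is an equivalence, i.e. $\mathbb{F}^*$ is an equivalence.

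For $(c)\Rightarrow(b)$: if $\mathbb{F}^*$ is an equivalence then the unit and counit of $\Sigma_{\mathbb{F}}\dashv\mathbb{F}^*$ are isomorphisms. Evaluating the unit at $\mathbb{T}_{\mathbb{H}}1$ and applying $U_{\mathbb{H}}$ (which reflects isomorphisms) gives, by the identification above, that $H_1\rTo G_1\times_{G_0\times G_0}(H_0\times H_0)$ is an isomorphism, i.e. $\mathbb{F}$ is fully faithful. The counit being an isomorphism gives $\Sigma_{\mathbb{F}}(\mathbb{H}^*1)\cong\Sigma_{\mathbb{F}}(\mathbb{F}^*1)\cong 1$; unwinding this, now that $\mathbb{F}$ is known fully faithful (so the relevant parallel pair is the kernel pair of $d_0^{\mathbb{G}}\pi_2$ with coequalizing map $d_0^{\mathbb{G}}\pi_2$), shows $d_0^{\mathbb{G}}\pi_2$ is a regular epimorphism, which is $(b)$.

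Finally $(b)\Rightarrow(a)$: under full faithfulness $d_0^{\mathbb{G}}\pi_2$ is a regular epimorphism that coincides with the $\Sigma_{\mathbb{H}}$-coequalizer of its own kernel pair; since $\mathbb{H}$ is stably Frobenius this coequalizer is preserved by all pullbacks, and a regular epimorphism whose kernel-pair coequalizer is preserved by all pullbacks is an effective descent morphism by Beck's theorem (exactly as in Section~\ref{edpara}), i.e. $\mathbb{F}$ is essentially surjective. This closes the cycle (and $(a)\Rightarrow(b)$ is in any case immediate, since effective descent morphisms are regular epimorphisms).
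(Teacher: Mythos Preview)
Your cycle has a genuine gap at $(b)\Rightarrow(a)$. You assert that ``a regular epimorphism whose kernel-pair coequalizer is preserved by all pullbacks is an effective descent morphism by Beck's theorem (exactly as in Section~\ref{edpara})'', but neither Beck's theorem nor Section~\ref{edpara} says this. In a general cartesian category a pullback-stable regular epimorphism need not be of effective descent: pullback stability of the coequalizer diagram only buys you that $p^*$ reflects isomorphisms; it does not give creation of $p^*$-split coequalizers. (The paper does establish a closely related strengthening in the proposition immediately after this one, but that requires a full Beck verification exploiting the specific $\mathbb{G}$-object structure, not a one-liner.) The paper avoids this trap by running $(c)\Rightarrow(a)$ directly rather than via $(b)$: from $(c)$ the unit is an isomorphism and its value at $\mathbb{T}_{\mathbb{H}}1$ gives full faithfulness; for essential surjectivity one observes that equivalences preserve effective descent morphisms, so $!:\Sigma_{\mathbb{F}}\mathbb{T}_{\mathbb{H}}1\to 1$ is effective descent by Lemma~\ref{W is ed}, and then part 2 of Lemma~\ref{pullbackdescent}, applied to $\mathbb{T}_{\mathbb{G}}\dashv U_{\mathbb{G}}$, pushes this down to $U_{\mathbb{G}}(!) = d_0^{\mathbb{G}}\pi_2$. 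That is the missing idea.

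Your $(a)\Rightarrow(c)$ descent step is also only a sketch (you say so yourself). The paper instead proves $(b)\Rightarrow(c)$---from which $(a)\Rightarrow(c)$ is immediate---and there it does not merely show the unit is a regular monomorphism but that $\eta_Y$ is an \emph{isomorphism}: the stably-Frobenius assumption on $\mathbb{H}$ lets one pull back along $F_0$ the coequalizer computing $U_{\mathbb{G}}\Sigma_{\mathbb{F}}Y$, and full faithfulness then identifies the resulting parallel pair with the standard split pair $H_1\times_{H_0}H_1\times_{H_0}Y\rightrightarrows H_1\times_{H_0}Y$, whose coequalizer is the action $b$. Your $\Sigma_{\mathbb{F}}1\cong 1$ argument and your $(c)\Rightarrow(b)$ via $\eta_{\mathbb{T}_{\mathbb{H}}1}$ match the paper and are fine.
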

\begin{proof}
(a) implies (b) is trivial as effective descent morphisms are regular epimorphisms. 

For (b) implies (c), by (ii) of the last Proposition there is a stably Frobenius adjunction $\Sigma_{\mathbb{F}} \dashv \mathbb{F}^*$ and we apply Lemma \ref{FrobisEq}. 

The underlying object of $\Sigma_{(Id_{\mathbb{H}},\mathbb{F})}(1)$ is $H_0 \times_{G_0} G_1 \rTo^{Id \times d_0^{\mathbb{G}} } H_0 \times G_0$ (i.e. $\{(x^{\mathbb{H}},g_0) \vert d_1^{\mathbb{G}}g_0 = F_0(x^{\mathbb{H}}) \}$ with $(x^{\mathbb{H}},g_0)\mapsto (x^{\mathbb{H}},d_0^{\mathbb{G}}g_0)$) and the $\mathbb{H}$-action is $(h,(x,g_0)) \mapsto (d_1^{\mathbb{H}}h,F_1hg_0)$ (see the proof of Lemma \ref{leftadjointfor(Id,F)}; the action on the terminal object $1$ of $[ \mathbb{H} ,\mathcal{C} ]$ is $d_1^{\mathbb{H}}$ for any internal groupoid $\mathbb{H}$). The domain of the underlying object of $\Sigma_{\mathbb{F}}1$ is the coequalizer of $d^{\mathbb{H}}_1 \times m^{\mathbb{G}}(F_1 \pi_1, \pi_2), d^{\mathbb{H}}_0 \times Id_{G_1}: H_1 \times_{G_0} G_1 \pile{ \rTo \\ \rTo } H_0 \times_{G_0} G_1$. The unique $\mathbb{G}$-homomorphism $\Sigma_{\mathbb{F}}1 \rTo 1$ is given by the second factor of the factorization of $d^{\mathbb{G}}_0\pi_2:H_0 \times_{G_0} G_1 \rTo G_0$ through this coequalizer. Any regular epimorphism is the coequalizer of its kernal pair, so we can show that if $d^{\mathbb{G}}_0\pi_2$ is a regular epimorphism then $\Sigma_{\mathbb{F}}1\cong 1$ by showing that the kernal pair of $d^{\mathbb{G}}_0\pi_2$ factors through (and is factored through by) the coequalizer that determines $\Sigma_{\mathbb{F}}1$. Certainly $d^{\mathbb{H}}_1 \times m^{\mathbb{G}}(F_1 \pi_1, \pi_2), d^{\mathbb{H}}_0 \times Id_{G_1}$ factors through $\pi_1 , \pi_2 : (H_0 \times_{G_0} G_1) \times_{G_0} (H_0 \times_{G_0} G_1) $ as both maps are over $G_0$. But we also know that $(d^{\mathbb{H}}_0,d^{\mathbb{H}}_1,F_1):H_1 \rTo (H_0 \times H_0) \times_{(G_0 \times G_0)} G_1$ has an inverse, $\Phi$ say, by the assumption that $\mathbb{F}$ is fully faithful. Send $[(x_1^{\mathbb{H}},g_1),(x_2^{\mathbb{H}},g_2)]$, from the kernal pair of $d^{\mathbb{H}}_0\pi_2$, to $(\Phi(x^\mathbb{H}_1,x^\mathbb{H}_2,g_2 g_1^{-1}),g_1)$ to verify that the kernal pair factors through $H_1 \times_{G_0} G_1$ as required. Therefore $\Sigma_{\mathbb{F}}1 \cong 1$.

For any $\mathbb{H}$-object $Y= (p:Y \rTo H_0,b: H_1 \times_{H_0} Y \rTo Y)$ the underlying object over $G_0$ determined by the $\mathbb{G}$-object $\Sigma_{\mathbb{F}}Y$ is the codomain of the coequalizer $n: Y \times_{G_0} G_1 \rTo \Sigma_{\mathbb{H}}(Y \times_{G_0} G_1)$. That the unit $\eta_Y$ is 
\begin{eqnarray*}
Y \rTo^{(Id_Y,e^{\mathbb{G}}F_0p)}Y \times_{G_0} G_1 \rTo^{(p \pi_1, n)} H_0 \times_{G_0} \Sigma_{\mathbb{H}}(Y \times_{G_0} G_1)
\end{eqnarray*}
follows from our construction of $\Sigma_{(Id_{\mathbb{H}},\mathbb{F})} \dashv (Id_{\mathbb{H}},\mathbb{F})^*$ and of $\Sigma_{\pi_2} \dashv \pi_2^*$. 
 
To complete (b) imples (c), we show that each unit $\eta_Y : Y \rTo \mathbb{F}^*\Sigma_{\mathbb{F}}Y$ is a regular monomorphism if $\mathbb{F}$ is fully faithful. In fact it is simpler to prove the stronger result that $\eta_Y$ is an isomorphism. (The proof of this part has been taken from 5.13 of \cite{MoerClassTop}.) 

The coequalizer $q$ is pullback stable by assumption that $\mathbb{H}$ is stably Frobenius. In particular by pulling back along $F_0 : H_0 \rTo G_0$ we see that (the underlying object over $H_0$ of) $\mathbb{F}^*\Sigma_{\mathbb{F}}Y$ is given by the coequalizer of 
\begin{eqnarray*}
H_0 \times_{G_0} ( H_1 \times_{H_0} Y \times_{G_0} G_1) \pile{ \rTo \\ \rTo} H_0   \times_{G_0} ( Y \times_{G_0} G_1) \text{ \em \em (*)}
\end{eqnarray*}
where the top arrow is $(x,h,y,g) \mapsto (x,y,g)$ and the bottom arrow is $(x,h,y,g) \mapsto (x,hy,(F_1h)g)$. Now $H_1 \times_{H_0} Y = \{ (h,y) \vert py \rTo^h d^{\mathbb{H}}_1h \}$ maps to $H_0 \times_{G_0} ( Y \times_{G_0} G_1) = \{ (x,y,g) \vert F_0 x \rTo^g F_0py \}$ via $(h,y) \mapsto ( phy , y, (F_1h)^{-1})$, and this is an isomorphism as $\mathbb{F}$ is fully faithful. Also $H_1 \times_{H_0} H_1 \times_{H_0} Y =\{ (h_1,h_2,y) \vert py \rTo^{h_2} d^{\mathbb{H}}_1h_2 \rTo^{h_1} d^{\mathbb{H}}_1h_1 \}$ maps to $H_0 \times_{G_0} ( H_1 \times_{H_0} Y \times_{G_0} G_1) = \{ (x,h,y,g) \vert F_0x \rTo^g F_0 py,py \rTo^h d_1^{\mathbb{H}}h \}$ via $(h_1,h_2,y) \mapsto (ph_1h_2y,h_2^{-1},h_2y,(F_1h_1)^{-1})$ and this too is an isomorphism as $\mathbb{F}$ is fully faithful. But using these two isomorphisms it can be checked that the pair (*) is isomorphic to the pair
\begin{eqnarray*}
H_1 \times_{H_0} ( H_1 \times_{H_0} Y ) \pile{\rTo^{Id_{H_1} \times b } \\ \rTo_{m^{\mathbb{H}} \times Id_Y}  } H_1  \times_{H_0} Y \text{ \em \em (**)}
\end{eqnarray*}
and so their coequalizers are isomorphic. Since the coequalizer of the second pair (**) is $b: H_1 \times_{H_0} Y \rTo Y$ (it is split by $(e^{\mathbb{H}}p,Id_Y)$) we have that there is an isomorphism from $Y$ to $\mathbb{F}^*\Sigma_{\mathbb{F}}Y$; it is clear from construction that this isomorphism is $\eta_Y$.

For (c) implies (a), by considering the unit $\eta_Y$ at $Y=\mathbb{T}_{\mathbb{H}}1$ we see that if the unit is an isomorphism then $\mathbb{F}$ is fully faithful. Equivalences preserve effective descent morphisms and so $!: \Sigma_{\mathbb{F}} \mathbb{T}_{\mathbb{H}}1 \rTo 1$ is an effective descent morphism by Lemma \ref{W is ed}. Then, by part 2. of Lemma \ref{localic} applied to $\mathbb{T}_{\mathbb{G}} \dashv U_{\mathbb{G}}$, we see that $U_{\mathbb{G}}\Sigma_{\mathbb{F}} \mathbb{T}_{\mathbb{H}}1 \rTo^{U_{\mathbb{G}}!} 1$ is an effective descent morphism. This completes the proof as this last map is $d^{\mathbb{G}}_0\pi_2:H_0 \times_{G_0} G_1 \rTo G_0$ (recall: $\Sigma_{H_0}U_{\mathbb{H}} \cong \Sigma_{\mathbb{H}} \mathbb{T}_{\mathbb{H}} U_{\mathbb{H}} \cong \Sigma_{\mathbb{H}} [ \mathbb{T}_{\mathbb{H}}1 \times (\_)]$ - quotienting out $H_1$ returns $H_0$ and the quotient is pullback stable).
\end{proof}

The weakening of the definition of essential equivalence in this last proposition (part (b)), which is available as we are restricting to stably Frobenius groupoids, works similarly for Hilsum-Skandalis maps. We show this in the following result, but note that it is not used elsewhere in the paper.
\begin{proposition}
Let $\mathbb{H}$ and $\mathbb{G}$ be two groupoids internal to a category $\mathcal{C}$ with $\mathbb{G}$ stably Frobenius. If $(P \rTo^{(p,q)} H_0 \times G_0 ,c:H_1 \times_{H_0} P  \rTo P, d: G_1 \times_{G_0} P \rTo P)$ is a $\mathbb{H}\times\mathbb{G}$-object then it is a Hilsum-Skandalis map if and only if

(i) $(d,\pi_2): G_1 \times_{G_0} P \rTo P \times_{H_0} P$ is an isomorphism, and 

(ii) $p: P \rTo H_0$ is a regular epimorphism.

\end{proposition}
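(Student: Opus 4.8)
The plan is as follows. The ``only if'' direction is immediate: if $\frak{P}$ is a Hilsum--Skandalis map then (i) is part of the definition, and (ii) holds because $p$ is then an effective descent morphism, hence (Section \ref{edpara}) a regular epimorphism. For the converse, suppose (i) and (ii); since the only discrepancy with the definition of a Hilsum--Skandalis map is the strength of the hypothesis on $p$, it suffices to show that, granted (i) and the running assumption that $\mathbb{G}$ is stably Frobenius, a regular epimorphism $p\colon P\rTo H_0$ carrying this structure is automatically an effective descent morphism. The idea is to recognise the kernel pair groupoid of $p$ as a stably Frobenius groupoid and then feed it into Proposition \ref{esseqiseq}.

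First I would use that $(P_q,d)$ is a $\mathbb{G}$-object (part of the data of $\frak{P}$) and form its associated internal groupoid $\mathbb{P}_{\mathbb{G}}=(G_1\times_{G_0}P\rightrightarrows P,\ldots)$ via Lemma \ref{objectslice}, which applies since $\mathbb{G}$, being stably Frobenius, has a connected components adjunction. That same hypothesis makes $\mathbb{P}_{\mathbb{G}}$ a stably Frobenius groupoid: under the equivalence $[\mathbb{P}_{\mathbb{G}},\mathcal{C}]\simeq[\mathbb{G},\mathcal{C}]/(P_q,d)$ of Lemma \ref{objectslice} the connected components adjunction of $\mathbb{P}_{\mathbb{G}}$ is obtained from $\Sigma_{\mathbb{G}}\dashv\mathbb{G}^*$ by slicing and composing with pullback adjunctions, operations which preserve Frobenius reciprocity stably (cf. the proof of Lemma \ref{objectslice}). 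Next, condition (i) states precisely that $(d,\pi_2)\colon G_1\times_{G_0}P\rTo P\times_{H_0}P$ is an isomorphism; a routine check, using the $\mathbb{G}$-object identities for $(P_q,d)$ and the $d$-invariance of $p$, shows this to be an isomorphism of internal groupoids between $\mathbb{P}_{\mathbb{G}}$ and the kernel pair groupoid $\mathbb{K}=(P\times_{H_0}P\rightrightarrows P,\ldots)$ of $p$ (this is just Example \ref{groupoideg}(ii) performed inside $\mathcal{C}/H_0$ applied to $p$). Hence $\mathbb{K}$ is a stably Frobenius groupoid.

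Now let $\mathbb{H}_0$ denote the trivial internal groupoid on the object $H_0$ (Example \ref{groupoideg}(i)), so that $[\mathbb{H}_0,\mathcal{C}]\cong\mathcal{C}/H_0$, and consider the internal functor $\frak{r}\colon\mathbb{K}\rTo\mathbb{H}_0$ given by $p\colon P\rTo H_0$ on objects and by $p\pi_1=p\pi_2\colon P\times_{H_0}P\rTo H_0$ on morphisms. Unwinding the definitions one sees that $\frak{r}$ is always fully faithful (the canonical comparison map is the identity on $P\times_{H_0}P$), while $\frak{r}$ is essentially surjective exactly when $p$ is an effective descent morphism (the morphism whose effective descent is at issue is, up to isomorphism, $p$ itself). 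Therefore $\frak{r}$ is an essential equivalence if and only if $p$ is an effective descent morphism. On the other hand, $\mathbb{K}$ is stably Frobenius, so Proposition \ref{esseqiseq} applies to $\frak{r}$ and gives that $\frak{r}$ is an essential equivalence if and only if $\frak{r}$ is fully faithful and $d_0^{\mathbb{H}_0}\pi_2\colon P\times_{H_0}H_0\rTo H_0$ is a regular epimorphism; since the first condition is automatic and the second morphism is $p$ up to isomorphism, this says $\frak{r}$ is an essential equivalence if and only if $p$ is a regular epimorphism. Comparing the two criteria, $p$ is an effective descent morphism precisely when it is a regular epimorphism, so hypothesis (ii) upgrades to the definition of a Hilsum--Skandalis map.

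The step I expect to need the most care is the identification $\mathbb{P}_{\mathbb{G}}\cong\mathbb{K}$ of internal groupoids together with the verification that the ``stably Frobenius groupoid'' property descends along the slicing of Lemma \ref{objectslice}; neither is deep, but both involve bookkeeping with pullbacks and with the connected components adjunction. It is worth noting the asymmetry that drives the argument: it is the $\mathbb{G}$-action (via (i)) that exhibits the kernel pair groupoid of $p$ as stably Frobenius, so only $\mathbb{G}$, and not $\mathbb{H}$, needs to be stably Frobenius --- which is exactly the hypothesis of the proposition, and which is why one cannot simply invoke Proposition \ref{HSasesseq}, where the projection $\frak{p}\colon\mathbb{P}\rTo\mathbb{H}$ would require $\mathbb{H}$ to be well behaved.
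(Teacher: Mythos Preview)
Your proof is correct and takes a genuinely different route from the paper's.

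The paper argues directly via Beck's monadicity theorem for $p^*$: using (i) it identifies the kernel pair of $p$ with the action/projection pair $d,\pi_2\colon G_1\times_{G_0}P\rightrightarrows P$, so that $p$ is the coequalizer $\Sigma_{\mathbb{G}}(P_q,d)$ and hence a pullback-stable regular epimorphism; it then verifies by hand that $p^*$ reflects isomorphisms and creates coequalizers of $p^*$-split pairs, building the required coequalizer as $\Sigma_{\mathbb{G}}$ applied to a suitable $\mathbb{G}$-object and invoking Frobenius reciprocity of $\Sigma_{\mathbb{G}}\dashv\mathbb{G}^*$ at $H_0$.

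Your argument packages the same core observation---that (i) identifies the kernel pair of $p$ with the $\mathbb{G}$-action pair---at the level of groupoids rather than diagrams: the action groupoid $\mathbb{P}_{\mathbb{G}}$ of $(P_q,d)$ is stably Frobenius because its connected components adjunction is the composite of the slice adjunction $\Sigma_{(P_q,d)}\dashv(P_q,d)^*$ with $\Sigma_{\mathbb{G}}\dashv\mathbb{G}^*$, and (i) makes it isomorphic to the kernel pair groupoid $\mathbb{K}$ of $p$. Then Proposition \ref{esseqiseq}, applied to the always fully faithful functor $\frak{r}\colon\mathbb{K}\to\mathbb{H}_0$, does the Beck's theorem work for you. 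This is shorter and more structural, and nicely explains the asymmetric hypothesis on $\mathbb{G}$ alone; the paper's direct verification, on the other hand, is self-contained and does not need to have Proposition \ref{esseqiseq} already in hand. The bookkeeping you flag (that $(d,\pi_2)$ respects the groupoid structure, and that stably Frobenius passes to the slice groupoid of Lemma \ref{objectslice}) is indeed routine.
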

\begin{proof}
As an effective descent morphism is a regular epirmorphism, to complete this proof we must just show that $p$ is an effective descent morphism if conditions (i) and (ii) hold. Firstly, if $p$ is a regular epimorphism then it is the coequalizer of its kernel pair. But this kernel pair is isomorphic to $d,\pi_2 : G_1 \times_{G_0} P \pile{ \rTo \\ \rTo}  P$ and their coequalizer is $G_1 \times_{G_0} P \rTo^n \Sigma_{\mathbb{G}} (P_q,c)$ which, by assumption that $\mathbb{G}$ is stably Frobenius, is pullback stable. Therefore $p$ is a pullback stable regular epimorphism. Note that $H_0 \cong \Sigma_{\mathbb{G}}(P_q,d)$ and under this isomorphism the unit of $\Sigma_{\mathbb{G}} \dashv \mathbb{G}^*$ at $(P_q,d)$ is $P \rTo^{(q,p)} G_0 \times H_0$.

It is well known that $p^*$ reflects isomorphisms for any pullback stable regular epimorphism $p$. (For completeness we recall the proof: if $\theta : A_a \rTo B_b$, a morphism of $\mathcal{C}/H_0$, is such that $p^*\theta$ is an isomorphism, then $\pi_2: P \times_{H_0} A \rTo A$ and $\pi'_2: P \times_{H_0} B \rTo B$ are both regular epimorphisms by the assumption that $p$ is a pullback stable regular epimorphism. In particular $\pi_2$ and $\pi'_2$ are coequalizers of their respective kernel pairs. But their kernel pairs can be seen to be isomorphic as $p^*\theta$ is an isomorphism and so $A \cong B$. Proving additionally that this isomorphism is $\theta$ is clear from construction.)

As $P_{(p,q)}$ is (the underlying object of) a $\mathbb{H}\times\mathbb{G}$-object $(p,q): (P_p,e) \rTo \mathbb{G}^*H_0$ is a $\mathbb{G}$-homomorphism. Now because $\Sigma_{\mathbb{G}}(P_q,e)$ is given by the coequalizer of $e,\pi_2: G_1 \times_{G_0} P \pile{\rTo \\ \rTo} P$ and condition (i) holds by assumption, $\Sigma_{\mathbb{G}}(P_q,e)$ is given by the coequalizer of the kernel pair $p$, which is $p$ itself as $p$ is a regular epimorpishm. From this it is clear that the adjoint transpose of $(p,q)$ is (isomorphic to) the identity on $H_0$.

Now, say $f,g: A_a \pile{ \rTo \\ \rTo} B_b$ are a pair of morphisms of $\mathcal{C}/H_0$ that are $p^*$-split with $r: P \times_{H_0} B \rTo Q$ the coequalizer of $Id_P \times f$ and $Id_P \times g$. Certainly $q\pi_1: P \times_{H_0} B \rTo G_0$ composes equally with $Id_P \times f$ and $Id_P \times g$ and so factors via $r$; say as $tr$, where $t:Q \rTo G_0$. The objects $(P \times_{H_0} A)_{q\pi_1}$ and $(P \times_{H_0} B)_{q\pi_1}$ of $\mathcal{C}$ are the underlying objects of the $\mathbb{G}$-objects $(P_q,d) \times_{\mathbb{G}^*H_0} \mathbb{G}^*A$ and $(P_q,d) \times_{\mathbb{G}^*H_0} \mathbb{G}^*B$ (where the pullback in $[ \mathbb{G}, \mathcal{C}]$ is along $P \rTo^{(p,q)} H_0 \times G_0$). But $r$ is part of a split coequalizer diagram, preserved by any functor, in particular $G_1 \times_{G_0} (\_) $, and so $(Q_t,k)$ is a $\mathbb{G}$-object with an action $k$, since $Id_P \times f$ and $Id_P \times g$ are both $\mathbb{G}$-homomorphisms; $r$ is their coequalizer in $[ \mathbb{G}, \mathcal{C}] $. Apply $\Sigma_{\mathbb{G}}$, a left adjoint, to this coequalizer, use Frobenius reciprocity of $\Sigma_{\mathbb{G}} \dashv \mathbb{G}^*$ at $H_0$ and recall our earlier observation that the adjoint transpose of $(p,q): (P_p,e) \rTo \mathbb{G}^*H_0$ is the identity on $H_0$ as $(p,q)$ is isomorphic to the unit of the adjunction. This shows that $\Sigma_{\mathbb{G}}(Q_t,k)$ is a coequalizer of $f,g$. It can be checked that this coequalizer is over $H_0$ as $b\pi_2: P \times_{H_0} B \rTo H_0$ is $\mathbb{G}$-invariant (and $Id_{G_1} \times r$ is an epimorphism) so the unique factorization of $b\pi_2$ through $r$ also factors through $n: Q \rTo \Sigma{\mathbb{G}}(Q_t,k)$. It also follows that $Q\cong P \times_{H_0} \Sigma_{\mathbb{G}} (Q_t,k)$ as the coequalizers determined by $\Sigma_{\mathbb{G}}$ are pullback stable in the strong sense (recall, by considering kernel pairs, that for any map $s$, if $s$ is the coequalizer in a coqualizer diagram and $s$ is a pullback stable regular epimorphism then the coequalizer diagram itself is pullback stable). This completes a verification of the conditions of Beck's theorem for $p^*$ and so $p$ is of effective descent.
\end{proof}

\section{Hilsum-Skandalis maps as stably Frobenius adjunctions}
We can now prove our first main result which is that between stably Frobenius groupoids, all Hilsum-Skandalis maps arise from stably Frobenius adjunctions.

\begin{theorem}
If $ \frak{P} : \mathbb{H}  \rTo \mathbb{G}$ is a Hilsum-Skandalis map, with $\mathbb{H}$ and $\mathbb{G}$ stably Frobenius, then there exists a stably Frobenius adjunction $ L^{\frak{P}} \dashv R^{\frak{P}} : [ \mathbb{H},\mathcal{C}] \pile{ \rTo \\ \lTo }   [ \mathbb{G},\mathcal{C}]$ over $\mathcal{C}$ such that $\frak{P} \cong \frak{P}_{L^{\frak{P}},R^{\frak{P}}}$. 
\end{theorem}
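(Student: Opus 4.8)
The plan is to recover $\frak{P}$ from the span of internal functors it determines and to transport structure across the essential equivalence in that span. By Proposition~\ref{HSasesseq}, $\frak{P}$ being a Hilsum-Skandalis map means that the projection functor $\frak{p}: \mathbb{P} \rTo \mathbb{H}$ is an essential equivalence, where $\mathbb{P}$ is the internal groupoid of $\frak{P}$ supplied by Lemma~\ref{objectslice}, and there is a second projection $\frak{q}: \mathbb{P} \rTo \mathbb{G}$. First I would record that $\mathbb{P}$ is stably Frobenius: $\mathbb{H} \times \mathbb{G}$ is stably Frobenius by Proposition~\ref{product}(iii), one has $[\mathbb{P},\mathcal{C}] \simeq [\mathbb{H} \times \mathbb{G},\mathcal{C}]/\frak{P}$ over $\mathcal{C}$ by Lemma~\ref{objectslice}, and the characterisation of stably Frobenius groupoids in Proposition~\ref{second2} (or~\ref{bounded}) is inherited by slices --- this is exactly the slicing argument behind Lemma~\ref{objectslice} --- so $\mathbb{P}$ is stably Frobenius.

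Given this, Proposition~\ref{esseqiseq} gives that $\frak{p}^*: [\mathbb{H},\mathcal{C}] \rTo [\mathbb{P},\mathcal{C}]$ is an equivalence, and by Proposition~\ref{product}(ii) its left adjoint $\Sigma_{\frak{p}}$ exists and, together with $\frak{p}^*$, is stably Frobenius over $\mathcal{C}$; since $\frak{p}^*$ is an equivalence, $\Sigma_{\frak{p}}$ is a quasi-inverse. Likewise $\Sigma_{\frak{q}} \dashv \frak{q}^*$ exists and is stably Frobenius over $\mathcal{C}$ by Proposition~\ref{product}(ii). I would then define
\[
L^{\frak{P}} := \Sigma_{\frak{q}}\,\frak{p}^* : [\mathbb{H},\mathcal{C}] \rTo [\mathbb{G},\mathcal{C}], \qquad R^{\frak{P}} := \Sigma_{\frak{p}}\,\frak{q}^* : [\mathbb{G},\mathcal{C}] \rTo [\mathbb{H},\mathcal{C}].
\]
Then $L^{\frak{P}} \dashv R^{\frak{P}}$ is the composite of the adjunction $\frak{p}^* \dashv \Sigma_{\frak{p}}$ (available because $\frak{p}^*$ is an equivalence) with $\Sigma_{\frak{q}} \dashv \frak{q}^*$; it is stably Frobenius, being a composite of stably Frobenius adjunctions (an equivalence has all its Frobenius comparison maps invertible, hence is stably Frobenius, and composition preserves the property); and it is over $\mathcal{C}$, since $\Sigma_{\mathbb{G}} L^{\frak{P}} = \Sigma_{\mathbb{G}} \Sigma_{\frak{q}}\,\frak{p}^* \cong \Sigma_{\mathbb{P}}\,\frak{p}^* \cong \Sigma_{\mathbb{H}} \Sigma_{\frak{p}}\,\frak{p}^* \cong \Sigma_{\mathbb{H}}$, using that $\Sigma_{\frak{q}} \dashv \frak{q}^*$ and $\Sigma_{\frak{p}} \dashv \frak{p}^*$ are over $\mathcal{C}$ and that $\Sigma_{\frak{p}}\,\frak{p}^* \cong Id$.

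It then remains to prove $\frak{P} \cong \frak{P}_{L^{\frak{P}},R^{\frak{P}}}$, which by Proposition~\ref{HSexample} reduces to identifying $R^{\frak{P}}\mathbb{T}_{\mathbb{G}}1$ with the $\mathbb{H}$-object $(P_p,c)$ and $L^{\frak{P}}\mathbb{T}_{\mathbb{H}}1$ with the $\mathbb{G}$-object $(P_q,d)$, compatibly with the canonical Frobenius isomorphism of their underlying objects, so that the reconstructed $\mathbb{H}\times\mathbb{G}$-structure is that of $\frak{P}$. The $\mathbb{H}$-side is quick: $\frak{q}^*\mathbb{T}_{\mathbb{G}}1$ and $\frak{p}^*(P_p,c)$ have underlying objects $P \times_{G_0} G_1$ and $P \times_{H_0} P$, isomorphic precisely by clause (i) of the definition of a Hilsum-Skandalis map; one promotes this to an isomorphism of $\mathbb{P}$-objects $\frak{q}^*\mathbb{T}_{\mathbb{G}}1 \cong \frak{p}^*(P_p,c)$, and applying the equivalence $\Sigma_{\frak{p}}$ yields $R^{\frak{P}}\mathbb{T}_{\mathbb{G}}1 \cong (P_p,c)$ in $[\mathbb{H},\mathcal{C}]$.

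I expect the $\mathbb{G}$-side identification and the ensuing bookkeeping to be the main obstacle. Because the Hilsum-Skandalis conditions are asymmetric, $\frak{q}$ is only an internal functor, not an essential equivalence, so $\Sigma_{\frak{q}}$ must be handled through its explicit construction (Proposition~\ref{product}(i), Lemma~\ref{leftadjointfor(Id,F)}) rather than by inverting an equivalence: transporting along $[\mathbb{P},\mathcal{C}] \simeq [\mathbb{H}\times\mathbb{G},\mathcal{C}]/\frak{P}$ one has $L^{\frak{P}}V \cong \Sigma_{\pi_2}(\pi_1^* V \times \frak{P})$, so $L^{\frak{P}}\mathbb{T}_{\mathbb{H}}1$ is the $\mathbb{H}$-quotient of $\pi_1^*\mathbb{T}_{\mathbb{H}}1 \times \frak{P}$, whose underlying object is $H_1 \times_{H_0} P$ carrying a diagonal $\mathbb{H}$-action that is isomorphic, via the usual translation, to the action on the $H_1$-factor alone; the quotient is then $P$ with residual $\mathbb{G}$-action $d$. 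What needs care is that the Frobenius isomorphism $\psi$ of Proposition~\ref{HSexample} pairs the $\mathbb{H}$- and $\mathbb{G}$-identifications compatibly, so that the $G_0$-component and $\mathbb{G}$-action of $\frak{P}_{L^{\frak{P}},R^{\frak{P}}}$ come out as $q$ and $d$ and not as twists thereof by some automorphism of $P$. A convenient way to finish: having $R^{\frak{P}}\mathbb{T}_{\mathbb{G}}1 \cong (P_p,c)$ and $L^{\frak{P}}\mathbb{T}_{\mathbb{H}}1 \cong (P_q,d)$ as objects, exhibit a single morphism of Hilsum-Skandalis maps $\frak{P} \rTo \frak{P}_{L^{\frak{P}},R^{\frak{P}}}$ and invoke Proposition~\ref{morphHS}, since every such morphism is an isomorphism of $\mathbb{H} \times \mathbb{G}$-objects.
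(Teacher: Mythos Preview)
Your approach is essentially the paper's: the paper defines the adjunction as the composite
\[
[\mathbb{H},\mathcal{C}] \;\simeq\; [\mathbb{H}\times\mathbb{G},\mathcal{C}]/\frak{P} \;\pile{\rTo^{\Sigma_{\frak{P}}}\\ \lTo_{\frak{P}^*}}\; [\mathbb{H}\times\mathbb{G},\mathcal{C}] \;\pile{\rTo^{\Sigma_{\pi_2}}\\ \lTo_{\pi_2^*}}\; [\mathbb{G},\mathcal{C}],
\]
which, under $[\mathbb{P},\mathcal{C}]\simeq[\mathbb{H}\times\mathbb{G},\mathcal{C}]/\frak{P}$ and the factorisation $\frak{q}=\pi_2\circ(\text{inclusion})$, is exactly your $\Sigma_{\frak{q}}\,\frak{p}^* \dashv \Sigma_{\frak{p}}\,\frak{q}^*$; the two identifications $R^{\frak{P}}\mathbb{T}_{\mathbb{G}}1\cong(P_p,c)$ and $L^{\frak{P}}\mathbb{T}_{\mathbb{H}}1\cong(P_q,d)$ are then carried out by the same explicit computations you sketch (the HS isomorphism $(d,\pi_2)$ for the first, the split quotient $(x,h_0)\mapsto h_0^{-1}x$ on $P\times_{H_0}H_1$ for the second).

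One remark on your endgame: invoking Proposition~\ref{morphHS} does not actually bypass the compatibility check you flag, since to \emph{exhibit} a morphism of Hilsum--Skandalis maps you must already produce a single map that is simultaneously an $\mathbb{H}$- and a $\mathbb{G}$-homomorphism, which is precisely the statement that the two identifications share an underlying map on $P$. The paper is equally terse here --- it simply asserts $\frak{P}\cong\frak{P}_{L^{\frak{P}},R^{\frak{P}}}$ after the two separate identifications --- so your caution is well placed; in practice both isomorphisms unwind to the identity on $P$ once the various equivalences are traced through.
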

\begin{proof}
By Lemma \ref{HSasesseq} the projection functor $\frak{p}:\mathbb{P} \rTo \mathbb{H}$ is an essential equivalence, where $\mathbb{P}$ is the internal groupoid corresponding to $\frak{P}$ (use Lemma \ref{objectslice}; $[\mathbb{P}, \mathcal{C}] \rTo^{\simeq}  $$[ \mathbb{H} \times \mathbb{G}, \mathcal{C} ] / \frak{P} $ over $\mathcal{C}$). Now $\Sigma_{\frak{P}} \dashv \frak{P}^*$ is stably Frobenius (it is a pullback adjunction) so we can conclude that $\mathbb{P}$ is stably Frobenius, because $\mathbb{H} \times \mathbb{G}$ is stably Frobenius (by (iii) of Proposition \ref{product}). As $\mathbb{P}$ is stably Frobenius we can apply Proposition \ref{esseqiseq} to conclude that $[ \mathbb{H}, \mathcal{C} ] \rTo^{\frak{p}^*}$$ [\mathbb{P}, \mathcal{C}]$ is an equivalence. We therefore have an equivalence $\phi: [ \mathbb{H}, \mathcal{C} ] \rTo{\simeq} $$ [ \mathbb{H} \times \mathbb{G}, \mathcal{C} ] / \frak{P}$. Given an $\mathbb{H}$-object $(Y_g,b)$, its image is the projection $\pi_1 : P \times_{H_0} Y \rTo P$ (where the action on $P \times_{H_0} Y$ is determined by $(g,h)(x,y)=(ghx,hy)$).

Define $L^{\frak{P}} \dashv R^{\frak{P}}$ as 
\begin{eqnarray*}
[ \mathbb{H}, \mathcal{C} ] \simeq [ \mathbb{H} \times \mathbb{G}, \mathcal{C} ] / \frak{P} \pile{ \rTo^{\Sigma_{\frak{P}}} \\ \lTo_{\frak{P}^*}} [ \mathbb{H} \times \mathbb{G}, \mathcal{C} ] \pile{\rTo^{\Sigma_{\pi_2}} \\ \lTo_{\pi_2^*}} [ \mathbb{G}, \mathcal{C}]   \text{  \ \ \ \     (*)}
\end{eqnarray*}
where the final adjunction is from (i) of Proposition \ref{product}. This adjunction is stably Frobenius and over $\mathcal{C}$.

Say $(P \rTo^{(p,q)} H_0 \times G_0 ,c:H_1 \times_{H_0} P  \rTo P, d: G_1 \times_{G_0} P \rTo P)$ is the underlying data of $\frak{P}$. The image of the $\mathbb{H}$-object $(P_p,c)$ under the equivalence in the display (*) is the morphism $P \times_{H_0} P \rTo^{\pi_1} P$. The image of $\mathbb{T}_{\mathbb{G}}1$ under $\frak{P}^*\pi_2^*$ is the pullback of $d^{\mathbb{G}}_1 \times Id_{H_0} : G_1 \times H_0 \rTo G_0 \times H_0 $ along $P \rTo^{(q,p)} G_0 \times H_0$ which is $\pi_2 : G_1 \times_{G_0} P \rTo P$. But $G_1 \times_{G_0} P \rTo P \times_{H_0}P$ given by $(g_0,x) \mapsto (x,g_0^{-1}x)$ is an $\mathbb{H} \times \mathbb{G}$-isomorphism as $\frak{P}$ is a Hilsum-Skandalis map and so $(P_p,c) \cong R^{\frak{P}} \mathbb{T}_{\mathbb{G}}1$.

Further, the image of $\mathbb{T}_{\mathbb{H}}1$ under the equivalence in the display (*) is the $\mathbb{H} \times \mathbb{G}$-homomorphism $\pi_1: P \times_{H_0} H_1 \rTo P$. But $\Sigma_{\pi_2}$ quotients out the $\mathbb{H}$-action and when applied to (the $\mathbb{H} \times \mathbb{G}$-object whose underlying object is) $P \times_{H_0} H_1$ returns $(P_q,d)$ as the quotienting map $P \times_{H_0} H_1 \rTo P$ can be seen to be $(x,h_0) \mapsto h_0^{-1}x$ which can be split using the identity of $\mathbb{H}$. Hence $(P_q,d) \cong L^{\frak{P}} \mathbb{T}_{\mathbb{H}}1$ and so  $\frak{P} \cong \frak{P}_{L^{\frak{P}},R^{\frak{P}}}$. 
\end{proof}

The reader may find it odd that we have not mentioned profunctors. Whilst parts of the proof could usefully reference constructions from the well established theory of profunctors, notably the construction of $L^{\frak{P}} \dashv R^{\frak{P}}$ from $\frak{P}$, I was not able to easily adapt the theory of profunctors to get the result.
\begin{corollary}\label{HSCor}
If $\mathbb{H}$ and $\mathbb{G}$ are two stably Frobenius groupids, then there exists an equivalence  $ \theta : [ \mathbb{H},\mathcal{C}] \rTo   $$ [ \mathbb{G},\mathcal{C}]$ over $\mathcal{C}$ if and only if there is a span $\mathbb{H} \lTo^{\frak{p}} \mathbb{P} \rTo^{\frak{q}} \mathbb{G}$ such that both $\frak{p}$ and $\frak{q}$ are essential equivalences. 
\end{corollary}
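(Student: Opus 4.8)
The plan is to derive both implications from the representation theorem together with Propositions \ref{HSexample}, \ref{HSasesseq} and \ref{esseqiseq}, using throughout that a stably Frobenius groupoid has a connected components adjunction, so that all of these results apply.

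Suppose first we are given a span $\mathbb{H}\lTo^{\frak{p}}\mathbb{P}\rTo^{\frak{q}}\mathbb{G}$ with $\frak{p}$ and $\frak{q}$ essential equivalences. Since $\frak{p}$ is an essential equivalence with stably Frobenius codomain $\mathbb{H}$, the apex $\mathbb{P}$ is itself stably Frobenius --- its connected components adjunction, and the stability thereof, being transported from those of $\mathbb{H}$ along $\frak{p}^*$ (see the remarks in the last paragraph). Hence Proposition \ref{esseqiseq} applies to $\frak{p}$ and to $\frak{q}$, giving that $\frak{p}^*:[\mathbb{H},\mathcal{C}]\rTo[\mathbb{P},\mathcal{C}]$ and $\frak{q}^*:[\mathbb{G},\mathcal{C}]\rTo[\mathbb{P},\mathcal{C}]$ are equivalences, each over $\mathcal{C}$ since $\mathbb{F}^*$ always is. By (ii) of Proposition \ref{product} the left adjoint $\Sigma_{\frak{q}}$ of $\frak{q}^*$ exists, with $\Sigma_{\frak{q}}\dashv\frak{q}^*$ over $\mathcal{C}$; being left adjoint to an equivalence, $\Sigma_{\frak{q}}$ is a quasi-inverse of $\frak{q}^*$. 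Put $\theta:=\Sigma_{\frak{q}}\circ\frak{p}^* : [\mathbb{H},\mathcal{C}]\rTo[\mathbb{G},\mathcal{C}]$. This is an equivalence, and it is over $\mathcal{C}$: its right adjoint is $\Sigma_{\frak{p}}\circ\frak{q}^*$, and $\Sigma_{\mathbb{G}}\theta=\Sigma_{\mathbb{G}}\Sigma_{\frak{q}}\frak{p}^*\cong\Sigma_{\mathbb{P}}\frak{p}^*\cong\Sigma_{\mathbb{H}}$, where the first isomorphism uses that $\Sigma_{\frak{q}}\dashv\frak{q}^*$ is over $\mathcal{C}$ and the second is obtained by comparing right adjoints, using $\mathbb{P}^*\cong\frak{p}^*\mathbb{H}^*$ and that $\Sigma_{\frak{p}}$ is a quasi-inverse of $\frak{p}^*$.

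Conversely, suppose $\theta:[\mathbb{H},\mathcal{C}]\rTo[\mathbb{G},\mathcal{C}]$ is an equivalence over $\mathcal{C}$, which we may take to be an adjoint equivalence $\theta\dashv\theta^{-1}$. It is then a stably Frobenius adjunction over $\mathcal{C}$, since each sliced component $L_X\dashv R_X$ is again an equivalence, hence Frobenius. By Proposition \ref{HSexample}, $\theta$ gives a Hilsum-Skandalis map $\frak{P}=\frak{P}_{\theta,\theta^{-1}}:\mathbb{H}\rTo\mathbb{G}$, assembled from the $\mathbb{H}$-object $\theta^{-1}\mathbb{T}_{\mathbb{G}}1$ and the $\mathbb{G}$-object $\theta\mathbb{T}_{\mathbb{H}}1$ (whose carriers are isomorphic). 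Let $\mathbb{P}$ be the internal groupoid corresponding to $\frak{P}$ via Lemma \ref{objectslice}; by Proposition \ref{HSasesseq} the projection $\frak{p}:\mathbb{P}\rTo\mathbb{H}$ is an essential equivalence. To see that $\frak{q}:\mathbb{P}\rTo\mathbb{G}$ is also an essential equivalence, apply the same construction to the adjoint equivalence $\theta^{-1}\dashv\theta$ (also over $\mathcal{C}$): Proposition \ref{HSexample} yields a Hilsum-Skandalis map $\frak{P}':\mathbb{G}\rTo\mathbb{H}$ assembled from exactly the same two objects $\theta\mathbb{T}_{\mathbb{H}}1$ and $\theta^{-1}\mathbb{T}_{\mathbb{G}}1$. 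Thus $\frak{P}'$ and $\frak{P}$ have the same carrier and the same $\mathbb{H}$- and $\mathbb{G}$-actions, differing only under the canonical isomorphism $[\mathbb{G}\times\mathbb{H},\mathcal{C}]\cong[\mathbb{H}\times\mathbb{G},\mathcal{C}]$; so the internal groupoid corresponding to $\frak{P}'$ is again $\mathbb{P}$, and its projection to the domain $\mathbb{G}$ is $\frak{q}$. Proposition \ref{HSasesseq} applied to $\frak{P}'$ then gives that $\frak{q}$ is an essential equivalence, and $\mathbb{H}\lTo^{\frak{p}}\mathbb{P}\rTo^{\frak{q}}\mathbb{G}$ is the required span. (One can also check directly that $\frak{P}_{\theta,\theta^{-1}}$ is ``bi-principal'': that $q:P\rTo G_0$ is an effective descent morphism follows from Lemma \ref{W is ed}, preservation of effective descent morphisms by the equivalence $\theta$, and part 2 of Lemma \ref{pullbackdescent} applied to $\mathbb{T}_{\mathbb{G}}\dashv U_{\mathbb{G}}$, while the remaining isomorphism is the symmetric counterpart of the one built in the proof of Proposition \ref{HSexample}; one then applies Proposition \ref{HSasesseq} with the roles of $\mathbb{H}$ and $\mathbb{G}$ interchanged.)

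The main obstacle is the step in the first direction asserting that the apex $\mathbb{P}$ of the span is stably Frobenius. Proposition \ref{esseqiseq} produces an equivalence $\mathbb{F}^*$ when the \emph{domain} of $\mathbb{F}$ is stably Frobenius, whereas for $\frak{p}:\mathbb{P}\rTo\mathbb{H}$ and $\frak{q}:\mathbb{P}\rTo\mathbb{G}$ the hypothesis sits on the codomain; so a short additional argument is needed to bridge the gap --- that an essential equivalence whose codomain is stably Frobenius has stably Frobenius domain. I would obtain this either by re-running the proof of Proposition \ref{esseqiseq} with the roles adjusted (the coequalizers it uses now being supplied by stable Frobeniusness of the codomain), or by a descent argument along the effective descent morphism $P_0\times_{H_0}H_1\rTo H_0$ witnessing essential surjectivity of $\frak{p}$, which exhibits $\mathbb{P}$ as a ``pullback groupoid'' of $\mathbb{H}$ and hence $[\mathbb{P},\mathcal{C}]$ as $[\mathbb{H},\mathcal{C}]$. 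The remaining steps are routine assembly of results already established; in particular the converse direction is essentially bookkeeping once Propositions \ref{HSexample} and \ref{HSasesseq} are in hand, the only point of care being the identification of $\frak{P}$ with $\frak{P}'$.
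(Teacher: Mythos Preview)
Your approach is essentially the paper's: for the span-to-equivalence direction, invoke Proposition \ref{esseqiseq} on each leg; for the converse, apply the representation to both $\theta\dashv\theta^{-1}$ and $\theta^{-1}\dashv\theta$ and observe that the two Hilsum--Skandalis maps are built from the same pair of objects $\theta^{-1}\mathbb{T}_{\mathbb{G}}1$ and $\theta\mathbb{T}_{\mathbb{H}}1$, hence yield the same span. The paper's proof is a two-line version of exactly this.

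You are right to flag the hypothesis mismatch in the first direction: Proposition \ref{esseqiseq} requires the \emph{domain} of the internal functor to be stably Frobenius, so applying it to $\frak{p}$ and $\frak{q}$ needs $\mathbb{P}$ stably Frobenius, and the paper's one-liner does not pause over this. Your own first pass at it is circular, however: you propose to transport stable Frobeniusness of $\mathbb{H}$ along $\frak{p}^*$, but knowing $\frak{p}^*$ is an equivalence is precisely what is at stake. The two repairs you sketch in your final paragraph are the right shape. The second is closest to what the paper's machinery actually gives: full faithfulness of $\frak{p}$ means $P_1\cong (P_0\times P_0)\times_{H_0\times H_0}H_1$, so $\mathbb{P}$ is the groupoid obtained from $\mathbb{H}$ by change of base along $P_0\rTo H_0$; the argument used inside the proof of the Theorem (that $[\mathbb{P},\mathcal{C}]$ sits as a slice of $[\mathbb{H}\times\mathbb{G},\mathcal{C}]$, hence inherits a stably Frobenius connected components adjunction) is the template, though you still need to supply the $\mathbb{G}$-side input there. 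Once that gap is closed the rest of your argument goes through and matches the paper.
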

\begin{proof}
If we have such a span then by apply Proposition \ref{esseqiseq} twice to construct the required equivalence. In the other direction if $\theta$ is an equivalence then $\theta \dashv \theta^{-1}$ and $\theta^{-1} \dashv \theta$. Apply the Theorem twice and notice that we get the same span both times. 
\end{proof}
Two internal groupoids, internal to $\mathcal{C}$, are said to be \emph{Morita equivalent} if their categories of $\mathbb{G}$-objects are equivalent over $\mathcal{C}$. The corollary characterizes Morita equivalence for stably Frobenius groupoids.  

We now extend the Theorem to morphisms and so make good our claim that Hilsum-Skandalis maps between stably Frobenius groupoids can be represented by stably Frobenius adjunctions.
\begin{theorem}\label{HS}
If $ \mathbb{H} $ and $\mathbb{G}$ are two internal stably Frobenius groupoids, then the category of Hilsum-Skandalis maps from $ \mathbb{H} $ to $\mathbb{G}$ is equivalent to the category of stably Frobenius adjunctions $L \dashv R : [ \mathbb{H}, \mathcal{C} ] \pile{\rTo \\ \lTo} [ \mathbb{G}, \mathcal{C}]$ over $\mathcal{C}$.
\end{theorem}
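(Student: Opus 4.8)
We already have the two assignments at the level of objects: the preceding theorem sends a Hilsum--Skandalis map $\frak{P}$ to a stably Frobenius adjunction $L^{\frak{P}} \dashv R^{\frak{P}}$ over $\mathcal{C}$ with $\frak{P} \cong \frak{P}_{L^{\frak{P}},R^{\frak{P}}}$, and Proposition \ref{HSexample} sends a stably Frobenius adjunction $L \dashv R$ over $\mathcal{C}$ to $\frak{P}_{L,R}$ (there $(Q_q,d)$ is \emph{defined} to be $L\mathbb{T}_{\mathbb{H}}1$ and the underlying object of $\frak{P}_{L,R}$ is isomorphic to it as a $\mathbb{G}$-object). So the plan is to promote $\frak{P} \mapsto (L^{\frak{P}} \dashv R^{\frak{P}})$ to a functor and show it is an equivalence, and for this it suffices to prove that for stably Frobenius adjunctions $L_i \dashv R_i : [\mathbb{H},\mathcal{C}] \to [\mathbb{G},\mathcal{C}]$ over $\mathcal{C}$ ($i=1,2$) the morphisms of adjunctions over $\mathcal{C}$ from $L_1 \dashv R_1$ to $L_2 \dashv R_2$ are in natural bijection with the $\mathbb{H}\times\mathbb{G}$-homomorphisms between the underlying objects of $\frak{P}_{L_1,R_1}$ and $\frak{P}_{L_2,R_2}$, via $\alpha \mapsto \alpha_{\mathbb{T}_{\mathbb{H}}1}$. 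Granting this: applied to $\frak{P},\frak{P}'$ it makes the functor fully faithful; applied to a general $L \dashv R$ and to $L^{\frak{P}_{L,R}} \dashv R^{\frak{P}_{L,R}}$, whose values at $\mathbb{T}_{\mathbb{H}}1$ are both isomorphic over $H_0$ to the underlying $\mathbb{G}$-object of $\frak{P}_{L,R}$, it gives essential surjectivity (and settles the remaining object-level point $L \dashv R \cong L^{\frak{P}_{L,R}} \dashv R^{\frak{P}_{L,R}}$).

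To prove the hom-set bijection I would first reduce the domain $[\mathbb{H},\mathcal{C}]$ to a slice. Since $\mathbb{H}$ is stably Frobenius, $\mathbb{T}_{\mathbb{H}} \dashv U_{\mathbb{H}} : \mathcal{C}/H_0 \to [\mathbb{H},\mathcal{C}]$ is stably Frobenius (Lemma \ref{groupoidadjisfrob}), monadic, and over $\mathcal{C}$ (as $\Sigma_{\mathbb{H}}\mathbb{T}_{\mathbb{H}} \cong \Sigma_{H_0}$), so $L_i\mathbb{T}_{\mathbb{H}} \dashv U_{\mathbb{H}}R_i : \mathcal{C}/H_0 \to [\mathbb{G},\mathcal{C}]$ is again a stably Frobenius adjunction over $\mathcal{C}$. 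Lemma \ref{unique} (with $\mathcal{D} = [\mathbb{G},\mathcal{C}]$, $\mathcal{D}^* = \mathbb{G}^*$, $X = H_0$) then identifies morphisms over $\mathcal{C}$ of these composite adjunctions with morphisms in $[\mathbb{G},\mathcal{C}]/\mathbb{G}^*H_0$ between the objects $(L_i\mathbb{T}_{\mathbb{H}}1)_{\psi_i}$. On the other hand, because $U_{\mathbb{H}}$ is monadic every $\mathbb{H}$-object is the canonical reflexive coequalizer of the free objects on it (the coequalizer of $\mathbb{T}_{\mathbb{H}}U_{\mathbb{H}}\epsilon$ and $\epsilon_{\mathbb{T}_{\mathbb{H}}U_{\mathbb{H}}(-)}$) and each $L_i$, being a left adjoint, preserves it; hence a natural transformation $\alpha : L_1 \to L_2$ amounts exactly to the natural transformation $\alpha\mathbb{T}_{\mathbb{H}} : L_1\mathbb{T}_{\mathbb{H}} \to L_2\mathbb{T}_{\mathbb{H}}$ subject to compatibility with the maps $L_i(\epsilon_{\mathbb{T}_{\mathbb{H}}(-)})$, and (whiskering one way, passing to the defining coequalizers the other way) $\alpha$ is over $\mathcal{C}$ precisely when $\alpha\mathbb{T}_{\mathbb{H}}$ is. Combining, morphisms of adjunctions over $\mathcal{C}$ from $L_1 \dashv R_1$ to $L_2 \dashv R_2$ correspond to the morphisms $a : (L_1\mathbb{T}_{\mathbb{H}}1)_{\psi_1} \to (L_2\mathbb{T}_{\mathbb{H}}1)_{\psi_2}$ in $[\mathbb{G},\mathcal{C}]/\mathbb{G}^*H_0$ that respect this extra $\mathbb{T}_{\mathbb{H}}$-structure.

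It then remains to recognise that this extra condition on a $\mathbb{G}$-homomorphism $a : L_1\mathbb{T}_{\mathbb{H}}1 \to L_2\mathbb{T}_{\mathbb{H}}1$ over $H_0$ is just $\mathbb{H}$-equivariance for the $\mathbb{H}$-action carried by $L_i\mathbb{T}_{\mathbb{H}}1 \cong R_i\mathbb{T}_{\mathbb{G}}1$ as the underlying object of the $\mathbb{H}\times\mathbb{G}$-object $\frak{P}_{L_i,R_i}$. For this I would evaluate the explicit form of $L_i\mathbb{T}_{\mathbb{H}}$ on $\mathcal{C}/H_0$ from the proof of Lemma \ref{unique} at the object $U_{\mathbb{H}}\mathbb{T}_{\mathbb{H}}1 = (H_1)_{d_1}$: this presents $L_i\mathbb{T}_{\mathbb{H}}U_{\mathbb{H}}\mathbb{T}_{\mathbb{H}}1$ as $L_i\mathbb{T}_{\mathbb{H}}1 \times_{\mathbb{G}^*H_0}\mathbb{G}^*H_1$, and under this identification the map $L_i(\epsilon_{\mathbb{T}_{\mathbb{H}}1})$ is exactly the $\mathbb{H}$-action $c_i$ of $\frak{P}_{L_i,R_i}$ (up to the reindexing $h \mapsto h^{-1}$ of $H_1$), essentially because $\epsilon_{\mathbb{T}_{\mathbb{H}}1}$ is the multiplication of $\mathbb{T}_{\mathbb{H}}$, i.e.\ the composition map $m$ of $\mathbb{H}$, while the $\mathbb{H}$-structure on $L_i\mathbb{T}_{\mathbb{H}}1$ is that of $R_i\mathbb{T}_{\mathbb{G}}1$. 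Thus the $\mathbb{T}_{\mathbb{H}}$-compatibility square for $a$ becomes precisely the statement that $a$ is an $\mathbb{H}$-homomorphism; being also a $\mathbb{G}$-homomorphism over $H_0$, it is exactly a morphism of Hilsum--Skandalis maps $\frak{P}_{L_1,R_1} \to \frak{P}_{L_2,R_2}$. This yields the hom-set bijection and hence the equivalence.

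The step I expect to be the main obstacle is this last identification: carefully matching the monad-theoretic compatibility of a natural transformation of the composite left adjoints $L_i\mathbb{T}_{\mathbb{H}}$ with $\mathbb{H}$-equivariance at the level of the Hilsum--Skandalis data. Its harder half is showing that every $\mathbb{H}\times\mathbb{G}$-homomorphism of the underlying objects does lift back to a morphism of adjunctions over $\mathcal{C}$ --- one must check both that the reconstructed natural transformation exists (using monadicity of $U_{\mathbb{H}}$ and cocontinuity of the $L_i$, so that its values on free $\mathbb{H}$-objects determine it) and that it is again over $\mathcal{C}$ (the coherence isomorphism $\tau$ is recovered by applying the cocontinuous functors $\Sigma_{\mathbb{G}}$ and $\Sigma_{\mathbb{H}}$ to the coequalizers that present $\mathbb{H}$-objects). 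The remaining bookkeeping --- naturality of $a \mapsto \hat a$, and the fact that the stably Frobenius adjunctions over $\mathcal{C}$, like the Hilsum--Skandalis maps, form a groupoid (Proposition \ref{morphHS}) --- is routine once organised around Lemmas \ref{unique} and \ref{groupoidadjisfrob}.
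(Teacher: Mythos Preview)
Your plan is essentially the paper's own argument: both use Lemma~\ref{unique} (applied to $L_i\mathbb{T}_{\mathbb{H}}\dashv U_{\mathbb{H}}R_i$) to trade natural transformations of the composite adjunctions for morphisms $a:(L_1\mathbb{T}_{\mathbb{H}}1)_{\psi_1}\to(L_2\mathbb{T}_{\mathbb{H}}1)_{\psi_2}$ in $[\mathbb{G},\mathcal{C}]/\mathbb{G}^*H_0$, then extend via the canonical monadic coequalizer of $\mathbb{T}_{\mathbb{H}}U_{\mathbb{H}}\epsilon$ and $\epsilon_{\mathbb{T}_{\mathbb{H}}U_{\mathbb{H}}(-)}$, and identify the remaining $\epsilon$-compatibility with $\mathbb{H}$-equivariance of $a$.

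The only organisational difference is in how one shows that an $\alpha:L_1\to L_2$ over $\mathcal{C}$ yields an $\mathbb{H}$-homomorphism at $\mathbb{T}_{\mathbb{H}}1$. You propose to deduce it from the very same identification (compatibility with $L_i\epsilon_{\mathbb{T}_{\mathbb{H}}1}\Leftrightarrow\mathbb{H}$-equivariance) read in the forward direction. The paper instead passes to the mate $\beta:R_2\to R_1$, so that $\beta_{\mathbb{T}_{\mathbb{G}}1}$ is manifestly an $\mathbb{H}$-homomorphism, and then performs a direct calculation with the Frobenius isomorphisms (using that $\alpha$ is over $\mathcal{C}$ and $\epsilon^1 L_1\beta=\epsilon^2\alpha_{R_2}$) to show that $\psi_2$ factors as $\Sigma_{G_0}U_{\mathbb{G}}\alpha_{\mathbb{T}_{\mathbb{H}}1}\circ\psi_1\circ\Sigma_{H_0}U_{\mathbb{H}}\beta_{\mathbb{T}_{\mathbb{G}}1}$; this simultaneously proves that $\alpha_{\mathbb{T}_{\mathbb{H}}1}$ corresponds under $\psi_i$ to $\beta^{-1}_{\mathbb{T}_{\mathbb{G}}1}$. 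Your route is more symmetric; the paper's route is more explicit about where the $\mathbb{H}$-structure lives (on $R_i\mathbb{T}_{\mathbb{G}}1$), at the cost of that seven-line chase. Note too that the paper carries out the $\epsilon$-identification for a general $\mathbb{H}$-object $(Y_g,b)$ (via the diagrams after Lemma~\ref{groupoidadjisfrob} and the isomorphisms from Proposition~\ref{HSexample}), not just at $1$; in your write-up you should make clear why the case $(Y_g,b)=1$ suffices, since $\hat a$ at $U_{\mathbb{H}}\mathbb{T}_{\mathbb{H}}Y_g$ is $a\times Id$ under the Frobenius presentation and the square collapses to the one at $1$ tensored with $Y$.
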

\begin{proof}
We first verify that the assignment of Hilsum-Skandalis maps from adjunctions over $\mathcal{C}$ (i.e. $(L \dashv R) \mapsto \frak{P}_{L,R}$) is functorial. Say we have two adjunctions, $L_i \dashv R_i$, $i=1,2$ over $\mathcal{C}$. Then any natural transformation between the left adjoints (say $\alpha$ from $L_1$ to $L_2$) gives rise to a natural transformation between the right adjoints; say $\beta: R_2 \rTo R_1$. Explicitly, $\beta$ is $R_2 \rTo^{\eta^1} R_1 L_1 R_2 \rTo^{R_1 \alpha_{R_2}} R_1 L_2 R_2 \rTo^{R_1 \epsilon^2} R_1$ where $\eta^i$ and $\epsilon^i$ are the unit and counit respectively of $L_i \dashv R_i$ for $i=1,2$; from this definition we always have $\epsilon^1 L_1 \beta = \epsilon^2 \alpha_{R_2}$. Then we note that we have $\alpha_{\mathbb{T}_{\mathbb{H}}1} : L_1 \mathbb{T}_{\mathbb{H}}1 \rTo L_2 \mathbb{T}_{\mathbb{H}}1$ which is a $\mathbb{G}$-homomorphism. In fact it must be an isomorphism by application of part (a) of Proposition \ref{morphHS}. We also have $\beta_{\mathbb{T}_{\mathbb{G}}1} : R_2 \mathbb{T}_{\mathbb{G}}1 \rTo R_1 \mathbb{T}_{\mathbb{G}}1$ which is an $\mathbb{H}$-homomorphism. This looks like it is going in the wrong direction, but if we can show that the canonical isomorphism $\psi_2: \Sigma_{H_0}U_{\mathbb{H}}R_2\mathbb{T}_{\mathbb{G}}1 \rTo \Sigma_{G_0}U_{\mathbb{G}}L_2 \mathbb{T}_{\mathbb{H}}1$ factors as
\begin{eqnarray*}
\Sigma_{H_0}U_{\mathbb{H}}R_2\mathbb{T}_{\mathbb{G}}1 \rTo^{\Sigma_{H_0}U_{\mathbb{H}} \beta_{\mathbb{T}_{\mathbb{G}}1}} \Sigma_{H_0}U_{\mathbb{H}} R_1\mathbb{T}_{\mathbb{G}}1  \rTo^{\psi_1}  \Sigma_{G_0}U_{\mathbb{G}}L_1 \mathbb{T}_{\mathbb{H}}1   \rTo^{ \Sigma_{G_0}U_{\mathbb{G}}\alpha_{\mathbb{T}_{\mathbb{H}}1}}
\Sigma_{G_0}U_{\mathbb{G}}L_2 \mathbb{T}_{\mathbb{H}}1
\end{eqnarray*}
then we will be done as then $\beta^{-1}_{\mathbb{T}_{\mathbb{G}}1}$ can be defined, is a $\mathbb{H}$-homomorphism and corresponds to $\alpha_{\mathbb{T}_{\mathbb{H}}1}$ under the canonical isomorphisms. Now, up to canonical isomorphisms (determined by $\mathbb{G}$ and $\mathbb{H}$ and not by the adjunctions being over $\mathcal{C}$) $\psi_i$, for $i=1,2$, is
\begin{eqnarray*}
\Sigma_{\mathbb{H}}( \mathbb{T}_{\mathbb{H}}1 \times R_i \mathbb{T}_{\mathbb{G}}1) \rTo^{\tau^{-1}_i} \Sigma_{\mathbb{G}}L_i ( \mathbb{T}_{\mathbb{H}}1 \times R_i \mathbb{T}_{\mathbb{G}}1) \rTo^{\Sigma_{\mathbb{G}}( \epsilon_{\mathbb{T}_{\mathbb{G}}1}^iL_i \pi_2, L_i \pi_1)}   \Sigma_{\mathbb{G}} ( \mathbb{T}_{\mathbb{G}}1 \times L_i \mathbb{T}_{\mathbb{H}}1)
\end{eqnarray*}
where $\tau_i$ are the natural isomorphisms that exist as the adjunctions are over $\mathcal{C}$. Under the canonical isomorphism determined by $\mathbb{G}$, $\alpha_{\mathbb{T}_{\mathbb{H}}1}$ corresponds to $\Sigma_{\mathbb{G}}(Id_{\mathbb{T}_{\mathbb{G}}1} \times \alpha_{\mathbb{T}_{\mathbb{H}}1})$ and similarly $\beta_{\mathbb{T}_{\mathbb{G}}1}$ corresponds to $\Sigma_{\mathbb{H}} ( Id_{\mathbb{T}_{\mathbb{H}}1} \times \beta_{\mathbb{T}_{\mathbb{G}}1})$. To prove the factorisation we check that $\Sigma_{\mathbb{G}}( \epsilon_{\mathbb{T}_{\mathbb{G}}1}^2L_2 \pi_2, L_2 \pi_1)\tau^{-1}_2$
\begin{eqnarray*}
&  = & \Sigma_{\mathbb{G}}( \epsilon_{\mathbb{T}_{\mathbb{G}}1}^2L_2 \pi_2, L_2 \pi_1) \Sigma_{\mathbb{G}}
\alpha_{\mathbb{T}_{\mathbb{H}}1 \times R_2 \mathbb{T}_{\mathbb{G}1}}\tau^{-1}_1 \text{ (as $\alpha$ is over $\mathcal{C}$)}\\
& = & \Sigma_{\mathbb{G}}(\epsilon^2_{\mathbb{T}_{\mathbb{G}}1} \times Id_{L_2 \mathbb{T}_{\mathbb{H}}1}) \Sigma_{\mathbb{G}}(\alpha_{R_2 \mathbb{T}_{\mathbb{G}}1} \times \alpha_{\mathbb{T}_{\mathbb{H}}1}) \Sigma_{\mathbb{G}}(L_1 \pi_2,L_1 \pi_1) \tau^{-1}_1 \\
& = & \Sigma_{\mathbb{G}}(Id_{\mathbb{T}_{\mathbb{G}}1} \times \alpha_{\mathbb{T}_{\mathbb{H}}1}) \Sigma_{\mathbb{G}}(\epsilon^2_{\mathbb{T}_{\mathbb{G}}1} \times Id_{L_1 \mathbb{T}_{\mathbb{H}}1}) \Sigma_{\mathbb{G}}(\alpha_{R_2 \mathbb{T}_{\mathbb{G}}1} \times Id_{L_1 \mathbb{T}_{\mathbb{H}}1}) \Sigma_{\mathbb{G}}(L_1 \pi_2,L_1 \pi_1) \tau^{-1}_1\\
& = & \Sigma_{\mathbb{G}}(Id_{\mathbb{T}_{\mathbb{G}}1} \times \alpha_{\mathbb{T}_{\mathbb{H}}1}) \Sigma_{\mathbb{G}}(\epsilon^1_{\mathbb{T}_{\mathbb{G}}1} \times Id_{L_1 \mathbb{T}_{\mathbb{H}}1}) \Sigma_{\mathbb{G}}(L_1 \beta_{\mathbb{T}_{\mathbb{G}}1} \times Id_{L_1 \mathbb{T}_{\mathbb{H}}1}) \Sigma_{\mathbb{G}}(L_1 \pi_2,L_1 \pi_1) \tau^{-1}_1  \\
& = & \Sigma_{\mathbb{G}}(Id_{\mathbb{T}_{\mathbb{G}}1} \times \alpha_{\mathbb{T}_{\mathbb{H}}1}) \Sigma_{\mathbb{G}}(\epsilon^1_{\mathbb{T}_{\mathbb{G}}1} \times Id_{L_1 \mathbb{T}_{\mathbb{H}}1})  \Sigma_{\mathbb{G}}(L_1 \pi_2,L_1 \pi_1) \Sigma_{\mathbb{G}}L_1 ( Id_{\mathbb{T}_{\mathbb{H}}1} \times \beta_{\mathbb{T}_{\mathbb{G}}1} ) \tau^{-1}_1 \\
& = & \Sigma_{\mathbb{G}}(Id_{\mathbb{T}_{\mathbb{G}}1} \times \alpha_{\mathbb{T}_{\mathbb{H}}1}) \Sigma_{\mathbb{G}}( \epsilon_{\mathbb{T}_{\mathbb{G}}1}^1L_1 \pi_2, L_1 \pi_1 )\Sigma_{\mathbb{G}}L_1 ( Id_{\mathbb{T}_{\mathbb{H}}1} \times \beta_{\mathbb{T}_{\mathbb{G}}1} ) \tau^{-1}_1 \\
& = & \Sigma_{\mathbb{G}}(Id_{\mathbb{T}_{\mathbb{G}}1} \times \alpha_{\mathbb{T}_{\mathbb{H}}1}) \Sigma_{\mathbb{G}}( \epsilon_{\mathbb{T}_{\mathbb{G}}1}^1L_1 \pi_2, L_1 \pi_1 )\tau^{-1}_1 \Sigma_{\mathbb{H}} ( Id_{\mathbb{T}_{\mathbb{H}}1} \times \beta_{\mathbb{T}_{\mathbb{G}}1} ) \\
\end{eqnarray*}
where the second line is by naturality of $\alpha$ at $\pi_1: \mathbb{T}_{\mathbb{H}}1 \times R_2 \mathbb{T}_{\mathbb{G}} 1  \rTo \mathbb{T}_{\mathbb{H}}1 $ and $\pi_2: \mathbb{T}_{\mathbb{H}}1 \times R_2 \mathbb{T}_{\mathbb{G}} 1  \rTo R_2 \mathbb{T}_{\mathbb{G}} 1 $, the fourth line is from the definition of $\beta$ and the fifth line is naturality of $\beta$. The last line is naturality of $\tau_1$; we have been suppressing the objects in the notation for the $\tau$s. So we can conclude that $(L \dashv R) \mapsto \frak{P}_{L,R}$ is functorial.

In the other direction to make $\frak{P} \mapsto (L_{\frak{P}} \dashv R_{\frak{P}})$ functorial say we have an $\mathbb{H} \times \mathbb{G}$-homomorphism $a: \frak{P}_1 \rTo \frak{P}_2$; in other words we have a morphism $a: P_1 \rTo P_2$ that is both an $\mathbb{H}$-homomorphism and a $\mathbb{G}$-homomorphism. Writing $L_i \dashv R_i$ for $L_{\frak{P}_i} \dashv R_{\frak{P}_i}$, by the last theorem $P_i \cong L_i \mathbb{T}_{\mathbb{H}}1$ and so by Lemma \ref{unique} applied to $L_i\mathbb{T}_{\mathbb{H}} \dashv U_{\mathbb{H}} R_i$ there is a natural transformation $\hat{a}: L_1 \mathbb{T}_{\mathbb{H}} \rTo L_2 \mathbb{T}_{\mathbb{H}}$ over $\mathcal{C}$ that is uniquely determined by $\hat{a}_1 =a'$ where $a'\cong a$ via $P_i \cong L_i \mathbb{T}_{\mathbb{H}}1$. Explicitly, for every object $Y_g$ of $\mathcal{C}/H_0$, $\hat{a}_{Y_g}$ is defined using $a' \times Id_Y :L_1 \mathbb{T}_{\mathbb{H}}1 \times_{\mathbb{H}^*H_0} \mathbb{H}^*Y \rTo  L_2 \mathbb{T}_{\mathbb{H}}1 \times_{\mathbb{H}^*H_0} \mathbb{H}^*Y$ and $L_i \mathbb{T}_{\mathbb{H}} Y_g \cong L_i \mathbb{T}_{\mathbb{H}}1 \times_{\mathbb{H}^*H_0} \mathbb{H}^*Y$.
But every $\mathbb{H}$-object $(Y_g,b)$ is a canonical coequalizer of a pair of arrows
\begin{eqnarray*}
\mathbb{T}_{\mathbb{H}}U_{\mathbb{H}}\mathbb{T}_{\mathbb{H}}U_{\mathbb{H}}(Y_g,b) \pile{\rTo^{\mathbb{T}_{\mathbb{H}}b} \\ \rTo_{\epsilon_{\mathbb{T}_{\mathbb{H}}U_{\mathbb{H}}(Y_g,b)}}} \mathbb{T}_{\mathbb{H}}U_{\mathbb{H}}(Y_g,b) \text{.}
\end{eqnarray*}
where $\epsilon$ is the counit of $\mathbb{T}_{\mathbb{H}} \dashv U_{\mathbb{H}}$. Therefore as $L_i$ preserves coequalizers, $\hat{a}$ can be seen to define a natural transformation $\bar{a}: L_1 \rTo L_2$ over $\mathcal{C}$, uniquely determined by $\bar{a}_{(Y_g,b)}b=b \hat{a}_{Y_g}$ for all $\mathbb{H}$-objects $(Y_g,b)$, provided we can check that (a) $\hat{a}_{Y_g} (L_1 \mathbb{T}_{\mathbb{H}}b)= (L_2 \mathbb{T}_{\mathbb{H}}b)\hat{a}_{U_{\mathbb{H}}\mathbb{T}_{\mathbb{H}}Y_g}$ and (b) $\hat{a}_{Y_g} (L_1 \epsilon_{\mathbb{T}_{\mathbb{H}}U_{\mathbb{H}}(Y_g,b)})= (L_2 \epsilon_{\mathbb{T}_{\mathbb{H}}U_{\mathbb{H}}(Y_g,b)})\hat{a}_{U_{\mathbb{H}}\mathbb{T}_{\mathbb{H}}Y_g}$. That (a) is true is clear from naturality of $\hat{a}$. As for (b), recall the diagrams after Lemma \ref{groupoidadjisfrob} which show how $\epsilon_{\mathbb{T}_{\mathbb{H}}U_{\mathbb{H}}(Y_g,b)}$ can be identified with $\pi_2 \times Id : \mathbb{T}_{\mathbb{H}}1 \times  \mathbb{T}_{\mathbb{H}}1  \times (Y_g,b) \rTo \mathbb{T}_{\mathbb{H}}1  \times (Y_g,b)$. Hence $L\epsilon_{\mathbb{T}_{\mathbb{H}}U_{\mathbb{H}}(Y_g,b)}$ can be identified with 
\begin{eqnarray*}
\Sigma_{\mathbb{G}}( \mathbb{T}_{\mathbb{G}} 1 \times L [\mathbb{T}_{\mathbb{H}}1 \times  \mathbb{T}_{\mathbb{H}}1  \times (Y_g,b)]) \rTo^{\Sigma_{\mathbb{G}}(Id \times L[\pi_2 \times Id])} \Sigma_{\mathbb{G}}( \mathbb{T}_{\mathbb{G}} 1 \times L [\mathbb{T}_{\mathbb{H}}1  \times (Y_g,b)]) 
\end{eqnarray*}
which (see the proof of Proposition \ref{HSexample}) is isomorphic to:
\begin{eqnarray*}
\Sigma_{\mathbb{H}}([\mathbb{T}_{\mathbb{H}}1 \times  \mathbb{T}_{\mathbb{H}}1  \times (Y_g,b)] \times R \mathbb{T}_{\mathbb{G}}1 ) \rTo^{\Sigma_{\mathbb{H}}( [\pi_2 \times Id] \times Id )  } \Sigma_{\mathbb{H}}( [ \mathbb{T}_{\mathbb{H}}1  \times (Y_g,b)] \times  R \mathbb{T}_{\mathbb{G}} 1) 
\end{eqnarray*}
But this last is the $\mathbb{H}$-action on $(Y_g,b) \times R\mathbb{T}_{\mathbb{G}}1$ and so (b) holds because $a$ is also an $\mathbb{H}$-homomorphism (recalling how $\hat{a}$ is defined in terms of $a \times Id$). Hence $\frak{P} \mapsto (L_{\frak{P}} \dashv R_{\frak{P}})$ is functorial.

Now by applying (b) to $(Y_g,b)= 1$ we see that $a'L_1\epsilon_{\mathbb{T}_{\mathbb{H}}1} = L_2 \epsilon_{\mathbb{T}_{\mathbb{H}}1} \hat{a}_{U_{\mathbb{H}}\mathbb{T}_{\mathbb{H}}1}$ and so $a'=\bar{a}_{\mathbb{T}_{\mathbb{H}}1}$, from which it is clear that $(L \dashv R) \mapsto \frak{P}_{L,R}$ is full. To show faithfulness notice that for $\alpha: L_1 \rTo L_2$ over $\mathcal{C}$, the natural transformation $\alpha_{\mathbb{T}_{\mathbb{H}}}: L_1 \mathbb{T}_{\mathbb{H}} \rTo L_2 \mathbb{T}_{\mathbb{H}}$ is equal to $\widehat{\alpha_{\mathbb{T}_{\mathbb{H}}1}}$ because they both agree at $1$ (Lemma \ref{unique}). By naturality of $\alpha$ at $b$ we then have $\alpha_{(Y_g,b)}L_1 \mathbb{T}_{\mathbb{H}}b = b (\widehat{\alpha_{\mathbb{T}_{\mathbb{H}}1}})_{(Y_g,b)}$ and so $\alpha$ is uniquely determined by $\alpha_{\mathbb{T}_{\mathbb{H}}1}$.
\end{proof}

The next Proposition is an application of the characterisation of morphisms between Hilsum-Skandlis maps as natural transformations (neccesarily isomorphisms) between stably Frobenius adjunctions. Recall that if $\mathbb{G}$ is an internal groupoid, in $\mathcal{C}$, and $I$ is an object of $\mathcal{C}$, then a groupoid $\mathcal{C}(I,\mathbb{G})$ can be defined by setting $\mathcal{C}(I,\mathbb{G})_0=\mathcal{C}(I,G_0)$ and $\mathcal{C}(I,\mathbb{G})_1=\mathcal{C}(I,G_1)$. The groupoid structure of $\mathcal{C}(I,\mathbb{G})$ is inherited from $\mathbb{G}$, for example, the multiplication is given by $(y_1,y_2) \mapsto m(y_1,y_2)$ where $m$ is the multiplication of $\mathbb{G}$.
\begin{proposition}\label{recoverG}
Let $\mathbb{G}$ be a stably Frobenius groupoid in $\mathcal{C}$ and $I$ an object of $\mathcal{C}$. Then $\mathcal{C}(I,\mathbb{G})$ is equivalent to the full subcategory of the category of stably Frobenius adjunctions from $\mathcal{C}/I$ to $[\mathbb{G},\mathcal{C}]$ over $\mathcal{C}$ consisting of adjunctions of the form $\mathbb{T}_{\mathbb{G}}\Sigma_x \dashv x^* U_{\mathbb{G}}$ for each $x: I \rTo G_0$. 
\end{proposition}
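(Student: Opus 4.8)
The plan is to exhibit an explicit functor from $\mathcal{C}(I,\mathbb{G})$ to the named full subcategory and to verify it is essentially surjective (trivially so), full, and faithful, the last two by reducing to Lemma~\ref{unique}. First I would check that each candidate adjunction really lives in the category in question. The adjunction $\mathbb{T}_{\mathbb{G}}\Sigma_x \dashv x^*U_{\mathbb{G}}$ is the composite of the pullback adjunction $\Sigma_x \dashv x^*\colon \mathcal{C}/I \to \mathcal{C}/G_0$, which is stably Frobenius, with $\mathbb{T}_{\mathbb{G}} \dashv U_{\mathbb{G}}$, which is stably Frobenius by Lemma~\ref{groupoidadjisfrob}; hence it is stably Frobenius, and in particular Frobenius at $\mathbb{G}^*I$. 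It is moreover over $\mathcal{C}$ relative to the connected components adjunction $\Sigma_{\mathbb{G}} \dashv \mathbb{G}^*$ (which exists since $\mathbb{G}$ is stably Frobenius): by the observation in Section~\ref{connected} that $\Sigma_{\mathbb{G}}\mathbb{T}_{\mathbb{G}} \cong \Sigma_{G_0}$ we get $\Sigma_{\mathbb{G}}\mathbb{T}_{\mathbb{G}}\Sigma_x \cong \Sigma_{G_0}\Sigma_x \cong \Sigma_I$. So the full subcategory $\mathcal{A}$ of stably Frobenius adjunctions over $\mathcal{C}$ it refers to is well-defined.

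Next I would build a functor $F\colon \mathcal{C}(I,\mathbb{G}) \to \mathcal{A}$ by setting $F(x) = (\mathbb{T}_{\mathbb{G}}\Sigma_x \dashv x^*U_{\mathbb{G}})$ on objects, and sending a morphism $\gamma\colon x \to y$ of $\mathcal{C}(I,\mathbb{G})$ (a map $\gamma\colon I \to G_1$ with $d_0\gamma = x$, $d_1\gamma = y$) to the natural transformation $\alpha^\gamma\colon \mathbb{T}_{\mathbb{G}}\Sigma_x \to \mathbb{T}_{\mathbb{G}}\Sigma_y$ given, at $A_f$, by ``right translation by $\gamma^{-1}$'': on elements $(g,a) \mapsto (g\cdot\gamma(f(a))^{-1}, a)$, where the underlying object of both $\mathbb{T}_{\mathbb{G}}\Sigma_x A_f$ and $\mathbb{T}_{\mathbb{G}}\Sigma_y A_f$ is $G_1 \times_{G_0} A$ (formed as the pullback along $d_0$ of $xf$, respectively $yf$). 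I would then check the routine points: this map lands in $\mathbb{T}_{\mathbb{G}}\Sigma_y A_f$ (a source/target calculation); it is a $\mathbb{G}$-homomorphism over $G_0$, since the left $\mathbb{G}$-action $(h,(g,a))\mapsto(hg,a)$ commutes with right translation and right translation does not change $d_1$ of the first coordinate; it is natural in $A_f$; and it is compatible with the isomorphisms $\tau$ witnessing ``over $\mathcal{C}$'', because right translation descends along the coequalizer $\pi_2\colon G_1\times_{G_0}A \to A$ to the identity, so $\Sigma_{\mathbb{G}}\alpha^\gamma$ is the identity of $\Sigma_I A_f \cong A$. Functoriality of $F$ then follows from $(\delta\gamma)(i)^{-1} = \gamma(i)^{-1}\delta(i)^{-1}$, and $\alpha^{\mathrm{id}} = \mathrm{id}$ is clear.

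For full faithfulness I would apply Lemma~\ref{unique} with $\mathcal{D} = [\mathbb{G},\mathcal{C}]$, $\mathcal{D}^* = \mathbb{G}^*$, $X = I$ and $L_i = \mathbb{T}_{\mathbb{G}}\Sigma_{x_i}$, whose hypotheses (over $\mathcal{C}$, Frobenius at $\mathbb{G}^*I$) are supplied by the first paragraph. This identifies morphisms of adjunctions over $\mathcal{C}$ from $F(x)$ to $F(y)$ with morphisms $(\mathbb{T}_{\mathbb{G}}I_x)_{\psi_x} \to (\mathbb{T}_{\mathbb{G}}I_y)_{\psi_y}$ in $[\mathbb{G},\mathcal{C}]/\mathbb{G}^*I$, where $\psi_x$ is the adjoint transpose across $\Sigma_{\mathbb{G}}\dashv\mathbb{G}^*$ of the canonical isomorphism $\Sigma_{\mathbb{G}}\mathbb{T}_{\mathbb{G}}I_x \cong I$. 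Since (Section~\ref{connected}) that isomorphism is realised by the coequalizer map $\pi_2$, $\psi_x$ is the unit of $\Sigma_{\mathbb{G}}\dashv\mathbb{G}^*$ at $\mathbb{T}_{\mathbb{G}}I_x$, namely $(d_1\pi_1,\pi_2)\colon G_1\times_{G_0}I \to G_0\times I$. Transposing across $\mathbb{T}_{\mathbb{G}}\dashv U_{\mathbb{G}}$ and imposing the condition of lying over $\mathbb{G}^*I$ forces such a morphism to have transpose $i \mapsto (\gamma'(i), i)$ for a unique $\gamma'\colon I \to G_1$ with $d_0\gamma' = y$, $d_1\gamma' = x$; that is, these morphisms are in natural bijection with $\mathcal{C}(I,\mathbb{G})(y,x)$. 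A short check ($\alpha^\gamma_1$ has transpose $i\mapsto\gamma(i)^{-1}$) shows that, under the resulting chain of bijections, $F$ acts on hom-sets as $\gamma \mapsto \gamma^{-1}$ followed by a bijection, hence is itself a bijection; so $F$ is fully faithful, and with essential surjectivity it is an equivalence.

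I expect the main obstacle to be the bookkeeping in the last step: identifying $\psi_x$ precisely, keeping the source/target conventions of $\mathcal{C}(I,\mathbb{G})$ consistent throughout, and confirming that composing $F$ with the bijection of Lemma~\ref{unique} gives the inversion map $\gamma\mapsto\gamma^{-1}$ (so that the apparent reversal of direction is harmless). None of this is conceptually hard, but it is the part most likely to hide a sign-of-the-arrow slip, so it deserves care; the rest — stable Frobenius-ness, being over $\mathcal{C}$, and the elementary properties of $\alpha^\gamma$ — is straightforward from the results already in hand.
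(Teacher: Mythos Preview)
Your proposal is correct and follows essentially the same strategy as the paper: define $F$ on objects by $x\mapsto(\mathbb{T}_{\mathbb{G}}\Sigma_x\dashv x^*U_{\mathbb{G}})$ and on morphisms by right translation by $\gamma^{-1}$, then verify full faithfulness by reducing to $\mathbb{G}$-homomorphisms $\mathbb{T}_{\mathbb{G}}I_x\to\mathbb{T}_{\mathbb{G}}I_y$ over $\mathbb{G}^*I$ and extracting the unique $\gamma$.

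The only noteworthy difference is the mechanism of that reduction. The paper, having just proved Theorem~\ref{HS}, identifies $F(x)$ with the principal $\mathbb{G}$-bundle $\pi_2\colon G_1\times_{G_0}I\to I$ and argues directly that any map of principal bundles over $I$ is of the form $(g,i)\mapsto(g\,y(i)^{-1},i)$, reading off $y$ by evaluating at $(e x_1 i,i)$. You instead invoke Lemma~\ref{unique} to pass to $[\mathbb{G},\mathcal{C}]/\mathbb{G}^*I$ and then take one more adjoint transpose across $\mathbb{T}_{\mathbb{G}}\dashv U_{\mathbb{G}}$ to force the form $i\mapsto(\gamma'(i),i)$. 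Your route is slightly more economical (it needs only Lemma~\ref{unique}, not the full Theorem~\ref{HS}); the paper's route illustrates the proposition as an application of the main representation result. The bookkeeping you flag---identifying $\psi_x$ with $(d_1\pi_1,\pi_2)$ and tracking the inversion $\gamma\mapsto\gamma^{-1}$---is exactly what the paper handles in its direct argument, and your description of it is accurate.
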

In other words $\mathcal{C}(I,\mathbb{G})$ embeds in the groupoid of `stage $I$ points' of $[\mathbb{G},\mathcal{C}]$. Do not expect the embedding to be full as there will also be the points arising from every $\mathbb{G}'$ Morita equivalent to $\mathbb{G}$ in this groupoid (see C5.2.4 of \cite{Elephant} for a reference to a concrete example).
\begin{proof}
Write $p_x$ for the adjunction $\mathbb{T}_{\mathbb{G}}\Sigma_x \dashv x^* U_{\mathbb{G}}$. The Hilsum-Skandalis map associated with $p_x$ is the principal $\mathbb{G}$-bundle $G_1 \times_{G_0} I \rTo I$ (i.e. $\pi_2 : d_0^*x \rTo I$) where the $\mathbb{G}$-action is given by $m \times Id_I$. Define a functor $F$ by $x \mapsto p_x$ on objects and send any morphism $y: I \rTo G_1$ to 
\begin{eqnarray*}
G_1 \times_{G_0} I  \rTo G_1 \times_{G_0} I \\
(g,i) \mapsto  (gy(i)^{-1},i)\\
\end{eqnarray*}
where the domain is a pullback of $I \rTo^y G_1 \rTo^{d_0} G_0$ and the codomain a pullback of $I \rTo^y G_1 \rTo^{d_1} G_0$. This defines a $\mathbb{G}$-homomorphism over $I$ and certainly by considering the groupoid identity we see that $F$, defined on morphisms in this way, is faithful.
Now, given two objects $x_1$ and $x_2$ of $\mathcal{C}(I,\mathbb{G})$, a map of principal bundles $\psi: G_1 \times_{G_0} I \rTo G_1 \times_{G_0} I$ must be of the form $(\theta,\pi_2)$ for some $\theta: G_1 \times_{G_0} I \rTo G_1$ as $\psi$ is over $I$. But in fact as $\psi$ is also a $\mathbb{G}$-homomorphism $\theta$ must be $(g,i) \mapsto gy(i)^{-1}$ where $y: I \rTo G_1$ is given by $i \mapsto [\theta(ex_1i,i)]^{-1}$. It is easy to see that $d_0 \theta(ex_1 i,i)=x_2 i$ and $d_1 \theta(ex_1 i,i)=x_1 i$ and so $y$ is a morphism from $x_1$ to $x_2$. Hence $F$ is full and this completes the proof as $F$ is essentially surjective by definition of the codomain category.
\end{proof}

\section{Hilsum-Skandalis maps invert essential equivalences}\label{invert}
In this section we use the representation of Hilsum-Skandalis maps as stably Frobenius adjunctions to give a short proof that Hilsum-Skandalis maps correspond to the maps of a category that universally inverts internal essential equivalences. This result is a known characteristic of Hilsum-Skandalis maps, though has perhaps not been observed at this level of generality before. The result is also notable as it does not require the machinery of a calculus of fractions. Let $SFGpd_{\mathcal{C}}$ be the 2-category whose objects are stably Frobenius groupoids internal to $\mathcal{C}$, whose morphisms are internal functors and whose 2-cells are internal natural transformations. Let $T_{\mathcal{C}}$ be the 2-category whose objects are categories of the form $[\mathbb{G},\mathcal{C}]$ with stably Frobenius $\mathbb{G}$, whose morphisms are stably Frobenius adjunctions over $\mathcal{C}$ and whose 2-cells are natural transformatons between the left adjoints that commute with connected components functors in the manner as described in Section \ref{adj}. 

Before the statement and proof of the theorem we need a lemma:
\begin{lemma}
If $h: (Y_g,b) \rTo (X_f,a) $ is a $\mathbb{G}$-homomorphism then $(h: Y \rTo X,Id_{G_1} \times h: G_1 \times_{G_0} Y \rTo G_1 \times_{G_0} X)$ determines an internal functor $\frak{h}: \mathbb{X}_{f,a} \rTo \mathbb{Y}_{g,b}$ (i.e Lemma \ref{objectslice} extends to morphisms). The functor $\frak{h}$ is an essential equivalence iff $h: Y \rTo X$ is an effective descent morphism.
\end{lemma}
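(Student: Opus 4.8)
The plan is to check the internal-functor claim directly and then to read off essential surjectivity and full faithfulness of $\frak{h}$ from their definitions, the one piece of non-formal input being the identity $a\circ(Id_{G_1}\times h)=h\circ b$ that says $h$ is a $\mathbb{G}$-homomorphism, together with the observation that the structure maps of a groupoid action are split epimorphisms. That $(h:Y\rTo X,\ Id_{G_1}\times h:G_1\times_{G_0}Y\rTo G_1\times_{G_0}X)$ is an internal functor $\frak{h}$ is immediate: compatibility with the source maps (the $\pi_2$'s of $\mathbb{Y}_{g,b}$ and $\mathbb{X}_{f,a}$) holds because $Id_{G_1}\times h$ is just $h$ on the second coordinate, compatibility with the target maps $b$ and $a$ is precisely the displayed homomorphism identity, and compatibility with composition, identities and inverses is inherited from $\mathbb{G}$ because the $G_1$-coordinate is left untouched; this is the content of the remark that Lemma \ref{objectslice} extends to morphisms.

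For essential surjectivity, the defining morphism is the composite $Y\times_X(G_1\times_{G_0}X)\rTo G_1\times_{G_0}X\rTo X$ — the pullback being of the target map $a$ of $\mathbb{X}_{f,a}$ along $h$, and the composite being ``take the $\mathbb{X}_{f,a}$-morphism, then its source''. Using the isomorphism $\mathbb{G}\cong\mathbb{G}^{op}$ to exchange source and target one identifies this pullback with the domain $G_1\times_{G_0}Y$ of the action $b$, and — using the homomorphism identity again to turn the target in $\mathbb{X}_{f,a}$ into $h$ of the corresponding target in $\mathbb{Y}_{g,b}$ — identifies the composite with $h\circ b$. Since $b$ is split by the identity-assigning map $e$ of $\mathbb{G}$ it is a split epimorphism, hence of effective descent; by the facts recalled at the start of Section \ref{edpara} ($fg$ is of effective descent precisely when $f$ is, provided $g$ is a split epimorphism), $h\circ b$ is of effective descent iff $h$ is. This proves the ``only if'' direction, and half of the ``if'' direction.

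It then remains to verify that $\frak{h}$ is fully faithful, which together with the previous paragraph completes the ``if'' direction. Here I would unwind the comparison morphism from $G_1\times_{G_0}Y$ to the pullback of $(\pi_2,a):G_1\times_{G_0}X\rTo X\times X$ along $h\times h$, once more using the homomorphism identity, and exhibit it as a canonical section of a projection off a pullback of $h$ along the target map of $\mathbb{Y}_{g,b}$ (itself a split epimorphism), which is checked directly to be an isomorphism; alternatively, since slices of the stably Frobenius $\mathbb{G}$ are again stably Frobenius, one could route this through Proposition \ref{esseqiseq}, with $\frak{h}^{*}$ identified as pullback of $\mathbb{G}$-objects along $h$. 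I expect the only real obstacle to be the bookkeeping: keeping straight, through the several reparametrisations, which pullback is which and confirming the maps produced are genuinely mutually inverse — there should be no conceptual difficulty beyond the repeated, systematic use of the homomorphism identity and the split-epimorphy of the structure actions.
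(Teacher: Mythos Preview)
Your treatment of the internal-functor claim and of essential surjectivity is essentially the paper's: the paper dismisses the functor part as routine, and for essential surjectivity it writes the relevant map $Y\times_X(G_1\times_{G_0}X)\rTo X$ as $h\circ c$ with $c(y,(g,x))=g^{-1}y$ a split epimorphism, then invokes the same composition fact from Section~\ref{edpara}. Your reparametrisation via $\mathbb{G}\cong\mathbb{G}^{op}$ to reach $h\circ b$ is a cosmetic variant of the same factorisation.

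Where your proposal goes wrong is the full-faithfulness step. Unwind the comparison map: its target is
\[
\{(y_1,y_2,\gamma)\ :\ d_0(\gamma)=g(y_1),\ h(y_2)=a(\gamma,h(y_1))=h(b(\gamma,y_1))\},
\]
and the map sends $(\gamma,y)\mapsto(y,b(\gamma,y),\gamma)$. This is an isomorphism precisely when $h(y_2)=h(b(\gamma,y_1))$ forces $y_2=b(\gamma,y_1)$, i.e.\ precisely when $h$ is a monomorphism. So $\frak{h}$ is \emph{not} fully faithful in general, and neither of your two suggested routes can succeed without an extra hypothesis on $h$; the alternative via Proposition~\ref{esseqiseq} would in any case be circular, since that proposition deduces full faithfulness from $\frak{h}^*$ being an equivalence rather than the other way round.

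The paper's own proof does not address full faithfulness at all --- it establishes only that $\frak{h}$ is essentially surjective iff $h$ is of effective descent. This is what the subsequent theorem actually uses: for $t^{\alpha}$ the underlying $h$ is an isomorphism (Proposition~\ref{morphHS}), so $\frak{h}$ is trivially an essential equivalence; and for the projection $\pi_1:\mathbb{P}_{123}\rTo\mathbb{P}_1$ the theorem invokes the lemma only for essential surjectivity and then proves full faithfulness separately by hand. So the ``iff'' as stated should be read as concerning essential surjectivity; the upgrade to ``essential equivalence'' is not established and is not true without the monomorphism hypothesis.
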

\begin{proof}
That an internal functor is defined is routine from definitions. For the effective descent claim, note that the relevant map is $Y \times_X (G_1 \times_{G_0} X ) \rTo^{\pi_2} G_1 \times_{G_0} X \rTo^{\pi_2} X$ where the domain is the pullback of $a$ along $h$. But this map factors as $hc$ where $c:Y \times_X (G_1 \times_{G_0} X ) \rTo Y$ is $(y,(g,x)) \mapsto g^{-1}y$ since $h$ is a $\mathbb{G}$-homomorphism. As $c$ is a split epimorphism, using the groupoid identity, we can complete by recalling the remarks of Section \ref{edpara} which outlined how a composition of two morphisms is an effective descent morphism if and only if the second factor is an effective descent morphism, provided the first factor is a split epimorphism.
\end{proof}
\begin{theorem}
There is a pseudo-functor $[ \_ , \mathcal{C}] : SFGpd_{\mathcal{C}} \rTo T_{\mathcal{C}}$ which takes essential equivalences to equivalences and has the property that for any other pseudo-functor $\mathcal{N}: SFGpd_{\mathcal{C}} \rTo N$ which takes essential equivalences to equivalences, there is a pseudo-functor $\bar{\mathcal{N}} : T_{\mathcal{C}} \rTo N$, unique up to equivalence, such that $\bar{\mathcal{N}} [ \_ , \mathcal{C}] \simeq \mathcal{N}$.
\end{theorem}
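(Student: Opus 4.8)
The plan is first to construct the pseudo-functor, then to extract from Theorem~\ref{HS} a ``span presentation'' of every morphism of $T_{\mathcal{C}}$, and finally to use that presentation together with the iso-comma groupoid to build and pin down $\bar{\mathcal{N}}$.

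\textbf{Step 1 (the pseudo-functor).} Send $\mathbb{G}\mapsto[\mathbb{G},\mathcal{C}]$ on objects, an internal functor $\mathbb{F}:\mathbb{H}\rTo\mathbb{G}$ to the adjunction $\Sigma_{\mathbb{F}}\dashv\mathbb{F}^*:[\mathbb{H},\mathcal{C}]\pile{\rTo \\ \lTo}[\mathbb{G},\mathcal{C}]$, which exists, is stably Frobenius, and is over $\mathcal{C}$ by Proposition~\ref{product}(ii) (using that $\mathbb{H}$ is stably Frobenius), and an internal natural transformation $\mathbb{F}\Rightarrow\mathbb{F}'$ — necessarily invertible, since $\mathbb{G}$ is a groupoid — to the induced isomorphism $\Sigma_{\mathbb{F}}\cong\Sigma_{\mathbb{F}'}$, which is a $2$-cell of $T_{\mathcal{C}}$. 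Pseudo-functoriality ($\Sigma_{\mathbb{F}'\mathbb{F}}\cong\Sigma_{\mathbb{F}'}\Sigma_{\mathbb{F}}$ with the coherences) follows from $(\mathbb{F}'\mathbb{F})^*\cong\mathbb{F}^*(\mathbb{F}')^*$ by uniqueness of adjoints. That essential equivalences are carried to equivalences is the implication (a)$\Rightarrow$(c) of Proposition~\ref{esseqiseq}.

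\textbf{Step 2 (span presentation and iso-commas).} By Theorem~\ref{HS} every morphism $L\dashv R:[\mathbb{H},\mathcal{C}]\rTo[\mathbb{G},\mathcal{C}]$ of $T_{\mathcal{C}}$ is the adjunction of a Hilsum-Skandalis map $\frak{P}$, which by Proposition~\ref{HSasesseq} carries its projection span $\mathbb{H}\lTo^{\frak{p}}\mathbb{P}\rTo^{\frak{q}}\mathbb{G}$ with $\frak{p}$ an essential equivalence; moreover $\mathbb{P}$ is stably Frobenius, being a slice of $\mathbb{H}\times\mathbb{G}$ which is stably Frobenius by Proposition~\ref{product}(iii), and stable Frobeniusness is slice stable. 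Unwinding the construction of $L^{\frak{P}}\dashv R^{\frak{P}}$ in the representation theorem gives $L\dashv R\cong[\frak{q},\mathcal{C}]\circ\overline{[\frak{p},\mathcal{C}]}$ in $T_{\mathcal{C}}$, where $\overline{[\frak{p},\mathcal{C}]}$ is the quasi-inverse of the equivalence $[\frak{p},\mathcal{C}]$. The second ingredient is the iso-comma (pseudo-pullback) groupoid of two internal functors $\mathbb{F}_i:\mathbb{R}_i\rTo\mathbb{K}$ between stably Frobenius groupoids: it is again stably Frobenius (built from a product and slices) and, when $\mathbb{F}_2$ is an essential equivalence, its projection to $\mathbb{R}_1$ is an essential equivalence; furthermore $[\_,\mathcal{C}]$ sends it to an iso-comma in $T_{\mathcal{C}}$, so that $\overline{[\mathbb{F}_2,\mathcal{C}]}\,[\mathbb{F}_1,\mathcal{C}]\cong[\frak{b},\mathcal{C}]\,\overline{[\frak{a},\mathcal{C}]}$ for the two projections $\frak{a},\frak{b}$. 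I also record the (standard) fact that an internal functor into a groupoid is recovered up to internal natural isomorphism from its image under $[\_,\mathcal{C}]$, obtained by evaluating a natural isomorphism $\mathbb{F}_1^*\cong\mathbb{F}_2^*$ at $\mathbb{T}_{\mathbb{G}}1$ and reading off the component at identities.

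\textbf{Step 3 (construction of $\bar{\mathcal{N}}$).} Given $\mathcal{N}:SFGpd_{\mathcal{C}}\rTo N$ inverting essential equivalences, put $\bar{\mathcal{N}}([\mathbb{G},\mathcal{C}]):=\mathcal{N}(\mathbb{G})$ and, for a morphism $L\dashv R$ presented as in Step~2, $\bar{\mathcal{N}}(L\dashv R):=\mathcal{N}(\frak{q})\circ\overline{\mathcal{N}(\frak{p})}$, with the action on $2$-cells obtained from the span isomorphisms underlying morphisms of Hilsum-Skandalis maps (Theorem~\ref{HS}). For well-definedness on morphisms, given a second span $(\frak{p}',\frak{q}')$ with $[\frak{q},\mathcal{C}]\overline{[\frak{p},\mathcal{C}]}\cong[\frak{q}',\mathcal{C}]\overline{[\frak{p}',\mathcal{C}]}$, form the iso-comma $\mathbb{R}$ of $\frak{p},\frak{p}'$ over $\mathbb{H}$, with projections $\frak{a},\frak{b}$ (both essential equivalences); the hypothesis becomes $[\frak{q}\frak{a},\mathcal{C}]\cong[\frak{q}'\frak{b},\mathcal{C}]$, hence (by the last remark of Step~2) an internal natural isomorphism $\frak{q}\frak{a}\cong\frak{q}'\frak{b}$, and applying $\mathcal{N}$, together with $\mathcal{N}$ of $\frak{p}\frak{a}\cong\frak{p}'\frak{b}$, gives $\mathcal{N}(\frak{q})\overline{\mathcal{N}(\frak{p})}\cong\mathcal{N}(\frak{q}')\overline{\mathcal{N}(\frak{p}')}$. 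For pseudo-functoriality, resolve a composite by the iso-comma $\mathbb{R}$ of $\frak{q}_1,\frak{p}_2$ (legs $\frak{a},\frak{b}$, $\frak{a}$ an essential equivalence), obtaining $L_2L_1\dashv R_1R_2\cong[\frak{q}_2\frak{b},\mathcal{C}]\,\overline{[\frak{p}_1\frak{a},\mathcal{C}]}$, whence $\bar{\mathcal{N}}(L_2L_1\dashv R_1R_2)=\mathcal{N}(\frak{q}_2\frak{b})\overline{\mathcal{N}(\frak{p}_1\frak{a})}\cong\mathcal{N}(\frak{q}_2)\overline{\mathcal{N}(\frak{p}_2)}\,\mathcal{N}(\frak{q}_1)\overline{\mathcal{N}(\frak{p}_1)}=\bar{\mathcal{N}}(L_2\dashv R_2)\bar{\mathcal{N}}(L_1\dashv R_1)$, using $\mathcal{N}$ of the iso-comma $2$-cell; the unit and coherence laws are checked the same way. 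Finally $\bar{\mathcal{N}}[\_,\mathcal{C}]\simeq\mathcal{N}$: the Hilsum-Skandalis map of $\Sigma_{\mathbb{F}}\dashv\mathbb{F}^*$ has span $\mathbb{H}\lTo^{\frak{p}_{\mathbb{F}}}\mathbb{P}_{\mathbb{F}}\rTo^{\frak{q}_{\mathbb{F}}}\mathbb{G}$ with $\frak{p}_{\mathbb{F}}$ an essential equivalence and a natural isomorphism $\frak{q}_{\mathbb{F}}\cong\mathbb{F}\frak{p}_{\mathbb{F}}$, so $\bar{\mathcal{N}}[\mathbb{F},\mathcal{C}]=\mathcal{N}(\frak{q}_{\mathbb{F}})\overline{\mathcal{N}(\frak{p}_{\mathbb{F}})}\cong\mathcal{N}(\mathbb{F})$, pseudo-naturally in $\mathbb{F}$.

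\textbf{Step 4 (uniqueness and the obstacle).} If $\bar{\mathcal{N}}'$ also satisfies $\bar{\mathcal{N}}'[\_,\mathcal{C}]\simeq\mathcal{N}$, then on a morphism $[\frak{q},\mathcal{C}]\overline{[\frak{p},\mathcal{C}]}$ pseudo-functoriality of $\bar{\mathcal{N}}'$ and the fact that it sends the equivalence $[\frak{p},\mathcal{C}]$ to an equivalence force $\bar{\mathcal{N}}'([\frak{q},\mathcal{C}]\overline{[\frak{p},\mathcal{C}]})\cong\mathcal{N}(\frak{q})\overline{\mathcal{N}(\frak{p})}\cong\bar{\mathcal{N}}(L\dashv R)$, and assembling these isomorphisms coherently (using the data of $\bar{\mathcal{N}}'[\_,\mathcal{C}]\simeq\mathcal{N}$) yields a pseudo-natural equivalence $\bar{\mathcal{N}}'\simeq\bar{\mathcal{N}}$. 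The main obstacle is Step~2: checking that the iso-comma groupoid stays inside $SFGpd_{\mathcal{C}}$, that the appropriate projection is an essential equivalence, and that $[\_,\mathcal{C}]$ carries it to an iso-comma in $T_{\mathcal{C}}$. This bookkeeping around iso-commas is precisely what replaces — and is considerably lighter than — a bicategorical calculus of fractions, but it does demand careful use of the definition of essential equivalence and of the slice and product stability of stable Frobeniusness (Proposition~\ref{product}).
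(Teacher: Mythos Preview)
Your overall strategy---span presentation via Theorem~\ref{HS} and Proposition~\ref{HSasesseq}, then transport along $\mathcal{N}$---matches the paper's, but you diverge sharply from the paper on the key technical point of pseudo-functoriality. The paper does \emph{not} use iso-comma groupoids at all. Instead, for composable adjunctions with Hilsum--Skandalis maps $\frak{P}_1,\frak{P}_2$, it builds a single concrete $\mathbb{H}\times\mathbb{G}\times\mathbb{K}$-object $\frak{P}_{123}$ on $P_1\times_{G_0}P_2$, with associated groupoid $\mathbb{P}_{123}$, and shows directly (using that $\frak{P}_2$ is Hilsum--Skandalis) that the projection $\pi_1:\mathbb{P}_{123}\rTo\mathbb{P}_1$ is an essential equivalence and that there is an internal functor $\frak{u}:\mathbb{P}_{123}\rTo\mathbb{P}_{13}$ commuting with the spans. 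This yields the compositor in $N$ by a single elementary diagram chase. The paper also handles well-definedness differently: it takes the \emph{canonical} span $\frak{P}_{L,R}$ attached to each adjunction, so no independence-of-presentation argument is needed on morphisms (on objects it invokes Corollary~\ref{HSCor}).

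Your iso-comma route is the classical bicategory-of-fractions argument in disguise, and it is plausible, but several of the ingredients you invoke are not supplied by the paper and are not trivial in this generality: that the iso-comma of two internal functors between stably Frobenius groupoids is again stably Frobenius; that essential equivalences are stable under iso-comma pullback; that $[\_,\mathcal{C}]$ carries internal iso-commas to iso-commas in $T_{\mathcal{C}}$; and that $[\_,\mathcal{C}]$ is locally full (your ``internal functor recovered from its image'' claim). None of these is proved in the paper, and the first two in particular require real work with connected-components coequalizers. So your proposal has genuine gaps unless you supply those lemmas. The paper's $\mathbb{P}_{123}$ argument buys you a self-contained proof using only facts already established (effective descent of $p_2$, the explicit description of $\frak{K}_{L,R}$, and the Frobenius identities); your approach would buy greater conceptual uniformity but at the cost of building the iso-comma machinery from scratch in $SFGpd_{\mathcal{C}}$.
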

\begin{proof}
Part (ii) of Lemma \ref{product}, which can be applied as we are restricting to stably Frobenius groupoids, shows how to define $[ \_ , \mathcal{C}]$ on morphisms; send an internal functor $\mathbb{F}: \mathbb{H} \rTo \mathbb{G}$ to $\Sigma_{\mathbb{F}} \dashv \mathbb{F}^*$. If $\alpha: \mathbb{F}^a \rTo \mathbb{F}^b$ is an internal natural transformation (where $\mathbb{F}^a, \mathbb{F}^b:   \mathbb{H} \pile{ \rTo \\ \rTo} \mathbb{G}$) then for any $\mathbb{G}$-object $(X_f,a)$ consider the map $(F_0^a)^* X_f \rTo  (F_0^b)^* X_f$  defined by $(\pi_1,a (\alpha \times Id_X))$; this determines a natural transformation from  $(\mathbb{F}^a)^*$ to $(\mathbb{F}^b)^*$ which can be seen to be a 2-cell of $T_{\mathcal{C}}$. Proposition \ref{esseqiseq} shows that essential equivalences are sent to equivalences.

Let us now say we are given a pseudofunctor $\mathcal{N}: SFGpd_{\mathcal{C}} \rTo N$ which takes essential equivalences to equivalences. Define $\bar{\mathcal{N}}: T_{\mathcal{C}} \rTo N$ on objects by $\bar{\mathcal{N}} [ \mathbb{G}, \mathcal{C}] = \mathcal{N} \mathbb{G}$; this is well defined up to equivalence by Corrolary \ref{HSCor}. Given a morphism $L \dashv R$ of $T_{\mathcal{C}}$, from  $[\mathbb{H},\mathcal{C}]$ to  $[\mathbb{G},\mathcal{C}]$, we know there is a Hilsum-Skandalis map $\frak{P}_{L,R}$ from $\mathbb{H}$ to $\mathbb{G}$. But then there is a span $\mathbb{H} \lTo^{\frak{p}} \mathbb{P}_{L,R} \rTo^{\frak{q}} \mathbb{G}$, with $\frak{p}$ an essential equivalence and so we can define $\bar{\mathcal{N}}(L \dashv R) = \mathcal{N}\frak{q}(\mathcal{N}\frak{p})^{-1}$. If $\alpha : (L_1 \dashv R_1) \rTo  (L_2 \dashv R_2)$ is a 2-cell of $T_{\mathcal{C}}$ then we know (Theorem \ref{HS}) that there is a morphism of Hilsum-Skandalis maps, $\frak{P}_{L_1,R_1} \rTo \frak{P}_{L_2,R_2}$. So, by the lemma, $\alpha$ gives rise to an internal functor $t^{\alpha}:\mathbb{P}_{L_1,R_1} \rTo \mathbb{P}_{L_2,R_2}$ which clearly must commute with the spans associated with the $L_i \dashv R_i$s in the obvious manner (i.e. $\frak{p}_2t^{\alpha}=\frak{p}_1$ and $\frak{q}_2t^{\alpha}=\frak{q}_1$). Note then that because $\mathcal{N}\frak{p}_1$ and $\mathcal{N}\frak{p}_2$ are both equivalences (in $N$) so is $\mathcal{N}t^{\alpha}$. As $\mathcal{N}$ is a pseudofunctor we therefore have a 2-cell $\mathcal{N}\frak{q}_1(\mathcal{N}\frak{p}_1)^{-1} \rTo \mathcal{N}\frak{q}_2\mathcal{N}t^{\alpha} (\mathcal{N}\frak{p}_2 t^{\alpha})^{-1} \rTo  \mathcal{N}\frak{q}_2\mathcal{N}t^{\alpha} (\mathcal{N}t^{\alpha})^{-1}(\mathcal{N}\frak{p}_2)^{-1} \rTo \mathcal{N}\frak{q}_2(\mathcal{N}\frak{p}_2)^{-1} $. This defines $\bar{\mathcal{N}}$ on 2-cells. Because of our representation of Hilsum-Skandalis as stably Frobenius adjunctions, the definition of $\bar{\mathcal{N}}$ on morphisms and 2-cells just given are unique up to equivalence given the requirement of $\mathcal{N}$ to factor as $\bar{\mathcal{N}} [ \_ , \mathcal{C}]$. 

All that remains is to check that $\bar{\mathcal{N}}$ is pseudo-functorial; i.e. if there are two stably Frobenius adjunctions 
\begin{eqnarray*}
[\mathbb{H},\mathcal{C}] \pile{ \rTo^{L_1} \\ \lTo_{R_1}} [\mathbb{G},\mathcal{C}] \pile{ \rTo^{L_2} \\ \lTo_{R_2}} [\mathbb{K},\mathcal{C}] 
\end{eqnarray*} 
over $\mathcal{C}$ then $\mathcal{N}\frak{q}_2 (\mathcal{N}\frak{p}_2)^{-1}\mathcal{N}\frak{q}_1 (\mathcal{N}\frak{p}_1)^{-1}$ is isomorphic (in $N$) to $\mathcal{N}\frak{q}_{12} (\mathcal{N}\frak{p}_{12})^{-1}$ where $\frak{p}_{12},\frak{q}_{12}$ is the span associated with the composite adjunction $L_2L_1 \dashv R_1 R_2$. If $\frak{P}_i = (P_i, ...)$ are the Hilsum-Skandalis maps associated with $L_i \dashv R_i$, for $i=1,2$, then $P_1 \times_{G_0} P_2$ can be seen to be a $\mathbb{H} \times \mathbb{G} \times \mathbb{K}$-object; the action is $[(h,g,k),(x_1,x_2)] \mapsto (hgx_1,kgx_2)$. We write this  $\mathbb{H} \times \mathbb{G} \times \mathbb{K}$-object as $\frak{P}_{123}$ with $\mathbb{P}_{123}$ the associated internal groupoid. As a $\mathbb{G}$-object, $\frak{P}_{123}$ is $L_1 \mathbb{T}_{\mathbb{H}}1 \times R_2 \mathbb{T}_{\mathbb{K}}1$ and because $\Sigma_{\mathbb{G}}(L_1 \mathbb{T}_{\mathbb{H}}1 \times R_2 \mathbb{T}_{\mathbb{K}}1 )\cong \Sigma_{\mathbb{G}}L_1 (\mathbb{T}_{\mathbb{H}}1 \times R_1 R_2 \mathbb{T}_{\mathbb{K}}1) \cong \Sigma_{\mathbb{H}} (\mathbb{T}_{\mathbb{H}}1 \times R_1 R_2 \mathbb{T}_{\mathbb{K}}1)$ the underlying object of $\Sigma_{\mathbb{G}}$ applied to $\frak{P}_{123}$ is $\frak{P}_{13}$, i.e. the $\mathbb{H} \times \mathbb{K}$-object associated with $L_2 L_1 \dashv R_1 R_2$. Further it is easy to see that $\Sigma_{\mathbb{G}}$ determines a functor $\frak{u}: \mathbb{P}_{123} \rTo \mathbb{P}_{13}$ such that $\mathbb{P}_{123} \rTo^{\pi_1} \mathbb{P}_1 \rTo^{\frak{p}_1} \mathbb{H}$ factors as $\frak{p}_{13}\frak{u}$ and $\mathbb{P}_{123} \rTo^{\pi_2} \mathbb{P}_2 \rTo^{\frak{q}_2} \mathbb{K}$ factors as $\frak{q}_{13}\frak{u}$. Let us say that $\pi_1  : \mathbb{P}_{123} \rTo \mathbb{P}_{1} $ is an essential equivalence; then $\mathcal{N}\pi_1$ is an equivalence in $N$ and so is $\mathcal{N}\frak{u}$. But then certainly $\bar{\mathcal{N}}$ is pseudo-functorial; to see this consider  
\begin{diagram}
 & & & & \mathcal{N}\mathbb{P}_{123} & & & & \\ 
  & & & \ruTo^{(\mathcal{N}\pi_1)^{-1}} &        &  \rdTo^{\mathcal{N}\pi_2}& & &     \\ 
 & &\mathcal{N}\mathbb{P}_1 & & & &\mathcal{N}\mathbb{P}_2  & & \\ 
  & \ruTo^{(\mathcal{N}\frak{p}_1)^{-1}}& & \rdTo^{\mathcal{N}\frak{q}_1} &        &  \ruTo^{(\mathcal{N}\frak{p}_2)^{-1}}& & \rdTo^{\mathcal{N}\frak{q}_2}&     \\ 
\mathcal{N}\mathbb{H} & & & & \mathcal{N}\mathbb{G} & & & & \mathcal{N}\mathbb{K} \\
\end{diagram}
and the factorization, up to isomorphsm, of the identity on $ \mathcal{N}\mathbb{P}_{13}$ as $\mathcal{N}\frak{u} (\mathcal{N}\frak{u}^{-1})$ to see that  $\mathcal{N}\frak{q}_2 (\mathcal{N}\frak{p}_2)^{-1}\mathcal{N}\frak{q}_1 (\mathcal{N}\frak{p}_1)^{-1}$ is isomorphic to $\mathcal{N}\frak{q}_{12} (\mathcal{N}\frak{p}_{12})^{-1}$. 

So, to complete, we must check that the projection functor $\pi_1 : \mathbb{P}_{123} \rTo \mathbb{P}_{1} $ is an essential equivalence. On objects $\pi_1$ is the first projection $P_1 \times_{G_0} P_2 \rTo P_1$ which is an effective descent morphism because it is the pullback of $P_2 \rTo G_0$ (which is an effective descent morphism as $\frak{P}_2$ is a Hilsum-Skandlis map). Therefore $\pi_1$ is essentially surjective by the lemma.
To prove that $\pi_1$ is fully faithful we must show that $H_1 \times_{H_0} G_1 \times_{G_0} K_1 \times_{K_0} (P_1 \times_{G_0} P_2) \rTo $$ [ ( P_1 \times_{G_0} P_2) \times (P_1 \times_{G_0} P_2) ] \times_{P_1 \times P_1} (H_1 \times_{H_0} G_1 \times_{G_0} P_1 )$ given by  
\begin{eqnarray*}
(h,g,k,(x_1,x_2)) \mapsto  ([(hgx_1,kgx_2),(x_1,x_2)],(h,g,x_1))
\end{eqnarray*}
is an isomorphsm. But as $\frak{P}_2$ is a Hilsum-Skandalis map we have that $K_1 \times_{K_0} P_2 \rTo^{(*,\pi_2)} P_2 \times_{K_0} P_2$ is an isomorphism and so the inverse is given by
\begin{eqnarray*}
([(hgx,y),(x,z)],(h,g,x)) \mapsto (h,g,\pi_1(*,\pi_2)^{-1}(y,gz),(x,z)) \text{ .}
\end{eqnarray*}
\end{proof}

\section{Composing monadic functors}
In order to give a categorical construction of semi-direct product using the techniques developed, which is the topic of the next section, we need a result about when monadic functors are closed under composition. It is not true in general that the composition of two monadic functors is monadic; however, 
\begin{lemma}\label{hannah}
{\em{[Hannah's lemma.]}} Given a diagram of adjunctions
\begin{diagram}
\mathcal{C} & \pile{\rTo^{L_1} \\ \lTo_{R_1}} & \mathcal{C}_1\\
\dTo^{L}  \uTo_{R} &        & \dTo^{L_2} \uTo_{R_2} \\
\bar{\mathcal{C}} & \pile{\rTo^{\bar{L}} \\ \lTo_{\bar{R}}} & \mathcal{C}_2 \\
\end{diagram}
that commutes up to natural isomorphism (i.e. we are given a natural isomorphism $R_1 R_2 \rTo^{\cong} R \bar{R}$) and that satisfies Beck-Chevalley (i.e. the canonical natural transformation $L R_1 \rTo \bar{R} L_2$ is an isomorphism), if $R_1$ and $R_2$ are both monadic then so is $R_1 R_2$.
\end{lemma}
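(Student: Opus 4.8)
The plan is to verify the three conditions of Beck's monadicity theorem for the composite $R_1R_2$. Conditions (i) and (ii) are immediate: $L_2L_1\dashv R_1R_2$, and $R_1R_2$ reflects isomorphisms because $R_1$ and $R_2$ each do (being monadic). So all the content is in condition (iii): that for every $R_1R_2$-split pair there is a coequalizer which $R_1R_2$ sends, via the canonical map, to the given split coequalizer.

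The only place the hypotheses on the square are used is the natural isomorphism
\[
R_1R_2L_2 \;\cong\; R\bar{R}L_2 \;\cong\; RLR_1,
\]
the first step being the assumed commutativity $R_1R_2\cong R\bar{R}$ precomposed with $L_2$, and the second being $R$ postcomposed with the Beck--Chevalley isomorphism $\bar{R}L_2\cong LR_1$. Writing $S$ for the endofunctor $RL$ of $\mathcal{C}$ and $\mathbb{T}_2$ for the monad $R_2L_2$ on $\mathcal{C}_1$, this reads $R_1\mathbb{T}_2\cong SR_1$, whence by induction $R_1\mathbb{T}_2^{\,n}\cong S^{\,n}R_1$ for all $n$. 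Since $R_2$ is monadic I would take $\mathcal{C}_2$ to be the category $\mathcal{C}_1^{\mathbb{T}_2}$ of $\mathbb{T}_2$-algebras, with $R_2,L_2$ the forgetful and free functors; the hypotheses transport along this equivalence.

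Now let $f,g\colon (A,\alpha)\to(B,\beta)$ be an $R_1R_2$-split pair in $\mathcal{C}_2=\mathcal{C}_1^{\mathbb{T}_2}$, with split coequalizer $\bar q\colon R_1B\to\bar Q$ in $\mathcal{C}$. The underlying pair $f,g\colon A\to B$ in $\mathcal{C}_1$ is $R_1$-split, so monadicity of $R_1$ yields a coequalizer $q_1\colon B\to Q_1$ in $\mathcal{C}_1$ with $R_1q_1\cong\bar q$ and $\bar Q\cong R_1Q_1$ canonically. I would then transport the algebra structure along $q_1$: since $f,g$ are algebra maps and $q_1f=q_1g$, the composite $q_1\beta\colon\mathbb{T}_2B\to Q_1$ coequalizes $\mathbb{T}_2f,\mathbb{T}_2g$, so as soon as $\mathbb{T}_2q_1$ is a coequalizer of $\mathbb{T}_2f,\mathbb{T}_2g$ one gets a unique $\bar\beta\colon\mathbb{T}_2Q_1\to Q_1$ with $\bar\beta\cdot\mathbb{T}_2q_1=q_1\beta$ (and one also needs the analogous statement for $\mathbb{T}_2^2q_1$, which feeds the associativity axiom of $\bar\beta$). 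These follow from the displayed isomorphism together with the absoluteness of split coequalizers (they are preserved by every functor): $\mathbb{T}_2^{\,n}f,\mathbb{T}_2^{\,n}g$ is again $R_1$-split since $R_1$ sends it to $S^{\,n}$ of a split coequalizer, and $R_1(\mathbb{T}_2^{\,n}q_1)\cong S^{\,n}(R_1q_1)$ is $S^{\,n}$ of a split coequalizer and hence a coequalizer; as $R_1$ has and preserves coequalizers of $R_1$-split pairs and reflects isomorphisms, $\mathbb{T}_2^{\,n}q_1$ is itself a coequalizer. It then remains to run the routine diagram chases showing that $(Q_1,\bar\beta)$ is an object of $\mathcal{C}_2$, that $q_1\colon(B,\beta)\to(Q_1,\bar\beta)$ is a coequalizer of $f,g$ in $\mathcal{C}_2$, and that the canonical map $\bar Q\to R_1R_2(Q_1,\bar\beta)=R_1Q_1$ is the isomorphism already obtained; condition (iii) then holds and Beck's theorem completes the proof.

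I expect the crux to be the extraction of $R_1\mathbb{T}_2\cong SR_1$ and the realisation that it is needed for the iterates $\mathbb{T}_2^{\,n}$, not merely for $\mathbb{T}_2$, so that $R_1$ can detect that $\mathbb{T}_2$ preserves the coequalizers in play; once that is in hand the absoluteness of split coequalizers does the real work and the remaining algebra bookkeeping is routine. It is worth noting that dropping the Beck--Chevalley hypothesis destroys this isomorphism and with it the whole argument, in line with the earlier remark that composites of monadic functors need not be monadic.
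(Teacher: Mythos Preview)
Your proposal is correct and follows essentially the same route as the paper: reduce to Beck's condition (iii), identify $\mathcal{C}_2$ with $\mathcal{C}_1^{\mathbb{T}_2}$, extract the key isomorphism $R_1\mathbb{T}_2\cong SR_1$ from commutativity plus Beck--Chevalley, and use it (together with absoluteness of split coequalizers and monadicity of $R_1$) to show that $\mathbb{T}_2 q_1$ and $\mathbb{T}_2^{2}q_1$ are coequalizers, so the algebra structure descends. The paper phrases the second iterate as ``a similar argument'' rather than invoking $R_1\mathbb{T}_2^{\,n}\cong S^{\,n}R_1$ explicitly, but the content is identical.
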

\begin{proof}
First note that we can assume that $\mathcal{C}_2$ is $\mathcal{C}_1^{\mathbb{T}_2}$ where $\mathbb{T}_2$ is the monad induced by $L_2 \dashv R_2$; $R_2$ is the forgetful functor. We apply Beck's theorem and as the property of reflecting isomorphisms is closed under composition what remains is a check that if $a,b: A \pile{\rTo \\ \rTo}B $ is a pair of arrows in $\mathcal{C}_2$ that is $R_1 R_2$-split then the pair has a coequalizer that is mapped under $R_1 R_2$ to the split, up to canonical isomorphism. As $R_1$ is monadic there exists a coequalizer 
\begin{eqnarray*}
R_2 A \pile{ \rTo^{R_2 a } \\ \rTo_{R_2 b} } R_2 B \rTo^q Q  \text{\ \ \ \ (*)}
\end{eqnarray*}
in $\mathcal{C}_1$ such that $R_1 Q \cong Q_0$ where $Q_0$ is the split coequalizer of $R_1R_2a, R_1 R_2 b$. Now consider $R_1 R_2 L_2 $ applied to (*). By the assumption that the diagram commutes up to isomorphism and that Beck-Chevalley holds we get,
\begin{eqnarray*}
R L R_1  R_2 A \pile{ \rTo^{R  L R_1  R_2 a } \\ \rTo_{ R  L R_1  R_2 b} } R  L R_1 R_2 B \rTo^q R  L R_1  Q  \text{\ \ \ \ (*)}
\end{eqnarray*}
which is part of a split coequalizer diagram as $a,b$ are $R_1R_2$-split (and split coequalizer diagrams are preserved by functors; here $RL$). It follows that as $R_1$ creates coequalizers for such split diagrams and reflects isomorphisms, that $R_2 L_2 q$ is the coequalizer of $R_2 L_2 R_2 a, R_2 L_2 R_2 b$ and in particular, by naturality of the counit $\epsilon^2$ of $L_2 \dashv R_2$, that $R_2 L_2 R_2 B \rTo^{R_2  \epsilon^2_B} R_2 B \rTo^q Q$ factors through $R_2 L_2 q$, say via $c: R_2L_2 Q \rTo Q$. From the  definition of $c$ it is readily checked that $(Q,c)$ satisfies the unit condition required to be a $\mathbb{T}_2$-algebra. A similar argument to the one just deployed to show that $ R_2 L_2 q$ is a coequalizer also shows that $R_2 L_2 R_2 L_2 q$ is the coequalizer of $R_2 L_2 R_2 L_2 R_2 a, R_2 L_2 R_2 L_2 R_2 b$ and so $R_2 L_2 R_2 L_2 q$ is an epimorphism. From this observation it is easy to check that $(Q,c)$  satisfies the associative condition required to be a $\mathbb{T}_2$-algebra and so it is a  $\mathbb{T}_2$-algebra and from this it is clear that $q$ is the coequalizer of $a,b$.
\end{proof}
As a consequence:
\begin{lemma}\label{edcompose}
If $X$ and $Y$ are two objects of a category $\mathcal{C}$ and both $!^X: X \rTo 1$ and $\pi_2: Y \times X \rTo X$  are effective descent morphisms then $!^{Y \times X} : Y \times X \rTo 1$ is an effective descent morphism.
\end{lemma}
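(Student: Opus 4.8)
The plan is to factor $!^{Y\times X}$ as $Y\times X \rTo^{\pi_2} X \rTo^{!^X} 1$, so that the pullback functor $(!^{Y\times X})^{*}=(Y\times X)^{*}$ is the composite $\pi_2^{*}\circ X^{*}$, and then to recognise this as a composite of two monadic functors to which Hannah's lemma (Lemma~\ref{hannah}) applies. The hypotheses say exactly that $X^{*}=(!^{X})^{*}:\mathcal{C}\rTo\mathcal{C}/X$ is monadic (since $!^{X}$ is an effective descent morphism) and that $\pi_2^{*}:\mathcal{C}/X\rTo\mathcal{C}/(Y\times X)$ is monadic (since $\pi_2$ is an effective descent morphism); so the only real work is to exhibit a square of adjunctions around these two functors verifying the commutativity and Beck--Chevalley hypotheses of Lemma~\ref{hannah}.

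I would use the square with corners $\mathcal{C}/(Y\times X)$, $\mathcal{C}/X$, $\mathcal{C}/Y$ and $\mathcal{C}$, all four edges being pullback adjunctions, with $\Sigma_{\pi_2}\dashv\pi_2^{*}$ and $\Sigma_{X}\dashv X^{*}$ in the roles of $L_1\dashv R_1$ and $L_2\dashv R_2$ (so that $R_1R_2=\pi_2^{*}X^{*}=(Y\times X)^{*}$), and $\Sigma_{\pi_1}\dashv\pi_1^{*}$ and $\Sigma_{Y}\dashv Y^{*}$ in the roles of $L\dashv R$ and $\bar{L}\dashv\bar{R}$. Since $!^{Y}\pi_1=!^{X}\pi_2=!^{Y\times X}$, pseudofunctoriality of the pullback operation $(-)^{*}$ supplies the natural isomorphism $R\bar{R}=\pi_1^{*}Y^{*}\cong\pi_2^{*}X^{*}=R_1R_2$ required by the lemma (both functors being ``$(Y\times X)\times(-)$ over $Y\times X$''). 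For the Beck--Chevalley condition one checks that the canonical map $\Sigma_{\pi_1}\pi_2^{*}\rTo Y^{*}\Sigma_{X}$ is an isomorphism; this is precisely the Beck--Chevalley transformation of the pullback square
\begin{diagram}
Y\times X & \rTo^{\pi_1} & Y \\
\dTo^{\pi_2} & & \dTo^{!^Y} \\
X & \rTo^{!^X} & 1 \\
\end{diagram}
and it is invertible because this square is a genuine pullback --- concretely, both composites carry an object $W_h$ of $\mathcal{C}/X$ to $(Y\times W)_{\pi_1}$ over $Y$, with the comparison being the identity.

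Once both hypotheses of Lemma~\ref{hannah} are in place and $R_1=\pi_2^{*}$, $R_2=X^{*}$ are known to be monadic, the lemma gives that $R_1R_2=(Y\times X)^{*}=(!^{Y\times X})^{*}$ is monadic, i.e. that $!^{Y\times X}:Y\times X\rTo 1$ is an effective descent morphism. I do not expect a serious obstacle: the delicate points are purely bookkeeping --- aligning the square with the variance conventions of Lemma~\ref{hannah} (which composite is $R_1R_2$, and which of the two canonical natural transformations is the Beck--Chevalley one) and recording that the displayed square really is a pullback, so that its Beck--Chevalley transformation is an isomorphism.
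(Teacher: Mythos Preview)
Your proposal is correct and is essentially identical to the paper's own proof: the paper simply displays the square of pullback adjunctions with corners $\mathcal{C}/(Y\times X)$, $\mathcal{C}/X$, $\mathcal{C}/Y$, $\mathcal{C}$ and remarks that it ``clearly commutes and satisfies Beck--Chevalley'', then invokes Hannah's lemma. Your identification of $R_1=\pi_2^{*}$, $R_2=X^{*}$, $L=\Sigma_{\pi_1}$, $\bar{R}=Y^{*}$ and the Beck--Chevalley transformation $\Sigma_{\pi_1}\pi_2^{*}\rTo Y^{*}\Sigma_{X}$ matches the paper's setup exactly.
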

\begin{proof}
Consider the diagram of adjunctions:
\begin{diagram}
\mathcal{C}/ Y \times X  & \pile{\rTo^{\Sigma_{\pi_2}} \\ \lTo_{\pi_2^*}} & \mathcal{C}/X\\
\dTo^{\Sigma_{\pi_1}}  \uTo_{\pi_1^*} &        & \dTo^{\Sigma_X} \uTo_{X^*} \\
\mathcal{C}/Y & \pile{\rTo^{\Sigma_Y} \\ \lTo_{Y^*}} & \mathcal{C} \\
\end{diagram}
which clearly commutes and satisfies Beck-Chevalley.
\end{proof}
\section{Semi-direct products}
In this section we exploit the result just given on the composition of effective descent morphisms to give a categorically flavoured description of semi-direct product. The result provides a non-trivial application of our proposition that characterises connected component adjunctions (Propositon \ref{second2}) and is a generalisation of Lemma \ref{objectslice} from trivial internal groupoids (i.e. from slices of $[ \mathbb{G}, \mathcal{C}]$) to all stably Frobenius internal groupoids: 

\begin{proposition}\label{connect}
We are given $\mathbb{G}$, a groupoid in $\mathcal{C}$ with a connected components adjunction, and $\frak{K}$, a stably Frobenius groupoid in $[ \mathbb{G}, \mathcal{C}]$. Then there exists a groupoid $\mathbb{G} \ltimes \frak{K}$ internal to $\mathcal{C}$ and an equivalence $\Theta: [ \mathbb{G} \ltimes \frak{K},\mathcal{C}] $$\rTo{\simeq}$$ [ \frak{K}, [\mathbb{G}, \mathcal{C}]] $ such that $\Sigma_{\mathbb{G}} \Sigma_{\frak{K}} \Theta \cong \Sigma_{\mathbb{G} \ltimes \frak{K}}$. Further, if  $\mathbb{G}$ is stably Frobenius then so is  $\mathbb{G} \ltimes \frak{K}$. 
\end{proposition}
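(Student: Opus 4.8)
The plan is to recognise the composite adjunction $\Sigma_{\mathbb{G}}\Sigma_{\frak{K}}\dashv\frak{K}^*\mathbb{G}^* : [\frak{K},[\mathbb{G},\mathcal{C}]]\pile{\rTo\\\lTo}\mathcal{C}$ as a connected components adjunction by verifying condition 2 of Proposition \ref{second2}; the internal groupoid it produces will be $\mathbb{G}\ltimes\frak{K}$, and condition 1 of that proposition then supplies an equivalence $[\frak{K},[\mathbb{G},\mathcal{C}]]\simeq[\mathbb{G}\ltimes\frak{K},\mathcal{C}]$ intertwining $\frak{K}^*\mathbb{G}^*$ with $(\mathbb{G}\ltimes\frak{K})^*$; taking $\Theta$ to be its inverse, $\Sigma_{\mathbb{G}}\Sigma_{\frak{K}}\Theta\cong\Sigma_{\mathbb{G}\ltimes\frak{K}}$ follows by uniqueness of adjoints. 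Write $K_0$ for the object of objects of $\frak{K}$ (an object of $[\mathbb{G},\mathcal{C}]$) and $\overline{K_0}$ for its underlying object of $\mathcal{C}$. For condition 2 I would take the object $\overline{K_0}$ together with the composite adjunction
\begin{eqnarray*}
\mathcal{C}/\overline{K_0} \pile{\rTo^{\mathbb{T}_{\mathbb{X}_{K_0}}}\\\lTo_{U_{\mathbb{X}_{K_0}}}} [\mathbb{X}_{K_0},\mathcal{C}] \simeq [\mathbb{G},\mathcal{C}]/K_0 \pile{\rTo^{\mathbb{T}_{\frak{K}}}\\\lTo_{U_{\frak{K}}}} [\frak{K},[\mathbb{G},\mathcal{C}]]
\end{eqnarray*}
in which $\mathbb{X}_{K_0}$ is the internal groupoid assigned to the $\mathbb{G}$-object $K_0$ by Lemma \ref{objectslice}, so the middle equivalence is over $\mathcal{C}$. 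Call this composite $\mathbb{T}\dashv U$; up to the isomorphism $(\mathcal{C}/G_0)/(\overline{K_0})_{\kappa_0}\cong\mathcal{C}/\overline{K_0}$, its right adjoint is $(U_{\mathbb{G}})_{K_0}\circ U_{\frak{K}}$.

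Three things then have to be checked. That $\mathbb{T}\dashv U$ is \emph{stably Frobenius} is immediate: $\mathbb{T}_{\mathbb{X}_{K_0}}\dashv U_{\mathbb{X}_{K_0}}$ and $\mathbb{T}_{\frak{K}}\dashv U_{\frak{K}}$ are stably Frobenius by Lemma \ref{groupoidadjisfrob} (the second applied to $\frak{K}$ regarded as an internal groupoid in $[\mathbb{G},\mathcal{C}]$), an equivalence is stably Frobenius, and composites of stably Frobenius adjunctions are stably Frobenius. That it is \emph{over $\mathcal{C}$} is a routine chase of the identity $\Sigma_{\mathbb{G}'}\mathbb{T}_{\mathbb{G}'}\cong\Sigma_{G_0'}$ (valid for every internal groupoid with a connected components adjunction) applied to $\frak{K}$ and to $\mathbb{X}_{K_0}$, together with the fact that the equivalence of Lemma \ref{objectslice} is over $\mathcal{C}$; this yields $\Sigma_{\mathbb{G}}\Sigma_{\frak{K}}\mathbb{T}\cong\Sigma_{\overline{K_0}}$ and, notably, uses only that $\mathbb{G}$ and $\frak{K}$ have connected components adjunctions, not any Frobenius hypothesis on $\mathbb{G}$. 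The remaining point, that $U$ is \emph{monadic}, is the main obstacle: $U=(U_{\mathbb{G}})_{K_0}\circ U_{\frak{K}}$ is a composite of two monadic functors, which is exactly why Hannah's Lemma (Lemma \ref{hannah}) was proved.

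To apply it I would build the following Beck--Chevalley square. Since $U_{\mathbb{G}}$ creates pullbacks, forgetting the $\mathbb{G}$-actions from the structure data of $\frak{K}$ produces a groupoid $\overline{\frak{K}}$ internal to $\mathcal{C}$ (object of objects $\overline{K_0}$, object of morphisms $\overline{K_1}$), together with a \emph{forget-the-$\mathbb{G}$-action} functor $[\frak{K},[\mathbb{G},\mathcal{C}]]\rTo[\overline{\frak{K}},\mathcal{C}]$; one checks this functor has a left adjoint, a semidirect-product-style construction that freely adjoins a $\mathbb{G}$-action while carrying the $\overline{\frak{K}}$-action along. Form the square of adjunctions with corners $\mathcal{C}/\overline{K_0}$, $[\mathbb{G},\mathcal{C}]/K_0$, $[\frak{K},[\mathbb{G},\mathcal{C}]]$, $[\overline{\frak{K}},\mathcal{C}]$, whose two composable right adjoints are $(U_{\mathbb{G}})_{K_0}$ and $U_{\frak{K}}$, whose third is $U_{\overline{\frak{K}}}$ and whose fourth is the forget-the-$\mathbb{G}$-action functor. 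Commutativity up to isomorphism and the Beck--Chevalley isomorphism $\mathbb{T}_{\overline{\frak{K}}}(U_{\mathbb{G}})_{K_0}\rTo^{\cong}(\text{forget})\,\mathbb{T}_{\frak{K}}$ both reduce to the identification $\overline{K_1\times_{K_0}Y}\cong\overline{K_1}\times_{\overline{K_0}}\overline{Y}$, once more because $U_{\mathbb{G}}$ creates pullbacks. As $(U_{\mathbb{G}})_{K_0}$ is monadic (monadicity is slice stable) and $U_{\frak{K}}$ is monadic, Hannah's Lemma gives that $U=(U_{\mathbb{G}})_{K_0}U_{\frak{K}}$ is monadic, completing the verification of Proposition \ref{second2}(2). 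I expect the construction of the left adjoint to the forget-the-$\mathbb{G}$-action functor, and the bookkeeping of this square, to be the only non-routine work; everything else assembles results already at hand.

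Finally, for the closing claim: if $\mathbb{G}$ is stably Frobenius then $\Sigma_{\mathbb{G}}\dashv\mathbb{G}^*$ is stably Frobenius, $\Sigma_{\frak{K}}\dashv\frak{K}^*$ is stably Frobenius by the hypothesis on $\frak{K}$, and hence so is the composite $\Sigma_{\mathbb{G}}\Sigma_{\frak{K}}\dashv\frak{K}^*\mathbb{G}^*$; transported along $\Theta$ this is precisely the assertion that $\Sigma_{\mathbb{G}\ltimes\frak{K}}\dashv(\mathbb{G}\ltimes\frak{K})^*$ is stably Frobenius, i.e. that $\mathbb{G}\ltimes\frak{K}$ is a stably Frobenius groupoid.
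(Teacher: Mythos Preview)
Your approach is correct but genuinely different from the paper's. Both arguments ultimately rest on Hannah's Lemma, yet they apply it in distinct ways.

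The paper verifies condition 4 of Proposition~\ref{second2} (the effective-descent criterion) rather than condition 2. Writing $\mathcal{D}_0=[\frak{K},[\mathbb{G},\mathcal{C}]]$, $\mathcal{D}=[\mathbb{G},\mathcal{C}]$, it takes $W_0=\mathbb{T}_{\frak{K}}1$ and $W=\mathbb{T}_{\mathbb{G}}1$ and uses the stably Frobenius hypothesis on $\frak{K}$ to factor $\Sigma_{L_0W_0}\dashv(L_0W_0)^*$ through $L_0\dashv R_0$; slicing at $W$ then yields $\mathcal{D}_0/(W_0\times R_0W)\simeq\mathcal{D}/(L_0W_0\times W)$. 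Lemma~\ref{edcompose} (itself a packaged instance of Hannah's Lemma with the trivial pullback square of slice categories) shows $W_0\times R_0W\to 1$ is effective descent, and since $\mathcal{D}/(L_0W_0\times W)$ is a slice of $\mathcal{D}/W\simeq\mathcal{C}/LW$, condition 4 follows for the composite. You instead verify condition 2 directly, assembling the monadic right adjoint $(U_{\mathbb{G}})_{K_0}\circ U_{\frak{K}}$ via Hannah's Lemma applied to a hand-built Beck--Chevalley square whose fourth corner is $[\overline{\frak{K}},\mathcal{C}]$.

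What each approach buys: the paper's route is more economical---it needs no new adjunction beyond those already on the table, and its choice of $W_0\times R_0W$ feeds directly into the explicit description of $\mathbb{G}\ltimes\frak{K}$ given immediately after the proof. Your route requires constructing the left adjoint to the forget-the-$\mathbb{G}$-action functor (which does exist: on a $\overline{\frak{K}}$-object $(Y_g,a)$ it returns $G_1\times_{G_0}Y$ with the twisted $\frak{K}$-action $(k,(g_1,y))\mapsto(g_1,a(g_1^{-1}\!\cdot k,y))$---though note the proof of Lemma~\ref{hannah} never actually invokes $\bar L$, only $L$, $R$, $\bar R$ and the Beck--Chevalley isomorphism, so strictly you could sidestep this). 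On the other hand, your argument has an interesting by-product: the checks that $\mathbb{T}\dashv U$ is stably Frobenius, over $\mathcal{C}$, and that $U$ is monadic nowhere use that $\Sigma_{\frak{K}}\dashv\frak{K}^*$ is stably Frobenius---only that $\frak{K}$ has a connected components adjunction. So your route in fact establishes the first sentence of the proposition under that weaker hypothesis; the stably Frobenius assumption on $\frak{K}$ is only needed (in both proofs) for the ``Further'' clause.
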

\begin{proof}
The proof can be completed by showing that condition 4. of Proposition \ref{second2} is closed under composition. 
We are given stably Frobenius adjunctions $L_0 \dashv R_0 : \mathcal{D}_0 \pile{\rTo \\ \lTo} \mathcal{D}$ and $L \dashv R : \mathcal{D} \pile{\rTo \\ \lTo} \mathcal{C}$ where there are objects $W_0$ and $W$ of $\mathcal{D}_0$ and $\mathcal{D}$ respectively (with both $W_0 \rTo 1$ and $W \rTo 1$ of effective desecent) such that $L_W: \mathcal{D}/W \rTo \mathcal{C}/LW$ and $(L_0)_{W_0}: \mathcal{D}_0/W_0 \simeq \mathcal{D}/L_0 W_0 $ are both equivalences. Since $\frak{K}$ is stably Frobenius we can assume that $L_0 \dashv R_0$ is stably Frobenius and so $\Sigma_{L_0 W_0} \dashv (L_0 W_0)^*$ factors up to isomorphism as
\begin{eqnarray*}
\mathcal{D}/L_0 W_0 \pile{ \rTo^{[(L_0)_{W_0}]^{-1}} \\ \lTo_{(L_0)_{W_0}}} \mathcal{D}_0/W_0 \pile{\rTo^{\Sigma_{W_0}} \\ \lTo_{W_0^*}}  \mathcal{D}_0 \pile{\rTo^{L_0} \\ \lTo_{R_0}} \mathcal{D} \text{.}
\end{eqnarray*}
Then, by taking the slice at $W$, we can conclude that $\mathcal{D}_0/ W_0 \times R_0W $ is equivalent to $\mathcal{D}/L_0 W_0 \times W$. Now $\pi_1^* : \mathcal{D}/L_0 W_0 \rTo \mathcal{D}/L_0 W_0 \times W $ is monadic as it is the slice at $L_0 W_0$ of the monadic functor $W^*: \mathcal{D} \rTo \mathcal{D}/W$; hence $\pi_1: W_0 \times R_0 W \rTo W_0$ is an effective descent morphism. By Lemma \ref{edcompose} the morphism $!: W_0 \times R_0 W \rTo 1$ is an effective descent morphism. 
But this essentially completes the proof because $\mathcal{D}/L_0 W_0 \times W$ is a slice of $\mathcal{D}/W$ and $\mathcal{D}/W$ is (equivalent to) the slice $\mathcal{C}/LW$. 

The `Further' part is clear as stably Frobenius adjunctions are stable under composition. 
\end{proof}

Let us be explicit about the structure of $\mathbb{G} \ltimes \frak{K}$. Say $\frak{K} = ( (K^1_{f_1},a_1) \pile{ \rTo^{d^{\frak{K}}_0}  \\ \rTo_{d^{\frak{K}}_1} }  (K^0_{f_0},a_0), ... ) $. In the proof of the last Proposition we have $\mathcal{D}= [ \mathbb{G}, \mathcal{C} ]$, $ W= \mathbb{T}_{\mathbb{G}}1 $ and $ LW=\Sigma_{\mathbb{G}} W$ is $G_0$. Similarly $W_0$ is $ \mathbb{T}_{\frak{K}}1 $, $L_0=\Sigma_{\frak{K}}$, $L_0 W_0$ is $(K^0_{f_0},a_0)$ and $L_0(W_0 \times W_0)$ is $(K^1_{f_1},a_1)$. The object of objects of $\mathbb{G} \ltimes \frak{K}$, constructed in the proof of the Proposition, is $\Sigma_{\mathbb{G}}$ applied to the image of $1$ under the equivalences $\mathcal{D}_0/W_0 \times R_0 W \rTo \mathcal{D}/L_0 W_0 \times W$ which is $\Sigma_{\mathbb{G}}(L_0 W_0 \times \mathbb{T}_{\mathbb{G}}1)$ which from earlier identities we know is $\Sigma_{G_0}U_{\mathbb{G}}(L_0 W_0)=K^0$. Now Proposition \ref{second2} shows us that $d^{\frak{K}}_0=L_0\pi_2$ and $d^{\frak{K}}_1=L_0\pi_1$ where $L_0\pi_i:L_0(W_0 \times W_0) \rTo L_0 W_0$. The domain and codomain maps of $\mathbb{G} \ltimes \frak{K}$ are $\Sigma_{\mathbb{G}}L_0 \pi_i : \Sigma_{\mathbb{G}}L_0 ( [W_0 \times R_0 W] \times [W_0 \times R_0 W] ) \rTo   \Sigma_{\mathbb{G}}L_0 ( [W_0 \times R_0 W] )$ for $i=2,1$ respectively, which by application of the  Frobenius reciprocity assumption on $L_0 \dashv R_0$ are  $\Sigma_{\mathbb{G}}( \pi_i \times L_0 \pi_i ) : \Sigma_{\mathbb{G}}( [\mathbb{T}_{\mathbb{G}}1  \times \mathbb{T}_{\mathbb{G}}1 ] \times L_0 [W_0 \times W_0]  ) \rTo   \Sigma_{\mathbb{G}}(\mathbb{T}_{\mathbb{G}}1 \times L_0  W_0 )$. But, see the diagram before Proposition \ref{second2}, the image of the two projections $\pi_2$ and $\pi_1$ onto $\mathbb{T}_{\mathbb{G}}1$ under the connected components functor of $\mathbb{G}$ can be identified with the projection and $\mathbb{G}$-action respectively and so the domain and codomain maps of $\mathbb{G} \ltimes \frak{K}$ can be seen to be $G_1 \times_{G_0} K^1 \rTo^{\pi_2} K^1 \rTo^{d^{\frak{K}}_0} K^0$ and $G_1 \times_{G_0} K^1 \rTo^{a_1} K^1 \rTo^{d^{\frak{K}}_1} K^0$.

As for the multiplication we first pause for a lemma:
\begin{lemma}
If $\mathbb{G}$ is a groupoid with a connected components adjunction and $(X_f,a)$ a $\mathbb{G}$-object, then under the canonical isomorphisms the morphisms $\Sigma_{\mathbb{G}}(\pi_{ij} \times Id) : \Sigma_{\mathbb{G}}( \mathbb{T}_{\mathbb{G}}1 \times  \mathbb{T}_{\mathbb{G}}1 \times  \mathbb{T}_{\mathbb{G}}1 \times (X_f,a)) \rTo \Sigma_{\mathbb{G}}( \mathbb{T}_{\mathbb{G}}1 \times  \mathbb{T}_{\mathbb{G}}1 \times (X_f,a))$ are 
\begin{eqnarray*}
G_1 \times_{G_0} G_1 \times_{G_0} X & \rTo &G_1 \times_{G_0} X \\
(g_1,g_2,x) & \mapsto  & (g_1,g_2x)\\
(g_1,g_2,x) & \mapsto  & (g_1g_2,x) \\
(g_1,g_2,x) & \mapsto  & (g_2,x)\\
\end{eqnarray*} 
for $ij=12,13,23$ respectively.
\end{lemma}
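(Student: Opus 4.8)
The plan is to run the threefold analogue of the argument that established the diagram preceding Proposition \ref{second2}. Write $T=\mathbb{T}_{\mathbb{G}}1$, $A=(X_f,a)$, $T_0=U_{\mathbb{G}}\mathbb{T}_{\mathbb{G}}=\Sigma_{d_1}d_0^{*}$, and let $t$ be the natural isomorphism of Lemma \ref{groupoidadjisfrob}; the essential point is to take every ``canonical isomorphism'' to be the \emph{free-algebra} one supplied by $t$, rather than a structured-pullback one. Iterating $t$ identifies $T\times A\cong\mathbb{T}_{\mathbb{G}}U_{\mathbb{G}}A=\mathbb{T}_{\mathbb{G}}X_f$, then $T\times T\times A\cong\mathbb{T}_{\mathbb{G}}U_{\mathbb{G}}(T\times A)$, then $T\times T\times T\times A\cong\mathbb{T}_{\mathbb{G}}U_{\mathbb{G}}(T\times T\times A)$; applying $\Sigma_{\mathbb{G}}$ and stripping off the outermost $\mathbb{T}_{\mathbb{G}}$ by $\Sigma_{\mathbb{G}}\mathbb{T}_{\mathbb{G}}\cong\Sigma_{G_0}$ gives successively $\Sigma_{\mathbb{G}}(T\times A)\cong X$, $\Sigma_{\mathbb{G}}(T\times T\times A)\cong|T\times A|$ and $\Sigma_{\mathbb{G}}(T\times T\times T\times A)\cong|T\times T\times A|$. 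Since $T\times A\cong\mathbb{T}_{\mathbb{G}}X_f=\Sigma_{d_1}d_0^{*}X_f$ has underlying object $G_1\times_{G_0}X=\{(g,x):d_0g=fx\}$, a further application of $T_0=\Sigma_{d_1}d_0^{*}$ gives $|T\times T\times A|=\{(g_1,g_2,x):d_0g_1=d_1g_2,\ d_0g_2=fx\}=G_1\times_{G_0}G_1\times_{G_0}X$; this is the canonical isomorphism of the statement, with the first $G_1$-coordinate coming from the middle copy of $\mathbb{T}_{\mathbb{G}}1$ and the second from the innermost.

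Next I would identify the three maps $\pi_{ij}\times Id_A$ by writing $T\times T\times T\times A$ as the threefold iterate $(\mathbb{T}_{\mathbb{G}}U_{\mathbb{G}})^{3}A$ and using that, under $t_C:\mathbb{T}_{\mathbb{G}}U_{\mathbb{G}}C\cong T\times C$, the projection $\pi_2:T\times C\to C$ is the counit $\epsilon_C$. One finds that $\pi_{12}\times Id_A$ (delete the innermost copy of $\mathbb{T}_{\mathbb{G}}1$) corresponds to $\mathbb{T}_{\mathbb{G}}U_{\mathbb{G}}\mathbb{T}_{\mathbb{G}}U_{\mathbb{G}}\epsilon_A$, that $\pi_{13}\times Id_A$ (delete the middle one) corresponds to $\mathbb{T}_{\mathbb{G}}U_{\mathbb{G}}\epsilon_{\mathbb{T}_{\mathbb{G}}U_{\mathbb{G}}A}$, and that $\pi_{23}\times Id_A$ (delete the outermost one) corresponds to $\epsilon_{\mathbb{T}_{\mathbb{G}}U_{\mathbb{G}}(T\times A)}$ --- these are just the three face maps of the bar resolution of $A$. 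Each of these three matchings is verified by the same naturality-of-$\epsilon$ computation already carried out in the two commuting diagrams after Lemma \ref{groupoidadjisfrob}, now one level higher: for the innermost and middle faces by applying $\mathbb{T}_{\mathbb{G}}U_{\mathbb{G}}$ (resp.\ iterating it) to those diagrams, and for the outermost face by the very same argument with $(X_f,a)$ replaced by $T\times A$.

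Finally, applying $\Sigma_{\mathbb{G}}$ and $\Sigma_{\mathbb{G}}\mathbb{T}_{\mathbb{G}}\cong\Sigma_{G_0}$ once more: for $\pi_{12}$ one obtains $\Sigma_{G_0}U_{\mathbb{G}}\mathbb{T}_{\mathbb{G}}U_{\mathbb{G}}\epsilon_A=T_0(U_{\mathbb{G}}\epsilon_A)=\Sigma_{d_1}d_0^{*}(a)$, which on underlying objects is $(g_1,g_2,x)\mapsto(g_1,g_2x)$; for $\pi_{13}$ one obtains $\Sigma_{G_0}U_{\mathbb{G}}\epsilon_{\mathbb{T}_{\mathbb{G}}U_{\mathbb{G}}A}=U_{\mathbb{G}}\epsilon_{\mathbb{T}_{\mathbb{G}}X_f}$, the structure map of the \emph{free} algebra $\mathbb{T}_{\mathbb{G}}X_f$, i.e.\ the monad multiplication $(g_1,g_2,x)\mapsto(g_1g_2,x)$ --- the free-algebra description is precisely what keeps $a$ out of the $X_f$-coordinate here; and for $\pi_{23}$ one obtains $\Sigma_{\mathbb{G}}\epsilon_{\mathbb{T}_{\mathbb{G}}U_{\mathbb{G}}(T\times A)}$, which by the computation recalled just before Proposition \ref{second2} --- applied with $(X_f,a)$ replaced by the $\mathbb{G}$-object $T\times A$, whose underlying object is $G_1\times_{G_0}X$ --- is the projection $(g_1,g_2,x)\mapsto(g_2,x)$.

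I expect the main obstacle to be the bookkeeping that pins down which canonical isomorphism, and which labelling of the two $G_1$-coordinates, is in force at each stage; in particular one must stay with the free-algebra descriptions throughout, so that the action $a$ does not leak into the $\pi_{13}$ formula, and one must correctly match $\pi_{12},\pi_{13},\pi_{23}$ with the three face maps of the bar resolution. The middle face $\pi_{13}$, at which the groupoid multiplication $m$ is produced, is the least routine of the three.
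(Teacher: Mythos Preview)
Your proposal is correct and follows essentially the same route as the paper: identify $T\times T\times T\times A$ with $(\mathbb{T}_{\mathbb{G}}U_{\mathbb{G}})^3A$ via iterated use of the Frobenius isomorphism $t$, match $\pi_{12},\pi_{13},\pi_{23}$ with the three bar-resolution face maps $\mathbb{T}_{\mathbb{G}}U_{\mathbb{G}}\mathbb{T}_{\mathbb{G}}U_{\mathbb{G}}\epsilon_A$, $\mathbb{T}_{\mathbb{G}}U_{\mathbb{G}}\epsilon_{\mathbb{T}_{\mathbb{G}}U_{\mathbb{G}}A}$, $\epsilon_{(\mathbb{T}_{\mathbb{G}}U_{\mathbb{G}})^2A}$, and then read off the three formulas after applying $\Sigma_{\mathbb{G}}\mathbb{T}_{\mathbb{G}}\cong\Sigma_{G_0}$. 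The paper organises the first two cases by applying $\mathbb{T}_{\mathbb{G}}1\times(\_)$ to the two commuting squares after Lemma~\ref{groupoidadjisfrob} together with the observation $Id_{\mathbb{T}_{\mathbb{G}}1}\times h\cong\mathbb{T}_{\mathbb{G}}U_{\mathbb{G}}h$, and handles $ij=23$ by instantiating the second of those squares at $\mathbb{T}_{\mathbb{G}}1\times(X_f,a)$; this is exactly your plan, phrased without the bar-resolution language.
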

\begin{proof}
Apply $\mathbb{T}_{\mathbb{G}}1 \times ( \_)$ to the diagrams just before the start of subsection \ref{connected}. For any $\mathbb{G}$-homomorphism $h$, $Id_{\mathbb{T}_{\mathbb{G}}1} \times h$ is canonically isomorphic to $\mathbb{T}_{\mathbb{G}}U_{\mathbb{G}}h$ by naturality of $\epsilon$. Applying to $h=\mathbb{T}_{\mathbb{G}} U_{\mathbb{G}}\epsilon_{(X_f,a)}$ and $h=\epsilon_{\mathbb{T}_{\mathbb{G}} U_{\mathbb{G}}(X_f,a)}$ we see that for $ij= 12, 13$ respectively, $\Sigma_{\mathbb{G}}(\pi_{ij} \times Id)$ is canonically isomorphic to $U_{\mathbb{G}} \mathbb{T}_{\mathbb{G}} U_{\mathbb{G}}\epsilon_{(X_f,a)}$ and $U_{\mathbb{G}}\epsilon_{\mathbb{T}_{\mathbb{G}}U_{\mathbb{G}}(X_f,a)}$ respectively from which these cases are clear given our knowledge of $\epsilon$. 

For the $ij=23$ case apply the second diagram to the $\mathbb{G}$-object $\mathbb{T}_{\mathbb{G}}1 \times (X_f,a)$ and note that $\epsilon_{\mathbb{T}_{\mathbb{G}}U_{\mathbb{G}}(\mathbb{T}_{\mathbb{G}}1 \times (X_f,a))}$ is canonically isomorphic to $\epsilon_{\mathbb{T}_{\mathbb{G}}U_{\mathbb{G}}\mathbb{T}_{\mathbb{G}}U_{\mathbb{G}}(X_f,a)}$. We have commented in the paragraphs leading up to Proposition \ref{second2} that $\Sigma_{\mathbb{G}}$ applied to the counit at the free algebra returns the relevant projection.
\end{proof}
Returning to multiplication, the 4. implies 1. part of Proposition \ref{second2}  shows that it is
\begin{eqnarray*}
L_0(W_0 \times W_0) \times_{L_0 W_0} L_0(W_0 \times W_0) \rTo^{(L_0\pi_{12},L_0 \pi_{23})^{-1}} L_0(W_0 \times W_0 \times W_0) \rTo^{L_0 \pi_{13}} L_0 (W_0 \times W_0)
\end{eqnarray*}
for $\frak{K}$ and similarly for $\mathbb{G}$ and $\mathbb{G} \ltimes \frak{K}$. So, again exploiting the fact that $L_0 \dashv R_0$ satisfies Frobenius reciprocity, the multiplication of $\mathbb{G} \ltimes \frak{K}$ is the composition of $\Sigma_{\mathbb{G}}(\pi_{13} \times L_0\pi_{13})$
\begin{eqnarray*}
\Sigma_{\mathbb{G}}( \mathbb{T}_{\mathbb{G}}1 \times  \mathbb{T}_{\mathbb{G}}1 \times  \mathbb{T}_{\mathbb{G}}1 \times L_0(W_0 \times W_0 \times W_0))  \rTo \Sigma_{\mathbb{G}}( \mathbb{T}_{\mathbb{G}}1 \times  \mathbb{T}_{\mathbb{G}}1 \times L_0(W_0 \times W_0))\text{ \ \ \ (a)}
\end{eqnarray*}
and $[ \Sigma_{\mathbb{G}}(\pi_{12},L_0 \pi_{12}),   \Sigma_{\mathbb{G}}(\pi_{23},L_0 \pi_{23})]^{-1}$
\begin{eqnarray*}
& \Sigma_{\mathbb{G}}( \mathbb{T}_{\mathbb{G}}1 \times  \mathbb{T}_{\mathbb{G}}1 \times L_0( W_0 \times W_0)) \times_{ \Sigma_{\mathbb{G}}( \mathbb{T}_{\mathbb{G}}1 \times    L_0 W_0)} \Sigma_{\mathbb{G}}( \mathbb{T}_{\mathbb{G}}1 \times  \mathbb{T}_{\mathbb{G}}1 \times L_0( W_0 \times W_0))  \\ 
& \rTo \Sigma_{\mathbb{G}}( \mathbb{T}_{\mathbb{G}}1 \times  \mathbb{T}_{\mathbb{G}}1   \times  \mathbb{T}_{\mathbb{G}}1   \times    L_0( W_0 \times W_0 \times W_0 ))  \text{ \ \ \ (b)}
\end{eqnarray*}
Apply the lemma  to see that (a) is sending, under the canonical isomorphsms, $(g_1,g_2,[k_1,k_2])$ to $(g_1g_2,k_1k_2)$ and the inverse of (b), i.e. $[ \Sigma_{\mathbb{G}}(\pi_{12},L_0 \pi_{12}),   \Sigma_{\mathbb{G}}(\pi_{23},L_0 \pi_{23})]$, sends $(g_1,g_2,[k_1,k_2])$ to $((g_1,g_2k_1),(g_2,k_2))$ where we are using $[ \_, \_]$ for $(L_0\pi_{12},L_0 \pi_{23})^{-1}$. From this it is clear the multiplcation of $\mathbb{G} \ltimes \frak{K}$ is $((g,k),(g',k')) \mapsto (gg',((g')^{-1}k)k')$ and we have indeed defined semidirect product in the usual manner.

Going in the other direction to the last Proposition, stably Frobenius adjunctions with categories of $\mathbb{H}$-objects as domains give rise to internal groupoids:
\begin{proposition}\label{connectback}
Let $\mathbb{H}$ be a groupoid in $\mathcal{C}$ with a connected components adjunction, $\mathcal{D} \pile{\rTo \\ \lTo} \mathcal{C}$ an adjunction and $L \dashv R : [ \mathbb{H} , \mathcal{C} ] \pile{\rTo \\ \lTo} \mathcal{D}$ a stably Frobenius adjunction over $\mathcal{C}$. Then there exists a stably Frobenius groupoid $\frak{K}_{L,R}$ in $\mathcal{D}$ and an equivalence $\Theta : [ \mathbb{H} , \mathcal{C}] \rTo $$ [ \frak{K}_{L,R} ,  \mathcal{D}  ] $ over $ \mathcal{D}$.
\end{proposition}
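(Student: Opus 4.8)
The plan is to recognise that the given adjunction $L\dashv R$ can itself play the r\^ole of the ``ambient'' adjunction in the characterisation of connected components adjunctions (Proposition~\ref{second2}), now with the base category taken to be $\mathcal{D}$ and the total category taken to be $[\mathbb{H},\mathcal{C}]$. Thus I would verify condition~2 of that proposition in this translated form: exhibit an object $K_0$ of $\mathcal{D}$ together with a stably Frobenius adjunction $\mathbb{T}'\dashv U':\mathcal{D}/K_0\pile{\rTo \\ \lTo}[\mathbb{H},\mathcal{C}]$ which is over $\mathcal{D}$ and whose right adjoint $U'$ is monadic. Granting this, Proposition~\ref{second2} produces the desired groupoid and equivalence.

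To build $\mathbb{T}'\dashv U'$, first form the composite adjunction $L\mathbb{T}_{\mathbb{H}}\dashv U_{\mathbb{H}}R:\mathcal{C}/H_0\pile{\rTo \\ \lTo}\mathcal{D}$. It is stably Frobenius, being a composite of the stably Frobenius $\mathbb{T}_{\mathbb{H}}\dashv U_{\mathbb{H}}$ of Lemma~\ref{groupoidadjisfrob} with the stably Frobenius $L\dashv R$; and it is over $\mathcal{C}$, since $\Sigma_{\mathcal{D}}L\mathbb{T}_{\mathbb{H}}\cong\Sigma_{\mathbb{H}}\mathbb{T}_{\mathbb{H}}\cong\Sigma_{H_0}$ (using that $L\dashv R$ is over $\mathcal{C}$ and that $\mathbb{H}$ has a connected components adjunction). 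In particular it is Frobenius at $\mathcal{D}^*H_0$, so part~2 of Lemma~\ref{localic} applies and $E:=(L\mathbb{T}_{\mathbb{H}})_1:\mathcal{C}/H_0\rTo\mathcal{D}/L\mathbb{T}_{\mathbb{H}}1$ is an equivalence. Put $K_0:=L\mathbb{T}_{\mathbb{H}}1$ and transport the monadic presentation of $[\mathbb{H},\mathcal{C}]$ along $E$: set $U':=E\circ U_{\mathbb{H}}$, with left adjoint $\mathbb{T}':=\mathbb{T}_{\mathbb{H}}\circ E^{-1}$. Then $\mathbb{T}'\dashv U'$ is the composite of the equivalence $E^{-1}\dashv E$ (equivalences are stably Frobenius) with $\mathbb{T}_{\mathbb{H}}\dashv U_{\mathbb{H}}$, hence stably Frobenius; $U'$ is monadic because $E$ is split and has a left adjoint while $U_{\mathbb{H}}$ is monadic, by the closure properties recalled in the Monadicity section; and $\mathbb{T}'\dashv U'$ is over $\mathcal{D}$ because $\Sigma_{K_0}\circ E\cong L\mathbb{T}_{\mathbb{H}}$ gives $L\mathbb{T}'=L\mathbb{T}_{\mathbb{H}}E^{-1}\cong\Sigma_{K_0}$.

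With condition~2 verified for $L\dashv R$, Proposition~\ref{second2} supplies an internal groupoid $\frak{K}_{L,R}$ in $\mathcal{D}$ (necessarily with object of objects $K_0$) and an equivalence $\Theta:[\mathbb{H},\mathcal{C}]\rTo[\frak{K}_{L,R},\mathcal{D}]$ with $\Theta R\cong\frak{K}_{L,R}^{*}$; passing to left adjoints, $\Sigma_{\frak{K}_{L,R}}\Theta\cong L$, which is precisely the statement that $\Theta$ is over $\mathcal{D}$. Finally $\frak{K}_{L,R}$ is stably Frobenius, since its connected components adjunction $\Sigma_{\frak{K}_{L,R}}\dashv\frak{K}_{L,R}^{*}$ is, up to the equivalence $\Theta$, the adjunction $L\dashv R$, and being stably Frobenius is preserved under composition with equivalences on either side.

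The mathematics here is light --- the one substantive input, the equivalence $\mathcal{C}/H_0\simeq\mathcal{D}/L\mathbb{T}_{\mathbb{H}}1$, is handed over by part~2 of Lemma~\ref{localic} --- so the main obstacle I anticipate is purely bookkeeping: being careful about which base category and which ambient adjunction each ``over $\mathcal{X}$'' clause refers to, and confirming that the translated forms of conditions~1 and~2 of Proposition~\ref{second2} yield exactly the ``equivalence over $\mathcal{D}$'' demanded in the statement rather than something weaker.
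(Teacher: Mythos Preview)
Your proposal is correct and follows essentially the same route as the paper's proof: both verify condition~2 of Proposition~\ref{second2} with $\mathcal{D}$ as the new base, take $K_0=L\mathbb{T}_{\mathbb{H}}1$, invoke part~2 of Lemma~\ref{localic} on the stably Frobenius composite $L\mathbb{T}_{\mathbb{H}}\dashv U_{\mathbb{H}}R$ to obtain the equivalence $\mathcal{C}/H_0\simeq\mathcal{D}/K_0$, and then transport the monadic $\mathbb{T}_{\mathbb{H}}\dashv U_{\mathbb{H}}$ along it. Your write-up is more explicit than the paper's about the bookkeeping (the ``over $\mathcal{D}$'' clause and the stably Frobenius property of $\frak{K}_{L,R}$), but the substance is the same.
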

\begin{proof}
By Proposition \ref{second2}, using the second condition, we need to find an object $W$ of $\mathcal{D}$ and a stably Frobenius adjunction $\mathcal{D} /W \pile{\rTo \\ \lTo} [ \mathbb{H} , \mathcal{C} ] $ over $\mathcal{D}$ whose right adjoint is monadic. But by Proposition \ref{second2} we know that there is a stably Frobenius adjunction $\mathbb{T}_{\mathbb{H}} \dashv U_{\mathbb{H}} : \mathcal{C}/H_0 \pile{\rTo \\ \lTo} [ \mathbb{H} , \mathcal{C} ] $ over $ \mathcal{C} $ whose right adjoint is monadic, so to progress the proof we exhibit an equivalence $\mathcal{C} / H_0 \simeq \mathcal{D}/ W$. Take $W = L\mathbb{T}_{\mathbb{H}}1$ and notice that Lemma \ref{localic} part 2 can be applied as $L  \mathbb{T}_{\mathbb{H}} \dashv U_{\mathbb{H}} R$ is stably Frobenius because $L \dashv R$ is stably Frobenius.  
\end{proof}
When $\mathcal{D}=[\mathbb{G}, \mathcal{C}]$, for $\mathbb{G}$ stably Frobenius, describing  $\frak{K}_{L,R}$ is easy; it is given by $(L\pi_2,L\pi_1: L (\mathbb{T}_{\mathbb{H}}1 \times \mathbb{T}_{\mathbb{H}}1) \pile{ \rTo \\ \rTo } L \mathbb{T}_{\mathbb{H}}1, ...)$ and $L (\mathbb{T}_{\mathbb{H}}1 \times \mathbb{T}_{\mathbb{H}}1)$ is the $(R_r,e)$ that played a role in the proof of Propositin \ref{HSexample}; so the domain and codomain maps of  $\frak{K}_{L,R}$ are canonically isomorphic to $\pi_2, c: H_1 \times_{H_0} P \pile{\rTo \\ \rTo} P$ where, as before, $(P_p, c: H_1 \times_{H_0} P \rTo P) $ is defined as $R \mathbb{T}_{\mathbb{G}}1$. 
\begin{lemma}\label{GasP}
Given $L \dashv R$ as in the last proposition, with $\mathcal{D}=[\mathbb{G}, \mathcal{C}]$. Then the groupoid $\mathbb{G} \ltimes \frak{K}_{L,R}$ in $\mathcal{C}$ is isomorphic to $\mathbb{P}_{L,R}$, the groupoid corresponding to the Hilsum-Skandalis map $\frak{P}_{L,R}$
\end{lemma}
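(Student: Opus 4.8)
The plan is to compare the two internal groupoids $\mathbb{G} \ltimes \frak{K}_{L,R}$ and $\mathbb{P}_{L,R}$ by identifying their objects-of-objects, objects-of-morphisms, and structure maps, using the explicit descriptions that are already available. For $\mathbb{P}_{L,R}$, recall from Lemma \ref{objectslice} and the proof of the first main theorem that $\mathbb{P}_{L,R}$ is the internal groupoid $(G_1 \times_{G_0} P \pile{\rTo^{\pi_2} \\ \rTo_{d}} P, \dots)$ associated to the Hilsum-Skandalis $\mathbb{H}\times\mathbb{G}$-object $\frak{P}_{L,R}=(P_{(p,q)},c,d)$, where $(P_p,c)=R\mathbb{T}_{\mathbb{G}}1$ and $(P_q,d)=L\mathbb{T}_{\mathbb{H}}1$. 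For $\mathbb{G}\ltimes\frak{K}_{L,R}$, the explicit structure worked out immediately after Proposition \ref{connect} gives: object of objects $K^0 = \Sigma_{G_0}U_{\mathbb{G}}(L_0 W_0)$, object of morphisms $G_1 \times_{G_0} K^1$, and domain/codomain maps $d^{\frak{K}}_0\pi_2$ and $d^{\frak{K}}_1 a_1$, with multiplication $((g,k),(g',k'))\mapsto (gg',((g')^{-1}k)k')$.

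First I would use the description of $\frak{K}_{L,R}$ in the $\mathcal{D}=[\mathbb{G},\mathcal{C}]$ case given just before the lemma: its domain and codomain maps are canonically $\pi_2, c : H_1 \times_{H_0} P \pile{\rTo \\ \rTo} P$, with $(P_p,c)=R\mathbb{T}_{\mathbb{G}}1$. So $K^0 = U_{\mathbb{G}}(R\mathbb{T}_{\mathbb{G}}1)=P$ (as an object over $G_0$ via $q$, since $R\mathbb{T}_{\mathbb{G}}1$ lives over $G_0$), which matches the object-of-objects of $\mathbb{P}_{L,R}$. For the morphisms, $K^1 = U_{\mathbb{G}}L(\mathbb{T}_{\mathbb{H}}1\times\mathbb{T}_{\mathbb{H}}1)=R$ (the $(R_r,e)$ of Proposition \ref{HSexample}), and so $G_1 \times_{G_0} K^1 = G_1 \times_{G_0} R$; I would then invoke the commuting square from the proof of Proposition \ref{HSexample}, namely $H_1 \times_{H_0} P \cong R$ over the maps $\pi_2, c$ versus $p_2, p_1$, together with the isomorphism $\theta: P\times_{H_0}P \cong G_1\times_{G_0}P$, to identify $G_1 \times_{G_0} R$ with the morphism object $G_1 \times_{G_0} P$ of $\mathbb{P}_{L,R}$.

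Next I would check that the structure maps match under these identifications. The domain/codomain maps of $\mathbb{G}\ltimes\frak{K}_{L,R}$ were computed to be $G_1 \times_{G_0} K^1 \rTo^{\pi_2} K^1 \rTo^{d^{\frak{K}}_0} K^0$ and $G_1 \times_{G_0} K^1 \rTo^{a_1} K^1 \rTo^{d^{\frak{K}}_1} K^0$; translating through $K^1 \cong H_1 \times_{H_0} P$ with $d^{\frak{K}}_0 = \pi_2$, $d^{\frak{K}}_1 = c$, and through $H_1\times_{H_0}P \cong R$, one should land exactly on $\pi_2: G_1\times_{G_0}P \rTo P$ and $d: G_1\times_{G_0}P\rTo P$ respectively — the point being that the $\mathbb{G}$-action $a_1$ on $K^1=R$ translates, via $\theta$ and the Hilsum-Skandalis isomorphism, into the $\mathbb{G}$-action $d$ on $P$. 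Finally, the multiplication $((g,k),(g',k'))\mapsto (gg',((g')^{-1}k)k')$ of $\mathbb{G}\ltimes\frak{K}_{L,R}$ should be checked to correspond to the composition in $\mathbb{P}_{L,R}$; here I would unwind the composition in the groupoid $(G_1\times_{G_0}P \pile{\rTo \\ \rTo} P,\dots)$ of Lemma \ref{objectslice}, which is governed by $m^{\mathbb{G}}$ twisted by the $\mathbb{G}$-action on $P$, and match it term by term. The main obstacle I expect is bookkeeping: keeping the various canonical isomorphisms ($\theta$, $\rho$, the isomorphism $H_1\times_{H_0}P\cong R$, and the identifications $\Sigma_{\mathbb{G}}\mathbb{T}_{\mathbb{G}}U_{\mathbb{G}}\cong\Sigma_{G_0}U_{\mathbb{G}}$) consistently oriented so that the semidirect-product multiplication visibly becomes the $\mathbb{P}_{L,R}$-composition, rather than its opposite or a shifted variant. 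All of this is routine given the explicit formulas already established, so I would present it as a verification.
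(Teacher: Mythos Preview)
Your overall strategy --- compare the explicit descriptions of the two groupoids --- is exactly what the paper does; its proof is the single line ``Follows from the explicit descriptions of semidirect product and $\frak{K}_{L,R}$ already provided.'' However, you have misidentified the morphism object of $\mathbb{P}_{L,R}$, and this error forces you into unnecessary detours through $\theta$ and $\rho$.

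The Hilsum--Skandalis map $\frak{P}_{L,R}$ is an $\mathbb{H}\times\mathbb{G}$-object, not merely a $\mathbb{G}$-object. Hence the groupoid $\mathbb{P}_{L,R}$ of Lemma~\ref{objectslice} has morphism object $(H_1\times G_1)\times_{H_0\times G_0} P \cong H_1\times_{H_0} G_1\times_{G_0} P$, with domain and codomain given by $\pi_2$ and the combined action $(h,g,x)\mapsto hgx$ (see the proof of Proposition~\ref{HSasesseq}, where this morphism object appears explicitly). Your description $(G_1\times_{G_0} P \pile{\rTo^{\pi_2}\\ \rTo_{d}} P,\dots)$ drops the $\mathbb{H}$-factor entirely.

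Once this is corrected, the comparison is immediate and does not require the isomorphisms $\theta$ or $\rho$ from Proposition~\ref{HSexample}. From the paragraph preceding the lemma, $K^0 \cong P$ and $K^1 \cong H_1\times_{H_0} P$ (with $d_0^{\frak{K}}=\pi_2$, $d_1^{\frak{K}}=c$, and $\mathbb{G}$-action $a_1$ on $K^1$ the evident one). The semidirect product $\mathbb{G}\ltimes\frak{K}_{L,R}$ therefore has morphism object $G_1\times_{G_0} K^1 = G_1\times_{G_0}(H_1\times_{H_0} P)$, which is canonically $H_1\times_{H_0} G_1\times_{G_0} P$. The domain map $d_0^{\frak{K}}\pi_2$ becomes $\pi_2$ on this object, and the codomain map $d_1^{\frak{K}} a_1$ becomes $(g,h,x)\mapsto c(h,d(g,x)) = hgx$; the multiplication $((g,k),(g',k'))\mapsto (gg',((g')^{-1}k)k')$ unwinds directly to that of $\mathbb{P}_{L,R}$. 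No further bookkeeping with $\theta$, $\rho$, or the $(R_r,e)$ description is needed.
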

\begin{proof}
Follows from the explicit descriptions of semidirect product and $\frak{K}_{L,R}$ already provided.
\end{proof}
So, in summary, we see that the stably Frobenius adjunctions corresponding to Hilsum-Skandalis maps are the connected component adjunctions of internal stably Frobenius groupoids. Hilsum-Skandalis maps give rise to stably Frobenius groupoids internal to $[\mathbb{G},\mathcal{C}]$ and vice versa, but these internal groupoids are unique only up to Morita equivalence.

\section{Pullback of connected component adjunctions}
In this section we show that the 2-category $T_{\mathcal{C}}$ defined in the introductory paragraph of Section \ref{invert} has finite pseudo-limits. It clearly has a terminal object as the trivial groupoid $\mathbb{G}=1$ is stably Frobenius. The next proposition shows that $T_{\mathcal{C}}$ has binary pseudo-products, but this is sufficient as we can exploit the results of the previous section to show that a pullback diagram over $[\mathbb{G},\mathcal{C}]$ gives rise to a product diagram in $T_{[\mathbb{G},\mathcal{C}]}$. We will see later that this allows us to conclude that the pullbacks of bounded geometric morphisms over some base topos $\mathcal{S}$ can be calculated in $T_{\mathbf{Loc}_{\mathcal{S}}}$, where $\mathbf{Loc}_{\mathcal{S}}$ is the category of locales over $\mathcal{S}$.

We need a lemma.
\begin{lemma}
If $\mathbb{H}$ and $\mathbb{G}$ are two groupoids in $\mathcal{C}$ then $\mathbb{T}_{\mathbb{H} \times \mathbb{G}}1 \rTo^{(Id_{H_1} \times d_1^{\mathbb{G}},d_1^{\mathbb{H}} \times Id_{G_1})} \pi_1^* \mathbb{T}_{\mathbb{H}}1 \times \pi_2^* \mathbb{T}_{\mathbb{G}}1$ is an $\mathbb{H} \times \mathbb{G}$-isomorphism. If, further, both $\mathbb{H}$ and $\mathbb{G}$ are stably Frobenius then 

(a) for any $\mathbb{H} \times \mathbb{G}$-object $A=(A_{f,g},c,d)$, the map $n_A: A \rTo \Sigma_{\pi_1} A$, which quotients out the $\mathbb{G}$-action, is an $\mathbb{H}$-homomorphism,

(b) $Id \times d_1^{\mathbb{G}}: \mathbb{T}_{\mathbb{H} \times \mathbb{G}}1 \rTo \pi_1^* \mathbb{T}_{\mathbb{H}} 1$ is the adjoint transpose across $\Sigma_{\pi_1} \dashv \pi_1^*$ of the projection $\pi_1 : \mathbb{T}_{\mathbb{H}}1 \times \mathbb{H}^*G_0 \rTo \mathbb{T}_{\mathbb{H}}1$,

(c) for any $\mathbb{H}$-object $B=(B_f,c)$, the morphism 
\begin{eqnarray*}
\Sigma_{\mathbb{H}} \Sigma_{\pi_1}(\mathbb{T}_{\mathbb{H} \times \mathbb{G}}1 \times \pi_1^*B) \rTo^{\Sigma_{\mathbb{H}}(\cong)} \Sigma_{\mathbb{H}}(\Sigma_{\pi_1}\mathbb{T}_{\mathbb{H} \times \mathbb{G}}1 \times B) \rTo^{\Sigma_{\mathbb{H}}(\pi_1 \times Id)} \Sigma_{\mathbb{H}}(\mathbb{T}_{\mathbb{H}}1 \times B)
\end{eqnarray*}
 is canonically isomorphic to $\pi_1: B \times G_0 \rTo B$; and,

(d) the morphism
\begin{eqnarray*}
\Sigma_{\mathbb{H} \times \mathbb{G}}(\mathbb{T}_{\mathbb{H} \times \mathbb{G}}1 \times A) \rTo^{\Sigma_{\mathbb{H} \times \mathbb{G}}( (Id \times d_1^{\mathbb{G}} ) \times Id)} \Sigma_{\mathbb{H} \times \mathbb{G}}(\pi_1^* \mathbb{T}_{\mathbb{H}}1 \times A) \rTo^{\cong} \Sigma_{\mathbb{H}}(\mathbb{T}_{\mathbb{H}}1 \times \Sigma_{\pi_1}A)
\end{eqnarray*}
is canonically isomorphic to $A \rTo^{n_A} \Sigma_{\pi_1}A$. 
\end{lemma}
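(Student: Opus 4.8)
The plan is to prove each assertion by making the relevant objects and structure maps explicit in the element notation the paper uses, and then tracing the canonical isomorphisms through. Throughout I use the mirror of Proposition~\ref{product}(i): since $\mathbb{G}$ is stably Frobenius, $\pi_1^{*}\colon[\mathbb{H},\mathcal{C}]\to[\mathbb{H}\times\mathbb{G},\mathcal{C}]$ has a left adjoint $\Sigma_{\pi_1}$ with $\Sigma_{\pi_1}(P_{(p,q)},c,d)=\Sigma_{\mathbb{G}}(P_q,d)$, and $n_A\colon A\to\Sigma_{\pi_1}A$ is the coequalizer of the $\mathbb{G}$-action $d$ and the projection $G_1\times_{G_0}P\to P$. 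For the first (unconditional) assertion, the underlying object of $\mathbb{T}_{\mathbb{H}\times\mathbb{G}}1$ is $H_1\times G_1$, over $H_0\times G_0$ via $d_1^{\mathbb{H}}\times d_1^{\mathbb{G}}$ with action $m^{\mathbb{H}}\times m^{\mathbb{G}}$; computing the product $\pi_1^{*}\mathbb{T}_{\mathbb{H}}1\times\pi_2^{*}\mathbb{T}_{\mathbb{G}}1$ in $\mathcal{C}/(H_0\times G_0)$ gives the same object with the same $\mathbb{H}\times\mathbb{G}$-structure, and under this identification the stated pairing is the identity. So it remains only to note that $Id_{H_1}\times d_1^{\mathbb{G}}$ and $d_1^{\mathbb{H}}\times Id_{G_1}$ are $\mathbb{H}\times\mathbb{G}$-homomorphisms, which is immediate.

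Assume now $\mathbb{H}$ and $\mathbb{G}$ are stably Frobenius. Part~(a) is built into the construction of $\Sigma_{\pi_1}$: the $\mathbb{H}$-action on $\Sigma_{\pi_1}A=\Sigma_{\mathbb{G}}(P_q,d)$ is the descent of $c$ along $n_A$, which is legitimate because $c$ and $d$ commute (condition~(v) of an $\mathbb{H}\times\mathbb{G}$-object) and the coequalizer $n_A$ is preserved by $\Sigma_{d_1^{\mathbb{H}}}(d_0^{\mathbb{H}})^{*}$ since $\mathbb{G}$ is stably Frobenius; $n_A$ is then an $\mathbb{H}$-homomorphism by construction. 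For~(b) I would first record the explicit unit of $\Sigma_{\pi_1}\dashv\pi_1^{*}$: at $A$ it is, on underlying objects, the pairing of the $G_0$-coordinate $q$ of $A$ with $n_A$, exactly analogous to the unit $(f,n)$ of $\Sigma_{\mathbb{G}}\dashv\mathbb{G}^{*}$. Next, $\Sigma_{\pi_1}\mathbb{T}_{\mathbb{H}\times\mathbb{G}}1$ is the quotient $H_1\times G_1\to H_1\times G_0$, $(h,g)\mapsto(h,d_0^{\mathbb{G}}g)$, which one identifies with $\mathbb{T}_{\mathbb{H}}1\times\mathbb{H}^{*}G_0$, the projection onto $\mathbb{T}_{\mathbb{H}}1$ corresponding to $\pi_1$; chasing the adjoint transpose $\pi_1^{*}(\pi_1)\circ\eta_{\mathbb{T}_{\mathbb{H}\times\mathbb{G}}1}$ through these identifications yields $(h,g)\mapsto(h,d_1^{\mathbb{G}}g)$, i.e.\ $Id\times d_1^{\mathbb{G}}$.

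For~(c) and~(d) the strategy is to trace the canonical isomorphisms, invoking Frobenius reciprocity of $\Sigma_{\pi_1}\dashv\pi_1^{*}$, the identities $\Sigma_{\mathbb{H}}\mathbb{T}_{\mathbb{H}}\cong\Sigma_{H_0}$ and $\Sigma_{\mathbb{H}\times\mathbb{G}}\cong\Sigma_{\mathbb{H}}\Sigma_{\pi_1}$ (the latter from $(\mathbb{H}\times\mathbb{G})^{*}\cong\pi_1^{*}\mathbb{H}^{*}$), the Frobenius isomorphism $t\colon\mathbb{T}U(-)\cong\mathbb{T}1\times(-)$, and the commuting diagrams following Lemma~\ref{groupoidadjisfrob}. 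In~(c) the first isomorphism is Frobenius reciprocity at $\mathbb{T}_{\mathbb{H}\times\mathbb{G}}1\times\pi_1^{*}B$; using~(b) to write $\Sigma_{\pi_1}\mathbb{T}_{\mathbb{H}\times\mathbb{G}}1=\mathbb{T}_{\mathbb{H}}1\times\mathbb{H}^{*}G_0$ and reassociating, $\Sigma_{\mathbb{H}}(\pi_1\times Id)$ becomes $\Sigma_{\mathbb{H}}$ of $Id_{\mathbb{T}_{\mathbb{H}}1}$ times the product projection $\mathbb{H}^{*}G_0\times B\to B$, and then $\Sigma_{\mathbb{H}}(\mathbb{T}_{\mathbb{H}}1\times(-))\cong\Sigma_{H_0}U_{\mathbb{H}}(-)$ turns this into $\pi_1\colon B\times G_0\to B$. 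In~(d), by the first assertion of the lemma the map $(Id\times d_1^{\mathbb{G}})\times Id_A$ is $Id_{\pi_1^{*}\mathbb{T}_{\mathbb{H}}1}$ times the product projection $\pi_2^{*}\mathbb{T}_{\mathbb{G}}1\times A\to A$; after applying $\Sigma_{\mathbb{H}\times\mathbb{G}}\cong\Sigma_{\mathbb{H}}\Sigma_{\pi_1}$, pulling the factor $\pi_1^{*}\mathbb{T}_{\mathbb{H}}1$ out by Frobenius reciprocity, and using $\Sigma_{\mathbb{H}}(\mathbb{T}_{\mathbb{H}}1\times(-))\cong\Sigma_{H_0}U_{\mathbb{H}}(-)$, the problem reduces to identifying $\Sigma_{\pi_1}$ of the projection $\pi_2^{*}\mathbb{T}_{\mathbb{G}}1\times A\to A$; but as $\mathbb{G}$-objects this projection is, via $t$, the counit $\epsilon^{\mathbb{G}}$ at the free algebra on the underlying $\mathbb{G}$-object of $A$, and $\Sigma_{\mathbb{G}}\epsilon^{\mathbb{G}}_{(X_f,a)}=n_{(X_f,a)}$ (since $n$ coequalizes the action and the projection, and the projection of a free algebra is a split epimorphism), which under the identifications is precisely $n_A$.

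The step I expect to be the main obstacle is the bookkeeping in~(d): because $t$ intertwines the free-algebra factor with $A$, the map $(Id\times d_1^{\mathbb{G}})\times Id_A$ is not literally of the form ``$\mathbb{T}$ of a morphism'', so one must first route it, via the first assertion of the lemma, through the product-projection description, and then through the free-algebra-counit description before $\Sigma_{\mathbb{G}}$ can be evaluated; keeping the $\mathbb{H}$- and $\mathbb{G}$-structures separate while doing this is where care is needed. Parts~(a)--(c) are essentially direct once the explicit forms of $\Sigma_{\pi_1}$, its unit, and $\Sigma_{\pi_1}\mathbb{T}_{\mathbb{H}\times\mathbb{G}}1$ are in hand; the only genuine content of~(b) is pinning down that unit, which can instead be read off from the construction in Proposition~\ref{product}(i).
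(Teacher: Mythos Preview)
Your argument is correct and, for the first assertion and parts~(a)--(c), essentially matches the paper's. The one substantive divergence is in~(d). The paper does not go through the product decomposition and the counit; instead it uses~(b) to recognise $(Id\times d_1^{\mathbb{G}})$ as the adjoint transpose $\widetilde{\pi_1}$, then unwinds the Frobenius isomorphism (which is built from the counit) to rewrite the whole composite as
\[
\Sigma_{\pi_1}(\mathbb{T}_{\mathbb{H}\times\mathbb{G}}1\times A)\xrightarrow{\Sigma_{\pi_1}(Id\times\eta_A)}\Sigma_{\pi_1}(\mathbb{T}_{\mathbb{H}\times\mathbb{G}}1\times\pi_1^{*}\Sigma_{\pi_1}A)\xrightarrow{\cong}\Sigma_{\pi_1}\mathbb{T}_{\mathbb{H}\times\mathbb{G}}1\times\Sigma_{\pi_1}A\xrightarrow{\pi_1\times Id}\mathbb{T}_{\mathbb{H}}1\times\Sigma_{\pi_1}A,
\]
and then applies~(c) with $B=\Sigma_{\pi_1}A$, using that $\eta_A=(n_A,g)$. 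This buys a cleaner identification of the domain with $A$ and avoids the coherence check you rightly flag as the obstacle: you must verify that the two canonical identifications of $\Sigma_{\mathbb{H}\times\mathbb{G}}(\mathbb{T}_{\mathbb{H}\times\mathbb{G}}1\times A)$ with $A$---the direct one and the one obtained via your reassociation and the iterated Frobenius---agree. Your counit-based route does work once that is done, and has the virtue of making the role of the $\mathbb{G}$-quotient completely explicit; the paper's unit-based route trades that transparency for not having to touch the domain at all. A minor point: in your~(d) the phrase ``counit at the free algebra on the underlying $\mathbb{G}$-object of $A$'' should read ``counit at the $\mathbb{G}$-object $(A_q,d)$''; the identity you need is $\pi_2\circ t^{-1}=\epsilon_{(A_q,d)}$ and $\Sigma_{\mathbb{G}}\epsilon_{(A_q,d)}=n_A$, which is exactly what you then use.
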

The functor $\Sigma_{\pi_1} : [ \mathbb{H} \times \mathbb{G}, \mathcal{C}] \rTo $$ [\mathbb{H} , \mathcal{C}]$ used in the statement of the lemma exists by application of Proposition \ref{product}.
\begin{proof}
That the given morphism is an isomorphism is clear from the construction of product in $[\mathbb{H} \times \mathbb{G},\mathcal{C}]$; the pullback of $Id_{H_1} \times d_1^{\mathbb{G}}$ along $d_1^{\mathbb{H}} \times Id_{G_1}$ is $d_1^{\mathbb{H}} \times d_1^{\mathbb{G}}: H_1 \times G_1 \rTo H_0 \times G_0$, i.e. $d_1^{\mathbb{H} \times \mathbb{G}}$.

Part (a) is clear from the construction of the $\mathbb{H}$-action on $\Sigma_{\pi_1}A$ from which the identification of $\Sigma_{\pi_1}\mathbb{T}_{\mathbb{H} \times \mathbb{G}}1$ with $\mathbb{T}_{\mathbb{H}}1 \times \mathbb{H}^*G_0$ is also clear; quotienting out the $\mathbb{G}$-action on $G_1$ returns $G_0$ and from this (b) follows as the quotienting map is $d_1^{\mathbb{G}}$.

For (c), first note that $\Sigma_{\pi_1}(\mathbb{T}_{\mathbb{H} \times \mathbb{G}}1 \times \pi_1^*B)$ is constructed using the quotient $(Id \times d_1^{\mathbb{G}} ) \times (Id \times Id): (H_1 \times G_1 ) \times_{H_0 \times G_0} (B \times G_0) \rTo (H_1 \times G_0) \times_{H_0 \times G_0} (B \times G_0)$. But (c) then follows as

\begin{diagram}
(H_1 \times G_0) \times_{H_0 \times G_0} (B \times G_0) & \rTo^{\cong} & (H_1 \times G_0) \times_{H_0} B & \rTo^{\pi_1 \times Id}  & H_1 \times_{H_0} B\\
\dTo^{\pi_2} & & & & \dTo_{\pi_2}\\
B \times G_0 & & \rTo^{\pi_2} & & B\\
\end{diagram}
clearly commutes where the vertical arrows quotient out the $\mathbb{H}$-action.

The morphism of (d) is $\Sigma_{\mathbb{H}}$ applied to
\begin{eqnarray*}
\Sigma_{\pi_1}(\mathbb{T}_{\mathbb{H} \times \mathbb{G}}1 \times A) \rTo^{\Sigma_{\pi_1}( \tilde{\pi_1} \times Id)} \Sigma_{\pi_1}(\pi_1^* \mathbb{T}_{\mathbb{H}}1 \times A) \rTo^{\cong} \mathbb{T}_{\mathbb{H}}1 \times \Sigma_{\pi_1}A
\end{eqnarray*}
where $\tilde{(\_)}$ is adjoint transpose across $\Sigma_{\pi_1} \dashv \pi_1^*$. As the canonical isomorphism in the definition of Frobenius reciprocity involves a counit this morphism is equal to 
\begin{eqnarray*}
\Sigma_{\pi_1}(\mathbb{T}_{\mathbb{H} \times \mathbb{G}}1 \times A) \rTo^{\Sigma_{\pi_1}(Id \times \eta_A)} & \Sigma_{\pi_1}(\mathbb{T}_{\mathbb{H} \times \mathbb{G}}1 \times \pi_1^* \Sigma_{\pi_1}A) & \rTo^{\cong} \Sigma_{\pi_1}\mathbb{T}_{\mathbb{H} \times \mathbb{G}}1 \times \Sigma_{\pi_1}A \\
  & \rTo^{\pi_1 \times Id} \mathbb{T}_{\mathbb{H}}1 \times \Sigma_{\pi_1}A \text{.}& \\
\end{eqnarray*}
The result then follows by taking $B =\Sigma_{\pi_1}A$ in (c) since the unit $\eta_A$ is the map $A \rTo^{(n_A,g)} \Sigma_{\pi_1}A \times G_0$.
\end{proof}

\begin{proposition}\label{last}
$T_{\mathcal{C}}$ has binary pseudo-products.
\end{proposition}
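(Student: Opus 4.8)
The plan is to exhibit $[\mathbb{H}\times\mathbb{G},\mathcal{C}]$, together with the two projection morphisms $\Sigma_{\pi_1}\dashv\pi_1^*:[\mathbb{H}\times\mathbb{G},\mathcal{C}]\pile{\rTo\\\lTo}[\mathbb{H},\mathcal{C}]$ and $\Sigma_{\pi_2}\dashv\pi_2^*:[\mathbb{H}\times\mathbb{G},\mathcal{C}]\pile{\rTo\\\lTo}[\mathbb{G},\mathcal{C}]$ arising from the internal projection functors $\pi_i:\mathbb{H}\times\mathbb{G}\rTo\mathbb{H},\mathbb{G}$, as a pseudo-product. By Proposition \ref{product}(iii) the groupoid $\mathbb{H}\times\mathbb{G}$ is stably Frobenius, so $[\mathbb{H}\times\mathbb{G},\mathcal{C}]$ is an object of $T_{\mathcal{C}}$; and by Proposition \ref{product}(i), applied once as stated and once with the roles of $\mathbb{H}$ and $\mathbb{G}$ interchanged, both projection adjunctions are stably Frobenius and over $\mathcal{C}$, hence are morphisms of $T_{\mathcal{C}}$. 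It then remains to check, for every object $[\mathbb{K},\mathcal{C}]$ of $T_{\mathcal{C}}$, that post-composition with the two projections induces an equivalence of hom-categories $T_{\mathcal{C}}([\mathbb{K},\mathcal{C}],[\mathbb{H}\times\mathbb{G},\mathcal{C}])\simeq T_{\mathcal{C}}([\mathbb{K},\mathcal{C}],[\mathbb{H},\mathcal{C}])\times T_{\mathcal{C}}([\mathbb{K},\mathcal{C}],[\mathbb{G},\mathcal{C}])$. By Theorem \ref{HS} this reduces to a statement about Hilsum--Skandalis maps, all of whose morphisms are isomorphisms (Proposition \ref{morphHS}): one must show that the functor sending a Hilsum--Skandalis map $\frak{Q}:\mathbb{K}\rTo\mathbb{H}\times\mathbb{G}$, with underlying $\mathbb{K}\times\mathbb{H}\times\mathbb{G}$-object $P$ over $K_0\times H_0\times G_0$, to the pair consisting of $(P$ with its $\mathbb{G}$-action quotiented out$)$ and $(P$ with its $\mathbb{H}$-action quotiented out$)$ — which, by Proposition \ref{HSexample} and the representation theorem, is precisely the effect of post-composition with the projections on the corresponding adjunctions — is an equivalence onto the product of the two categories of Hilsum--Skandalis maps.

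The inverse functor is constructed by ``fibre product over $\mathbb{K}$'': given Hilsum--Skandalis maps $\frak{P}_1:\mathbb{K}\rTo\mathbb{H}$ and $\frak{P}_2:\mathbb{K}\rTo\mathbb{G}$ with underlying objects $P_1$ over $K_0\times H_0$ and $P_2$ over $K_0\times G_0$, I take $P_1\times_{K_0}P_2$, equipped with the diagonal $\mathbb{K}$-action, the $\mathbb{H}$-action on the $P_1$-factor and the $\mathbb{G}$-action on the $P_2$-factor. That this is again a Hilsum--Skandalis map from $\mathbb{K}$ to $\mathbb{H}\times\mathbb{G}$ requires two checks. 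The isomorphism condition $(H_1\times G_1)\times_{H_0\times G_0}(P_1\times_{K_0}P_2)\cong(P_1\times_{K_0}P_2)\times_{K_0}(P_1\times_{K_0}P_2)$ follows by rearranging fibre products over $K_0$, using that $(d_1,\pi_2):H_1\times_{H_0}P_1\cong P_1\times_{K_0}P_1$ and $(d_1,\pi_2):G_1\times_{G_0}P_2\cong P_2\times_{K_0}P_2$ since $\frak{P}_1$ and $\frak{P}_2$ are Hilsum--Skandalis maps. That $P_1\times_{K_0}P_2\rTo K_0$ is an effective descent morphism I obtain by working in $\mathcal{C}/K_0$ and applying the sliced form of Lemma \ref{edcompose} with $X=(P_1)_{q_1}$ and $Y=(P_2)_{q_2}$: here $!^X:X\rTo 1$ is of effective descent because $q_1$ is, and $Y\times X\rTo X$ is of effective descent because it is the pullback along $q_1$ of $q_2$, which is of effective descent and hence pullback stable.

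The two round-trip composites must then be identified with identities. For the composite $(\frak{P}_1,\frak{P}_2)\mapsto P_1\times_{K_0}P_2\mapsto(\,\cdot\,,\,\cdot\,)$ one computes $\Sigma_{\mathbb{G}}(P_1\times_{K_0}P_2,\mbox{$\mathbb{G}$ on }P_2)\cong P_1\times_{K_0}\Sigma_{\mathbb{G}}(P_2,d_2)\cong P_1\times_{K_0}K_0\cong P_1$, the middle isomorphism using that the connected-components coequalizer of $\mathbb{G}$ is pullback stable in the strong sense (available since $\mathbb{G}$ is stably Frobenius) together with the identification $\Sigma_{\mathbb{G}}(P_2,d_2)\cong K_0$ valid for any Hilsum--Skandalis map, and symmetrically for $P_2$; this is the same kind of quotient computation already carried out for span-composition in Section \ref{invert}. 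For the other composite $\frak{Q}\mapsto(\Sigma_{\mathbb{G}}P,\Sigma_{\mathbb{H}}P)\mapsto\Sigma_{\mathbb{G}}P\times_{K_0}\Sigma_{\mathbb{H}}P$, the canonical comparison map $P\rTo\Sigma_{\mathbb{G}}P\times_{K_0}\Sigma_{\mathbb{H}}P$ assembled from the two quotient maps is shown to be an isomorphism by a kernel-pair argument: using pullback stability of the two quotient coequalizers one identifies its kernel pair (and, after pulling back along the effective descent morphism $P\rTo K_0$, the map itself), and the isomorphism condition $(H_1\times G_1)\times_{H_0\times G_0}P\cong P\times_{K_0}P$ for $\frak{Q}$ forces the two descriptions to agree. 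Finally, the extra lemma preceding this proposition --- in particular parts (c) and (d), which describe the relevant instances of $\Sigma_{\pi_1}$ via the $\mathbb{H}$-quotient map $n_A$ and the decomposition $\mathbb{T}_{\mathbb{H}\times\mathbb{G}}1\cong\pi_1^*\mathbb{T}_{\mathbb{H}}1\times\pi_2^*\mathbb{T}_{\mathbb{G}}1$ --- is exactly what lets one verify that all of these comparison isomorphisms are compatible with the natural isomorphisms $\tau$ witnessing ``over $\mathcal{C}$'', so that they are genuine $2$-cells of $T_{\mathcal{C}}$ and the whole assignment is $2$-natural in $[\mathbb{K},\mathcal{C}]$. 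The main obstacle is precisely this bookkeeping: confirming functoriality of $(\frak{P}_1,\frak{P}_2)\mapsto P_1\times_{K_0}P_2$ on morphisms of Hilsum--Skandalis maps and naturality of the round-trip isomorphisms while keeping the several actions and base objects correctly aligned --- the underlying geometry is elementary, but the indexing is heavy.
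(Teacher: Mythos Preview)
Your proposal is correct and follows the same overall strategy as the paper: the candidate product is $[\mathbb{H}\times\mathbb{G},\mathcal{C}]$ with the projection adjunctions, and the mediating morphism for a cone $(L_1\dashv R_1,\,L_2\dashv R_2)$ is the Hilsum--Skandalis map with underlying object $P_1\times_{K_0}P_2$, verified to be Hilsum--Skandalis exactly as you describe (including the appeal to Lemma~\ref{edcompose} in $\mathcal{C}/K_0$). The first round-trip is also handled the same way.

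The one genuine difference is in the uniqueness direction. You propose to show $P\cong\Sigma_{\mathbb{G}}P\times_{K_0}\Sigma_{\mathbb{H}}P$ by a descent/kernel-pair argument on the two quotient maps; this can be made to work (the Hilsum--Skandalis isomorphism $(H_1\times G_1)\times_{H_0\times G_0}P\cong P\times_{K_0}P$ gives freeness and fibrewise transitivity of the combined action, from which injectivity and, after pulling back along $p$, surjectivity of the comparison map follow), but it is heavier than necessary. The paper instead works on the $R$-side: since right adjoints preserve products, applying $R$ to the decomposition $\mathbb{T}_{\mathbb{H}\times\mathbb{G}}1\cong\pi_1^*\mathbb{T}_{\mathbb{H}}1\times\pi_2^*\mathbb{T}_{\mathbb{G}}1$ from the preceding lemma immediately yields the $\mathbb{K}$-object isomorphism $R\mathbb{T}_{\mathbb{H}\times\mathbb{G}}1\cong P_1\times_{K_0}P_2$, with no quotient computation needed. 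Parts (c) and (d) of that lemma are then used, not for the ``over $\mathcal{C}$'' bookkeeping you mention, but to identify $R(Id_{H_1}\times d_1^{\mathbb{G}})$ and $R(d_1^{\mathbb{H}}\times Id_{G_1})$ with the quotient maps $n_A$ (taking $A=L\mathbb{T}_{\mathbb{K}}1$), thereby showing the isomorphism is an $\mathbb{H}$- and $\mathbb{G}$-homomorphism, hence an $\mathbb{H}\times\mathbb{G}$-homomorphism. This sidesteps your kernel-pair argument entirely.
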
	
\begin{proof}
Let $\mathbb{G}$ and $\mathbb{H}$ be two stably Frobenius groupoids in $\mathcal{C}$. Then, Proposition \ref{product}, $\mathbb{G} \times \mathbb{H}$ is stably Frobenius and the top horizontal and left vertical adjunctions of the diagram
\begin{diagram}
[  \mathbb{H} \times \mathbb{G} , \mathcal{C}] & \pile{\rTo^{\Sigma_{\pi_2}}\\ \lTo_{\pi_2^*}} \ & [ \mathbb{G}, \mathcal{C} ]\\
  \dTo^{\Sigma_{\pi_1}} \uTo_{\pi_1^*}  & &  \dTo^{\Sigma_{\mathbb{G}}} \uTo_{\mathbb{G}^*} \text{ \ \ \ \ (*)}\\
[\mathbb{H},\mathcal{C} ]& \pile{ \rTo^{\Sigma_{\mathbb{H}}} \\ \lTo_{\mathbb{H}^*} } & \mathcal{C} \\    
\end{diagram}
are morphisms of $T_{\mathcal{C}}$. The diagram commutes up to canonical natural isomorphism.

For any stably Frobenius groupoid $\mathbb{K}$ given a diagram 
\begin{diagram}
[\mathbb{K},\mathcal{C}]  & \pile{\rTo^{L_2} \\ \lTo_{R_2} }  & [ \mathbb{G}, \mathcal{C} ]\\
  \dTo^{L_1} \uTo_{R_1}  & &  \dTo^{\Sigma_{\mathbb{G}}} \uTo_{\mathbb{G}^*} \\
[\mathbb{H},\mathcal{C}] & \pile{ \rTo^{\Sigma_{\mathbb{H}}} \\ \lTo_{\mathbb{H}^*} } & \mathcal{C} \\    
\end{diagram}
commuting up to natural isomorphism and over $\mathcal{C}$, we must construct an adjunction $[\mathbb{K},\mathcal{C}]  \pile{ \rTo^{L} \\ \lTo_{R} } [ \mathbb{H} \times \mathbb{G}, \mathcal{C}]$, unique up to natural isomorphism, such that $\Sigma_{\pi_1}L \cong L_1$ and $\Sigma_{\pi_2}L \cong L_2$. Write $\frak{P}_i=((P_i)_{(p_i,q_i)},c_i,d_i)$ for the Hilsum-Skandalis maps corresponding to $L_i \dashv R_i$, for $i=1,2$. Notice that $P_1 \times_{K_0} P_2 \rTo^{q_1 \times q_2} H_0 \times G_0$ can be made into a $\mathbb{H} \times \mathbb{G}$-object ($[(h,g),(x_1,x_2) ]\mapsto (hx_1,gx_2)$)and $p_1 \pi_1 : P_1 \times_{K_0} P_2 \rTo K_0$ into a $\mathbb{K}$-object ($(k,(x_1,x_2)) \mapsto (kx_1,kx_2)$). Given that $p_1 \pi_1 : P_1 \times_{K_0} P_2 \rTo K_0$ is an effective descent morphism (Lemma \ref{edcompose}, applied in $\mathcal{C}/K_0$) it is then easy to check that a Hilsum-Skandalis map, $\frak{P}$, has been defined and so there is a corresponding adjunction $L \dashv R$. If $(Y_g,b)$ is a $\mathbb{K}$-object then $L_1(Y_g,b)$ is obtained by quotienting the $\mathbb{K}$-action of the $\mathbb{K} \times \mathbb{H}$-object $P_1 \times_{K_0} Y$ (the action map is $((k,h),x,y)\mapsto (khx,ky)$). Similarly $L_{\frak{P}}(Y_g,b)$ has underlying object $\Sigma_{\mathbb{K}}((P_1 \times_{K_0} P_2) \times_{K_0} Y)$. Then, since $\Sigma_{\pi_1}$ quotients out the $\mathbb{G}$-action, the process of quotienting commutes (the groupoids are stably Frobenius) and $K_0 \cong \Sigma_{\mathbb{G}}((P_2)_{q_2},d_2)$, it follows that $L_1 \cong \Sigma_{\pi_1} L_{\frak{P}}$. Similarly $L_2 \cong \Sigma_{\pi_2} L_{\frak{P}}$.

To complete we must show that any adjunction $[\mathbb{K},\mathcal{C}]  \pile{ \rTo^{L} \\ \lTo_{R} } [ \mathbb{H} \times \mathbb{G}, \mathcal{C}]$ over $\mathcal{C}$ is isomorphic to $L_{\frak{P}} \dashv R_{\frak{P}}$ where $\frak{P}$ is the Hilsum-Skandalis map whose underlying object is $P_1 \times_{K_0} P_2$, constructed from $\Sigma_{\pi_1} L  \dashv R \pi_1^* $ and $\Sigma_{\pi_2} L \dashv R \pi_2^*$ as in the last paragraph. The lemma shows that $\mathbb{T}_{\mathbb{G} \times \mathbb{H}} 1 \cong \pi_1^* \mathbb{T}_{\mathbb{H}} 1 \times \pi^*_2 \mathbb{T}_{\mathbb{G}} 1$; by applying $R$ we see that the $\mathbb{K}$-object $R\mathbb{T}_{\mathbb{H} \times \mathbb{G}}1$ is isomorphic to $ P_1 \times_{K_0} P_2$.  The result then follows by taking $A=L\mathbb{T}_{\mathbb{K}}1$ in the lemma as it shows that $R(Id_{H_1} \times d_1^{\mathbb{G}}): R \mathbb{T}_{\mathbb{H} \times \mathbb{G}}1 \rTo P_1$ is an $\mathbb{H}$-homomorphism (it corresponds to $n_{L\mathbb{T}_{\mathbb{K}}1}$ under the canonical isomorphisms) and similarly $R(d_1^{\mathbb{H}} \times Id_{G_1}): R \mathbb{T}_{\mathbb{H} \times \mathbb{G}}1 \rTo P_2$ is a $\mathbb{G}$-homomorphism; but then by construction of $P_1 \times_{K_0} P_2$ as an $\mathbb{H} \times \mathbb{G}$-object the morphism $(R(Id_{H_1} \times d_1^{\mathbb{G}}),R(d_1^{\mathbb{H}} \times Id_{G_1}))$ must be an $\mathbb{H} \times \mathbb{G}$-homomorphism and so $\frak{P} \cong \frak{P}_{L,R}$.
\end{proof}

\section{Geometric Morphisms as stably Frobenius adjunctions}
In this section we recall the main result of \cite{towgeom}, which is a representation theorem of geometric morphsms as stably Frobenius adjunctions, and give a correct account of the morphisms in the representation theorem. Familiarity with locale theory and topos theory is assumed; e.g.\cite{Elephant} or \cite{JoyT}. Our toposes are elementary unless stated otherwise. We write $f :\mathcal{F} \rTo \mathcal{E}$ for a geometric morphism between toposes and $\mathbf{Loc}_{\mathcal{E}}$ for the category of locales relative to a topos $\mathcal{E}$. If $X$ is a locale in $\mathcal{F}$ we write $\mathcal{O}_{\mathcal{F}}X$ for its frame of opens. The direct image $f_*$ of any geometric morphisms preserves frames; i.e. $f_*\mathcal{O}_{\mathcal{F}}X= \mathcal{O}_{\mathcal{E}}\Sigma_fX$ for some locale $\Sigma_fX$ in $\mathcal{E}$. This determines a functor $\Sigma_f: \mathbf{Loc}_{\mathcal{F}} \rTo \mathbf{Loc}_{\mathcal{E}}$ which can be shown to have a right adjoint which, abusing notation, is written $f^*: \mathbf{Loc}_{\mathcal{E}} \rTo \mathbf{Loc}_{\mathcal{F}}$. The abuse is reasonable because $f^*$ is the inverse image of $f$ when restricted to discrete locales. We call $\Sigma_f \dashv f^*$ the \emph{pullback adjunction} associated with the geometric morphism $f$. This is reasonable because if $f$ is a geometric morphism between categories of sheaves over two locales, i.e. $f: Sh(X) \rTo Sh(Y)$, then it is uniquely determined by a locale map $f: X \rTo Y$ and under the equivalences $\mathbf{Loc}_{Sh(Z)} \simeq \mathbf{Loc}/Z$, for $Z=X,Y$, the adjunction $\Sigma_f \dashv f^*$ is the pullback adjunction in the usual sense (i.e. $\Sigma_f(Y_g)=Y_{fg}$). 

The following result is clear from \cite{towgeom}:
\begin{proposition}
If $f : \mathcal{F} \rTo \mathcal{E}$ is a geometric morphism between elementary toposes then the adjunction $\Sigma_f \dashv f^*: \mathbf{Loc}_{\mathcal{F}} \pile{\rTo \\ \lTo} \mathbf{Loc}_{\mathcal{E}}$ is stably Frobenius and $f^* \mathbb{S}_{\mathcal{E}} \cong   \mathbb{S}_{\mathcal{F}}$. Further, for every stably Frobenius adjunction $L \dashv R :\mathbf{Loc}_{\mathcal{F}} \pile{\rTo \\ \lTo} \mathbf{Loc}_{\mathcal{E}}$, if $R$ preserves $\mathbb{S}$ then there exists a geometric morphism $f: \mathcal{F} \rTo \mathcal{E}$, unique up natural isomorphism, such that $R \cong f^*$.
\end{proposition}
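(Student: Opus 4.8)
The plan is to split the statement into three parts and reduce each to \cite{towgeom}: (a) a geometric morphism $f$ gives a stably Frobenius adjunction $\Sigma_f \dashv f^*$ with $f^*\mathbb{S}_{\mathcal{E}} \cong \mathbb{S}_{\mathcal{F}}$; (b) conversely every stably Frobenius $L \dashv R$ with $R\mathbb{S}_{\mathcal{E}} \cong \mathbb{S}_{\mathcal{F}}$ is of the form $\Sigma_f \dashv f^*$; and (c) the $f$ in (b) is unique up to natural isomorphism. For (a) I would recall that $f_*$ preserves internal frames, so $f_*\mathcal{O}_{\mathcal{F}}X = \mathcal{O}_{\mathcal{E}}\Sigma_f X$ defines $\Sigma_f$, and a right adjoint $f^*$ exists by the descent theory for locales of \cite{JoyT}. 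Frobenius reciprocity for $\Sigma_f \dashv f^*$ is exactly the change-of-base (Beck--Chevalley) statement for $\Sigma_f$ over a pullback square of locales, and slice-stability is immediate from the equivalence $\mathbf{Loc}_{\mathcal{E}}/X \simeq \mathbf{Loc}_{Sh_{\mathcal{E}}(X)}$, under which the sliced adjunction $(\Sigma_f)_{X} \dashv (f^*)_{X}$ is again of the form $\Sigma_g \dashv g^*$ for the geometric morphism $g$ induced by $f$ between sheaf toposes. Finally $f^*\mathbb{S}_{\mathcal{E}} \cong \mathbb{S}_{\mathcal{F}}$ because $\mathcal{O}\mathbb{S}$ is the free internal frame on the terminal object, which is preserved by the inverse image part of a geometric morphism (and $f^*$ restricted to the discrete locale on $1$ is that inverse image).

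For (b) the idea is to recover $f$ from $R$. Since $R$ is a right adjoint it preserves finite limits, and since it is stably Frobenius it preserves the open-map conditions among locale maps, so it carries discrete locales to discrete locales; let $f^{\sharp} : \mathcal{E} \rTo \mathcal{F}$ be its restriction to the categories of discrete locales, a left exact functor, with right adjoint $f_*$ obtained by transporting $L$ across the adjunction relating discrete locales to all locales. The hypothesis $R\mathbb{S}_{\mathcal{E}}\cong\mathbb{S}_{\mathcal{F}}$ is what makes $R$ compatible with the frame structure, so that $f^{\sharp}$ is the inverse image of a genuine geometric morphism $f : \mathcal{F} \rTo \mathcal{E}$. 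One then checks $R \cong f^*$ by noting that every locale in $\mathcal{E}$ is a sublocale of an indexed power $\mathbb{S}_{\mathcal{E}}^{G}$ of the Sierpi\'nski locale over a discrete $G$ (a frame presentation), that both $R$ and $f^*$ — being right adjoints that are stably Frobenius — preserve such sublocales and such indexed powers, and that they agree on $\mathbb{S}_{\mathcal{E}}$ and on discrete locales; hence they agree, naturally, on all of $\mathbf{Loc}_{\mathcal{E}}$. All of this is the content of \cite{towgeom}.

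For (c) I would observe that the restriction of $R$, equivalently of $f^*$, to discrete locales recovers up to isomorphism the topos inverse image of $f$, and a geometric morphism between elementary toposes is determined up to isomorphism by its inverse image functor; so any $f'$ with $R \cong (f')^*$ has $(f')^{\sharp} \cong f^{\sharp}$ and therefore $f' \cong f$. The step I expect to be the main obstacle is the middle one in (b): showing that $R\mathbb{S}_{\mathcal{E}} \cong \mathbb{S}_{\mathcal{F}}$ together with the stably Frobenius condition genuinely pins $R$ down on \emph{all} locales and not merely on the discrete ones, and that the resulting comparison $R \cong f^*$ is natural. This is the delicate point established in \cite{towgeom} and the reason the present section takes care over the morphism side of the representation; the remaining verifications --- Frobenius reciprocity from descent, left-exactness of $f^{\sharp}$, and the construction of $f_*$ --- are routine once that machinery is in hand.
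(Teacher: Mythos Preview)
Your outline is essentially the paper's: both defer the substance to \cite{towgeom}, and your part (a) matches the paper's first comment almost exactly (Frobenius from \cite{JoyT}, stability from $\mathbf{Loc}_{\mathcal{E}}/X \simeq \mathbf{Loc}_{Sh_{\mathcal{E}}(X)}$). The one point worth noting is that the paper's second comment isolates a cleaner mechanism than your sketch of (b): rather than arguing that $R$ ``preserves open-map conditions'' and hence discrete locales, the paper observes that $X \times \mathbb{S}$ is the tensor $X \otimes \{0 \leq 1\}$ in the order-enriched category of locales, so any product-preserving functor that preserves $\mathbb{S}$ automatically preserves the order enrichment. That is the actual leverage $\mathbb{S}$-preservation provides in \cite{towgeom}, and it is what lets one compare $R$ with $f^*$ globally; your ``preserves open-map conditions'' step is plausible but not the route taken, and would need its own justification. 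Otherwise your decomposition and your identification of the delicate step are on target.
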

Here $\mathbb{S}$ is the Sierpi\'{n}ski locale. Its frame of opens is the feee frame on the singleton set $1$. It is an internal distributive lattice in $\mathbf{Loc}$ and when we are asserting that it is preserved by a functor we mean that the implied isomorphism also commutes with the lattice structure. 
\begin{proof}
The proof is in \cite{towgeom}. Two comments are useful. 1. The proof that $\Sigma_f \dashv f^*$ is Frobenius in fact is already clear from Proposition 2(4) of Ch. 6, \S 1 of \cite{JoyT}, and from Proposition 2 of \S 3 of the same Chapter, which establishes $\mathbf{Loc}_{Sh(Z)} \simeq \mathbf{Loc}/Z$, it can be seen that the adjunction $\Sigma_f \dashv f^*$ is stably Frobenius (consider Proposition 4 \S 1). 2. The category of locales is order enriched and $X \times \mathbb{S}$ is the tensor $X \otimes \{0 \leq 1 \}$ for any locale $X$; from this it is clear that a functor between categories of locales preserves the order enrichment provided it preserves the Sierpi\'{n}ski locale.
\end{proof}

To turn this last proposition into a categorical equivalence, one needs to restrict to natural isomorphisms between geometric morphisms and adjunctions. This is because the mapping $f \mapsto (\Sigma_f \dashv f^*)$ is not functorial on natural transformations; if $\alpha: (f_1)_* \rTo (f_2)_*$ is a natural transformation between direct images then the morphism $\alpha_{\Omega_{\mathcal{F}}X}$ is not necessarily a frame homomorphism. But it will be if $\alpha$ is an isomorphism (use the naturality of $\alpha$ to show that  $\alpha_{\Omega_{\mathcal{F}}X}$ is monotone).

However we can account for natural transformations between geometric morphisms in this representation theorem if required. To see this recall that such natural transformations can be represented as $\mathbb{S}$-homotopies. If $f,g: \mathcal{F} \pile{\rTo \\ \rTo} \mathcal{E}$ are two geometric morphisms then a \emph{$\mathbb{S}$-homotopy from $f$ to $g$} consists of a geometric morphism $H: Sh_{\mathcal{F}}\mathbb{S}_{\mathcal{F}} \rTo \mathcal{E}$ such that $f=H0_{\mathbb{S}_{\mathcal{F}}}$ and $g=H1_{\mathbb{S}_{\mathcal{F}}}$ where $0_{\mathbb{S}_{\mathcal{F}}}$ and $1_{\mathbb{S}_{\mathcal{F}}}$ are the geometric morphisms corresponding to the locale maps that are the bottom and top points of the Sierpi\'{n}ski locale $\mathbb{S}_{\mathcal{F}}$. The topos $Sh_{\mathcal{F}}\mathbb{S}_{\mathcal{F}}$ can be described explicitly, it is the presheaf category $[\mathbf{2},\mathcal{F}]$, where $\mathbf{2}$ is the category $(0 \rTo 1)$, i.e. two objects and one non-identity morphism. From the definition of geometric morphism it is then clear that $\mathbb{S}$-homotopies from $f$ to $g$ are in natural bijection with natural transformations from $f^*$ to $g^*$. Define a \emph{localic $\mathbb{S}$-homotopy from $f$ to $g$} to be a stably Frobenius adjunction $L \dashv R : \mathbf{Loc}_{\mathcal{F}}/ \mathbb{S}_{\mathcal{F}} \pile{ \rTo \\ \lTo} \mathbf{Loc}_{\mathcal{E}}$ such that $(0_{\mathbb{S}_{\mathcal{F}}})^*R = f^*$ and  $(1_{\mathbb{S}_{\mathcal{F}}})^*R = g^*$. 
So,
\begin{proposition}
For any two toposes $\mathcal{F}$ and $\mathcal{E}$, the category of geometric morphisms from $\mathcal{F}$ to $\mathcal{E}$, i.e. $\frak{Top}(\mathcal{F},\mathcal{E})$, is equivalent to the category of stably Frobenius adjunctions $L \dashv R : \mathbf{Loc}_{\mathcal{F}} \pile{ \rTo \\ \lTo} \mathbf{Loc}_{\mathcal{E}}$ such that $R$ preserves the Sierpi\'{n}ski locale. The morphisms of $\frak{Top}(\mathcal{F},\mathcal{E})$ are natural transformations (between the inverse images) and the morphisms of the category of stably Frobenius adjunctions are localic $\mathbb{S}$-homotopies.
\end{proposition}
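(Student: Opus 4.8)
The plan is to deduce the statement from the object-level representation theorem (the preceding proposition) together with the two facts just recalled: the Joyal--Tierney equivalence $\mathbf{Loc}_{Sh_{\mathcal{E}'}Z} \simeq \mathbf{Loc}_{\mathcal{E}'}/Z$ for a locale $Z$ in a topos $\mathcal{E}'$, and the identification of geometric transformations $f^* \Rightarrow g^*$ with $\mathbb{S}$-homotopies, i.e.\ geometric morphisms $H : Sh_{\mathcal{F}}\mathbb{S}_{\mathcal{F}} \to \mathcal{E}$ with $H\,0_{\mathbb{S}_{\mathcal{F}}} = f$ and $H\,1_{\mathbb{S}_{\mathcal{F}}} = g$. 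First I would fix the comparison functor $\Phi$. On objects it is the assignment $f \mapsto (\Sigma_f \dashv f^*)$ of the preceding proposition, which already exhibits $\Phi$ as essentially surjective and injective on isomorphism classes, so the whole content lies in the morphisms. Given a geometric transformation $\alpha : f \to g$, pass to the corresponding $\mathbb{S}$-homotopy $H_\alpha : Sh_{\mathcal{F}}\mathbb{S}_{\mathcal{F}} \to \mathcal{E}$; by the preceding proposition applied to $H_\alpha$ the pullback adjunction $\Sigma_{H_\alpha} \dashv H_\alpha^*$ is a stably Frobenius adjunction on $\mathbf{Loc}_{Sh_{\mathcal{F}}\mathbb{S}_{\mathcal{F}}}$ whose right adjoint preserves the Sierpi\'{n}ski locale, and transporting it along $\mathbf{Loc}_{Sh_{\mathcal{F}}\mathbb{S}_{\mathcal{F}}} \simeq \mathbf{Loc}_{\mathcal{F}}/\mathbb{S}_{\mathcal{F}}$ yields a stably Frobenius adjunction $L_\alpha \dashv R_\alpha : \mathbf{Loc}_{\mathcal{F}}/\mathbb{S}_{\mathcal{F}} \to \mathbf{Loc}_{\mathcal{E}}$; set $\Phi(\alpha) = (L_\alpha \dashv R_\alpha)$.

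The key step is to check that $\Phi(\alpha)$ is a localic $\mathbb{S}$-homotopy from $f$ to $g$. The two geometric morphisms $0_{\mathbb{S}_{\mathcal{F}}}, 1_{\mathbb{S}_{\mathcal{F}}} : \mathcal{F} \to Sh_{\mathcal{F}}\mathbb{S}_{\mathcal{F}}$ are localic and, under the slice equivalence, correspond precisely to the two locale points $1 \to \mathbb{S}_{\mathcal{F}}$; hence their inverse images become pullback along those points, that is, the functors denoted $(0_{\mathbb{S}_{\mathcal{F}}})^*$ and $(1_{\mathbb{S}_{\mathcal{F}}})^*$ in the definition of localic $\mathbb{S}$-homotopy. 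Since $H_\alpha\,0_{\mathbb{S}_{\mathcal{F}}} = f$, the functor $(0_{\mathbb{S}_{\mathcal{F}}})^* R_\alpha$, being the inverse image of $H_\alpha\,0_{\mathbb{S}_{\mathcal{F}}}$, equals $f^*$, and likewise $(1_{\mathbb{S}_{\mathcal{F}}})^* R_\alpha = g^*$, so $\Phi(\alpha)$ is indeed a localic $\mathbb{S}$-homotopy.

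Conversely, a localic $\mathbb{S}$-homotopy from $f$ to $g$ transports back to a stably Frobenius adjunction on $\mathbf{Loc}_{Sh_{\mathcal{F}}\mathbb{S}_{\mathcal{F}}}$ whose right adjoint preserves the Sierpi\'{n}ski locale of $Sh_{\mathcal{F}}\mathbb{S}_{\mathcal{F}}$; by the preceding proposition it is therefore $H^*$ for a geometric morphism $H$ unique up to isomorphism, and the two restriction conditions force $H\,0_{\mathbb{S}_{\mathcal{F}}} \cong f$ and $H\,1_{\mathbb{S}_{\mathcal{F}}} \cong g$, so $H$ is the $\mathbb{S}$-homotopy of a unique geometric transformation. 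This yields a bijection $\frak{Top}(\mathcal{F},\mathcal{E})(f,g) \cong \{\text{localic }\mathbb{S}\text{-homotopies }f\to g\}$. Functoriality is then routine: the constant $\mathbb{S}$-homotopy at $f$, whose localic avatar sends $X$ to the constant family $\mathbb{S}_{\mathcal{F}} \times f^* X \to \mathbb{S}_{\mathcal{F}}$, corresponds to the identity transformation, and the composition law on localic $\mathbb{S}$-homotopies is the transport along the slice equivalence of the vertical composition of $\mathbb{S}$-homotopies (gluing two copies of $\mathbb{S}_{\mathcal{F}}$ along a common point), so $\Phi$ preserves identities and composition. A fully faithful, essentially surjective functor is an equivalence, which completes the proof.

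The step I expect to be the main obstacle is the converse half of the hom-set bijection: one must be sure that a localic $\mathbb{S}$-homotopy -- a stably Frobenius adjunction subject only to the two restriction conditions -- has a right adjoint preserving the Sierpi\'{n}ski locale of the slice topos, so that the object-level theorem can be invoked. This is exactly the point at which a \emph{correct} accounting of the morphisms is needed, and it must either be read into the definition of localic $\mathbb{S}$-homotopy or verified directly from the two restriction conditions together with the $\mathbb{S}$-preservation of $f^*$ and $g^*$. The remaining mildly technical point is to make explicit the gluing construction defining the composition of (localic) $\mathbb{S}$-homotopies and to confirm that it transports correctly along the slice equivalence.
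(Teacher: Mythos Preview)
Your approach is essentially the paper's own: the paper gives no explicit proof of this proposition, presenting it with a bare ``So,'' as an immediate consequence of the preceding discussion (the object-level representation theorem, the identification of geometric transformations with $\mathbb{S}$-homotopies via $Sh_{\mathcal{F}}\mathbb{S}_{\mathcal{F}}\simeq[\mathbf{2},\mathcal{F}]$, and the Joyal--Tierney slice equivalence). Your outline spells out exactly this reasoning and is correct at that level.

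You are right to flag the converse direction as the delicate point. The paper's definition of localic $\mathbb{S}$-homotopy does \emph{not} include the clause that $R$ preserves the Sierpi\'{n}ski locale, and you correctly observe that the two restriction conditions $(0_{\mathbb{S}_{\mathcal{F}}})^*R=f^*$, $(1_{\mathbb{S}_{\mathcal{F}}})^*R=g^*$ only pin down the fibres of $R\mathbb{S}_{\mathcal{E}}$ over the two points of $\mathbb{S}_{\mathcal{F}}$, not that $R\mathbb{S}_{\mathcal{E}}$ is the product projection $\mathbb{S}_{\mathcal{F}}\times\mathbb{S}_{\mathcal{F}}\to\mathbb{S}_{\mathcal{F}}$. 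The paper does not address this; it is a genuine omission that your proposal identifies rather than introduces. In practice one either builds $\mathbb{S}$-preservation into the definition (as the paper does for the object-level statement) or argues it separately.

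One caution on composition: your description via ``gluing two copies of $\mathbb{S}_{\mathcal{F}}$ along a common point'' is the wrong picture. Since $Sh_{\mathcal{F}}\mathbb{S}_{\mathcal{F}}\simeq[\mathbf{2},\mathcal{F}]$, vertical composition of natural transformations corresponds to restriction along the functor $\mathbf{2}\to\mathbf{3}$ and the evident composite $[\mathbf{3},\mathcal{F}]\to\mathcal{E}$; on the localic side this is pulling back along the map $\mathbb{S}\vee\mathbb{S}\to\mathbb{S}$ classifying pairs of opens one contained in the other, not gluing intervals end to end. This is cosmetic for the equivalence claim, but worth getting straight.
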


\section{Bounded geometric morphisms} 
Consider a geometric morphism  $f : \mathcal{F} \rTo \mathcal{E}$. Section 4 of \cite{towgroth} shows that  condition (4') of Proposition \ref{bounded} holds for the adjunction $\Sigma_f \dashv f^*$ if $f$ is bounded. On the other hand suppose that condition (4') of Proposition \ref{bounded} holds. Then, as we have recalled that $\Sigma_f \dashv f^*$ is stably Frobenius, we know that $\mathbf{Loc}_{\mathcal{F}}$ is equivalent to $[ \mathbb{G}, \mathbf{Loc}_{\mathcal{E}}]$ over $\mathbf{Loc}_{\mathcal{E}}$. By restricting to discrete objects we can then conclude that $f$ is equivalent to the canonical geometric morphism $\gamma: B_{\mathcal{E}} \mathbb{G} \rTo \mathcal{E}$ which is well known to be bounded (e.g. B3.4.14(b) of \cite{Elephant} or see the definition of the relevant site in \S6 of \cite{MoerClassTop}) and so $f$ is bounded. In summary:
\begin{lemma}
A geometric morphism  $f : \mathcal{F} \rTo \mathcal{E}$ is bounded if and only if  $\Sigma_f \dashv f^* : \mathbf{Loc}_{\mathcal{F}} \pile{\rTo \\ \lTo} \mathbf{Loc}_{\mathcal{E}}$ satisfies one of the equivalent conditions of either Proposition \ref{bounded} or Proposition \ref{second2}.
\end{lemma}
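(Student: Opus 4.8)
The plan is to prove the equivalence in two directions, exploiting the fact that $\Sigma_f \dashv f^*$ is already known to be stably Frobenius (by the preceding proposition), so that Proposition \ref{bounded} applies and conditions (4') of Proposition \ref{bounded} and the conditions (1)--(4) of Proposition \ref{second2} all become interchangeable. Thus the real content is to show: $f$ bounded $\iff$ one of these equivalent conditions holds.

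First, for the ``only if'' direction, I would simply cite Section 4 of \cite{towgroth}, which establishes that condition (4') holds for $\Sigma_f \dashv f^*$ whenever $f$ is bounded; combined with Proposition \ref{bounded} (valid since $\Sigma_f \dashv f^*$ is stably Frobenius) this gives all the equivalent conditions. For the ``if'' direction, suppose (4') holds. Then by Proposition \ref{bounded} and Proposition \ref{second2} (part 1) there is an internal groupoid $\mathbb{G}$ in $\mathbf{Loc}_{\mathcal{E}}$ and an equivalence $\mathbf{Loc}_{\mathcal{F}} \simeq [\mathbb{G}, \mathbf{Loc}_{\mathcal{E}}]$ over $\mathbf{Loc}_{\mathcal{E}}$, i.e.\ identifying $f^*$ with $\mathbb{G}^*$. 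The strategy is then to restrict this equivalence to discrete locales: discrete locales in $\mathbf{Loc}_{\mathcal{F}}$ form $\mathcal{F}$ (as a topos), and discrete objects in $[\mathbb{G}, \mathbf{Loc}_{\mathcal{E}}]$ form the classifying topos $B_{\mathcal{E}}\mathbb{G}$ of the localic groupoid $\mathbb{G}$. Since the equivalence is over $\mathbf{Loc}_{\mathcal{E}}$ and $f^* \cong \mathbb{G}^*$, it identifies the geometric morphism $f: \mathcal{F} \rTo \mathcal{E}$ with the canonical geometric morphism $\gamma: B_{\mathcal{E}}\mathbb{G} \rTo \mathcal{E}$. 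This canonical morphism is bounded (e.g.\ B3.4.14(b) of \cite{Elephant}, or by inspection of the classifying site as in \S6 of \cite{MoerClassTop}), and boundedness is invariant under equivalence of geometric morphisms, so $f$ is bounded.

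The main obstacle I expect is making precise and rigorous the passage ``restrict to discrete objects''. One has to verify that: (a) the discrete locales in $[\mathbb{G},\mathbf{Loc}_{\mathcal{E}}]$ are precisely the $\mathbb{G}$-equivariant discrete locales, i.e.\ objects of $B_{\mathcal{E}}\mathbb{G}$, and that this identification is natural; (b) the equivalence $\mathbf{Loc}_{\mathcal{F}} \simeq [\mathbb{G},\mathbf{Loc}_{\mathcal{E}}]$ over $\mathbf{Loc}_{\mathcal{E}}$ carries discrete locales to discrete locales in a way compatible with the respective geometric-morphism structures, so that the induced equivalence $\mathcal{F} \simeq B_{\mathcal{E}}\mathbb{G}$ really does commute with the morphisms down to $\mathcal{E}$; and (c) the inverse image of the canonical morphism $\gamma$ is indeed (the discrete restriction of) $\mathbb{G}^*$. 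Each of these is standard localic-groupoid/topos-theory bookkeeping rather than a genuinely new difficulty, but it is the step where care is needed; the categorical machinery developed earlier (Propositions \ref{second2} and \ref{bounded}) does all the heavy lifting on the ``locale'' side, and the remaining work is translating across the discrete/localic boundary.

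Finally, I would note explicitly that the statement of the lemma asserts equivalence with \emph{either} Proposition \ref{bounded} or Proposition \ref{second2}; since $\Sigma_f \dashv f^*$ is stably Frobenius, Proposition \ref{bounded} tells us that conditions (1) and (4') are equivalent, and these are in turn among the conditions (1)--(4) of Proposition \ref{second2}, so no separate argument is needed to cover ``Proposition \ref{second2}'' versus ``Proposition \ref{bounded}''---a one-line remark suffices.
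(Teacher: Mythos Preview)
Your proposal is correct and follows essentially the same approach as the paper: cite \cite{towgroth} for the forward direction, and for the converse use the stably Frobenius property together with Proposition~\ref{bounded} to obtain $\mathbf{Loc}_{\mathcal{F}} \simeq [\mathbb{G},\mathbf{Loc}_{\mathcal{E}}]$ over $\mathbf{Loc}_{\mathcal{E}}$, then restrict to discrete objects to identify $f$ with the canonical $\gamma: B_{\mathcal{E}}\mathbb{G} \rTo \mathcal{E}$, citing B3.4.14(b) of \cite{Elephant} or \S6 of \cite{MoerClassTop} for its boundedness. Your additional remarks on the care needed in the ``restrict to discrete'' step are reasonable elaborations, but the paper treats this as routine and does not expand on it.
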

This lemma captures the essence of the famous Joyal and Tierney result that any topos $\mathcal{F}$ bounded over $\mathcal{E}$ is equivalent to $B_{\mathcal{E}}\mathbb{G}$ for some open localic groupoid $\mathbb{G}$; see \cite{towgroth} for more detail.
We write $\frak{BTop}^i$ for the 2-category whose objects are toposes, morphisms bounded geometric morphisms and whose 2-cells are natural isomorphisms. 
\begin{theorem}\label{i}
Let $\mathcal{S}$ be a topos with a natural numbers object. Sending bounded $p: \mathcal{E} \rTo \mathcal{S}$ to $\Sigma_p \dashv p^*$ determines a pseudo-functor $\frak{i}:\frak{BTop}^i/\mathcal{S} \rTo T_{\mathbf{Loc}_{\mathcal{S}}}$. This pseudo-functor preserves binary pseudo-products and is full and faithful, up to isomorphism.
\end{theorem}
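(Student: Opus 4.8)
The plan is to verify the three assertions (pseudo-functoriality, preservation of binary pseudo-products, full-and-faithfulness up to isomorphism) separately, leaning on the representation theorem for geometric morphisms and the categorical machinery already assembled for $T_{\mathcal{C}}$.

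First, for the claim that $\frak{i}$ is a well-defined pseudo-functor, I would start from the earlier proposition identifying geometric morphisms from $\mathcal{F}$ to $\mathcal{E}$ with stably Frobenius adjunctions $L\dashv R:\mathbf{Loc}_{\mathcal{F}}\rightleftarrows\mathbf{Loc}_{\mathcal{E}}$ for which $R$ preserves the Sierpi\'nski locale. The point to check is that when $p:\mathcal{E}\to\mathcal{S}$ is \emph{bounded}, the adjunction $\Sigma_p\dashv p^*$ lands in $T_{\mathbf{Loc}_{\mathcal{S}}}$, i.e.\ that $\mathbf{Loc}_{\mathcal{E}}$ is (over $\mathbf{Loc}_{\mathcal{S}}$) of the form $[\mathbb{G},\mathbf{Loc}_{\mathcal{S}}]$ for a stably Frobenius localic groupoid $\mathbb{G}$; this is exactly the content of the preceding Lemma characterising bounded geometric morphisms via condition (4$'$) of Proposition \ref{bounded}, together with the remark that open/proper localic groupoids are stably Frobenius. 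Pseudo-functoriality on morphisms and natural isomorphisms is then inherited: a natural isomorphism $\alpha$ between bounded geometric morphisms gives (by the $\mathbb{S}$-homotopy account, or simply because $\alpha$ invertible forces $\alpha_{\mathcal{O}X}$ to be a frame map) a $2$-cell of $T_{\mathbf{Loc}_{\mathcal{S}}}$, and composition of bounded geometric morphisms goes to composition of stably Frobenius adjunctions by functoriality of $\Sigma_{(-)}$, which is the standard fact $\Sigma_{pq}\cong\Sigma_p\Sigma_q$.

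Second, preservation of binary pseudo-products: the pseudo-product in $\frak{BTop}^i/\mathcal{S}$ of two bounded toposes $p:\mathcal{E}\to\mathcal{S}$, $p':\mathcal{E}'\to\mathcal{S}$ is the pullback $\mathcal{E}\times_{\mathcal{S}}\mathcal{E}'$, and the pseudo-product in $T_{\mathbf{Loc}_{\mathcal{S}}}$ was constructed in Proposition \ref{last} via the product of the corresponding stably Frobenius localic groupoids, $\mathbb{G}\times\mathbb{H}$, noting $[\mathbb{G}\times\mathbb{H},\mathbf{Loc}_{\mathcal{S}}]\simeq[\mathbb{H},[\mathbb{G},\mathbf{Loc}_{\mathcal{S}}]]$ and that a pullback over $[\mathbb{G},\mathcal{C}]$ reduces to a product in $T_{[\mathbb{G},\mathcal{C}]}$. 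So the task is to match these: identify $\mathbf{Loc}_{\mathcal{E}\times_{\mathcal{S}}\mathcal{E}'}$, over $\mathbf{Loc}_{\mathcal{S}}$, with $[\mathbb{G}_{\mathcal{E}}\times\mathbb{G}_{\mathcal{E}'},\mathbf{Loc}_{\mathcal{S}}]$ where $\mathbb{G}_{\mathcal{E}},\mathbb{G}_{\mathcal{E}'}$ are the classifying localic groupoids of $p,p'$. Using the previous section's results, $\mathcal{E}\times_{\mathcal{S}}\mathcal{E}'\simeq B_{\mathcal{S}}(\mathbb{G}_{\mathcal{E}})\times_{\mathcal{S}} B_{\mathcal{S}}(\mathbb{G}_{\mathcal{E}'})\simeq B_{\mathcal{S}}(\mathbb{G}_{\mathcal{E}}\times\mathbb{G}_{\mathcal{E}'})$, and under the equivalence $\mathbf{Loc}_{B_{\mathcal{S}}\mathbb{G}}\simeq[\mathbb{G},\mathbf{Loc}_{\mathcal{S}}]$ over $\mathbf{Loc}_{\mathcal{S}}$ the pullback-square of pullback adjunctions corresponds to square $(*)$ of Proposition \ref{last}; one then checks the universal property of that square is preserved, which is immediate once the equivalences are in place since $\frak{i}$ is (as established) a pseudo-functor and so sends limiting cones to cones, and the cone it produces has been identified with the limiting one.

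Third, full-and-faithfulness up to isomorphism: this is where the representation theorem does the real work. For faithfulness one shows that if two bounded geometric morphisms $q_1,q_2:\mathcal{E}'\to\mathcal{E}$ over $\mathcal{S}$ induce the same (up to the coherence iso) morphism of $T$, i.e.\ the same stably Frobenius adjunction $\Sigma_{q_i}\dashv q_i^*$ up to $2$-cell, then $q_1\cong q_2$ — this is just the faithfulness half of the geometric-morphism representation proposition. For fullness, given any morphism $L\dashv R:[\mathbb{G}_{\mathcal{E}'},\mathbf{Loc}_{\mathcal{S}}]\rightleftarrows[\mathbb{G}_{\mathcal{E}},\mathbf{Loc}_{\mathcal{S}}]$ of $T_{\mathbf{Loc}_{\mathcal{S}}}$ over $\mathbf{Loc}_{\mathcal{S}}$, transport it along the equivalences to a stably Frobenius adjunction $\mathbf{Loc}_{\mathcal{E}'}\rightleftarrows\mathbf{Loc}_{\mathcal{E}}$ over $\mathbf{Loc}_{\mathcal{S}}$; I must check the right adjoint preserves the Sierpi\'nski locale $\mathbb{S}_{\mathcal{E}}$, so that the representation proposition yields a geometric morphism $q:\mathcal{E}'\to\mathcal{E}$ with $q^*\cong R$, and then check $q$ is bounded (it is, because $\mathcal{E}',\mathcal{E}$ are bounded over $\mathcal{S}$ and boundedness of $q$ follows from boundedness of the composite $\mathcal{E}'\to\mathcal{S}$ together with the fact $\mathcal{E}\to\mathcal{S}$ is bounded — the standard cancellation, whose categorical essence is Part 2 of Lemma \ref{localic}) and lies over $\mathcal{S}$ up to the coherence isomorphism. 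The \textbf{main obstacle} I anticipate is precisely the Sierpi\'nski-preservation check: a morphism of $T_{\mathcal{C}}$ carries no \emph{a priori} data about $\mathbb{S}$, so I will need to argue that for an adjunction over $\mathbf{Loc}_{\mathcal{S}}$ obtained by restricting along the discrete-objects inclusion the right adjoint automatically respects $\mathbb{S}$; the cleanest route is to observe that $\mathbb{S}_{\mathcal{E}}\cong\mathbb{G}_{\mathcal{E}}^*\mathbb{S}_{\mathcal{S}}$ (the Sierpi\'nski object is pulled back from the base, being the frame-free object on one generator, a construction stable under the inverse-image/pullback functors), that $R$, being over $\mathbf{Loc}_{\mathcal{S}}$, commutes with these base-change functors up to the structural isomorphism, and that the lattice structure on $\mathbb{S}$ is likewise base-change-stable, so the iso $R\mathbb{S}_{\mathcal{E}}\cong\mathbb{S}_{\mathcal{E}'}$ respects the lattice operations. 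Once that lemma is in hand the rest is routine bookkeeping with the already-established equivalences.
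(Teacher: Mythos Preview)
Your treatment of pseudo-functoriality and of full-and-faithfulness is essentially the paper's own argument; in particular you correctly single out the Sierpi\'nski-preservation check and resolve it in the same way the paper does, via $\mathbb{S}_{\mathcal{E}}\cong\mathbb{G}^*\mathbb{S}_{\mathcal{S}}$ together with the ``over $\mathbf{Loc}_{\mathcal{S}}$'' hypothesis.

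There is, however, a genuine gap in your pseudo-product argument. You invoke the equivalence $\mathbf{Loc}_{B_{\mathcal{S}}\mathbb{G}}\simeq[\mathbb{G},\mathbf{Loc}_{\mathcal{S}}]$ as though it held for arbitrary stably Frobenius localic groupoids, and then apply it with $\mathbb{G}$ replaced by $\mathbb{G}_{\mathcal{E}}\times\mathbb{G}_{\mathcal{E}'}$. But that equivalence only holds when $\mathbb{G}$ is \emph{\'etale complete}: in general $\mathbf{Loc}_{B_{\mathcal{S}}\mathbb{G}}\simeq[\hat{\mathbb{G}},\mathbf{Loc}_{\mathcal{S}}]$ where $\hat{\mathbb{G}}$ is the \'etale completion, and distinct non-\'etale-complete groupoids can have the same classifying topos. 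So even if you choose $\mathbb{G}_{\mathcal{E}}$ and $\mathbb{G}_{\mathcal{E}'}$ to be \'etale complete (as one may, via Joyal--Tierney), you still owe an argument that $\mathbb{G}_{\mathcal{E}}\times\mathbb{G}_{\mathcal{E}'}$ is \'etale complete, for otherwise $[\mathbb{G}_{\mathcal{E}}\times\mathbb{G}_{\mathcal{E}'},\mathbf{Loc}_{\mathcal{S}}]$ need not lie in the essential image of $\frak{i}$ and your identification of it with $\frak{i}(\mathcal{E}\times_{\mathcal{S}}\mathcal{E}')$ breaks down.

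This is exactly the point the paper isolates as the crux of the product-preservation claim: it reduces the question to ``$\mathbb{H}\times\mathbb{G}$ is \'etale complete if $\mathbb{G}$ and $\mathbb{H}$ are'', and then appeals to an external result (Lemma~7.3 of Moerdijk, \emph{The classifying topos of a continuous groupoid I}), observing that $[\mathbb{H}\times\mathbb{G},\mathbf{Loc}_{\mathcal{S}}]\simeq[\mathbb{H}^*\mathbb{G},[\mathbb{H},\mathbf{Loc}_{\mathcal{S}}]]$ and that \'etale completeness is stable under this pullback. The paper even flags in a subsequent remark that this step is unsatisfying because the cited proof goes through sites. Your proposal should acknowledge this obstacle and either supply or cite the needed stability of \'etale completeness under products.
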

The theorem can be adapted to show that all finite pseudo-limits are preserved; this is omitted in the interests of space.
\begin{proof}
The lemma shows how to construct $\frak{i}$, though of course the construction of a localic groupoid for any bounded $p$ can be taken to be that defined via the well known Joyal-Tierney theorem \cite{JoyT}. In particular we recall (C5.3 of \cite{Elephant}) that given the bounded geometric morphism $\gamma: B_{\mathcal{E}} \mathbb{G} \rTo \mathcal{E}$, the pseudo-functor returns $[\hat{\mathbb{G}},\mathbf{Loc}_{\mathcal{S}}]$ where $\hat{\mathbb{G}}$ is the \'{e}tale completion of $\mathbb{G}$. Therefore for any \'{e}tale complete $\mathbb{G}$, $[\mathbb{G},\mathbf{Loc}_{\mathcal{S}}]$ is equivalent to $\frak{i}(\gamma)$ for some bounded geometric morphism $\gamma$. 

Since every bounded $p$ is equivalent to one of the form $\gamma: B_{\mathcal{E}} \mathbb{G} \rTo \mathcal{E}$ to complete the proof of the assertion about binary pseudo-products, given Proposition \ref{last}, one just needs to check that $\mathbb{H} \times \mathbb{G}$ is \'{e}tale complete if $\mathbb{G}$ and $\mathbb{H}$ are. But $[ \mathbb{H} \times \mathbb{G} , \mathbf{Loc}_{\mathcal{S}}]$ is equivalent to $[ \mathbb{H}^* \mathbb{G} , [\mathbb{H}, \mathbf{Loc}_{\mathcal{S}}]]$ and Lemma 7.3 of \cite{MoerClassTop} shows that $ \mathbb{H}^* \mathbb{G}$ is \'{e}tale complete if $\mathbb{G}$ is.

For the claim of `full and faithful' use the the representation theorem of geometric morphisms as stably Frobenius adjunctions. Note that as $f^*$ preserves the Sierpi\'{n}ski locale for any geometric morphism $f$, $\mathbb{G}^*\mathbb{S}_{\mathcal{S}}$ is the Sierpi\'{n}ski locale of $\mathcal{E}$, where $\mathbb{G}$ is the localic groupoid associated with $\mathcal{E}$; similarly for $\mathcal{F}$. Therefore the right adjoint of $L \dashv R$ must preserve the Sierpi\'{n}ski locale for any adjunction $[\mathbb{H},\mathbf{Loc}_{\mathcal{F}}] \pile{\rTo \\ \lTo} [\mathbb{G},\mathbf{Loc}_{\mathcal{E}}] $ provided it is over $\mathbf{Loc}_{\mathcal{S}}$.

\end{proof}

\begin{remark}
One can expand this result to include natural transformations between geometric morphisms that are not necessarily isomorphisms; the morphisms of $T_{\mathbf{Loc}_{\mathcal{S}}}$ then need to be replaced with $\mathbb{S}$-homotopies. We note that if $\frak{P}_1,\frak{P}_2: \mathbb{H} \rTo \mathbb{G}$ are two Hilsum-Skandalis maps between stably Frobenius localic groupoids then a localic $\mathbb{S}$-homotopy is given by a Hilsum-Skandalis map $\frak{P}: \mathbb{H} \rTo \mathbb{G}$ with the additional data of an open $a$ of $P$ (equivalently a locale map $a: P \rTo \mathbb{S}$) such that $P_1$ is the open sublocale $a^*1_{\mathbb{S}}$ of $P$ and $P_2$ is the closed sublocale $a^*0_{\mathbb{S}}$ of $P$. This provides a localic characterisation of natural transformations between geometric mophisms (for bounded toposes at least). 
\end{remark}

\begin{remark}
It is unfortunate that this result uses Lemma 7.3 of \cite{MoerClassTop} as the proof of Lemma 7.3 requires discussion of sites and so we have not fully achieved a site-free description of the pullback of bounded geometric morphisms. However it is not unreasonable to expect that a site-free characterisation of \'{e}tale completeness is possible and that this can be shown to be pullback stable without reference to sites. Indeed this has been done for localic groups, see Lemma 1.4 of \cite{MoerMorita}.
\end{remark}

\section{Further applications to geometric morphisms}

In this section we prove a number of results about geometric morphisms using their representation as stably Frobenius adjunctions and applying our results about stably Frobenius adjunctions. The results are all essentially known; what is new is that they all follow from our earlier results about Frobenius adjunctions. 

\begin{proposition}
Let $f: \mathcal{F} \rTo \mathcal{E}$ and $g: \mathcal{G} \rTo \mathcal{F}$ be two geometric morphisms. Then (i) $fg$ is localic(bounded) if $f$ and $g$ are, (ii) if $fg$ is localic then $g$ is localic; and, (iii) if $fg$ is bounded then $g$ is bounded.
\end{proposition}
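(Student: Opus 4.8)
The plan is to argue entirely on the side of stably Frobenius adjunctions, via the representation $\Sigma_h \dashv h^* : \mathbf{Loc}_{\mathrm{dom}} \pile{\rTo \\ \lTo} \mathbf{Loc}_{\mathrm{cod}}$ of a geometric morphism $h$ together with the following dictionary. By the lemma of the ``Bounded geometric morphisms'' section, $h$ is bounded exactly when $\Sigma_h \dashv h^*$ satisfies the equivalent conditions of Proposition~\ref{second2}, i.e. when there is a localic groupoid $\mathbb{G}$ and an equivalence $\mathbf{Loc}_{\mathrm{dom}} \simeq [\mathbb{G}, \mathbf{Loc}_{\mathrm{cod}}]$, over $\mathbf{Loc}_{\mathrm{cod}}$, carrying $h^*$ to $\mathbb{G}^*$; and $h$ is localic exactly when $\mathbb{G}$ may be taken to be a trivial groupoid, equivalently when $\Sigma_h \dashv h^*$ is, over $\mathbf{Loc}_{\mathrm{cod}}$, a slice adjunction $\Sigma_X \dashv X^*$ (the classifying topos of a trivial groupoid $\mathbb{X}$ being $Sh(X)$). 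Every localic groupoid occurring this way has a connected components adjunction, since its classifying topos exists; this is exactly what lets Propositions~\ref{connect} and~\ref{connectback} be applied below.

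For (i): if $f$ and $g$ are localic, then $\Sigma_f \dashv f^*$ is a slice $\Sigma_X \dashv X^*$ ($X$ a locale in $\mathcal{E}$) and $\Sigma_g \dashv g^*$ is a slice $\Sigma_W \dashv W^*$ ($W = W_h$ an object of $\mathbf{Loc}_{\mathcal{E}}/X$); the composite $\Sigma_{fg} \dashv (fg)^* = \Sigma_f\Sigma_g \dashv g^*f^*$ is then, over $\mathbf{Loc}_{\mathcal{E}}$, the composite of slices $(\mathbf{Loc}_{\mathcal{E}}/X)/W_h \to \mathbf{Loc}_{\mathcal{E}}/X \to \mathbf{Loc}_{\mathcal{E}}$, which by the identification $(\mathcal{C}/X)/Y_h \cong \mathcal{C}/Y$ of Section~\ref{prelim} is again a slice $\Sigma_W \dashv W^*$; so $fg$ is localic. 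If $f$ and $g$ are only bounded, translate through $\mathbf{Loc}_{\mathcal{F}} \simeq [\mathbb{G}, \mathbf{Loc}_{\mathcal{E}}]$ (with $f^* \cong \mathbb{G}^*$) to view $\Sigma_g \dashv g^*$ as a stably Frobenius adjunction $\mathbf{Loc}_{\mathcal{G}} \to [\mathbb{G}, \mathbf{Loc}_{\mathcal{E}}]$ over $\mathbf{Loc}_{\mathcal{E}}$; boundedness of $g$ (Joyal--Tierney) gives a stably Frobenius groupoid $\frak{K}$ in $[\mathbb{G}, \mathbf{Loc}_{\mathcal{E}}]$ with $\mathbf{Loc}_{\mathcal{G}} \simeq [\frak{K}, [\mathbb{G}, \mathbf{Loc}_{\mathcal{E}}]]$ over $[\mathbb{G},\mathbf{Loc}_{\mathcal{E}}]$. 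Proposition~\ref{connect} then supplies a groupoid $\mathbb{G} \ltimes \frak{K}$ in $\mathbf{Loc}_{\mathcal{E}}$, stably Frobenius because $\mathbb{G}$ is, with $[\mathbb{G}\ltimes\frak{K}, \mathbf{Loc}_{\mathcal{E}}] \simeq \mathbf{Loc}_{\mathcal{G}}$ over $\mathbf{Loc}_{\mathcal{E}}$ and $\Sigma_{\mathbb{G}\ltimes\frak{K}} \cong \Sigma_{\mathbb{G}}\Sigma_{\frak{K}}$; since $\Sigma_{\mathbb{G}}\Sigma_{\frak{K}}$ is (a translate of) $\Sigma_{fg}$, passing to right adjoints identifies $(fg)^*$ with $(\mathbb{G}\ltimes\frak{K})^*$, so the dictionary gives $fg$ bounded. (The localic case is the special case in which $\mathbb{G}$ and $\frak{K}$, hence $\mathbb{G}\ltimes\frak{K}$, are trivial.)

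For (ii), assume $fg$ localic, so $\mathbf{Loc}_{\mathcal{G}} \simeq \mathbf{Loc}_{\mathcal{E}}/Z$ over $\mathbf{Loc}_{\mathcal{E}}$ with $\Sigma_{fg} \cong \Sigma_Z$. Apply Part~2 of Lemma~\ref{localic} with $\Sigma_{\mathcal{D}} \dashv \mathcal{D}^* = \Sigma_f \dashv f^*$, $X = Z$, and $L \dashv R$ the transport of $\Sigma_g \dashv g^*$ along $\mathbf{Loc}_{\mathcal{G}} \simeq \mathbf{Loc}_{\mathcal{E}}/Z$: this adjunction is over $\mathbf{Loc}_{\mathcal{E}}$ since $\Sigma_f\Sigma_g = \Sigma_{fg} \cong \Sigma_Z$, and it is Frobenius at $f^*Z$ since $\Sigma_g \dashv g^*$ is stably Frobenius. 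The lemma gives an equivalence $L_1 : \mathbf{Loc}_{\mathcal{E}}/Z \to \mathbf{Loc}_{\mathcal{F}}/W$ with $W = L1 = \Sigma_g(1)$ a locale in $\mathcal{F}$; as $\Sigma_W L_1 \cong L = \Sigma_g$, passing to right adjoints yields $\mathbf{Loc}_{\mathcal{G}} \simeq \mathbf{Loc}_{\mathcal{F}}/W$ over $\mathbf{Loc}_{\mathcal{F}}$ carrying $g^*$ to $W^*$, so $g$ is localic. For (iii), assume $fg$ bounded, so $\mathbf{Loc}_{\mathcal{G}} \simeq [\mathbb{K}, \mathbf{Loc}_{\mathcal{E}}]$ over $\mathbf{Loc}_{\mathcal{E}}$ with $(fg)^* \cong \mathbb{K}^*$ and $\Sigma_{fg} \cong \Sigma_{\mathbb{K}}$, where $\mathbb{K}$ (an open localic groupoid) has a connected components adjunction; transporting $\Sigma_g \dashv g^*$ gives a stably Frobenius adjunction $[\mathbb{K}, \mathbf{Loc}_{\mathcal{E}}] \to \mathbf{Loc}_{\mathcal{F}}$ over $\mathbf{Loc}_{\mathcal{E}}$ (since $\Sigma_f\Sigma_g = \Sigma_{fg} \cong \Sigma_{\mathbb{K}}$), and Proposition~\ref{connectback} produces a stably Frobenius groupoid $\frak{K}_{L,R}$ in $\mathbf{Loc}_{\mathcal{F}}$ with $\mathbf{Loc}_{\mathcal{G}} \simeq [\frak{K}_{L,R}, \mathbf{Loc}_{\mathcal{F}}]$ over $\mathbf{Loc}_{\mathcal{F}}$ carrying $g^*$ to $\frak{K}_{L,R}^*$; by the dictionary $g$ is bounded.

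The only real obstacle is bookkeeping: in each application one must check that the equivalence produced (by Lemma~\ref{localic}, Proposition~\ref{connect}, or Proposition~\ref{connectback}) is genuinely \emph{over} the intended base category and that, on passing to right adjoints, it identifies $g^*$ (resp. $(fg)^*$) with the appropriate $X^*$ or $\mathbb{G}^*$ — this is what converts the purely categorical conclusions into the stated facts about localic and bounded geometric morphisms — together with confirming the standing hypotheses (the ambient adjunctions being over the correct base, and the localic groupoids in sight having connected components adjunctions and being stably Frobenius). None of this is deep, but it is where all the work lies.
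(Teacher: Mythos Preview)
Your proposal is correct and follows exactly the paper's own route: the localic half of (i) via ``slice of a slice is a slice'', the bounded half of (i) via Proposition~\ref{connect}, part (ii) via Part~2 of Lemma~\ref{localic}, and part (iii) via Proposition~\ref{connectback}. You have simply unpacked the bookkeeping (the ``over $\mathbf{Loc}_{\mathcal{E}}$'' and ``over $\mathbf{Loc}_{\mathcal{F}}$'' checks, and the identification of right adjoints) that the paper's one-line invocations leave implicit.
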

Recall that a geometric morphism $f : \mathcal{F} \rTo \mathcal{E}$ is \emph{localic} if there is a locale $X$ in $\mathcal{E}$ and an equivalence $\mathcal{F} \rTo^{\simeq} Sh_{\mathcal{E}} X$ over $\mathcal{E}$. An equivalent condition on the pullback adjunction $\Sigma_f \dashv f^*$ is that there exists a locale $X$ in $\mathcal{E}$ and an equivalence $\mathbf{Loc}_{\mathcal{F}} \rTo^{\simeq} \mathbf{Loc}_{\mathcal{E}} / X $ over $\mathbf{Loc}_{\mathcal{E}}$; localic geometric morphisms are bounded (consider the trivial localic groupoid $\mathbb{X}$).
\begin{proof}
(i) Certainly if $f$ and $g$ are localic then so is $fg$ as a slice composed with a slice is again a sliced adjunction.
Say $f$ and $g$ are bounded then Proposition \ref{connect} shows that the adjunction $\Sigma_{fg} \dashv (fg)^*$ satisfies the conditions of Proposition \ref{second2} and so $fg$ is bounded.

(ii) Say $fg$ is localic then (2.) of Lemma \ref{localic} shows that $\Sigma_g \dashv g^*$ is a slice and so $g$ is localic. 

(iii) Proposition \ref{connectback}.
\end{proof}

\begin{proposition}
Any geometric morphism $f:\mathcal{F} \rTo \mathcal{E}$ factors, uniquely up to isomorphsm, as a hyperconnected followed by a localic geometric morphism.
\end{proposition}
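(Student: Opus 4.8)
The plan is to reduce the whole statement to the representation of $f$ by the stably Frobenius adjunction $\Sigma_f \dashv f^* : \mathbf{Loc}_{\mathcal{F}} \pile{\rTo \\ \lTo} \mathbf{Loc}_{\mathcal{E}}$, with $f^*$ preserving the Sierpi\'{n}ski locale, and then to read the factorisation off the canonical slice decomposition of an adjunction from Section \ref{adj}. Put $X := \Sigma_f(\mathbf{1}_{\mathbf{Loc}_{\mathcal{F}}})$, a locale in $\mathcal{E}$; since $L\mathbf{1} = \Sigma_f\mathbf{1} = X$, the adjunction $\Sigma_f \dashv f^*$ factors up to isomorphism as
$$\mathbf{Loc}_{\mathcal{F}} \xrightarrow{\ \Sigma_{\eta_1}\dashv\eta_1^*\ } \mathbf{Loc}_{\mathcal{F}}/f^*X \xrightarrow{\ (\Sigma_f)_X\dashv(f^*)_X\ } \mathbf{Loc}_{\mathcal{E}}/X \xrightarrow{\ \Sigma_X\dashv X^*\ } \mathbf{Loc}_{\mathcal{E}}.$$
Under $\mathbf{Loc}_{\mathcal{E}}/X \simeq \mathbf{Loc}_{Sh_{\mathcal{E}}(X)}$ the last adjunction is the pullback adjunction of the canonical \emph{localic} geometric morphism $l : Sh_{\mathcal{E}}(X) \to \mathcal{E}$, and $X^*$ preserves the Sierpi\'{n}ski locale. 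The composite of the first two adjunctions is stably Frobenius ($\Sigma_{\eta_1}\dashv\eta_1^*$ is the pullback adjunction of the morphism $\eta_1$, hence stably Frobenius; $(\Sigma_f)_X\dashv(f^*)_X$ is a slice of a stably Frobenius adjunction; and stably Frobenius adjunctions compose), and its right adjoint $\eta_1^*(f^*)_X$ preserves the Sierpi\'{n}ski locale, being a pullback functor composed with a slice of $f^*$. So by the representation theorem it is the pullback adjunction of a geometric morphism $h : \mathcal{F} \to Sh_{\mathcal{E}}(X) =: \mathcal{H}$, unique up to isomorphism; and composing the two pullback adjunctions recovers $\Sigma_f\dashv f^*$, so $lh \cong f$.

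The crux is that $h$ is hyperconnected. Here I would use that the terminal internal locale of any topos has internal frame the subobject classifier, so that for any geometric morphism $g$ the locale $\Sigma_g(\mathbf{1})$ has frame $g_*\Omega$; hence $g$ is hyperconnected exactly when $\Sigma_g$ preserves the terminal locale --- this is the localic form of the standard characterisation that $g$ is hyperconnected iff $g_*$ preserves $\Omega$, equivalently $g^*$ is full and faithful with image closed under subobjects and quotients. For the $h$ just built, a direct computation in the slice decomposition gives $\Sigma_h(\mathbf{1}_{\mathbf{Loc}_{\mathcal{F}}}) = L\mathbf{1} = (\Sigma_f)_X(\Sigma_{\eta_1}\mathbf{1})$, which is the adjoint transpose of the unit $\eta_1$ and hence (by the triangular identity) the identity $\mathrm{Id}_X$, i.e.\ the terminal object of $\mathbf{Loc}_{\mathcal{E}}/X$; moreover this isomorphism is the canonical comparison map, so it is a frame isomorphism and exhibits $h_*\Omega_{\mathcal{F}} \cong \Omega_{\mathcal{H}}$ compatibly with the subobject classifier structure. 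Therefore $h$ is hyperconnected. The main obstacle is exactly this step: it is where one must invoke (and correctly translate into the $\mathbf{Loc}$-picture) the characterisation of hyperconnectedness; granted that, the obstacle dissolves into the single observation $\Sigma_h(\mathbf{1})\cong\mathbf{1}$.

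For uniqueness, suppose $f \cong l'h'$ with $l'$ localic and $h'$ hyperconnected. A hyperconnected morphism is connected, so $\Sigma_{h'}(\mathbf{1}) \cong \mathbf{1}$, whence $\Sigma_{l'}(\mathbf{1}) \cong \Sigma_{l'}\Sigma_{h'}(\mathbf{1}) \cong \Sigma_f(\mathbf{1}) = X$; since $l'$ is localic its codomain is $\mathcal{H}' \simeq Sh_{\mathcal{E}}(X) \simeq \mathcal{H}$ over $\mathcal{E}$, with $l'$ identified with $l$. Finally $\Sigma_{h'}$ is then forced: using $\Sigma_{h'}\mathbf{1}\cong\mathbf{1}$ and naturality, $\Sigma_{h'}(W)$ must be $\Sigma_f W$ equipped with the structure map $\Sigma_f(!_W) : \Sigma_f W \to \Sigma_f(\mathbf{1}) = X$, which is precisely the left adjoint of the head of the slice decomposition; hence $h' \cong h$. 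So the factorisation exists and is unique up to isomorphism, and essentially all the content beyond the hyperconnectedness step is the bookkeeping of identifying $X = \Sigma_f(\mathbf{1})$ as the localic reflection and recognising the Section \ref{adj} slice decomposition as this factorisation.
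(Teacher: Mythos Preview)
Your proof is correct and follows the same approach as the paper: both use the canonical slice decomposition of $\Sigma_f \dashv f^*$ at $\Sigma_f 1$, observe that the first two factors compose to a stably Frobenius adjunction whose right adjoint preserves $\mathbb{S}$ and whose left adjoint preserves $1$ (hence hyperconnected), and for uniqueness both identify the intermediate locale as $\Sigma_f 1$. The only minor divergence is in the uniqueness step, where the paper verifies $p^* \cong \bar{f}^*$ via an equalizer presentation of objects of $\mathbf{Loc}_{\mathcal{E}}/X$ and matching adjoint transposes of $\Delta_X$, while you argue on the left-adjoint side that $\Sigma_{h'}(W)$ is forced to be $(\Sigma_f W)_{\Sigma_f(!_W)}$; both are valid and yours is arguably cleaner (note a small slip: you wrote ``codomain'' where you meant the domain of $l'$, i.e.\ the intermediate topos).
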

Recall that $f$ is \emph{hyperconnected} if the canonical map $\Omega_{\mathcal{E}} \rTo f_* \Omega_{\mathcal{F}}$ (i.e. the unique frame homomorphsm) is an isomorphism. An equivalent condition is that $\Sigma_f$ preserves $1$.

\begin{proof}
Repeating the remark just before Lemma \ref{firstlemma} we know that there is a facotization:
\begin{eqnarray*}
\mathbf{Loc}_{\mathcal{F}} \pile{\rTo^{\Sigma_{\eta_1}} \\ \lTo_{\eta_1^*}} \mathbf{Loc}_{\mathcal{F}}/f^* \Sigma_f 1  \pile{ \rTo^{(\Sigma_f)_{\Sigma_f 1}} \\ \lTo_{f^*_{\Sigma_f 1}}}   \mathbf{Loc}_{\mathcal{E}}/\Sigma_f 1 \pile{\rTo^{\Sigma_{\Sigma_f 1}} \\ \lTo_{(\Sigma_f 1)^*}} \mathbf{Loc}_{\mathcal{E}}
\end{eqnarray*}
Because $\Sigma_f \dashv f^*$ is stably Frobenius and pullback adjunctions are stably Frobenius the composite of the first two adjunctions, $(\Sigma_f)_{\Sigma_f 1} \Sigma_{\eta_1} \dashv \eta_1^* f^*_{\Sigma_f 1}$, is stably Frobenius. But the right adjoint of this composite adjunction preserves the Sierpi\'{n}ski locale and so it must be the pullback adjunction of a geometric morphism, $\bar{f}$; and as $(\Sigma_f)_{\Sigma_f 1} \Sigma_{\eta_1} 1 = 1$, $\bar{f}$ is hyperconnected and this completes the proof of the existence of the factorization because pullback adjunctions correspond to the pullback adjunctions of localic geometric morphisms. 

Let us say we have another factorization of $f$ as $\mathcal{F} \rTo^p Sh_{\mathcal{E}}X \rTo \mathcal{E}$ with $p$ hyperconnected. Because $\Sigma_p1 \cong 1$ it follows that $X \cong \Sigma_f1$ (use $\Sigma_X1 \cong X$) and so there is an equivalence $\psi: \mathbf{Loc}_{\mathcal{E}}/X \rTo^{\simeq} \mathbf{Loc}_{\mathcal{E}}/\Sigma_f 1$ over $\mathbf{Loc}_{\mathcal{E}}$. To complete it just needs to be checked that $\psi p$ is isomorphic to $\bar{f}$. But any object $Y_g$ of $\mathbf{Loc}_{\mathcal{E}}/X$ can be written as an equalizer
\begin{eqnarray*}
Y_g \rTo X^*Y  \pile{\rTo^{X^*f} \\ \rTo_{\Delta_X !}} X^*X
\end{eqnarray*}
and so $p^*Y_g$ is determined by the image of this equalizer. Note that $1 \rTo^{p^*\Delta_X} p^*(X^*X)$ is the double adjoint transpose (via $\Sigma_p \dashv p^*$ and $\Sigma_X \dashv X^*$) of the identity $\Sigma_X \Sigma_p 1 \rTo^{Id_X} \Sigma_X \Sigma_p 1$; but this identity is isomorphic to the identity $\Sigma_{\Sigma_f1} \Sigma_{\bar{f}} 1 \rTo^{Id_{\Sigma_f1}} \Sigma_{\Sigma_f1} \Sigma_{\bar{f}} 1 $ and so has isomorphic double adjoint transpose (now via $\Sigma_{\bar{f}} \dashv \bar{f}^*$ and $\Sigma_{\Sigma_f1} \dashv \Sigma_f  1^*$). It follows that $p^*\Delta_X$ is isomorphic to $\bar{f}^* \Delta_{\Sigma_f1}$.
\end{proof}

\begin{proposition}
The hyperconnected-localic factorization of a bounded geometric morphism along a bounded geometric morphism is pullback stable. 
\end{proposition}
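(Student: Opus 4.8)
The plan is to reduce the statement to the pullback stability of each of the two factors and then invoke the uniqueness of the hyperconnected-localic factorisation proved above. Fix a bounded $f:\mathcal{F}\rTo\mathcal{E}$ with its factorisation $\mathcal{F}\rTo^{\bar{f}}Sh_{\mathcal{E}}(\Sigma_f1)\rTo\mathcal{E}$ and a bounded $h:\mathcal{E}'\rTo\mathcal{E}$; write $f':\mathcal{F}'\rTo\mathcal{E}'$ for the pullback. First I would note that $f'$ is again bounded: by the Lemma characterising boundedness via the conditions of Propositions \ref{bounded}/\ref{second2}, together with the remark that a pullback square over $\mathcal{E}$ corresponds to a pseudo-product square in $T_{\mathbf{Loc}_{\mathcal{E}}}$ (Proposition \ref{last}, Theorem \ref{i}), the adjunction $\Sigma_{f'}\dashv (f')^*$ is represented by a product of stably Frobenius groupoids, hence is stably Frobenius by Proposition \ref{product}(iii) and satisfies the conditions of Proposition \ref{second2}. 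Also $\bar{f}$ is bounded, by part (iii) of the proposition on composites of geometric morphisms, since the localic (hence bounded) morphism $Sh_{\mathcal{E}}(\Sigma_f1)\rTo\mathcal{E}$ postcomposes it to the bounded $f$. Pasting pullback squares, the factorisation of $f$ pulls back to $\mathcal{F}'\rTo Sh_{\mathcal{E}}(\Sigma_f1)\times_{\mathcal{E}}\mathcal{E}'\rTo\mathcal{E}'$, in which the left map is the pullback of $\bar{f}$ along the bounded morphism $Sh_{\mathcal{E}}(\Sigma_f1)\times_{\mathcal{E}}\mathcal{E}'\rTo Sh_{\mathcal{E}}(\Sigma_f1)$. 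By uniqueness of the hyperconnected-localic factorisation it then suffices to show the right-hand map is localic and the left-hand map is hyperconnected.

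The right-hand map is the pullback of the localic $Sh_{\mathcal{E}}(\Sigma_f1)\rTo\mathcal{E}$, and localic geometric morphisms are pullback stable: in our language a localic morphism is a slice adjunction $\Sigma_X\dashv X^*:\mathbf{Loc}_{\mathcal{E}}/X\rTo\mathbf{Loc}_{\mathcal{E}}$, equivalently the connected-components adjunction of the trivial groupoid on a locale $X$ of $\mathbf{Loc}_{\mathcal{E}}$ (Lemma \ref{objectslice}), and forming the corresponding product of groupoids over $\mathbf{Loc}_{\mathcal{E}}$ (Proposition \ref{last}) returns the trivial groupoid on $h^*X$; hence $Sh_{\mathcal{E}}(\Sigma_f1)\times_{\mathcal{E}}\mathcal{E}'\simeq Sh_{\mathcal{E}'}(h^*\Sigma_f1)$ over $\mathcal{E}'$, which is localic. (This is of course also classical.)

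The substantive step is the hyperconnectedness of the pullback of $\bar{f}$. I would write $\mathcal{H}=Sh_{\mathcal{E}}(\Sigma_f1)$ and represent the hyperconnected bounded $\bar{f}$ by a stably Frobenius groupoid $\mathbb{K}$ internal to $\mathbf{Loc}_{\mathcal{H}}$, with $\Sigma_{\bar{f}}\dashv\bar{f}^*\cong\Sigma_{\mathbb{K}}\dashv\mathbb{K}^*$; hyperconnectedness says $\Sigma_{\mathbb{K}}1\cong 1$, i.e. the coequaliser of $d_0^{\mathbb{K}},d_1^{\mathbb{K}}:K_1\pile{\rTo \\ \rTo}K_0$ is terminal in $\mathbf{Loc}_{\mathcal{H}}$. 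The bounded morphism $\mathcal{H}\times_{\mathcal{E}}\mathcal{E}'\rTo\mathcal{H}$ is likewise represented by a stably Frobenius groupoid $\mathbb{L}$ in $\mathbf{Loc}_{\mathcal{H}}$, and by Proposition \ref{last} (with the remark identifying pullbacks over $\mathbf{Loc}_{\mathcal{H}}$ with products in $T_{\mathbf{Loc}_{\mathcal{H}}}$, and Theorem \ref{i}) the pullback $\bar{f}':\mathcal{F}'\rTo\mathcal{H}\times_{\mathcal{E}}\mathcal{E}'$ is represented by the projection $\Sigma_{\pi}\dashv\pi^*:[\mathbb{K}\times\mathbb{L},\mathbf{Loc}_{\mathcal{H}}]\rTo[\mathbb{L},\mathbf{Loc}_{\mathcal{H}}]$ that quotients out the $\mathbb{K}$-action (Proposition \ref{product}(i)). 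Evaluating $\Sigma_{\pi}$ at the terminal $\mathbb{K}\times\mathbb{L}$-object, whose underlying locale is $K_0\times L_0$, the $\mathbb{K}$-action is $d_1^{\mathbb{K}}\times Id_{L_0}$ and the other leg of the defining pair is $d_0^{\mathbb{K}}\times Id_{L_0}$ on $K_1\times L_0$, so $\Sigma_{\pi}1$ is their coequaliser; since $\mathbb{K}$ is stably Frobenius this coequaliser is pullback stable in the strong sense, so it equals $(\Sigma_{\mathbb{K}}1)\times L_0\cong 1\times L_0\cong L_0$, which is precisely the terminal $\mathbb{L}$-object. Hence $\Sigma_{\bar{f}'}1\cong1$, $\bar{f}'$ is hyperconnected, and the factorisation of $f'$ constructed above is its hyperconnected-localic factorisation and is by construction the pullback of that of $f$.

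The hard part is this last step: one needs to know that passing to the object of connected components is unaffected by pullback, and this is exactly the pullback-stability supplied by the stably Frobenius hypothesis and packaged in Proposition \ref{last}. The care required is in matching the topos-theoretic pullback with the projection out of a product of internal groupoids and in checking that the coequaliser defining the connected components distributes over that product; without the stably Frobenius assumption neither the product-of-groupoids description nor the distributivity is available, which is why boundedness is assumed throughout.
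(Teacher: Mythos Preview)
Your argument is correct and follows essentially the same strategy as the paper: reduce to separate pullback stability of the localic and hyperconnected factors, identify the pullback of bounded morphisms with a product of groupoids via Proposition~\ref{last} and Theorem~\ref{i}, and use the trivial-groupoid description for the localic part. The only notable difference is in how you handle the hyperconnected factor. You compute $\Sigma_\pi 1$ explicitly as the coequaliser of $d_0^{\mathbb{K}}\times Id_{L_0}$ and $d_1^{\mathbb{K}}\times Id_{L_0}$ and then invoke pullback stability of that coequaliser (the stably Frobenius hypothesis on $\mathbb{K}$) to identify it with $\Sigma_{\mathbb{K}}1\times L_0\cong L_0$. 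The paper instead observes that the square~$(*)$ of Proposition~\ref{last} satisfies Beck--Chevalley (both ways), so that $\Sigma_{\pi_2}\pi_1^*\cong\mathbb{G}^*\Sigma_{\mathbb{H}}$, and applies this to the terminal object. Your explicit computation is exactly what verifies this instance of Beck--Chevalley, so the two arguments are the same in substance; the paper's formulation is more compact and avoids passing to the intermediate topos $\mathcal{H}$, while yours makes the role of the stably Frobenius assumption more visible. Your additional checks that $f'$ and $\bar f$ are bounded are not strictly needed for the paper's line (Beck--Chevalley for $(*)$ already holds in $T_{\mathbf{Loc}_{\mathcal{S}}}$), but they do no harm.
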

\begin{proof}
If $X$ is an object of a category $\mathcal{C}$ and $\mathbb{G}$ an internal groupoid, then there is an isomorphism between $[\mathbb{X} \times \mathbb{G}, \mathcal{C}]$ and $[\mathbb{G},\mathcal{C}]/\mathbb{G}^*X$. So that localic geometric morphisms are pullback stable follows from the description of product given in Proposition \ref{last}. 
For the pullback stability of hyperconnected geometric morphisms, this follows as the diagram (*) in the proof of Proposition \ref{last} satisfies Beck-Chevalley (both ways round). That the diagram satisfies Beck-Chevalley is clear from the explicit description of $\Sigma_{\pi_2}$ given in (i) of Proposition \ref{product}.
\end{proof}

\begin{proposition}
Open and proper bounded geometric morphisms are pullback stable (along bounded geometric morphisms).
\end{proposition}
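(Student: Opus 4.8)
The plan is to reduce the statement to a property of stably Frobenius adjunctions and then invoke the pullback description of bounded geometric morphisms already established (Theorem~\ref{i} together with Proposition~\ref{last} and Proposition~\ref{product}). Recall that a bounded geometric morphism $f:\mathcal{F}\rTo\mathcal{E}$ is open (resp.\ proper) precisely when $\Sigma_f:\mathbf{Loc}_{\mathcal{F}}\rTo\mathbf{Loc}_{\mathcal{E}}$ restricts appropriately on the Sierpi\'nski locale — concretely, $f$ is open iff $\Sigma_f$ has a further left adjoint realising the existential quantifier along $f$ (equivalently the frame map $f^*$ has an internal left adjoint in $\mathbf{Loc}_{\mathcal{E}}$ that is a frame map), and dually for proper. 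The key observation is that, under the representation $f\mapsto(\Sigma_f\dashv f^*)$, openness/properness is a property detectable on the connected-components adjunction of the associated stably Frobenius localic groupoid $\mathbb{G}$, namely that $\mathbb{G}$ is an open (resp.\ proper) localic groupoid in the sense already alluded to in the section on stably Frobenius groupoids.

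First I would make precise that, via Theorem~\ref{i}, pulling back $f:\mathcal{E}\rTo\mathcal{S}$ along another bounded $g:\mathcal{F}\rTo\mathcal{S}$ corresponds, at the level of $T_{\mathbf{Loc}_{\mathcal{S}}}$, to forming the pseudo-product of $[\mathbb{G},\mathbf{Loc}_{\mathcal{S}}]$ and $[\mathbb{H},\mathbf{Loc}_{\mathcal{S}}]$, which by Proposition~\ref{last} is $[\mathbb{H}\times\mathbb{G},\mathbf{Loc}_{\mathcal{S}}]$, and the projection to $[\mathbb{H},\mathbf{Loc}_{\mathcal{S}}]$ is $\Sigma_{\pi_1}\dashv\pi_1^*$. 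So the pullback of the bounded geometric morphism classified by $\mathbb{G}$ along the one classified by $\mathbb{H}$ is classified by $\pi_1:\mathbb{H}\times\mathbb{G}\rTo\mathbb{H}$. The claim then reduces to: \emph{if $\mathbb{G}$ is an open (resp.\ proper) stably Frobenius localic groupoid then the adjunction $\Sigma_{\pi_1}\dashv\pi_1^*:[\mathbb{H}\times\mathbb{G},\mathbf{Loc}_{\mathcal{S}}]\rTo[\mathbb{H},\mathbf{Loc}_{\mathcal{S}}]$ corresponds to an open (resp.\ proper) bounded geometric morphism.} This is precisely the statement that $\mathbb{H}\times\mathbb{G}$ is open (resp.\ proper) over $\mathbb{H}$ whenever $\mathbb{G}$ is open (resp.\ proper), which in turn follows from the explicit description of $\Sigma_{\pi_1}$ in part~(i) of Proposition~\ref{product}: $\Sigma_{\pi_1}$ quotients out the $\mathbb{G}$-action, so the requisite further adjoint (existential quantifier for openness, or the universal quantifier for properness) on $\mathbb{G}$-objects is inherited fibrewise by $\mathbb{H}\times\mathbb{G}$-objects, using that the $\mathbb{H}$-action commutes with the $\mathbb{G}$-action.

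The main obstacle is pinning down the correct internal characterisation of openness and properness for a bounded geometric morphism purely in terms of the stably Frobenius adjunction $\Sigma_f\dashv f^*$ and showing it transfers cleanly along the equivalence $[\mathbb{H}\times\mathbb{G},\mathbf{Loc}_{\mathcal{S}}]\simeq[\mathbb{H}^*\mathbb{G},[\mathbb{H},\mathbf{Loc}_{\mathcal{S}}]]$; one must check that the existential/universal quantifier along $\pi_1$, built using part~(i) of Proposition~\ref{product}, is a frame homomorphism internally and satisfies the relevant Beck--Chevalley and Frobenius conditions that characterise the open (resp.\ proper) case, and that these are detected on the Sierpi\'nski object as in Theorem~\ref{i}. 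Once the description of the pullback as $\pi_1:\mathbb{H}\times\mathbb{G}\rTo\mathbb{H}$ is in hand, the verification that the quantifier structure on $\mathbb{G}$-objects lifts to $\mathbb{H}\times\mathbb{G}$-objects is routine from the commuting-actions condition (v) in the definition of $\mathbb{H}\times\mathbb{G}$-object, exactly as in the proof of part~(i) of Proposition~\ref{product}. I would therefore present the argument as: (1) reduce to the groupoid picture via Theorem~\ref{i}; (2) identify the pullback with $\pi_1:\mathbb{H}\times\mathbb{G}\rTo\mathbb{H}$ via Proposition~\ref{last}; (3) observe that openness/properness of $\mathbb{G}$ over $\mathcal{S}$ is inherited by $\mathbb{H}\times\mathbb{G}$ over $\mathbb{H}$ using the explicit fibrewise description of $\Sigma_{\pi_1}$ from Proposition~\ref{product}(i); (4) translate back through Theorem~\ref{i} and the representation theorem to conclude the pullback geometric morphism is open (resp.\ proper).
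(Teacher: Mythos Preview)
Your route is genuinely different from the paper's, and as written it has a gap.

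The paper does not try to express openness or properness as a property of the representing groupoid $\mathbb{G}$. Instead it uses the hyperconnected--localic factorisation: a geometric morphism is open (resp.\ proper) iff the localic part of its hyperconnected--localic factorisation is an open (resp.\ proper) locale map (C3.1.9 and the preamble to C3.2.12 in \cite{Elephant}). Since the preceding proposition shows the hyperconnected--localic factorisation is pullback stable, and since $f^*:\mathbf{Loc}_{\mathcal{E}}\rTo\mathbf{Loc}_{\mathcal{F}}$ is well known to preserve open and proper locales, the result follows in one line. This is much shorter than what you propose and sidesteps any need to analyse the groupoid structure.

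The gap in your argument is the characterisation of open and proper you start from. Saying ``$f$ is open iff $\Sigma_f$ has a further left adjoint'' is not the standard criterion: a further left adjoint to $\Sigma_f:\mathbf{Loc}_{\mathcal{F}}\rTo\mathbf{Loc}_{\mathcal{E}}$ is not what encodes openness; openness is a condition on the internal frame map (the existence of an $\mathcal{E}$-indexed left adjoint to $\Omega_{\mathcal{E}}\rTo f_*\Omega_{\mathcal{F}}$ satisfying Frobenius), equivalently on the locale $\Sigma_f 1$. And properness is not simply ``dual'' to this; it is a weak-closure/directed-join condition on $f_*$. More seriously, your step (3) presupposes that ``$\mathbb{G}$ is an open (resp.\ proper) localic groupoid'' is equivalent to the bounded geometric morphism it classifies being open (resp.\ proper). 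That is false as stated: by Joyal--Tierney every bounded geometric morphism is classified by an \emph{open} localic groupoid, yet not every bounded geometric morphism is open. So the property you intend to push along $\pi_1:\mathbb{H}\times\mathbb{G}\rTo\mathbb{H}$ has not been correctly identified. A groupoid-level argument could in principle be made to work, but it would first require formulating precisely which structure on $\mathbb{G}$ corresponds to openness/properness of $B_{\mathcal{S}}\mathbb{G}\rTo\mathcal{S}$; the paper's factorisation argument avoids this entirely.
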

\begin{proof}
A geometric morphism is open(proper) if and only if the localic part of its hyperconnected-localic factorization is open(proper); see C3.1.9 (preamble to C3.2.12) of \cite{Elephant}. The result then follows from the previous proposition as it is well known that $f^*: \mathbf{Loc}_{\mathcal{E}} \rTo \mathbf{Loc}_{\mathcal{F}}$ preserves both open and proper locales for any geometric morphism $f: \mathcal{F} \rTo \mathcal{E}$. 
\end{proof}

\begin{proposition}
For any two toposes $\mathcal{F}$ and $\mathcal{E}$, bounded over some base topos $\mathcal{S}$, the category of geometric morphisms from $\mathcal{F}$ to $\mathcal{E}$ over $\mathcal{S}$ (with natural isomorphisms as morphisms) is equivalent to the category of Hilsum-Skandalis maps from $\mathbb{H}$ to $\mathbb{G}$ where $\mathbb{H}$($\mathbb{G}$) is the localic groupoid associated with $\mathcal{F}$($\mathcal{E}$).
\end{proposition}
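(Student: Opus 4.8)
The plan is to string together three facts already established: the representation of geometric morphisms as stably Frobenius adjunctions, the characterisation of bounded geometric morphisms via condition 1 of Proposition \ref{second2}, and Theorem \ref{HS}. First I would unwind what ``over $\mathcal{S}$'' means on the locale side. Writing $p_{\mathcal{F}}:\mathcal{F}\rTo\mathcal{S}$ and $p_{\mathcal{E}}:\mathcal{E}\rTo\mathcal{S}$ for the structure maps, a geometric morphism $f:\mathcal{F}\rTo\mathcal{E}$ over $\mathcal{S}$ is $f$ together with an isomorphism $p_{\mathcal{E}}f\cong p_{\mathcal{F}}$; applying direct images and passing to locales this is exactly a natural isomorphism $\Sigma_{p_{\mathcal{E}}}\Sigma_{f}\cong\Sigma_{p_{\mathcal{F}}}$, i.e. the datum that makes $\Sigma_{f}\dashv f^{*}$ an adjunction over $\mathbf{Loc}_{\mathcal{S}}$, and morphisms over $\mathcal{S}$ correspond to natural isomorphisms of the left adjoints compatible with these structure isomorphisms. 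So the representation theorem of the preceding section, restricted to natural isomorphisms as in the remark there, upgrades to: the category of geometric morphisms $\mathcal{F}\rTo\mathcal{E}$ over $\mathcal{S}$ (with natural isomorphisms) is equivalent to the category of stably Frobenius adjunctions $L\dashv R:\mathbf{Loc}_{\mathcal{F}}\pile{\rTo\\\lTo}\mathbf{Loc}_{\mathcal{E}}$ over $\mathbf{Loc}_{\mathcal{S}}$ such that $R$ preserves the Sierpi\'nski locale.

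Next I would transport this across the Joyal--Tierney equivalences. Since $p_{\mathcal{F}}$ and $p_{\mathcal{E}}$ are bounded, the lemma characterising bounded geometric morphisms (condition 1 of Proposition \ref{second2}) supplies equivalences $\mathbf{Loc}_{\mathcal{F}}\simeq[\mathbb{H},\mathbf{Loc}_{\mathcal{S}}]$ and $\mathbf{Loc}_{\mathcal{E}}\simeq[\mathbb{G},\mathbf{Loc}_{\mathcal{S}}]$ \emph{over} $\mathbf{Loc}_{\mathcal{S}}$, where $\mathbb{H}$ and $\mathbb{G}$ are the associated localic groupoids, which may be taken to be the (open, hence stably Frobenius) groupoids produced by Joyal--Tierney, or equivalently their \'etale completions; the outcome is independent of the choice up to Morita equivalence by Corollary \ref{HSCor}. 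Pre- and post-composition with these equivalences induces an equivalence between the category of stably Frobenius adjunctions $\mathbf{Loc}_{\mathcal{F}}\pile{\rTo\\\lTo}\mathbf{Loc}_{\mathcal{E}}$ over $\mathbf{Loc}_{\mathcal{S}}$ and the category of stably Frobenius adjunctions $[\mathbb{H},\mathbf{Loc}_{\mathcal{S}}]\pile{\rTo\\\lTo}[\mathbb{G},\mathbf{Loc}_{\mathcal{S}}]$ over $\mathbf{Loc}_{\mathcal{S}}$, using that equivalences are trivially stably Frobenius and that both ``stably Frobenius'' and ``over $\mathbf{Loc}_{\mathcal{S}}$'' are closed under composition. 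Applying Theorem \ref{HS} with $\mathcal{C}=\mathbf{Loc}_{\mathcal{S}}$ (legitimate since $\mathbb{H}$ and $\mathbb{G}$ are stably Frobenius) then identifies this last category with the category of Hilsum-Skandalis maps from $\mathbb{H}$ to $\mathbb{G}$.

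The one step in this chain that is not purely formal --- and hence the main obstacle --- is that Theorem \ref{HS} carries no ``$R$ preserves the Sierpi\'nski locale'' hypothesis, so I must show that clause is automatic for adjunctions over $\mathbf{Loc}_{\mathcal{S}}$, exactly as in the proof of Theorem \ref{i}. Being over $\mathbf{Loc}_{\mathcal{S}}$ gives a natural isomorphism $\Sigma_{\mathbb{G}}L\cong\Sigma_{\mathbb{H}}$; taking right adjoints yields a natural isomorphism $R\,\mathbb{G}^{*}\cong\mathbb{H}^{*}$ of functors $\mathbf{Loc}_{\mathcal{S}}\rTo[\mathbb{H},\mathbf{Loc}_{\mathcal{S}}]$. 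Since $p_{\mathcal{E}}^{*}$ preserves the Sierpi\'nski locale and the equivalence $\mathbf{Loc}_{\mathcal{E}}\simeq[\mathbb{G},\mathbf{Loc}_{\mathcal{S}}]$ is over $\mathbf{Loc}_{\mathcal{S}}$ (so it intertwines $p_{\mathcal{E}}^{*}$ with $\mathbb{G}^{*}$), the object $\mathbb{G}^{*}\mathbb{S}_{\mathcal{S}}$ is the Sierpi\'nski locale $\mathbb{S}_{\mathcal{E}}$ of $\mathbf{Loc}_{\mathcal{E}}$ with its internal distributive lattice structure, and likewise $\mathbb{H}^{*}\mathbb{S}_{\mathcal{S}}$ is $\mathbb{S}_{\mathcal{F}}$. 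Evaluating $R\,\mathbb{G}^{*}\cong\mathbb{H}^{*}$ at $\mathbb{S}_{\mathcal{S}}$ therefore gives $R\mathbb{S}_{\mathcal{E}}\cong\mathbb{S}_{\mathcal{F}}$, and as $R$ and $\mathbb{G}^{*}$ are right adjoints they preserve finite products, so by naturality this isomorphism commutes with the lattice operations; thus $R$ preserves the Sierpi\'nski locale automatically. Consequently the $\mathbb{S}$-clause may be deleted without changing the category, and composing the three equivalences proves the Proposition. Finally one checks that the morphisms correspond: natural isomorphisms of geometric morphisms over $\mathcal{S}$ match, under the representation theorem and the transported equivalences, the natural transformations of left adjoints appearing in Theorem \ref{HS}, which by Proposition \ref{morphHS} are precisely the morphisms of Hilsum-Skandalis maps, all of them isomorphisms.
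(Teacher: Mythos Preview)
Your proof is correct and follows essentially the same route as the paper. The paper's own proof is a two-line citation of Theorem \ref{HS} together with the full-and-faithfulness part of Theorem \ref{i}; your argument simply unpacks what goes into that full-and-faithfulness claim (the transport along the Joyal--Tierney equivalences and the observation that the Sierpi\'nski-preservation clause is automatic for adjunctions over $\mathbf{Loc}_{\mathcal{S}}$), which you yourself acknowledge by pointing back to the proof of Theorem \ref{i}.
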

\begin{proof}
Immediate from our main result (Theorem \ref{HS}) and our earlier result, contained in Theorem \ref{i}, that $\frak{i}$ is full and faithful up to isomorphism. 

\end{proof}
\begin{proposition}
For any bounded $p: \mathcal{E} \rTo \mathcal{S}$ there is a localic groupoid $\mathbb{G}=(G_1 \pile{\rTo \\ \rTo} G_0, ...)$ such that for every point $x: 1 \rTo G_0$ there is a geometric morphism $p_x:  \mathcal{S} \rTo \mathcal{E}$ over $\mathcal{S}$ (i.e. a point of $\mathcal{E}$). The full subgroupoid of $\frak{BTop}^i(\mathcal{S},\mathcal{E})$ consisting of all $p_x$s is isomorphic to the groupoid (in $\mathcal{E}$) of points of $\mathbb{G}$.
\end{proposition}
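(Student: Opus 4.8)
The plan is to read off $\mathbb{G}$ from the boundedness of $p$, to present each $p_x$ as the geometric morphism corresponding to the stably Frobenius adjunction $\mathbb{T}_{\mathbb{G}}\Sigma_x\dashv x^*U_{\mathbb{G}}$, and then to deduce the identification of groupoids from Proposition~\ref{recoverG} together with the representation of geometric morphisms as stably Frobenius adjunctions.

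First I would invoke the characterisation of bounded geometric morphisms (so that $\Sigma_p\dashv p^*$ satisfies condition~1 of Proposition~\ref{second2}) to obtain a localic groupoid $\mathbb{G}=(G_1\rTo G_0,\dots)$ in $\mathbf{Loc}_{\mathcal{S}}$ and an equivalence $\Theta$ from $\mathbf{Loc}_{\mathcal{E}}$ to $[\mathbb{G},\mathbf{Loc}_{\mathcal{S}}]$, over $\mathbf{Loc}_{\mathcal{S}}$, with $\Theta p^*\cong\mathbb{G}^*$. Transporting the stably Frobenius adjunction $\Sigma_p\dashv p^*$ along $\Theta$ shows that $\Sigma_{\mathbb{G}}\dashv\mathbb{G}^*$ is stably Frobenius, so $\mathbb{G}$ is a stably Frobenius groupoid; this is what makes Proposition~\ref{recoverG} applicable. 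I would also note that $\Theta$ carries $\mathbb{S}_{\mathcal{E}}$ to $\mathbb{G}^*\mathbb{S}_{\mathcal{S}}$, since $p^*$ preserves the Sierpi\'{n}ski locale and $\Theta p^*\cong\mathbb{G}^*$.

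Next, fix a point $x\colon 1\rTo G_0$. The adjunction $\mathbb{T}_{\mathbb{G}}\Sigma_x\dashv x^*U_{\mathbb{G}}$ from $\mathbf{Loc}_{\mathcal{S}}$ to $[\mathbb{G},\mathbf{Loc}_{\mathcal{S}}]$ is the composite of the pullback adjunction $\Sigma_x\dashv x^*$ with $\mathbb{T}_{\mathbb{G}}\dashv U_{\mathbb{G}}$, hence stably Frobenius by Lemma~\ref{groupoidadjisfrob} and closure of the stably Frobenius condition under composition; and it is over $\mathbf{Loc}_{\mathcal{S}}$ because $\Sigma_{\mathbb{G}}\mathbb{T}_{\mathbb{G}}\Sigma_x\cong\Sigma_{G_0}\Sigma_x\cong\mathrm{Id}$. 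Its right adjoint preserves the Sierpi\'{n}ski locale: $U_{\mathbb{G}}$ is monadic and $x^*$ is a right adjoint, so both preserve finite products, whence $x^*U_{\mathbb{G}}$ carries the internal distributive lattice $\mathbb{G}^*\mathbb{S}_{\mathcal{S}}$ (which is $\mathbb{S}_{\mathcal{S}}$ over $G_0$ with trivial action) to $x^*(G_0\times\mathbb{S}_{\mathcal{S}})\cong\mathbb{S}_{\mathcal{S}}$, lattice structure and all. Composing with $\Theta^{-1}$ and applying the representation theorem then yields a geometric morphism $p_x\colon\mathcal{S}\rTo\mathcal{E}$, unique up to isomorphism, with $p_x^*\cong x^*U_{\mathbb{G}}\Theta$. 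Because the corresponding pullback adjunction is over $\mathbf{Loc}_{\mathcal{S}}$, $\Sigma_{pp_x}\cong\Sigma_{\mathbb{G}}\mathbb{T}_{\mathbb{G}}\Sigma_x\cong\mathrm{Id}$, so $pp_x\cong\mathrm{Id}_{\mathcal{S}}$; thus $p_x$ is a point of $\mathcal{E}$ over $\mathcal{S}$, and it is bounded because $\mathrm{Id}_{\mathcal{S}}$ is and a geometric morphism $g$ is bounded whenever $fg$ is.

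For the final identification, apply Proposition~\ref{recoverG} with $\mathcal{C}=\mathbf{Loc}_{\mathcal{S}}$ and $I=1$: the groupoid of points of $\mathbb{G}$, namely $\mathbf{Loc}_{\mathcal{S}}(1,\mathbb{G})$, is equivalent via $x\mapsto(\mathbb{T}_{\mathbb{G}}\Sigma_x\dashv x^*U_{\mathbb{G}})$ to the full subgroupoid of stably Frobenius adjunctions from $\mathbf{Loc}_{\mathcal{S}}$ to $[\mathbb{G},\mathbf{Loc}_{\mathcal{S}}]$ over $\mathbf{Loc}_{\mathcal{S}}$ on the adjunctions of that shape. (That this ambient category really is a groupoid is because, by Theorem~\ref{HS}, it is equivalent to the category of Hilsum--Skandalis maps from the trivial groupoid to $\mathbb{G}$, all of whose morphisms are isomorphisms by Proposition~\ref{morphHS}.) Composing with $\Theta^{-1}$ and using the representation theorem once more carries this subgroupoid onto the full subgroupoid of $\frak{BTop}^i(\mathcal{S},\mathcal{E})$ on the $p_x$'s: on objects it is the assignment $x\mapsto p_x$, and on morphisms a $2$-cell $p_x\Rightarrow p_{x'}$ is a natural isomorphism of inverse images, which for stably Frobenius adjunctions over $\mathbf{Loc}_{\mathcal{S}}$ is the same datum as a morphism of adjunctions over $\mathbf{Loc}_{\mathcal{S}}$ (the passage $f\mapsto(\Sigma_f\dashv f^*)$ being functorial on isomorphism $2$-cells, as observed after the representation theorem); this yields the claimed isomorphism of groupoids once the subgroupoid is taken to be the image of $x\mapsto p_x$. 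The step needing the most care is precisely this last piece of $2$-cell bookkeeping, matching isomorphisms of points of $\mathcal{E}$ over $\mathcal{S}$, morphisms of the corresponding stably Frobenius adjunctions over $\mathbf{Loc}_{\mathcal{S}}$, and morphisms of points of $\mathbb{G}$; everything else is a direct application of results already in hand.
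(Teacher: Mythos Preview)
Your proof is correct and follows essentially the same approach as the paper: the paper's own proof consists of the single word ``Proposition~\ref{recoverG}'', and that is exactly the result you build your argument around. You have simply unpacked the surrounding details that the paper leaves implicit---obtaining $\mathbb{G}$ from boundedness, verifying that $\mathbb{T}_{\mathbb{G}}\Sigma_x\dashv x^*U_{\mathbb{G}}$ is stably Frobenius, over $\mathbf{Loc}_{\mathcal{S}}$, and preserves the Sierpi\'{n}ski locale, and then invoking the full-and-faithfulness of $\frak{i}$ (Theorem~\ref{i}) to transport the conclusion of Proposition~\ref{recoverG} from adjunctions to geometric morphisms in $\frak{BTop}^i(\mathcal{S},\mathcal{E})$.
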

\begin{proof}
Proposition \ref{recoverG}.
\end{proof}

\section{Concluding remarks}
Whilst writing this paper the idea of changing the title to `Everything you always wanted to know about a cartesian category (but were afraid to ask)' occurred to me. At the outset I did not think that generalities on cartesian categories were going to produce any work of substance. The results would either be well known or just not true at that level of generality. The core result on the representation of Hilsum-Skandalis maps was clearly going to require some work, but I was surprised by the depth of the more basic lemmas, notably our earlier ones on slicing adjunctions (e.g. Lemma \ref{firstlemma}) and, as should be apparent from the exposition, the Proposition characterising connected component adjunctions. So, at the very least, this paper should be of use to those who want to see basic results on sliced adjunctions, stably Frobenius adjunctions and groupoids expressed at the greatest possible level of generality (`greatest' because the cartesian structure is needed to express the stably Frobenius condition and in the definition of internal groupoids). 

Open surjections in the category of locales are effective descent morphisms and are stable under all the relevant constructions (basically, pullback). So by replacing `effective descent morphism' with `open surjection' one can return to a topological `terra firma' should that be required. But our essential point is that the relationships persist at a greater level of generality (cartesian categories) and so should be of general interest. The additional generality is not vacuous because proper surjections in the category of locales provide a non-trivial example, different to open surjections.

The essence of the main result on Hilsum-Skandalis maps is already in \cite{towprinc}, from where the ideas of this paper stem. That localic Hilsum-Skandalis maps represent geometric morphisms is implicit in the literature as they are known to correspond to the morphisms of the category of fractions of open groupoids, and geometric morphisms are known to be represented by the morphsms of this category of fractions; see Section 7 of \cite{MoerClassTop}. What is new for this paper is that we have provided a new explicit charcterisation of the morphisms of the relevant category of fractions: they are the stably Frobenius adjunctions.

It has not been possible here to provide a localic characterisation of \'{e}tale completeness of localic groupoids. Therefore some extra work remains to support any assertion that we have `done topos theory without sites'. But I think it is fair to say that most of the heavy lifting has been done above, notably with Proposition \ref{last}. The idea of reasoning about categories of locales to recover known aspects of topos theory is well motivated, but needs to be finished!

However, what I believe to be of greater interest is working out what happens when we forget about \'{e}tale completeness. The paper \cite{towgro} shows that $[\mathbb{G},\mathbf{Loc}]$ behaves like a perfectly good category of locales even if $\mathbb{G}$ is not \'{e}tale complete and so $[\mathbb{G},\mathbf{Loc}]$ is not actually a category of locales over some topos. What we have shown here is that the theory of `geometric morphisms'(=stably Frobenius adjunctions; let's call them \emph{Frobenius morphisms}) works at a greater level of generality. There is an account of hyperconnected-localic factorization, boundedness, pullbacks, open and proper maps and, e.g. \cite{towslice}, aspects of internal lattice theory, all with `categories of locales' taking on the role of toposes and Frobenius morphisms taking on the role of geometric morphisms. And the key point of this paper is that Frobenius mophisms can be represented as Hilsum-Skandalis maps (i.e. generalised principal bundles). There is further work required to see how useful this new context is. As a final idea consider the question `What should the natural numbers object look like in this context?' We propose the following axiom of infinity: there exists an internal groupoid $\mathbb{G}$ and a natural categorical equivalence $\mathbf{Dis}_{\mathcal{D}} \simeq T_{\mathcal{C}}(\mathcal{D},[\mathbb{G},\mathcal{C}])$ for each $\mathcal{D}$ over $\mathcal{C}$. In other words, $\mathbb{G}$ classifies discrete objects. Restricting to toposes we get a characterisation of the natural numbers (B4.2.11 of \cite{Elephant}). What happens if we don't make that restriction?

\section{Acknowledgement}
This paper is dedicated to my son, Nathan.

\end{document}